\newcommand{\norm}[1]{\left\lVert#1\right\rVert}
\newcommand{\normd}{\left\lVert\cdot\right\rVert}
\newcommand{\val}[1]{\left\lvert#1\right\rvert}
\newcommand{\vald}{\left\lvert\cdot\right\rvert}
\newcommand{\stalk}{\mathcal O_{X,x}}
\newcommand{\gar}{\gamma_0}
\DeclareMathOperator{\Spa}{Spa}
\DeclareMathOperator{\im}{Im}
\DeclareMathOperator{\id}{id}
\DeclareMathOperator{\Hom}{Hom}
\DeclareMathOperator{\supp}{supp}
\DeclareMathOperator{\sep}{sep}
\DeclareMathOperator{\ev}{ev}
\DeclareMathOperator{\QCOuv}{QCOuv}
\DeclareMathOperator{\Spec}{Spec}
\newcommand{\mt}[1]{\mathcal#1}
\theoremstyle{plain}
\newtheorem{prop}{Proposition}[section]
\newtheorem{theo}[prop]{Theorem}
\newtheorem{lem}[prop]{Lemma}
\newtheorem{defn}[prop]{Definition}
\newtheorem{rem}[prop]{Remark}
\newtheorem{exmp}[prop]{Example}
\newtheorem{const}[prop]{Construction}
\begin{document}

%\section{Introduction}
%\begin{theo}
%There is an equivalence of categories
%\begin{gather*}
%\lbrace \text{hausdorff strictly k-analytic Berkovich %spaces} \rbrace
%\\ \cong
%\\ \lbrace \text{taut adic spaces locally of finite type/k} %\rbrace,
%\end{gather*}
%sending $\mathcal{M}(R)$ to $Spa(R,R^{+})$ for any affinoid %k-algebra $(R,R^{+})$.
%%\end{theo}
%\begin{proof}
%c.f. Theorem 2.24 in [SCH] where the author refers to Proposition 8.3.1 in [HUB]
%\end{proof}

%Let $(An)$ be the category of hausdorff strictly $k$-analytic spaces which is introduced in ([BER1], 1.2) and let $(Var)$ be the category of rigid analytic varieties over $k$ which is described in ([BGR], 9.3.1). There is  a fully faithful functor $s: (An) \rightarrow (Var)$ (c.f. ([BER1], 1.6)).

%More precisely in [HUB] the author proves the following theorem: 

%\begin{theo}
%The image of $s$ is the class of taut rigid analytic varieties over $k$ (c.f.[HUB], 5.6.6). I.e. $s$ gives an equivalence of categories between the category of hausdorff strictly $k$- analytic varieties and the category of taut rigid analytic varieties over $k$.
%\end{theo}

%Moreover one has the well known equivalence of quasi separated 	rigid analytic varieties over $k$ and the category of quasi-separated adic spaces locally of finite type over $Spa(k,k^{0})$. (c.f. [HUB1] Prop. 4.5)

%so the aim of this paper is to give a direct proof of theorem 0.1, directly constructing a suitable functor 

%\section{Notations and assumptions}

%\begin{enumerate}
%\item All rings in this paper have an identity $1$.
%\end{enumerate}

\begin{center}
\Large  A comparison of adic spaces and Berkovich spaces \\
\vspace{0.5cm}
\normalsize Timo Henkel
\footnote{Timo Henkel, Department of Mathematics, AG-Algebra, TU Darmstadt, Schloßgartenstraße 7, 64289 Darmstadt, Germany; thenkel@mathematik.tu-darmstadt.de}
\vspace{0.2cm}
\end{center} 
\noindent \textbf{Abstract}. This paper reviews the equivalence between the category of taut adic spaces that are locally of finite type and the category of strictly analytic Berkovich spaces. An explicit construction of this functor is provided by using the terminology of valuative spaces. 

\section*{Introduction}

After the discovery of $p$-adic numbers in the late 19th century one goal was to establish a theory of analytic functions for those \emph{non-archimedean fields} similarly to that over archimedean fields like $\mathbb{R}$ or $\mathbb{C}$. Since the valuation of a non-archimedean field $k$ satisfies the ultrametric triangle inequality, it is totally disconnected. This fact leads to strange occurrences when defining analytic spaces over $k$ in a naive way. It turned out that there were too many possible \emph{coverings} so that functions, which should not be analytic, had this property. The great idea of Tate was to restrict the setting to so called \emph{admissible coverings} which led to the notion of a \emph{Grothendieck topology} and the definition of rigid analytic spaces. \\
In the last decades more general approaches were developed, taking the theory into different directions. In particular, this paper focuses on the connection between Huber's theory of adic spaces and Berkovich's theory of analytic spaces which both extend the classic theory. In the affine case, Huber's theory works with certain pairs of general topological rings. In contrast to this, the fundamental rings considered in Berkovich's theory are generalized Tate algebras that admit scaling, which was not possible in the classic theory. \\
It is easy to see that those objects coincide if we assume both rings to satisfy certain finiteness conditions over $k$. Straightforward, one obtains an equivalence of categories for the affine objects in both categories (cf. §4). Nevertheless, the obtained spaces in both theories differ tremendously, already in view of topological aspects. The main problem, when one tries to generalize the equivalence to non-affinoid adic spaces and Berkovich spaces, are the different glueing procedures that create global objects out of affine ones. In the adic setting spaces are glued by open immersions and in the Berkovich case, we have affinoid domain embeddings which are closed immersions from a topological point of view. \\
The general equivalence of categories, which is also the main result of this paper (cf. \ref{theo. final theorem}), was previously proven by using the category of finite type rigid analytic spaces as a bridge in between (cf. \cite{Hu3} Proposition 4.5 and \cite{Be3} Theorem 1.6.1). Unfortunately, this proof does not give much information on how the corresponding spaces are connected. Therefore we provide a more direct proof following Fujiwara and Kato in \cite{FK} in which we explicitly construct a Berkovich space structure on the Hausdorff quotient of a taut adic space. Some of the work will be done in the setting of valuative spaces, as introduced in \cite{FK}. \\

As mentioned above, the main theorem of this paper is the following:

\begin{theo}\label{theo. final theorem oben}(cf. \ref{theo. final theorem})
There is an equivalence of categories:
\begin{gather*}		
\lbrace \text{taut adic spaces that are locally of finite type over }k \rbrace \\		
\cong \\
\lbrace \text{Hausdorff strictly} \ k\text{-analytic Berkovich spaces} \rbrace
\end{gather*}
sending $(X,\mt O_X, (v_x)_{x\in X})$ to $([X],\mt A, \tau)$, where $[X]$ is the universal Hausdorff quotient of $X$.
\end{theo}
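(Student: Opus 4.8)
The plan is to build the functor explicitly, check that it is well defined on morphisms, and then establish that it is fully faithful and essentially surjective. The affine case is already settled in \S4: for a strictly $k$-affinoid algebra $A$ the adic spectrum $\Spa(A,A^\circ)$ and the Berkovich spectrum $\mt M(A)$ carry the same affinoid algebra, and the separation (Hausdorffification) map $\Spa(A,A^\circ)\to\mt M(A)$ identifies $\mt M(A)$ with the separated quotient. The whole difficulty is therefore to globalise this correspondence across the two incompatible glueing formalisms: open immersions on the adic side versus affinoid-domain embeddings, which are topologically closed, on the Berkovich side.

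First I would construct the functor. For a taut adic space $X$ locally of finite type over $k$, I form the universal Hausdorff quotient $[X]=X^{\sep}$ with quotient map $\pi\colon X\to[X]$; the separated quotient is automatically Hausdorff, and tautness guarantees that it is moreover locally compact, so that it underlies a genuine Berkovich space in the target category. I would then transport the affinoid structure: each affinoid open $U=\Spa(A,A^\circ)\subseteq X$ maps onto $\pi(U)\cong\mt M(A)$, and I declare these Berkovich affinoids, with their algebras $A$, to form the atlas $\mt A$ and the net $\tau$. Compatibility on overlaps reduces to the affine equivalence of \S4 applied to the common affinoid subdomains of two charts, and the quasi-compactness encoded in tautness ensures that the resulting family is a net in Berkovich's sense. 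One then verifies that $([X],\mt A,\tau)$ is a Hausdorff strictly $k$-analytic space.

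The reason for working with valuative spaces, following Fujiwara--Kato, now becomes visible: an open immersion of adic spaces descends to an affinoid-domain embedding of Berkovich spaces, which is closed topologically, so the separated quotient forgets exactly the higher-rank generisations that encode the open structure upstairs. I would keep track of these by recording, for each $y\in[X]$, the full fibre $\pi^{-1}(y)\subseteq X$; this is precisely the valuative-space datum, and it lets the net glue correctly while remembering how the charts overlap as open subsets of $X$. Full faithfulness then follows by combining the universal property of the Hausdorff quotient --- which turns an adic morphism $X\to Y$ into a map $[X]\to[Y]$ and forces its uniqueness --- with the affine comparison, checked locally on affinoid charts, to see that every morphism of Berkovich spaces arises, and does so from a unique adic morphism.

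The main obstacle is essential surjectivity, where the glueing discrepancy must be overcome in reverse. Given a Hausdorff strictly $k$-analytic space $(Y,\mt A,\tau)$, I would cover $Y$ by Berkovich affinoids $\mt M(A_i)$, replace each by the adic affinoid $\Spa(A_i,A_i^\circ)$, and reglue these, now along open immersions rather than the closed affinoid-domain embeddings present in $Y$. Reconstructing the correct open overlaps amounts to reinstating, around every boundary, the higher-rank points that the Berkovich space has discarded. This is exactly what the valuative-space framework supplies: the net of $Y$ determines these higher-rank points uniquely, reconstructing a taut adic space $X$ with $[X]\cong Y$; functoriality of this reconstruction, together with the affine comparison, shows that the two functors are mutually inverse.
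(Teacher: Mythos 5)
Your overall architecture coincides with the paper's: the functor is built exactly as in \ref{construction: functor objects} (separated quotient, net and atlas from the affinoid opens, tautness giving Hausdorffness via \ref{theo: localy strongly compact -> locally Hausdorff}), and essential surjectivity is obtained exactly as in \ref{construction: quasi-inverse functor}, by regluing the adic affinoids $\Spa(A_S,A_S^o)$ along the open immersions that \ref{prop. affinoid domain embedding open immersion of adic spaces} associates to affinoid domain embeddings, with the valuative-space machinery of \S 5--6 supplying the higher-rank points.

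There is, however, one step where your stated justification would fail: faithfulness. The universal property of the Hausdorff quotient gives uniqueness in the wrong direction --- it says that a given $f\colon X\to Y$ determines a \emph{unique} $[f]\colon[X]\to[Y]$, not that $[f]$ determines $f$. A priori two adic morphisms $f\neq g$ could differ only on non-maximal points and still satisfy $[f]=[g]$, and nothing in the quotient's universal property rules this out. What actually closes this gap in the paper is reflexivity: $X$ is reflexive by \ref{prop. affine adic spaces connected to strict k-aff alg are reflexive}, and \ref{prop: maps of reflexive spaces} shows that two valuative quasi-compact maps out of a reflexive valuative space with the same separated quotient coincide; one then still has to check separately that the sheaf maps $f^b_V$ are determined by $[f]$, which the paper does in \ref{const: F  faithful} by reducing to the affinoid charts. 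You have the relevant idea nearby (you note that the net determines the higher-rank points), but as written the faithfulness argument rests on a property the Hausdorff quotient does not have. A second, more minor omission: since a morphism of Berkovich spaces is an equivalence class of spans in a category of fractions (\ref{def. category of k-analytic spaces}), both the definition of the functor on morphisms and the fullness argument require choosing a coarsened net $\tau_X|_f$ so that $[f]$ becomes a strong morphism, and checking independence of that choice --- this is the content of \ref{const: functor morphisms} and \ref{theo: functor well defined} and should not be elided.
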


The paper is divided into six sections. 
The first paragraph recalls some basic definitions of topological spaces that are used for the later content. 
In the second section a short introduction to Huber's theory of adic spaces is given in which we focus on those objects satisfying certain finiteness conditions over $k$. 
The third section shortly establishes analytic spaces in the sense they were defined by Berkovich. Similarly to the section before, we stress the properties which are important to compare particular Berkovich spaces with adic spaces. 
The affinoid case of this comparison will be done in paragraph four.
The fifth section introduces the notion of valuative spaces and their separated quotients and explains the relation of those terms to a possible comparison between the theories mentioned before.
The main result Theorem \ref{theo. final theorem} of this work is proven in the last paragraph by an explicit construction of the functor from the category of taut adic spaces that are locally of finite type over $k$ to the category of Hausdorff strictly $k$-analytic Berkovich spaces.\\

\noindent \textbf{Acknowledgements}.\\ I would like to thank Torsten Wedhorn who was my supervisor for this paper which is my master thesis. I am grateful to him for suggesting this topic and for the patience he had during our countless discussions.  Moreover, I am thankful to the members of the AG-Algebra at TU Darmstadt who warmly hosted me during the finial period of my work for this paper.\\
\newpage
\noindent Notations and assumptions:
\begin{enumerate}
\item Let all rings be commutative and with unit. 
\item All complete spaces are Hausdorff by definition.
\item For a ring $A$ with subsets $R$ and $S$, the set $R\cdot S$ denotes the additive subgroup of $A$ generated by $\lbrace rs \ | \ r \in R, \ s \in S \rbrace$.
\item Throughout this paper we fix a non-archimedean field $(k,\vald)$, i.e. a topological field whose topology is complete and induced by a non-trivial valuation $\vald$. In other words, $\vald$ is a map $k \rightarrow \mathbb{R}^{\geq 0}$ satisfying
\begin{enumerate}
\item $\val 0 =0$ and $\val 1 =1$,
\item $\val{ab}=\val a \val b$ and
\item $\val{a+b} \leq \max\lbrace\val a, \val b\rbrace$
\end{enumerate}
 for all $a,b \in k$.
 In particular, $k$ contains topologically nilpotent units which play an important role in the whole theory.

 Note that in this case $\sqrt{\val{k^\times}}:= \lbrace a \in \mathbb{R}^{\geq 0} \ | \ \exists \ n \in \mathbb{N} \ \text{with} \ a^n \in \val{k^\times} \rbrace$ is dense in $\mathbb{R}^{\geq 0}$. Indeed: Let $a\in \mathbb{R}^{\geq 0}$, $a<1$ and $\epsilon > 0$. Choose $c \in k$ such that $\val c < 1$ and take $n \in \mathbb{N}$ with $1-\sqrt[n]{\val c}< \epsilon$. Let $k\in \mathbb{N}$ such that $\sqrt[n]{\val c}^k \leq a \leq \sqrt[n]{\val c}^{k-1}$. Then $a-\sqrt[n]{\val {c^k}} < \epsilon$. Since inversion is continuous in $\mathbb{R}$, the result follows for all $a>1$ as well. 

\end{enumerate}

\tableofcontents

\section{General topology}

We start with some basic notions from general topology which will play an important role later in this work.

\begin{rem} 
\emph{
For a topological space $X$, we say that $\tilde{x}\in X$ is a \emph{generization} of $x \in X$, if $x \in \overline{\lbrace \tilde x \rbrace}$. Which means that $\tilde{x}$ is contained in any open neighbourhood of $x$.
By $G_x$ we denote the set of all generizations of $x$, or alternatively define $G_x$ to be the intersection of all open neighbourhoods of $x$. Now assume $X$ to be a $T_0$-space (i.e. for two distinct elements there exists an open set containing one of them but not the other).\\
Then the relation on $X$
\begin{gather*}		
x \leq y \ \text{if and only if} \ y \ \text{is a generization of}\ x
\end{gather*}
is a partial order on $X$ where we need the $T_0$-property to assure antisymmetry. $x \in X$ is said to be \emph{maximal} (resp. \emph{minimal}) if $x$ is maximal (resp. minimal) with respect to this ordering. Note that $x$ is the unique closed point of $G_x$ and that $x \in X$ is minimal if and only if $\lbrace x \rbrace$ is closed in $X$.  
}
\end{rem}

For certain spaces of interest, we will see that the sets $G_x$ are totally ordered. This leads to interesting observations which will we study in §6.

\begin{defn}\label{def. sober quasi sepatated...}
\emph{Let $X$ be a topological space.
\begin{enumerate}
\item We say that $X$ is \emph{compact} if it is quasi-compact and Hausdorff.
\item $X$ is said to be \emph{locally Hausdorff} if every point of $X$ has an open neighbourhood that is a Hausdorff space under the subspace topology.
\item If every point of $X$ has a compact neighbourhood contained in an open Hausdorff neighbourhood, then we say that $X$ is \emph{locally compact}. 
\item $X$ is called \emph{sober} if every closed irreducible subset of $X$ has a unique generic point.
\item We say that $X$ is \emph{quasi-separated} if the intersection of two open, quasi-compact subsets of $X$ again is quasi-compact.
\item $X$ is said to be \emph{taut} if it is quasi-separated and the closure of an open quasi-compact subset is again quasi-compact.
\item $X$ is called \emph{coherent} if it satisfies the following conditions:
	\begin{enumerate}
	\item $X$ has a basis of its topology which consists of quasi-compact subsets;
	\item $X$ is quasi-compact and quasi-separated.
	\end{enumerate}
\item If $X$ is coherent, we denote by $\QCOuv(X)$ the set of all open and quasi-compact subsets of $X$.
\item $X$ is said to be \emph{locally coherent} if it admits an open covering of coherent subspaces.
 \end{enumerate}
}
\end{defn}

\begin{rem}\label{rem. connection to spectral }
\emph{Some authors call topological spaces, that are coherent and sober, \emph{spectral}. This naming is justified by the fact that a topological space is spectral if and only if it is homeomorphic to the prime spectrum of a ring (cf. \cite{Ho}). We will stick with the notation in \cite{FK} and primarily use the notion of coherent sober spaces.
}
\end{rem}

\begin{defn}\label{def. locally quasi-compact}
\emph{Let $X$ be a topological space.
\begin{enumerate}
\item A continuous map $f:X \rightarrow Y$ between topological spaces is called \emph{quasi-compact} if $f^{-1}(V)$ is a quasi-compact subset of $X$ for any quasi-compact open subset $V \subseteq Y$.
\item We say that a subset $U\subseteq X$ is \emph{retro-compact} if the inclusion map $U \hookrightarrow X$ is quasi-compact.
\item A continuous map $f:X \rightarrow Y$ between locally coherent spaces is said to be \emph{locally quasi-compact} if the map $f|_U:U\rightarrow V$ is quasi-compact for any pair $(U,V)$ of coherent open subsets $U \subseteq X$ and $V \subseteq Y$ such that $f(U) \subseteq V$.
\end{enumerate}
}
\end{defn}

\begin{prop}\label{prop: quasi-compact map between locally coherent spaces}
Let $f:X \rightarrow Y$ be a locally quasi-compact map of locally coherent spaces and let $V \subseteq Y$ be a retrocompact open subset. Then $f^{-1}(V)$ is retrocompact as well.
\end{prop}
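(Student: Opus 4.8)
The plan is to unwind the definition of retrocompactness: I must show that for every quasi-compact open subset $W \subseteq X$, the set $f^{-1}(V) \cap W$ is quasi-compact. Since $f$ is continuous and $V$ is open, $f^{-1}(V)$ is automatically open, so the whole content lies in the quasi-compactness. The strategy is to reduce to a local situation in which I can feed a concrete pair of coherent opens into the definition of local quasi-compactness, and then transport the retrocompactness of $V$ across $f$ by a single quasi-compactness statement.

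The heart of the argument is the following local computation. Suppose $U \subseteq X$ is a coherent open subset whose image $f(U)$ is contained in some coherent open $V' \subseteq Y$; call such a $U$ \emph{good}. Then I claim $f^{-1}(V) \cap U$ is a quasi-compact open subset of $U$. Indeed, $V \cap V'$ is open in $V'$, and since $V$ is retrocompact in $Y$ while $V'$ is quasi-compact open, the set $V \cap V'$ is quasi-compact, so $V \cap V' \in \QCOuv(V')$. By local quasi-compactness applied to the pair $(U,V')$ (both coherent, $f(U) \subseteq V'$), the restriction $f|_U : U \to V'$ is quasi-compact, hence $(f|_U)^{-1}(V \cap V')$ is quasi-compact; because $U \subseteq f^{-1}(V')$ this preimage equals $f^{-1}(V) \cap U$, proving the claim.

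Next I would show that the good coherent opens form a basis of the topology of $X$, and conclude. Given $x \in X$, choose a coherent open $U_0 \ni x$ (as $X$ is locally coherent) and a coherent open $V' \ni f(x)$ in $Y$ (as $Y$ is locally coherent); then $U_0 \cap f^{-1}(V')$ is an open neighbourhood of $x$, and since $U_0$ has a basis of quasi-compact opens I may pick a quasi-compact open $B$ of $U_0$ with $x \in B \subseteq U_0 \cap f^{-1}(V')$. A quasi-compact open subset of a coherent space is again coherent (it inherits quasi-separatedness and a basis of quasi-compact opens), so $B$ is coherent, and $f(B) \subseteq V'$ makes $B$ good; this shows the good coherent opens form a basis. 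Now, for an arbitrary quasi-compact open $W \subseteq X$, cover $W$ by good coherent opens contained in $W$ and extract a finite subcover $W = B_1 \cup \cdots \cup B_m$ by quasi-compactness of $W$. Then $f^{-1}(V) \cap W = \bigcup_{l=1}^m (f^{-1}(V) \cap B_l)$ is a finite union of quasi-compact sets by the local computation, hence quasi-compact, which is exactly what retrocompactness of $f^{-1}(V)$ demands.

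I expect the main obstacle to be organizing the two-sided locality. The definition of local quasi-compactness only applies to pairs of coherent opens with $f(U) \subseteq V'$, so the crux is manufacturing enough such \emph{good} pairs: verifying that quasi-compact opens inside coherent spaces are themselves coherent, and that the open set $U_0 \cap f^{-1}(V')$ obtained by pulling back a coherent open of $Y$ can be shrunk to a basic quasi-compact (hence coherent) good neighbourhood. Once this basis is in place, the retrocompactness of $V$ in $Y$ passes to $X$ entirely through the elementary observation $V \cap V' \in \QCOuv(V')$, and the finiteness needed at the end is supplied by the quasi-compactness of the test set $W$.
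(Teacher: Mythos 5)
Your argument is correct. Note that the paper does not prove this proposition at all --- it simply cites \cite{FK} Proposition 0.2.2.25 --- so you have supplied a self-contained proof where the paper has none. The two key points both check out: $V\cap V'$ is a quasi-compact open of the coherent open $V'$ precisely because $V$ is retrocompact, so local quasi-compactness applied to the pair $(U,V')$ yields the quasi-compactness of $f^{-1}(V)\cap U$; and a quasi-compact open subset of a coherent space is indeed coherent, which gives you enough \emph{good} pairs. One cosmetic remark: as written, your basis argument only produces a good coherent neighbourhood $B\subseteq U_0\cap f^{-1}(V')$ of $x$, whereas the final step needs $B\subseteq W$; this is repaired by shrinking inside $U_0\cap f^{-1}(V')\cap W$ instead, which the same appeal to the basis of quasi-compact opens of $U_0$ provides. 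With that trivial adjustment the proof is complete.
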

\begin{proof}
\cite{FK} Proposition 0.2.2.25
\end{proof}

\section{Adic spaces}

In this section we recall the definition and certain properties of adic spaces which were originally introduced by Huber in \cite{Hu2}. The discussion mainly follows \cite{We2} and uses results from \cite{Hu2} and \cite{Hu3}. 

\subsection{Huber rings and Huber pairs}
The fundamental algebraic objects in the theory of adic spaces are certain pairs of topological rings which are in the focus of this subsection.
\begin{defn}\label{def. Huber ring / Tate ring}
\emph{Let $A$ be a topological ring.
\begin{enumerate}
\item $A$ is called \emph{adic} if it possesses an ideal $I$ such that the family $(I^n)_{n\in \mathbb{N}}$ forms a fundamental system of open neighbourhoods of $0$ in $A$. In this case $I$ is said to be an \emph{ideal of definition of $A$}.
\item $A$ is called a \emph{Huber ring} if $A$ possesses an open adic subring $A_0$ with finitely generated ideal of definition $I$. We call $A_0$ a \emph{ring of definition of $A$} and $(A_0,I)$ a \emph{pair of definition of $A$}.
\item A subset $S\subseteq A$ is said to be \emph{bounded} if for every neighbourhood $U$ of $0$ in $A$ there exists a neighbourhood $V$ of $0$ in $A$ such that $V\cdot S \subseteq U$ (here we assume $V\cdot U$ to be $\lbrace vu \ | \ v\in V, \ u \in U \rbrace$).
\item $x \in A$ is said to be \emph{power-bounded} (resp.  \emph{topologically nilpotent}) if $\lbrace x^n \ | \ n\in \mathbb{N} \rbrace$ is bounded in $A$ (resp. if $\lim_{n\rightarrow \infty} x^n =0$). 
\end{enumerate}
We define:\begin{center}
 $A^o:= \lbrace x \in A \ | \ \text{x is power-bounded} \rbrace$,\\
 $A^{oo}:= \lbrace x \in A \ | \ \text{x is topologically nilpotent} \rbrace$.
\end{center}
\begin{enumerate}
\item[v)]$A$ is called a \emph{Tate ring} if it is a Huber ring and has a topologically nilpotent unit.
\end{enumerate}
}
\end{defn}

\begin{rem}\label{rem. power bounded elemnts form ring}
\emph{
Note that if $A$ is an adic ring, then $A^o$ is a subring of $A$, which contains $A^{oo}$ as an ideal.
}
\end{rem}

\begin{defn}\label{def. Huber pair}
\emph{
\begin{enumerate}
\item
A \emph{Huber pair} is a pair $(A,A^+)$ where $A$ is a Huber ring and $A^+$ is an integrally closed open subring of $A$ which is contained in $A^o$. We call such a ring $A^+$ \emph{ring of integral elements of $A$}.
\item A Huber pair $(A,A^+)$ is said to be \emph{complete} if $A$ is a complete topological ring with respect to its uniform structure induced by its topology.
\item A \emph{morphism of Huber pairs} $\varphi:(A,A^+)\rightarrow (B,B^+)$ is a continuous ring homomorphism $\varphi: A\rightarrow B$ such that $\varphi(A^+)\subseteq B^+$.
\end{enumerate}
}
\end{defn}

\begin{rem}\label{rem. complete Huber pairs}
\emph{
Let $(A,A^+)$ be a Huber pair. Let $\hat{A}$ denote the completion of $A$ as an additive topological group. Since the ring multiplication can be extended to $\hat{A}$, we obtain a complete topological ring which still is a Huber ring. If we denote the integral closure of the topological closure of $A^+$ in $\hat{A}$ by $\hat{A}^+$, we obtain a Huber pair $(\hat{A}, \hat{A}^+)$ which we call the \emph{completion} of $(A,A^+)$.
}
\end{rem}

The following construction leads to the definition of morphisms which are topologically of finite type and hence to Huber pairs that are topologically of finite type over $k$. Those pairs of rings will be the fundamental algebraic objects for our comparison.

\begin{defn}\label{defn. restricted power series}
\emph{
Let $(A,A^+)$ be a complete Huber pair and $T_1,...,T_n$ finite subsets of $A$ such that $T_i \cdot A$ is open for each $i=1,...,n$. Consider the ring of formal power series of $A$ denoted by $A [[ X ]]= A [[X_1,...,X_n ]]$. Using the usual multi-index notation, we define
\begin{gather*}		
A\langle X \rangle _T := \Bigl\lbrace \sum_{\nu \in \mathbb{N}_0^n} a_\nu X^\nu \in A[[X]] \ ;  \  \begin{tabular}{lcr}
    $a_\nu \in T^\nu \cdot U \ \text{for all open subgroups}$ \\
  $\ U \subseteq A \ \text{and almost all} \ \nu \in \mathbb{N}_0^n$ \\
 \end{tabular} \Bigr\rbrace.
\end{gather*} 
We endow $A\langle X \rangle_T$ with the unique ring topology such that a fundamental system of open neighbourhoods of $0$ in $A\langle X \rangle_T$ is given by sets of the form
\begin{gather*}		
   U_{\langle X \rangle} := \Bigl\lbrace  \sum_{\nu \in \mathbb{N}_0^n} a_\nu X^\nu \in A[[X]] \ | \ a_\nu \in T^\nu \cdot U \ \text{for all}  \ \nu \in \mathbb{N}^n_0 \Bigr\rbrace,
\end{gather*}   
 where $U$ is an open subgroup of A.
This makes $A\langle X \rangle _T$ a complete Huber ring.
If $T_i=\lbrace 1 \rbrace$ for all $i=1,..,n$, we simply write $A\langle X \rangle = A\langle X_1,...,X_n \rangle$.
Moreover, we define $A\langle X \rangle ^+_T$ to be the integral closure of 
\begin{gather*}		
\Bigl\lbrace \sum_{\nu \in \mathbb{N}_0^n} a_\nu X^\nu \in A[[X]] \ | \ a_\nu \in T^\nu \cdot A^+ \ \text{for all} \ \nu \in \mathbb{N}_o^n \Bigr\rbrace
\end{gather*}
in $A\langle X \rangle_T$.
So we get a Huber pair 
\begin{gather*}		
A\langle X_1,...,X_n \rangle _{T_1,...,T_n} := A \langle X \rangle_T := (A\langle X \rangle_T,A\langle X \rangle^+_T)
\end{gather*}
and a natural homomorphism of Huber pairs $(A,A^+) \rightarrow (A\langle X \rangle_T,A\langle X\rangle^+_T)$. Note that this construction is universal with respect to certain morphisms of Huber pairs  $(A,A^+)\rightarrow(B,B^+)$ where $(B,B^+)$ is complete.
} 
\end{defn}

\begin{exmp}\label{exmp. Huber pairs}
\emph{
\begin{enumerate}
\item $(k,k^o)$ is a Huber pair where we have $k^o= \lbrace c \in k \ | \ \val c \leq 1 \rbrace$.
\item $k\langle X_1,...,X_n \rangle$ is the \emph{Tate algebra over $k$} in $n$ variables. Let $c \in k$ be a topologically nilpotent unit. By choosing $(k^o \langle T_1,...,T_n \rangle, c \cdot k^o \langle T_1,...,T_n \rangle$ as a pair of definition $k\langle X_1,...,X_n \rangle$ can be considered to be a Huber ring.
\item For each Huber ring $A$ the subring of power-bounded elements is a ring of integral elements, in particular $( k \langle T_1,...,T_n \rangle, k^o \langle T_1,...,T_n \rangle)$ is a Huber pair.
\end{enumerate}
}
\end{exmp}

\begin{defn}\label{def. Huber pairs topologically of finite type /k}
\emph{
Let $f:(A,A^+)\rightarrow (B,B^+)$ be a morphism of Huber pairs.
\begin{enumerate}
\item
$f$ is said to be \emph{topologically of finite type} if there exists an $n \in \mathbb{N}$, $T_1,...,T_n$ finite subsets of $A^+$ such that $T_i \cdot A$ is open in $A$ for all $i$ and a morphism of Huber pairs $g:A\langle X_1,...,X_n \rangle_{T_1,...,T_n} \rightarrow B$ with $f= g \circ h$, such that $g$ is surjective, continuous, open and $B^+$ is the integral closure of $f(A^+)$ in $B$. Here $h$ denotes the canonical morphism of Huber pairs $A \rightarrow A\langle X_1,...,X_n \rangle_{T_1,...,T_n}$.
\item $f$ is called \emph{strictly topologically of finite type} if $f$ is topologically of finite type and one can choose $g$ in the definition above to have domain  $A\langle X_1,...,X_n \rangle$ for some $n \in \mathbb{N}$.
\item
 $(A,A^+)$ is said to be \emph{topologically of finite type over $k$} if there exists a homomorphism of Huber pairs $\pi:(k,k^o)\rightarrow (A,A^+)$ which is topologically of finite type.
\end{enumerate}
}
\end{defn}

\begin{rem}\label{rem. huber pairs of finite type equal k affinoid }
Let $f:(A,A^+)\rightarrow (B,B^+)$ be a morphism of Huber pairs.
\emph{\begin{enumerate}
\item If $A$ is a Tate ring and $B$ is complete, then $f$ is strictly topologically of finite type if and only if $f$ is topologically of finite type (cf. \cite{We1} Proposition 6.36).
\item As we will see later, this assertion implies that $(A,A^+)$ is of topologically finite type over $k$ if and only if there exists a surjective and continuous $k$-algebra homomorphism $k\langle X_1,...,X_n \rangle \rightarrow A$ and $A^+=A^o$ (cf. \ref{lem. unique topological of finite type}). This observation is important since it provides an equality between the fundamental algebraic objects in the theory of strictly $k$-analytic Berkovich spaces and the theory of adic spaces that are locally of finite type over $k$.
\end{enumerate}
}
\end{rem}

\subsection{The adic spectrum of a Huber pair}

In this section we associate a certain topological space $X=\Spa(A,A^+)$ to a given Huber pair $(A,A^+)$. $X$ will consist of equivalence classes of valuations of $A$ and the elements of $A$ will be considered as functions on $X$, which are bounded with respect to $A^+$. Beginning with the construction of such space, we will state important properties afterwards and give a short example at the end.

\begin{defn}\label{continuous valuations of a topological ring}
\emph{Let $A$ be a topological ring.
A \emph{valuation of $A$} is a map $\val \cdot :A \rightarrow \Gamma \cup
\lbrace 0 \rbrace$, where $\Gamma$ is a (multiplicatively written) totally ordered group such that we have 
\begin{enumerate}
\item $\val{f+g} \leq \max \lbrace \val f , \val g \rbrace$,
\item $\val{fg}=\val f \val g$ and
\item $\val 0 = 0$ and $\val 1 =1$
\end{enumerate}
for all $f,g \in A$.
If $x$ is a valuation of $A$ and $f\in A$, we sometimes write $\val{f(x)}$ instead of $x(f)$ to stress  the interpretation of $f$ as a function on the 'space of valuations of $A$'. This will be made more precise in the following.
}
\end{defn}

\begin{defn}\label{def. defs for valuations}
\emph{
Let $v: A \rightarrow \Gamma \cup \lbrace 0 \rbrace$ be a valuation of a topological ring $A$.
\begin{enumerate}
\item $v$ is said to be \emph{continuous} if for all $\gamma \in \Gamma$ the set $\lbrace f \in A \ | \ v(f) < \gamma \rbrace$ is open in $A$.
\item The \emph{support of $v$} is defined as $\supp(v):= \lbrace a \in A \ | \ v (a)=0 \rbrace$.
\item The \emph{value group of $v$} is the subgroup $\Gamma_v$ of $\Gamma$ generated by $v(A)\cap \Gamma$. If we consider a valuation $v$ as above and it is not stated differently, we will always assume  $\Gamma$ to be the valuation group of $v$.
\item For a second valuation $v': A\rightarrow \Gamma' \cup \lbrace 0 \rbrace $ we say that $v$ and $v'$ are \emph{equivalent} if for all $f,g\in A$ one has $v(f) \leq v(g) \Leftrightarrow v'(f)  \leq v'(g)$. In this case we write $v \sim v'$. Equivalently $v \sim v'$ if there exists an isomorphism of ordered monoids $f:\Gamma \cup \lbrace 0 \rbrace \rightarrow \Gamma' \cup \lbrace 0 \rbrace$ such that $f \circ v = v'$.\\  Note that equivalent valuations have the same support.
\item The \emph{rank} of $v$ is defined as the \emph{rank} of its value group $\Gamma_{v}$ which is the cardinality of convex subgroups of $\Gamma_{v}$ that are not equal to $\lbrace 1 \rbrace$. (Remember that for a totally ordered group $G$ a subgroup $H$ of $G$ is called \emph{convex} if for all $g \in G$ and $g \in H$ with  $h \leq g \leq 1$ we already have $g \in H$.)  Note that equivalent valuations are of the same rank.
\item The $v$-topology on $A$ (by abuse of notation we identify $A$ with its underlying ring without any topology) is the ring topology on $A$ such that a fundamental system of open neighbourhoods of  $0 \in A$ is given by $(\lbrace a \in A \ | \ v (a) < \gamma \rbrace)_{\gamma \in \Gamma}$ (cf. \cite{We1} Proposition 5.39). 
This means that $v$ is continuous if and only if $\id:A\rightarrow A'$ is continuous where $A'$ is $A$ endowed with the $v$-topology.
Moreover the obtained topology on $A$ does not change if we pass to an equivalent valuation.
\end{enumerate}
}
\end{defn}

\begin{rem}\label{rem. extension of valuations to field}
\emph{
Let $v :A \rightarrow \Gamma \cup \lbrace 0 \rbrace$ be a valuation of a topological ring $A$. In this case we have the field $\kappa(v) := \text{Frac}(A/\supp(v))$  since $\supp(v)$ is a prime ideal in $A$. There exists a unique extension as a valuation $\tilde{v}$ of $v$ to $\kappa(v)$ and the valuation groups coincide. In particular $\tilde{v}$ and $v$ are of the same rank. 
 We also define \emph{the valuation ring of $v$ in $\kappa(v)$} as $A_{v}:= \lbrace a \in \kappa(v) \ | \ \tilde{v}(a)\leq 1 \rbrace$. Note that $A_{v}$ is open with respect to the $\tilde{v}$-topology on $\kappa(v)$. Here the whole construction does not change if we pass to an equivalent valuation.
}
\end{rem}

\begin{defn}\label{adic spectrum of a Huber ring}
\emph{Let $(A,A^+)$ be a Huber pair. 
\begin{enumerate}
\item	$\Spa(A,A^+)$ denotes the \emph{adic spectrum of $(A,A^+)$} and is defined as 
\begin{gather*}
\Spa(A,A^+):= \Bigl\lbrace \vald \ \text{valuation on $A$} \ ; \begin{tabular}{lcr}
 $\vald \ \text{is continuous and} $\\
  $ \val f \leq 1 \ \text{for all} \ f \in A^+$\\
 \end{tabular} \Bigr\rbrace/ \sim.
\end{gather*}
\item We endow $\Spa(A,A^+)$ with the topology generated by subsets of the form 
\begin{gather*}
\lbrace x \in \Spa(A,A^+) \ | \ \val{f(x)} \leq \val{g(x)} \rbrace
\end{gather*}
where $f,g \in A$.
\end{enumerate}
}
\end{defn}

Now we briefly come back to the notion generizations which we introduced in the first section. 

\begin{defn}\label{defn generizations}
\emph{
Let $(A,A^+)$ be a Huber pair and $X=\Spa(A,A^+)$ the associated adic spectrum. For $x$ and $x' \in X$ such that $x' \in \overline{\lbrace x \rbrace} $ and $\supp(x)=\supp(x')$ we say that $x$ is a \emph{vertical generization} of $x'$. 
}
\end{defn}

\begin{rem}\label{rem. vertical genrization}
\emph{
Let $(A,A^+)$ be a Huber pair and $X=\Spa(A,A^+)$ the associated adic spectrum.
\begin{enumerate}
\item In general not all generizations that appear in $X$ are vertical. However, one can show that this is the case if $X$ is \emph{analytic} (cf. \ref{def. analytic adic spaces}). Since all adic spaces associated to Berkovich spaces have this property, vertical generizations play a central role in our discussion.
\item If $x,x' \in X$ such that $x'$ is non-trivial and a vertical generization of $x$, then $x$ and $x'$ induce the same topology on $\kappa(x)=\kappa(x')$ (cf. \cite{We1} Proposition 5.45).
\item Let $x\in X$. Note that the vertical generizations of $x$ are in bijection with the convex subgroups of the value group of $x$ (cf. \cite{We1} Remark 4.12) which are totally ordered by inclusion. Therefore if $x$ only allows vertical generizations in $X$, the set of all generizations of $x$ is totally ordered. 
\end{enumerate}
}
\end{rem}

The last observation shows that adic spectra of certain Huber pairs are \emph{valuative spaces} which will be introduced in §5.

The following theorem contains some of the most important properties for adic spectra. In particular, it establishes the importance of so called \emph{rational subsets} which play a central role when we endow the adic spectrum of a Huber pair with a structure sheaf.

\begin{theo}\label{theo. adic spectrum is spectral space}
Let $(A,A^+)$ be a Huber pair. Then $X:= \Spa(A,A^+)$ is a spectral (and in particular coherent) space. Moreover, the sets
\begin{gather*}		
X\biggl(\frac{T}{s}\biggr)	:=\lbrace  x \in \Spa(A,A^+) \ | \ \val{t(x)} \leq \val{s(x)} \neq 0 \ \text{\emph{for all}} \ t\in T\rbrace,
\end{gather*}
where $s\in A $ and $T\subseteq A$ is a finite subset such that  $T\cdot A$ is an open ideal of $A$, form a basis of quasi-compact subsets of $X$ that is stable under finite intersections.
\end{theo}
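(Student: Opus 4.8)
The plan is to realise $X=\Spa(A,A^+)$ as a subspace of the full valuation spectrum $\mathrm{Spv}(A)$ of all equivalence classes of (not necessarily continuous) valuations of $A$, and then to invoke the general principle that a pro-constructible subspace of a spectral space is itself spectral, the quasi-compact opens of the subspace being exactly the traces of the quasi-compact opens of the ambient space. The fundamental input I would take for granted is that $\mathrm{Spv}(A)$, topologised by the sets $\{v : \val{f(v)} \le \val{g(v)} \ne 0\}$ with $f,g\in A$, is spectral; this is classical (it may be proved via Hochster's characterisation or by an explicit presentation as a spectrum), and its constructible sets are precisely the Boolean combinations of these subbasic, quasi-compact open sets.

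Inside $\mathrm{Spv}(A)$ the adic spectrum is cut out by two requirements: (i) $\val{f(v)}\le 1$ for every $f\in A^+$, and (ii) continuity of $v$. Condition (i) is easy: for a fixed $f$ the locus $\{v:\val{f(v)}\le 1\}=\{v:\val{f(v)}\le\val{1(v)}\}$ is constructible, so the intersection over all $f\in A^+$ is pro-constructible. Condition (ii) is the technical heart of the theorem, and it is here that the Huber-ring hypothesis is indispensable. I would fix a pair of definition $(A_0,I)$ with $I=(a_1,\dots,a_n)$ finitely generated, and reformulate continuity of $v$ as a condition on the finitely many values $\val{a_i(v)}$ together with boundedness of $v$ on $A_0$; the finiteness of the generating set is exactly what allows this reformulation to be written as an intersection of constructible conditions, so that the set $\mathrm{Cont}(A)$ of continuous valuations becomes pro-constructible in $\mathrm{Spv}(A)$. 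Establishing this pro-constructibility is the step I expect to be the main obstacle. Granting both conditions, $X$ is an intersection of pro-constructible subsets of $\mathrm{Spv}(A)$, hence pro-constructible, and therefore spectral in the subspace topology; in particular it is coherent and sober in the sense of Definition~\ref{def. sober quasi sepatated...}.

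It remains to identify the rational subsets as a suitable basis. Each $X(T/s)$ is the trace on $X$ of the finite intersection $\bigcap_{t\in T}\{v:\val{t(v)}\le\val{s(v)}\ne 0\}$ of subbasic opens, so it is a constructible open of $\mathrm{Spv}(A)$ met with $X$; by the description of the quasi-compact opens of a pro-constructible subspace it is quasi-compact and open in $X$. That these sets form a basis follows by writing an arbitrary subbasic open of $X$ as a union of rational subsets, where the hypothesis that $T\cdot A$ be open is used to guarantee that the relevant loci are genuinely open. Finally, stability under finite intersection is the direct computation: after harmlessly enlarging $T$ and $T'$ to contain $s$ and $s'$ respectively, one checks $X(T/s)\cap X(T'/s')=X(T\cdot T'/ss')$, and $T\cdot T'\cdot A=(T\cdot A)(T'\cdot A)$ is open because a product of two open ideals contains a power of $I$. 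This completes the programme.
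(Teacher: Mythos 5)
The paper does not actually prove this theorem: its ``proof'' is the citation of \cite{Hu2} Theorem 3.5, and your programme is essentially the architecture of Huber's argument there, so the overall route is the right one. As a proof, however, the proposal has a genuine gap exactly where you flag one, and it is the entire content of the theorem. The claim that the continuity locus $\mathrm{Cont}(A)$ is pro-constructible in $\mathrm{Spv}(A)$ does not fall out of ``finitely many values $\val{a_i(v)}$ together with boundedness of $v$ on $A_0$'': continuity of $v$ is the cofinality condition that for every $\gamma\in\Gamma_v$ some power of the ideal of definition is mapped below $\gamma$, and the quantifier runs over the value group of $v$, not over elements of $A$, so it is not visibly an intersection of constructible conditions in the coordinates $v(f)$. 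Huber's way around this is to introduce the auxiliary space $\mathrm{Spv}(A,I)$ of valuations whose value group is generated by the characteristic group together with $v(I)$, to construct a retraction $r:\mathrm{Spv}(A)\rightarrow\mathrm{Spv}(A,I)$ by dividing out a suitable convex subgroup, and to deduce spectrality of $\mathrm{Spv}(A,I)$ with the rational topology from this retraction via a Hochster-type lemma; only inside $\mathrm{Spv}(A,I)$ does $\mathrm{Cont}(A)$ become the pro-constructible locus $\lbrace v \ | \ v(a)<1 \ \text{for all} \ a\in I\rbrace$. Your sketch also passes over a second point: the topology on $X$ generated by the rational subsets is not obviously the subspace topology inherited from $\mathrm{Spv}(A)$, since the trace on $X$ of an arbitrary subbasic open $\lbrace v \ | \ \val{f(v)}\leq\val{g(v)}\neq 0\rbrace$ must be shown to be a union of rational subsets $X(\frac{T}{s})$ with $T\cdot A$ open; this is itself a lemma that uses continuity of the valuations and the structure of the pair of definition.

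The peripheral verifications you give are correct: the conditions $\val{f(v)}\leq 1$ for $f\in A^+$ cut out a pro-constructible subset; in a pro-constructible subspace of a spectral space the quasi-compact opens are the traces of those of the ambient space; after adjoining $s$ to $T$ and $s'$ to $T'$ one has $X(\frac{T}{s})\cap X(\frac{T'}{s'})=X(\frac{T\cdot T'}{ss'})$; and $T\cdot T'\cdot A$ is open because a product of open ideals in a Huber ring contains a power of the ideal of definition. But until the spectrality of $\mathrm{Cont}(A)$ (equivalently, the detour through $\mathrm{Spv}(A,I)$ or some substitute for it) is actually carried out, the proposal is an accurate table of contents for Huber's proof rather than a proof.
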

\begin{proof}
\cite{Hu2} Theorem 3.5.
\end{proof}

Subsets of the particular form in the theorem above are called $\emph{rational}$.

\begin{rem}\label{rem. morphisms of affinoid adic spaces}
\emph{Each morphism of Huber pairs $\varphi :(A,A^+)\rightarrow (B,B^+)$ induces a continuous map $\Spa(\varphi):\Spa(B,B^+)\rightarrow \Spa(A,A^+)$ by $x \mapsto x \circ \varphi$. 
}
\end{rem}

One can show that there is a homeomorphism  between the adic spectrum of a Huber pair $(A,A^+)$ and the adic spectrum of its completion $(\hat{A},\hat{A}^+)$. This map induces a correspondence of rational subsets of the respective spectra (cf. \cite{Hu2} Proposition 3.9). Later we will endow an adic spectrum $\Spa(A,A^+)$ with a pre-sheaf of complete topological rings which is initially defined on the collection of rational subsets of $\Spa(A,A^+)$. Hence we will also get an isomorphism in the right category of ringed spaces between $\Spa(A,A^+)$ and $\Spa(\hat{A},\hat{A}^+)$. So in the following we can and will assume any Huber pair to be complete.

The following easy example of an adic spectrum will play an important role when we consider adic spectra of $k$-algebras:

\begin{exmp}\label{exmp. Spa(k,k^o)}
\emph{$\Spa(k,k^o)$ consists of a single point, namely the one corresponding to the fixed non-archimedean rank $1$ valuation $\vald $ on $k$. We will provide the arguments for this claim within the next lines. It is clear that the equivalence class of $\vald$ is an element of $\Spa(k,k^o)$. Now assume that there is a continuous valuation $v: k \rightarrow \Gamma \cup \lbrace 0 \rbrace  $ such that $k^o \subseteq A_v:= \lbrace a \in k \ | \ v(a) \leq 1 \rbrace$ (so $A_v$ denotes the valuation ring associated to $v$). In this case $A_v$ is a $k^o$-subalgebra of $k$. If $k^o=A_v$, then $v$ is equivalent to $\vald$. So assume that there is $c \in A_v$ such that $\val c > 1$. Let $a \in k$ be arbitrary. Then there is an $n\in \mathbb{N}$ with $\val{ a c^{-n}}\leq 1$ and hence $a= a c^{-n} c^n \in k^o[c] \subseteq A_v$. So we have $A_v = k$ and therefore $v$ is trivial. Since $v$ is continuous, $\lbrace 0 \rbrace  = \lbrace b \in k \ | \ v(b)<1 \rbrace$ is open in $k$. But this is false and hence we get a contradiction to $k^o \subsetneq A_v$.\\
Therefore for any Huber pair $(A,A^+)$ we have a map $\Spa(A,A^+) \rightarrow \Spa(k,k^o)$ which is continuous. It will have a special meaning if $A$ is a $k$-algebra, since (as we will explain later in the discussion) in this case it corresponds to the map $k \rightarrow A$.
}
\end{exmp}

\subsection{Localization constructions in \textit{$\Spa(A,A^+)$}}

Our next goal is to provide the adic spectrum $X$ of a Huber pair $(A,A^+)$ with a particular structure sheaf of complete topological rings. The strategy is to find certain Huber pairs such that their spectra can be identified with rational subsets of $X$. Similarly to the theory of affine prime schemes, this will be achieved by a certain localization construction which is introduced in this subsection.

\begin{const}(Localization)\label{con. localization in a Huber ring}
\emph{
Let $A$ be a Huber ring with pair of definition $(A_0,I)$. Choose $s \in A$ and $\emptyset \neq T= \lbrace t_1,..,t_n \rbrace \subseteq A$ such that the ideal in $A$ generated by $T$ is open. 
Set $D:=A_0[\frac{t_1}{s},...,\frac{t_n}{s}]\subseteq A_s = A[s^{-1}]$ and endow $A_s$ with the ring topology such that $(I^n \cdot D)_{n\geq 1}$ is a fundamental system of open neighbourhoods of $0$. Let $A_s$ endowed with this particular topology be denoted by $A(\frac{T}{s})$. Note that this is a Huber ring with pair of definition $(D,I\cdot D)$. Its completion, which again is a Huber ring, is denoted by $A\langle\frac{T}{s} \rangle$. The ring homomorphism $\rho: A\rightarrow A\langle\frac{T}{s} \rangle $ is continuous and has the following universal property: \\
Let $\varphi : A \rightarrow B$ be a morphism of complete Huber rings, such that $\varphi(s) \in B^\times$ and $\frac{\varphi(t)}{\varphi(s)}\in B^o$ for all $t \in T$. Then there exists a unique morphism of complete Huber rings $\pi : A\langle\frac{T}{s} \rangle \rightarrow B$ such that $\pi \circ \rho = \varphi$. Note that $A\langle\frac{T}{s} \rangle \cong A \langle X \rangle _T /\overline{(1-sX)}$ since both rings have the same universal property.\\
Let $(A,A^+)$ be a Huber pair and let $A(\frac{T}{s})^+$ denote the integral closure of $A^+[\frac{t_1}{s},...,\frac{t_n}{s}]$ in $A(\frac{T}{s})$. Then $(A(\frac{T}{s}),A(\frac{T}{s})^+)$ is a Huber pair as well as its completion $A\langle\frac{T}{s} \rangle := (A\langle\frac{T}{s} \rangle,A\langle\frac{T}{s} \rangle^+)$. The latter is universal for morphisms of complete Huber pairs $\varphi :(A,A^+) \rightarrow (B,B^+)$ such that $\varphi(s) \in B^\times$ and $\frac{\varphi(t)}{\varphi(s)}\in B^+$ for all $t \in T$. \\
Note that the obtained morphism of Huber pairs $A \rightarrow A\langle \frac{T}{s}\rangle$ is topologically of finite type.
%Moreover note that $A\langle\frac{T}{s} \rangle^+= \lbrace f \in A\langle\frac{T}{s} \rangle \ | \ \val{f(x)}\leq 1 \ \text{for all} \ x\in U \rbrace \ $
}
\end{const}

The following lemma provides some important consequences of the construction above. In particular, it shows that rational subsets of adic spectra can be described by adic spectra of Huber pairs which we constructed above.

\begin{lem}\label{lem. embeddings of rational subsets}
Let $\varphi:= \Spa(\rho):\Spa(A\langle\frac{T}{s}\rangle,A\langle\frac{T}{s}\rangle^+) \rightarrow \Spa(A,A^+)=X$ be as above.
\begin{enumerate}
\item $\varphi$ is an open topological immersion with image $U:=X(\frac{T}{s})$ ,
\item $\varphi$ is universal for continuous ring homomorphisms $\varphi: A \rightarrow B$ (where $B$ is a complete Huber ring) such that $\Spa(\varphi)$ factor through $U$, 
\item $\varphi$ induces a bijection between the rational subsets of $\Spa(A\langle\frac{T}{s}\rangle,A\langle\frac{T}{s}\rangle^+)$ and the rational subsets of $X$ that are contained in $X(\frac{T}{s})$.
\end{enumerate}
\end{lem}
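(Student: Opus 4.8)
The plan is to prove the three assertions by exploiting the universal property of the localization construction \ref{con. localization in a Huber ring} together with the fact (Theorem \ref{theo. adic spectrum is spectral space}) that rational subsets form a basis of the topology. The guiding principle throughout is that a point $x$ of $X = \Spa(A,A^+)$, i.e. a continuous valuation with $\val{f(x)} \le 1$ on $A^+$, factors through $\rho$ precisely when $\val{s(x)} \neq 0$ and $\val{t(x)} \le \val{s(x)}$ for all $t \in T$, which is exactly the condition defining $U = X(\frac{T}{s})$.

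\smallskip

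\noindent \textbf{Part ii).} I would begin here, since it is the conceptual heart and the other parts follow from it. Let $\varphi: A \to B$ be a continuous homomorphism into a complete Huber ring such that $\Spa(\varphi)$ factors through $U$. I would first verify that this factorization condition forces $\varphi(s) \in B^\times$ and $\frac{\varphi(t)}{\varphi(s)} \in B^o$ for all $t \in T$: concretely, if every valuation of $B$ pulls back to a point of $U$, then for each such valuation $\val{\varphi(s)} \neq 0$ and $\val{\varphi(t)} \le \val{\varphi(s)}$, and one translates these valuation-theoretic statements into the algebraic statements that $\varphi(s)$ is invertible and $\frac{\varphi(t)}{\varphi(s)}$ is power-bounded (the non-vanishing of a continuous valuation on $\varphi(s)$ across all of $\Spa B$ giving invertibility, and the inequality giving power-boundedness via the characterization of $B^o$). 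Once these hold, the universal property stated in Construction \ref{con. localization in a Huber ring} yields the unique factorization $\pi : A\langle\frac{T}{s}\rangle \to B$ with $\pi \circ \rho = \varphi$, so $\Spa(\varphi)$ factors uniquely through $\varphi = \Spa(\rho)$.

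\smallskip

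\noindent \textbf{Part i).} Next I would identify the image and establish the immersion property. That $\im(\varphi) = U$ follows by describing points: a valuation $y$ on $A\langle\frac{T}{s}\rangle$ restricts along $\rho$ to a valuation with $\val{s} \neq 0$ and $\val{t} \le \val{s}$, landing in $U$; conversely any $x \in U$ extends uniquely across $\rho$ by the universal property applied to the complete residue data of $x$ (here I would invoke the extension of a valuation to $\kappa(x)$ from Remark \ref{rem. extension of valuations to field}). Injectivity of $\varphi$ and the fact that it is a homeomorphism onto $U$ I would deduce from the same point-wise bijection plus the compatibility of the topologies generated by the rational inequalities — the preimages of the subbasic opens of $X$ meeting $U$ match the subbasic opens on the source. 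That $U$ is open is immediate since it is itself rational, hence a member of the basis from Theorem \ref{theo. adic spectrum is spectral space}.

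\smallskip

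\noindent \textbf{Part iii).} Finally, the bijection of rational subsets is a formal consequence: a rational subset of $X$ contained in $U$ is cut out by inequalities $\val{t_i(x)} \le \val{s'(x)} \neq 0$ with generators coming from $A$, and under the universal identification these pull back along $\rho$ to the corresponding inequalities on $A\langle\frac{T}{s}\rangle$, giving a rational subset of the source; the inverse sends a rational subset of the source, defined by data in $A\langle\frac{T}{s}\rangle$, to its homeomorphic image in $U$. I expect the main obstacle to lie in Part ii), specifically in the careful translation of the two topological/valuative conditions ``$\Spa(\varphi)$ factors through $U$'' into the purely algebraic hypotheses ``$\varphi(s) \in B^\times$ and $\frac{\varphi(t)}{\varphi(s)} \in B^o$'' required to invoke the universal property — the forward direction (invertibility from non-vanishing of all continuous valuations, and power-boundedness from the inequalities) is where completeness of $B$ and the precise characterization of $B^o$ must be used with care.
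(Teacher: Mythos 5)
The paper does not actually prove this lemma; it defers entirely to \cite{Hu3} Proposition 1.3 and Lemma 1.5, so your outline should be measured against Huber's argument, which it follows in spirit. Part ii) is essentially right: the reduction to the universal property of $A\langle\frac{T}{s}\rangle$ via the two algebraic conditions $\varphi(s)\in B^\times$ and $\frac{\varphi(t)}{\varphi(s)}\in B^o$ is the correct route, and you correctly flag that the translation from ``$v(\varphi(s))\neq 0$ and $v(\varphi(t))\leq v(\varphi(s))$ for all $v$'' into those conditions is where completeness enters (these are Huber's characterizations of units and of $B^o$ in a complete Huber ring; they are nontrivial but standard, and citing them is acceptable here).

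The genuine gap is in parts i) and iii), where you treat openness of $\varphi$ and the correspondence of rational subsets as formal consequences of the point-wise bijection. They are not. The subbasic (and rational) subsets of $\Spa(A\langle\frac{T}{s}\rangle,A\langle\frac{T}{s}\rangle^+)$ are cut out by inequalities among arbitrary elements $f,g\in A\langle\frac{T}{s}\rangle$, and these elements in general do \emph{not} lie in the image of $A$ (nor even of $A[s^{-1}]$): they are limits. So the claim that ``the preimages of the subbasic opens of $X$ meeting $U$ match the subbasic opens on the source'' only yields continuity of $\varphi$, which you already had; it does not show that $\varphi$ carries opens of the source to opens of $U$. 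Likewise, in iii) the forward direction (rational in $X$ pulls back to rational in the source) is indeed formal, but the substantive direction is the other one: a rational subset of $A\langle\frac{T}{s}\rangle$ defined by $f_1,\ldots,f_n,g$ must be shown to coincide with one defined by elements coming from $A$, which requires approximating the $f_i$ and $g$ by elements of the dense subring $A[s^{-1}]$, checking that the defining conditions $\val{f_i(x)}\leq\val{g(x)}\neq 0$ are insensitive to a sufficiently small perturbation (this uses that the $f_i,g$ generate an open ideal), and then clearing denominators. This approximation argument is the actual content of Huber's Lemma 1.5 and is what makes i) and iii) true; without it your proof of openness, and hence of the topological immersion claim, does not close.
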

\begin{proof}
\cite{Hu3} Proposition 1.3 and Lemma 1.5.
\end{proof}

\subsection{Affine adic spaces}
In this section we want to make use of our previous observations to define a pre-sheaf on $\Spa(A,A^+)$. It is important to note that this pre-sheaf will not always be a sheaf. However, we also state a theorem that ensures this property for all Huber pairs that are important for our later comparison to Berkovich spaces. Note that as explained before, we assume all Huber pairs to be complete. By abuse of notation, we sometimes do not distinguish between an element of $\Spa(A,A^+)$ and a representative of the equivalence class.

\begin{const}\label{construction: structure sheaf on Spa(A,A^+)}
\emph{
Let $(A,A^+)$ be a Huber pair and let $X=\Spa(A,A^+)$ be the associated spectrum. For a rational subset $U=X(\frac{T}{s})\subseteq \Spa(A,A^+)$ we set $\mt O _X (U):=A\langle \frac{T}{s}\rangle$ and $\mt O_X^+(U) := A\langle \frac{T}{s} \rangle^+$.
Using the universal property of $A\langle \frac{T}{s}\rangle$ one shows that the assignments $U\mapsto \mt O_X(U)$ and $U\mapsto \mt O^+_X(U)$ provide well defined pre-sheaves of complete topological rings on the basis of rational subsets of $X$, where 
\begin{gather*}		
\mt O_X^+(U)= \lbrace f \in \mt O_X(U) \ | \ \val{f(x)}\leq 1 \ \text{for all} \ x\in U \rbrace \ \ (\ast)
\end{gather*} holds (cf. \cite{Hu3} Proposition 1.6 (iv)).
If $\mt O_X$ is a sheaf (in this case we will call $(A,A^+)$ a \emph{sheafy Huber pair}) on the basis consisting of rational subsets of $X$, then it can uniquely be extended to a sheaf of complete topological rings on the topological space $\Spa(A,A^+)$ using the usual limes construction. In this case, due to $(\ast)$, $\mt O_X^+$ is also a sheaf of complete topological rings on $\Spa(A,A^+)$. \\ Let $x\in X$. As usual the \emph{stalk of $x$ in $X$} is defined by
\begin{gather*}		
\mt O_{X,x}:=\underset{U \ni x \  \text{open}}{\text{colim}} \mt O_X(U) = \underset{U \ni x \  \text{rational}}{\text{colim}} \mt O_X(U).
\end{gather*}
Note that the colimes is taken in the category of rings and hence there is a priori no topology on
$\stalk$. Let $U$ be a rational subset of $X$ with $x \in U$. Then by \ref{lem. embeddings of rational subsets} we can uniquely extend $x$ to a valuation of $\mt O_X(U)$ and by the universal property of $\stalk$ we get a valuation $v_x$ of $ \stalk$. One shows that $\stalk$ is a local ring with maximal ideal $\supp(v_x)$.
}
\end{const}

One might ask if every Huber pair is sheafy and if not what are important examples of sheafy Huber pairs. The first question has to be denied (cf. \cite{BV} §4) but the following theorem provides a list of sufficient properties for a Huber pair to be sheafy.

\begin{theo}\label{theo. when is a huber pair sheafy? sufficient properties}
Let $(A,A^+)$ be a Huber pair. If $(A,A^+)$ satisfies at least one of the following properties, it is sheafy:
\begin{enumerate}
\item $A$ has a noetherian ring of definition;
\item $A$ is a Tate ring and $A \langle X_1 ,...,X_n \rangle$ is noetherian for all $n\in \mathbb{N}$;
\item $A$ has the discrete topology.
\end{enumerate}
\end{theo}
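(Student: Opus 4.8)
The plan is to verify the sheaf axioms on the basis of rational subsets and then bootstrap to the whole space. By Theorem \ref{theo. adic spectrum is spectral space} the rational subsets form a basis of quasi-compact opens of the coherent space $X=\Spa(A,A^+)$ that is stable under finite intersection, so the presheaf $\mt O_X$ is a sheaf as soon as it satisfies the sheaf condition for every finite covering of a rational subset by rational subsets. Using Lemma \ref{lem. embeddings of rational subsets} iii), the restriction of $\mt O_X$ to a rational subset $U=X(\frac{T}{s})$ is the structure presheaf of $\Spa(A\langle\frac{T}{s}\rangle,A\langle\frac{T}{s}\rangle^+)$, whose rational subsets are exactly the rational subsets of $X$ contained in $U$. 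Hence, provided each of the three classes of Huber pairs is stable under the localization $(A,A^+)\mapsto(A\langle\frac{T}{s}\rangle,A\langle\frac{T}{s}\rangle^+)$, it suffices to check the sheaf condition for finite rational coverings of the \emph{full} space $\Spa$ of a pair in the given class. So I would first record this stability: a noetherian ring of definition yields a noetherian ring of definition after localization; a Tate ring with all $A\langle X_1,\dots,X_n\rangle$ noetherian keeps this property, since the localizations are (strictly) topologically of finite type and thus quotients of such noetherian Tate algebras; and a discrete ring stays discrete.

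Second, I would reduce arbitrary finite rational coverings to simple Laurent coverings. Following Tate's classical argument transported to the adic setting, every finite rational covering is refined by a Laurent covering, and a presheaf that is a sheaf for a refining covering and for each of its members is a sheaf for the original one; moreover, a Laurent covering in several functions is obtained by iterating simple Laurent coverings $\lbrace X(\frac{f}{1}),\,X(\frac{1}{f})\rbrace$. It therefore suffices to prove exactness of the augmented \v{C}ech complex for a single simple Laurent covering, i.e. to establish that
\[ 0 \longrightarrow A \longrightarrow A\langle\tfrac{f}{1}\rangle \oplus A\langle\tfrac{1}{f}\rangle \longrightarrow A\langle\tfrac{f}{1},\tfrac{1}{f}\rangle \longrightarrow 0 \]
is exact, where the first map is the diagonal and the second the difference of the two restrictions.

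Third comes the algebraic heart, split by cases. In case iii) the topology is discrete, so the completed localizations coincide with the ordinary localizations $A_f$, $A_{f^{-1}}$ and $A_{f,f^{-1}}$, and the displayed sequence is the elementary \v{C}ech complex of a two-element principal open covering, whose exactness is a direct localization computation. In cases i) and ii) the completed localization rings arise as completions of finitely presented $A_0$-algebras (respectively as quotients of noetherian Tate algebras), and under the noetherian hypotheses the completion functor is flat. I would first establish exactness of the uncompleted analogue of the displayed sequence by a direct computation with honest localizations, and then deduce exactness of the completed sequence by tensoring with the (flat) completion; in the Tate case ii) this is precisely the adic incarnation of Tate's acyclicity theorem, with the topologically nilpotent unit supplying the scaling needed to pass between the finite subsets $T$ and a strict Tate-algebra presentation.

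The main obstacle is this last step: controlling the interaction between completion and the \v{C}ech differential, that is, proving that the completed sequence remains exact after one has verified its uncompleted counterpart. This is exactly where the finiteness hypotheses are indispensable — flatness of completion is what rescues exactness, and its failure for a general Huber pair is precisely why sheafiness can break down (cf. \cite{BV}). Since the discrete case is immediate and the Tate case reduces to the noetherian one, I expect essentially all of the genuine work to lie in organizing the flatness argument together with the Laurent-covering bookkeeping cleanly.
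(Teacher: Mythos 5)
The paper does not actually prove this theorem: the stated proof is a bare citation of \cite{Hu3} Theorem 2.2 and \cite{We1} Theorem 8.27, so there is no internal argument to compare against. Your sketch is, in outline, precisely the proof given in those references: reduce to finite coverings of rational subsets using that they form an intersection-stable basis of quasi-compact opens of the spectral space $\Spa(A,A^+)$, use stability of each of the three hypotheses under the localization $(A,A^+)\mapsto(A\langle\frac{T}{s}\rangle,A\langle\frac{T}{s}\rangle^+)$ to reduce to coverings of the whole spectrum, run Tate's refinement argument down to simple Laurent coverings, and prove exactness there by comparing the uncompleted \v{C}ech sequence (an elementary localization computation) with its completion, the noetherian hypotheses being exactly what makes completion exact. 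You have also correctly located the crux in the interaction between completion and the \v{C}ech differential.

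Two caveats. First, the reduction to Laurent coverings is not a single refinement: in Tate's argument one refines an arbitrary finite rational covering by a rational covering, then finds a Laurent covering on whose members that rational covering becomes generated by units, and only rational coverings generated by units are refined by Laurent coverings; the transitivity lemma you invoke must therefore be applied twice, not once. Second, your closing claim that case ii) reduces to case i) is false: a strongly noetherian Tate ring need not possess a noetherian ring of definition (e.g.\ $\mathbb{C}_p\langle T\rangle$, whose natural ring of definition $\mathcal{O}_{\mathbb{C}_p}\langle T\rangle$ is not noetherian), and Huber treats ii) by a parallel but distinct argument in which flatness is extracted from the noetherianness of the Tate algebras $A\langle X_1,\dots,X_n\rangle$ themselves rather than of a ring of definition. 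The body of your sketch already handles the two cases by separate mechanisms, so this is a defect of the summary sentence rather than of the plan, but as written that sentence would not survive scrutiny.
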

\begin{proof}
\cite{Hu3} Theorem 2.2 and \cite{We1} Theorem 8.27.
\end{proof}

\begin{rem}\label{rem: sheafy Huber pairs}
\emph{
A Tate ring which satisfies condition ii) of the theorem above is called \emph{strongly noetherian}. One can show that every Tate ring that is topologically of finite type over a strongly noetherian Tate ring is strongly noetherian as well (cf. \cite{We1} Remark 6.39). Since $k$ is a strongly noetherian Tate ring (cf. \cite{BGR} 5.2.6 Theorem 1), any Huber ring that is topologically of finite type over $k$ is strongly noetherian. Consequently, any Huber pair that is topologically of finite type over $k$ is sheafy by the theorem above.
}
\end{rem}

\subsection{Adic spaces}

In this section we globalize our previous construction of adic spectra to certain locally ringed spaces. Those spaces are abstract triples which locally look like adic spectra of sheafy Huber pairs.

\begin{defn}\label{def. category V}
\emph{The category $\mt V$ is defined as follows:\\
 An object of $\mt V$ is a  triple $(X, \mt O_X , (v_x)_{x\in X})$ consisting of the following data:
\begin{enumerate}
\item A topological space $X$,
\item a sheaf of complete topological rings on $X$ such that the stalk $\stalk$ is a local ring for all $x \in X$,
\item for each $x \in X$ an equivalence class of valuations on $\stalk$ denoted by $v_x$ such that the support of $v_x$ is the maximal ideal of $\stalk$.
\end{enumerate}
A morphism $(X, \mt O_X , (v_x)_{x\in X})\rightarrow (Y, \mt O_Y , (v_y)_{y\in Y})$ in $\mathcal{V}$ is a pair $(f,f^b)$ consisting of 
\begin{enumerate}
\item a continuous map $f: X\rightarrow Y$,
\item a morphism of sheaves of complete topological rings $f^b: \mt O_Y \rightarrow f^*\mt O_X$ such that $v_{f(x)} = v_x \circ f_x^b$ for all $x \in X$.
\end{enumerate}
}
\end{defn}

\begin{rem}\label{rem. sheafy huber pairs belong to V}
\emph{
\begin{enumerate}
\item  Note that the definition of morphisms in $\mt V$ implies that the induced maps on the stalks $f_x^b$ are local morphisms of local rings. 
\item Let $(A,A^+)$ be a sheafy Huber pair. Then 
\begin{gather*}		
X:=\Spa(A,A^+):=(X,\mt O_X, (v_x)_{x \in X})
\end{gather*}
defined as in (\ref{construction: structure sheaf on Spa(A,A^+)}) is an object in $\mt V$.
\end{enumerate}
}
\end{rem}

\begin{defn}\label{def. adic space}
\emph{An \emph{adic space} is an object of $\mt V$ that is locally isomorphic to $\Spa(A,A^+)$ for sheafy Huber pairs $(A,A^+)$. The category of adic spaces is the full subcategory of $\mt V$ whose objects are adic spaces. In turn the full subcategory of the category of adic spaces, whose objects are isomorphic to $\Spa(A,A^+)$ for some sheafy Huber pair $(A,A^+)$, is by definition the \emph{category of affinoid adic spaces}.
}
\end{defn}

\begin{rem}\label{rem: functor sheafy huber pairs to affinoid adic spaces}
\emph{
We obtain a functor $\Spa$ from the category of sheafy Huber pairs to the category of affinoid adic spaces by $\varphi: (A,A^+)\mapsto \Spa(A,A^+)$. Which sends a morphism of Huber pairs $(A,A^+) \rightarrow (B,B^+)$ to the pair $(f,f^b)$ where $f$ is induced by composition with $\varphi$ and $f^b$ is obtained the universal property of Huber pairs associated to rational subsets.
}
\end{rem}

The following theorem is important for our following discussion since it shows that all information of an affinoid adic space is already contained in its associated Huber pair and vice versa.

\begin{theo}\label{theo. fully faithful functor}
Let $X$ be an adic space and $(A,A^+)$ a complete sheafy Huber pair whose associated affinoid adic space $\Spa(A,A^+)$ is denoted by $Y$. Then we have a bijection
\begin{gather*}		
\Hom_{\text{adic spaces}}(X,Y) \rightarrow \Hom((A,A^+),(\mt O_X(X), \mt O_X(X)^+))\\
(f,f^b)\mapsto f^b_Y.
\end{gather*}
The morphisms on the right hand side are continuous ring homomorphisms $\varphi:A\rightarrow \mt O_X(X)$ such that $\varphi(A^+) \subseteq \mt O_X(X)^+$. 
In particular the functor $(A,A^+)\mapsto \Spa(A,A^+)$ from the category of complete sheafy Huber pairs to the category of affinoid adic spaces is an equivalence of categories.
\end{theo}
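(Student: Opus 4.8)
The plan is to construct an explicit inverse bijection and then verify it is compatible with the proposed map $(f,f^b) \mapsto f^b_Y$. Given a morphism of Huber pairs $\varphi : (A,A^+) \to (\mt O_X(X), \mt O_X(X)^+)$, I would first build the underlying continuous map $f : X \to Y = \Spa(A,A^+)$. For each point $x \in X$ we have the local ring $\stalk$ with its valuation $v_x$, and the canonical morphism $A \to \mt O_X(X) \to \stalk$. Composing $v_x$ with this homomorphism yields a valuation of $A$; I would check it is continuous (using continuity of $\varphi$ and of the localization maps) and satisfies $\val{a} \le 1$ for $a \in A^+$ (because $\varphi(A^+) \subseteq \mt O_X(X)^+$ and the stalk valuation is bounded by $1$ on $\mt O_X^+$, via the description $(\ast)$ in Construction \ref{construction: structure sheaf on Spa(A,A^+)}). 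Its support is the preimage of the maximal ideal $\supp(v_x)$, so it is a genuine point $f(x) \in \Spa(A,A^+)$.

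Next I would promote $f$ to a morphism in $\mt V$ by constructing $f^b : \mt O_Y \to f_* \mt O_X$ on the basis of rational subsets. For a rational subset $U = Y(\frac{T}{s})$, Lemma \ref{lem. embeddings of rational subsets} identifies $\mt O_Y(U)$ with the universal Huber pair $A\langle \frac{T}{s}\rangle$. By construction $f^{-1}(U) = \{x : \val{t(f(x))} \le \val{s(f(x))} \ne 0\}$ is open in $X$, and over it the composite $A \to \mt O_X(f^{-1}(U))$ inverts $s$ and sends each $\frac{t}{s}$ into $\mt O_X^+(f^{-1}(U))$; the universal property of Construction \ref{con. localization in a Huber ring} then produces a unique continuous $\mt O_Y(U) \to \mt O_X(f^{-1}(U))$. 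Compatibility with restriction follows from uniqueness, so these glue to a sheaf morphism $f^b$, and I would confirm $v_{f(x)} = v_x \circ f^b_x$ directly from the definition of $f(x)$. Continuity of $f$ reduces to checking preimages of the subbasic sets $\{\val{g} \le \val{h}\}$ are open, which again follows from the rational-subset description.

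Finally I would verify the two composites are identities. Starting from $\varphi$, forming $(f,f^b)$ and taking global sections recovers $f^b_Y = \varphi$ essentially by unwinding the construction on $U = Y$. Conversely, starting from a morphism $(f,f^b)$ of adic spaces, setting $\varphi := f^b_Y$ and repeating the construction returns the same morphism; here the key point is that an adic-space morphism into $Y$ is determined by its effect on global sections, since the point $f(x)$ and the stalk maps are forced by $\varphi$ together with the valuations $v_x$ and the locality of the stalk homomorphisms (Remark \ref{rem. sheafy huber pairs belong to V}). The final sentence — that $\Spa$ is an equivalence onto affinoid adic spaces — then follows formally: the functor is fully faithful by the displayed bijection (specialized to $X$ affinoid), and essentially surjective by Definition \ref{def. adic space}.

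I expect the main obstacle to be the reconstruction direction, namely showing that a morphism $(f,f^b)$ is completely recovered from $f^b_Y$. One must argue that the continuous map $f$ is uniquely determined on points: this requires showing that for each $x$, the valuation $v_{f(x)}$ is forced to be $v_x \circ f^b_{x}$ composed with $\varphi$, and that the induced map on stalks is the localization of $\varphi$ — this is where the compatibility condition $v_{f(x)} = v_x \circ f^b_x$ and the fact that $f^b_x$ is a local homomorphism (Remark \ref{rem. sheafy huber pairs belong to V}) do the essential work, pinning down $f(x)$ as the unique point whose valuation pulls back correctly.
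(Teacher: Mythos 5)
Your outline is correct and is essentially the argument the paper relies on: the paper does not prove this theorem itself but defers to \cite{We1} Proposition 8.25, whose proof proceeds exactly as you describe (define $f(x)$ as $v_x$ composed with $A\rightarrow \mt O_X(X)\rightarrow \mt O_{X,x}$, checking continuity by factoring through an open affinoid neighbourhood of $x$, build $f^b$ on rational subsets via the universal property of $A\langle\frac{T}{s}\rangle$, and recover $(f,f^b)$ from $f^b_Y$ using $v_{f(x)}=v_x\circ f^b_x$). The one step to tighten is your direct appeal to the universal property with target $\mt O_X(f^{-1}(U))$: as stated in \ref{con. localization in a Huber ring} it applies only to complete Huber rings, and $\mt O_X(f^{-1}(U))$ for a general open $f^{-1}(U)$ need not be one, so you should first produce the maps into $\mt O_X(V)$ for open affinoids $V\subseteq f^{-1}(U)$ and then glue using the uniqueness you already invoke.
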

\begin{proof}
\cite{We1} Proposition 8.25.
\end{proof}

\subsection{Adic spaces locally of finite type over $k$}

This subsection provides a notion of adic spaces which satisfy certain finiteness conditions over $k$. In particular, all adic spaces we want to associate to certain Berkovich spaces will have this property.

\begin{defn}\label{def. morphisms locally of finite type}
\emph{\begin{enumerate}
\item
Let $f:X\rightarrow Y$ be a morphism of adic spaces. $f$ is called \emph{locally of finite type} if for every $x \in X$ there is an open affinoid neighbourhood $U=\Spa(B,B^+)$ of $x$ and an open affinoid subspace $V=\Spa(A,A^+)$ of $Y$ with $f(U) \subseteq V$ such that the induced morphism of Huber pairs $(A,A^+)\rightarrow (B,B^+)$ is topologically of finite type.
\item
An adic space $X$ is said to be \emph{locally of finite type over $k$} if the trivially given continuous map $X\rightarrow \Spa(k,k^o)$ is a morphism of adic spaces that is locally of finite type over $k$.
\end{enumerate}
}
\end{defn}

\begin{prop}\label{prop. properties of morphisms that are locally of finite type}
Let $\varphi : A\rightarrow B$ and $\psi :B \rightarrow C$ be morphisms of Huber pairs, then the following assertions hold:
\begin{enumerate}
\item If $\varphi$ and $\psi$ are topologically of finite type, then its composition $\psi \circ \varphi$ is locally of finite type.
\item If $\psi \circ \varphi$ is topologically of finite type, then $\psi$ is locally of finite type.
\end{enumerate}
\end{prop}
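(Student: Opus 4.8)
Since being \emph{locally of finite type} is defined for morphisms of adic spaces (Definition \ref{def. morphisms locally of finite type}), I read the two assertions through the induced morphisms of affinoid adic spaces $\Spa(\psi)\colon \Spa(C,C^+)\to\Spa(B,B^+)$ and $\Spa(\psi\circ\varphi)$, which are available since all pairs in sight are sheafy. The plan is to reduce everything to manipulating the restricted power series presentations of Definition \ref{defn. restricted power series}: a morphism is topologically of finite type exactly when it factors as a surjective, continuous, open quotient of such an algebra, and a single topologically of finite type morphism is trivially locally of finite type by choosing $U$ and $V$ in Definition \ref{def. morphisms locally of finite type} to be the full spaces.

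For (i) the idea is to splice the two presentations. Write the hypotheses as surjective continuous open quotients $g\colon A\langle X\rangle_T\twoheadrightarrow B$ and $g'\colon B\langle Y\rangle_S\twoheadrightarrow C$ with $T_i\subseteq A^+$, $S_j\subseteq B^+$, and with $B^+$, $C^+$ the integral closures of $\varphi(A^+)$, $\psi(B^+)$. First I would lift the finite scaling system $S$ through $g$ to a finite system $\tilde S$ in $A\langle X\rangle_T$, arranging $\tilde S_j\cdot A\langle X\rangle_T$ to be open and the lifts to be power-bounded. The universal property of the restricted power series construction then yields a morphism $A\langle X\rangle_T\langle Y\rangle_{\tilde S}\to C$ factoring through $B\langle Y\rangle_S$ via $g$; since $g'$ and the base-changed map $A\langle X\rangle_T\langle Y\rangle_{\tilde S}\to B\langle Y\rangle_S$ are both surjective, so is the composite. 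The identity $C^+=$ integral closure of $(\psi\circ\varphi)(A^+)$ follows from transitivity of integral closure, as the integral closure of $\psi(B^+)$ equals that of $\psi(\varphi(A^+))$. It then remains to recognise $A\langle X\rangle_T\langle Y\rangle_{\tilde S}$ as a restricted power series algebra over $A$ with scaling sets in $A^+$; approximating the elements of $\tilde S$ by polynomials in $A[X]$ (dense in $A\langle X\rangle_T$) within the same open ideal lets me re-express the $Y$-scaling through $T$ and the polynomial coefficients, giving a genuine presentation over $A$. Hence $\psi\circ\varphi$ is topologically, and a fortiori locally, of finite type.

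For (ii) only $\psi\circ\varphi$ is assumed topologically of finite type, say via $g\colon A\langle X\rangle_T\twoheadrightarrow C$ with $\psi\circ\varphi=g\circ h$. Here the idea is to base change this presentation along $\varphi$: the images $x_i:=g(X_i)$ topologically generate $C$ over $\psi(B)$, so $C$ is a quotient of a restricted power series algebra over $B$ in the variables $X_i$. The natural candidate scaling $\varphi(T_i)\subseteq B^+$ need not generate an open ideal of $B$ globally, but it becomes open after passing to a suitable rational subset $V\subseteq\Spa(B,B^+)$ (together with a matching rational subset $U\subseteq\Spa(C,C^+)$ with $\Spa(\psi)(U)\subseteq V$), where it is governed by the localization construction \ref{con. localization in a Huber ring}. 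On each such chart the induced morphism of Huber pairs is a surjective open quotient of a restricted power series algebra, i.e. topologically of finite type, and such charts exist around every point of $\Spa(C,C^+)$; by Definition \ref{def. morphisms locally of finite type} this is precisely what it means for $\Spa(\psi)$ to be locally of finite type. This is also where the weaker conclusion --- "locally", not "topologically", of finite type --- is forced.

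The hard part in both cases will be the same mismatch between the \emph{topological} data: lifting the scaling systems and the integral or power-bounded elements through the open surjections $g,g'$ while keeping the generated ideals open. In (i) this is the passage from scaling sets living in $A\langle X\rangle_T$ to honest scaling sets in $A^+$, and in (ii) it is the failure of openness of $\varphi(T_i)\cdot B$ that must be repaired on rational subsets. Once these topological bookkeeping points are settled, the remaining algebraic content --- surjectivity of the spliced quotient map and transitivity of integral closure --- is routine.
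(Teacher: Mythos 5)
The paper does not actually prove this proposition; it cites \cite{Hu3} Lemma 3.5 iv), where both conclusions read ``topologically of finite type'' --- the word ``locally'' in the statement here is a slip, as you can see from how the proposition is applied in the proof of Proposition \ref{prop: locally of finite type first prop}: there part i) is invoked to conclude that $j\circ g$ is \emph{topologically} of finite type and part ii) to conclude that $l$ is \emph{topologically} of finite type. Your part i) is compatible with this: splicing the presentations $A\langle X\rangle_T\twoheadrightarrow B$ and $B\langle Y\rangle_S\twoheadrightarrow C$ is the standard route and yields the strong conclusion, and the two technical points you flag (lifting the scaling sets through the open surjection $g$, and rewriting $A\langle X\rangle_T\langle Y\rangle_{\tilde S}$ as a restricted power series algebra with scaling data in $A^+$) are precisely the content of the preparatory parts of Huber's lemma; as a sketch this is acceptable, though those two points carry essentially all of the work.

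Part ii) has a genuine gap: you prove the wrong, strictly weaker statement. By localizing on $\Spa(B,B^+)$ to repair the possible failure of openness of $\varphi(T_i)\cdot B$ you only obtain that $\Spa(\psi)$ is locally of finite type as a morphism of adic spaces, and you assert that this weakening is ``forced''. It is not: the assertion is that $\psi$ itself is topologically of finite type as a morphism of Huber pairs, and this global conclusion is exactly what the paper needs in Proposition \ref{prop: locally of finite type first prop} (your local version would not allow one to deduce that $l:A'\langle\frac{T}{s}\rangle\rightarrow B'\langle\frac{M}{r}\rangle$ is topologically of finite type). The correct repair is not to localize but to abandon the pushed-forward scaling sets $\varphi(T_i)$ altogether: topological finite type is an intrinsic finiteness condition ($\psi$ adic together with finitely many topological generators satisfying suitable boundedness conditions, cf. \cite{Hu3} \S 3), the images $g(X_i)$ that generate $C$ topologically over $A$ also generate it over $B$, adicness of $\psi\circ\varphi$ combined with continuity of $\varphi$ forces $\psi$ to be adic, and the scaling sets for a presentation of $C$ over $B$ may then be chosen anew inside $B^+$ rather than imported from $A$. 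Note also that in the only situation in which the paper uses this statement (rings topologically of finite type over the Tate ring $k$), the openness issue you worry about disappears entirely, since by Remark \ref{rem. huber pairs of finite type equal k affinoid } i) all scaling sets may be taken to be $\lbrace 1\rbrace$.
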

\begin{proof}
\cite{Hu3} Lemma 3.5. iv).
\end{proof}

\begin{prop}\label{prop: locally of finite type first prop}
Let $f:X \rightarrow Y$ be a morphism of adic spaces that is locally of finite type. Let $U \subseteq X$ and $V \subseteq Y$ be open subsets such that $f(U) \subseteq V$. Then the morphism of adic spaces $U \rightarrow V$ obtained by the restriction of $f$ is locally of finite type.
\end{prop}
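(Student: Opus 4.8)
The plan is to check the condition point by point on $U$, since being locally of finite type is local on the source by Definition \ref{def. morphisms locally of finite type}, and at each point to shrink both source and target to affinoids on which the induced map of Huber pairs becomes topologically of finite type. Fix $x \in U$. As $f$ is locally of finite type there are open affinoids $W = \Spa(B,B^+) \ni x$ in $X$ and $Z = \Spa(A,A^+)$ in $Y$ with $f(W) \subseteq Z$ and $\beta \colon (A,A^+) \to (B,B^+)$ topologically of finite type. Since $f(U \cap W) \subseteq V \cap Z$, I may replace $U,V$ by $U \cap W, V \cap Z$ and hence assume that $X = W$ and $Y = Z$ are affinoid, that $\beta$ is topologically of finite type, and that $U \subseteq X$, $V \subseteq Y$ are open with $x \in U$ and $f(U) \subseteq V$.

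Now I would use that rational subsets form a basis of the topology (Theorem \ref{theo. adic spectrum is spectral space}). Pick a rational subset $Z' = Z(\tfrac{T}{s}) \ni f(x)$ with $Z' \subseteq V$ and set $A' = A\langle\tfrac{T}{s}\rangle$, $Z' = \Spa(A',{A'}^+)$; the localization $A \to A'$ is topologically of finite type by \ref{con. localization in a Huber ring}. Directly from the definition of rational subsets, $f^{-1}(Z') \cap W$ equals the rational subset $W(\tfrac{\beta(T)}{\beta(s)}) =: W'' = \Spa(B'',{B''}^+)$. Under the identification $A' \cong A\langle X\rangle_T/\overline{(1-sX)}$ of \ref{con. localization in a Huber ring}, the induced map $A' \to B''$ is the base change of $\beta$ along $A \to A'$ (indeed $B'' \cong A'\widehat{\otimes}_A B$, because localizing commutes with forming restricted power series), so it is again topologically of finite type. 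Finally pick a rational subset $W' \ni x$ of $W$ with $W' \subseteq U \cap W''$, say $W' = \Spa(B',{B'}^+)$; being rational in $W$ and contained in $W''$, it is rational in $W''$ by \ref{lem. embeddings of rational subsets}, so $B'' \to B'$ is a localization and thus topologically of finite type. By Theorem \ref{theo. fully faithful functor} the restriction $f|_{W'} \colon W' \to Z'$ corresponds to the morphism $\psi \colon (A',{A'}^+) \to (B',{B'}^+)$ factoring as $A' \to B'' \to B'$; as a composite of two topologically of finite type maps, $\psi$ is locally of finite type by \ref{prop. properties of morphisms that are locally of finite type}(i).

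To finish I would invoke locality once more. Since $\psi$ is locally of finite type, so is the morphism of adic spaces $f|_{W'} \colon W' \to Z'$, and applying Definition \ref{def. morphisms locally of finite type} at the point $x \in W'$ produces an open affinoid $\Spa(B_1,B_1^+) \ni x$ contained in $W' \subseteq U$ and an open affinoid $\Spa(A_1,A_1^+)$ contained in $Z' \subseteq V$, with $f(\Spa(B_1,B_1^+)) \subseteq \Spa(A_1,A_1^+)$, on which the Huber pair map is topologically of finite type. These are precisely the data required by Definition \ref{def. morphisms locally of finite type} for $f|_U \colon U \to V$ at $x$; since $x \in U$ was arbitrary, $f|_U$ is locally of finite type.

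The main obstacle is that a composite of topologically of finite type morphisms is in general only locally of finite type and not topologically of finite type, as reflected in \ref{prop. properties of morphisms that are locally of finite type}(i). Consequently the first, naive choice of affinoids yields only a map $\psi$ that is locally of finite type, and one is forced to descend a second time, using that the notion is local, to a finer affinoid on which the induced map is genuinely topologically of finite type. The accompanying technical points, namely that the preimage of a rational subset is again rational and that $A' \to B''$ can be recognised as a base change of $\beta$ (so as to obtain an honest topologically of finite type map instead of yet another composite), are what make this descent effective.
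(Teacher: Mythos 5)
Your proof is correct and follows the same basic strategy as the paper's: reduce to the case where source and target are affinoid via the defining property of a locally of finite type morphism, shrink to rational subsets sitting inside $U$ and $V$, and exploit that the localization morphisms $A \rightarrow A\langle \frac{T}{s}\rangle$ are topologically of finite type. Where you diverge is in the endgame. The paper observes that the composite $A' \rightarrow B' \rightarrow B'\langle \frac{M}{r}\rangle$ is topologically of finite type and then applies part ii) of Proposition \ref{prop. properties of morphisms that are locally of finite type} (a cancellation property) to conclude that the induced map $A'\langle\frac{T}{s}\rangle \rightarrow B'\langle\frac{M}{r}\rangle$ between the two rational localizations is itself topologically of finite type, so a single descent suffices; note that the paper reads the conclusions of that proposition as ``topologically of finite type'', which is what Huber's Lemma 3.5 iv) actually asserts --- the word ``locally'' there is evidently a misprint. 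Taking the printed statement at face value, you instead manufacture one genuinely topologically of finite type map $A' \rightarrow B''$ by identifying it as a completed base change of $\beta$, and then perform a second descent to repair the composite. Both moves are avoidable, and the base-change step is the one place where you lean on facts the paper never establishes: neither the isomorphism $B'' \cong A'\hat{\otimes}_A B$ nor the stability of ``topologically of finite type'' under such base change appears in the text (completed tensor products of Huber pairs are not even defined there), and your identification of $f^{-1}(Z')\cap W$ with the rational subset $W(\frac{\beta(T)}{\beta(s)})$ tacitly uses that $\beta(T)\cdot B$ is open, whereas the paper sidesteps this by merely choosing some rational neighbourhood of $x$ inside the preimage. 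These auxiliary claims are all true and standard, so your argument does go through, but the paper's route via the cancellation property reaches the same conclusion without them and without the second descent.
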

\begin{proof}
Let $x \in U$ be arbitrary. By hypothesis we have an open affinoid neighbourhood $U'=\Spa( B',B'^+)$ of $x$ in $X$ and an affinoid open subset $V'=\Spa( A',A'^+)$ of $Y$ such that $f(U')\subseteq V'$ and the induced morphism of Huber pairs $g:(A',A'^+)\rightarrow (B',B'^+)$ is topologically of finite type. We choose a rational subset $R(\frac{T}{s})$ of $Y$ such that  $f(x) \in R(\frac{T}{s})\subseteq V \cap V'$ and a rational subset $R(\frac{M}{r})$ of $X$ such that $x \in R(\frac{M}{r}) \subseteq f^{-1}(R(\frac{T}{s}) \cap U' \cap U)$. So we have $f(R(\frac{M}{r}))\subseteq R(\frac{T}{s})$ and the rational sets are rational subsets of $(A',A'^+)$ and $(B',B'^+)$ respectively. Hence we have a commutative diagram
\begin{gather*}		
 \begin{xy}
  \xymatrix{
      A' \ar[r]^g \ar[d]^i    &   B' \ar[d]^j  \\
      A' \langle \frac{T}{s}\rangle \ar[r]^l             &   B' \langle \frac{M}{r} \rangle   
  }
\end{xy}
 \end{gather*} 
where $i$ and $j$ are topologically of finite type and so is $g$. Hence $j \circ g$ is topologically of finite type by \ref{prop. properties of morphisms that are locally of finite type} i) and hence $l$ is topologically of finite type by \ref{prop. properties of morphisms that are locally of finite type} ii). This shows the claim.

\end{proof}

\begin{prop}\label{prop: locally of finite type second prop}
Let $f:X \rightarrow Y$ be a morphism of adic spaces that is locally of finite type. Let $U= \Spa(B,B^+) \subseteq X$ and $V=\Spa(A,A^+)\subseteq Y$ be open affinoid subspaces with $f(U)\subseteq V$. Then the induced homomorphism of Huber pairs $(A,A^+)\rightarrow (B,B^+)$ is topologically of finite type.
\end{prop}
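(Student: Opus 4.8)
The plan is to first reduce to the situation where the whole morphism is already affinoid, then exploit quasi-compactness to pass to a finite cover on which the finite-type hypothesis is literally available, and finally glue the local data into a single presentation of $B$ as a restricted power series algebra over $A$.

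First I would apply Proposition \ref{prop: locally of finite type first prop} to the open subsets $U\subseteq X$ and $V\subseteq Y$: the restricted morphism $f|_U\colon U\to V$ is again locally of finite type, so without loss of generality $X=U=\Spa(B,B^+)$ and $Y=V=\Spa(A,A^+)$, and it remains to show that $A\to B$ is topologically of finite type. Now for each $x\in U$ Definition \ref{def. morphisms locally of finite type} supplies an affinoid neighbourhood together with an affinoid target over which $f$ is topologically of finite type. Using that the rational subsets form a basis of the topology which is stable under finite intersection (Theorem \ref{theo. adic spectrum is spectral space}), I would shrink these to rational subsets, exactly as in the proof of Proposition \ref{prop: locally of finite type first prop}: one obtains a rational subset $W_x=X(\frac{M_x}{r_x})\ni x$ and a rational subset $W'_x=Y(\frac{T_x}{s_x})$ with $f(W_x)\subseteq W'_x$ such that the induced map $A\langle\frac{T_x}{s_x}\rangle\to B\langle\frac{M_x}{r_x}\rangle$ is topologically of finite type. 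Since $U$ is quasi-compact (Theorem \ref{theo. adic spectrum is spectral space}), finitely many $W_1,\dots,W_m$ already cover $U$.

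The heart of the argument is the passage from this local data to a global presentation. Composing $A\to A\langle\frac{T_j}{s_j}\rangle$ (topologically of finite type by Construction \ref{con. localization in a Huber ring}) with the topologically finite type map $A\langle\frac{T_j}{s_j}\rangle\to B\langle\frac{M_j}{r_j}\rangle$ shows, via Proposition \ref{prop. properties of morphisms that are locally of finite type}, that each $B\langle\frac{M_j}{r_j}\rangle$ is generated over $A$ by finitely many elements together with the coordinate functions coming from the localisations. I would collect all of these finitely many elements and assemble them, using that the localisations $B\langle\frac{M_j}{r_j}\rangle$ glue to $B$ along the cover, into a single finite family $b_1,\dots,b_N\in B^+$ together with finite subsets $T_1',\dots,T_N'\subseteq A^+$ with $T_i'\cdot A$ open and a continuous $A$-algebra homomorphism $g\colon A\langle X_1,\dots,X_N\rangle_{T_1',\dots,T_N'}\to B$ carrying $X_i$ to $b_i$. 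Surjectivity of $g$ would then be checked locally on the cover $\{W_j\}$ using the sheaf property of $\mt O_X$, and the integral-closure description of $B^+$ demanded by Definition \ref{def. Huber pairs topologically of finite type /k} would follow from the local characterisation $(\ast)$ in Construction \ref{construction: structure sheaf on Spa(A,A^+)} together with the sheaf property of $\mt O_X^+$, since membership in $B^+$ is the local condition $\val{b(x)}\le 1$.

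The main obstacle I expect is the last point: verifying that the surjection $g$ is not merely continuous but \emph{open}, i.e.\ that the quotient topology induced from $A\langle X\rangle_{T'}$ agrees with the given topology of $B$. This is genuinely a local-to-global statement about complete Huber rings and relies on completeness together with an open-mapping type argument; it is precisely the technical core established by Huber, so at this point I would invoke the corresponding gluing results from \cite{Hu3} (in the circle of Lemma 3.5) rather than reprove them. Everything else—the reduction to the affinoid case, the finite cover, and the assembly of the generators $b_1,\dots,b_N$—is formal once these foundational results are granted.
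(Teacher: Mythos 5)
Your opening reduction is exactly the paper's: apply Proposition \ref{prop: locally of finite type first prop} to replace $X$ and $Y$ by $U$ and $V$, so that only the statement for an affinoid morphism that is locally of finite type remains. At that point the paper stops and cites \cite{Hu1} Proposition 3.8.15; you instead try to sketch a proof of that affinoid statement, and this is where the gaps appear. The central one is the ``assembly'' step: the finitely many topological generators you obtain over $A$ for each $\mt O_X(W_j)=B\langle\frac{M_j}{r_j}\rangle$ live in those localizations, not in $B$, and sections over a rational subset need not extend to global sections, so there is no formal way to ``collect'' them into a finite family $b_1,\dots,b_N\in B^+$. Producing \emph{any} finite set of global topological generators of $B$ over $A$ is precisely the content of Huber's theorem, not a bookkeeping consequence of the local data. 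Likewise, surjectivity of $g$ cannot be ``checked locally using the sheaf property'': surjectivity onto global sections is not a local condition (it is an $H^1$-type statement, which for general sheafy Huber pairs is not automatic), and the identification of $B^+$ with the required integral closure does not follow from $(\ast)$ alone, since $(\ast)$ describes $B^+$ by boundedness of functions while Definition \ref{def. Huber pairs topologically of finite type /k} demands an algebraic relation to the image of $A^+$. So the genuinely hard part is not only the openness of $g$, as you suggest, but already its existence and its surjectivity.

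Since you ultimately fall back on Huber's results anyway, the cleanest repair is to do what the paper does: after the reduction to the affinoid case, invoke \cite{Hu1} Proposition 3.8.15 (or the corresponding statements around \cite{Hu3} Lemma 3.5) for the full affinoid assertion, rather than only for the openness of the presentation.
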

\begin{proof}
By \ref{prop: locally of finite type first prop} we can assume that $X$ and $Y$ are affinoid. But in this case the result is known from \cite{Hu1} Proposition 3.8.15.
\end{proof}

\begin{rem}\label{rem. locally of finite type over k morphisms}
\emph{
Let $X$ be an adic space that is locally of finite type over $k$ and let $U=\Spa(A,A^+)$ be an open affinoid subset. By \ref{prop: locally of finite type second prop} the induced morphism of Huber pairs $(k,k^o)\rightarrow (A,A^+)$ is topologically of finite type. In particular if $X$ is affinoid, then $X=\Spa(A,A^+)$ for a Huber pair $(A,A^+)$ that is topologically of finite type over $k$.	
}
\end{rem}

\subsection{Analytic adic spaces}
As we will see in the next sections, it is necessary that there are topologically nilpotent units in a ring to be able to do analytic calculation. Therefore the definition of analytic adic spaces is straightforward.

 \begin{defn}\label{def. analytic adic spaces}
 \emph{Let $X$ be an adic space. 
  \begin{enumerate}
\item An element $x \in X$ is said to be \emph{analytic} if there exists an open neighbourhood $x \in U$ such that $\mt O_X(U)$ contains a topologically nilpotent unit.
 \item 
 $X$ is said to be \emph{analytic} if every point of $X$ is analytic.
\end{enumerate} 
 }
 \end{defn}

\begin{exmp}\label{exmp. analytic adic space}
\emph{
\begin{enumerate}
\item
Let $A:=k \langle T_1,...,T_n \rangle$ be a Tate algebra. Then $\Spa(A,A^o)$ is an analytic adic space, since $A$ contains a topologically nilpotent unit.
\item More generally, any adic space, that is locally of finite type over $k$, is analytic. 
\end{enumerate}
}
\end{exmp}

The following properties of analytic adic spaces assure that we can consider such spaces a valuative spaces which are introduced in §5.

\begin{prop}\label{prop: properties of analytic adic spaces}
Let $f:X \rightarrow Y$ be a morphism between analytic adic spaces.
\begin{enumerate}
\item For $x \in X $ all generizations of $x$ in $X$ are vertical and in particular the set $G_x$ of generizations of $x$ is totally ordered;
\item A point $x \in X$ is a maximal point if and only if the valuation $v_x$ has rank $1$;
\item If $x \in X$ is a maximal point, then $f(x) $ is a maximal point of $Y$.
\end{enumerate}
\end{prop}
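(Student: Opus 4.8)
I would prove the three assertions in the stated order, since each later part feeds on the earlier ones and the only genuinely geometric input is needed for part i). For part i) I would not reprove verticality from scratch but invoke Remark~\ref{rem. vertical genrization} i), which already records that for an analytic point $x$ every generization in $X$ is vertical. Granting this, Remark~\ref{rem. vertical genrization} iii) puts the vertical generizations of $x$ in bijection with the convex subgroups of $\Gamma_{v_x}$; as the convex subgroups of a totally ordered group form a chain under inclusion, the set $G_x$ of generizations of $x$ is totally ordered, which is the assertion.

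For part ii) I would reduce maximality to a statement about convex subgroups. By definition $x$ is maximal precisely when $G_x=\lbrace x\rbrace$, i.e. $x$ has no proper generization. Using part i) and Remark~\ref{rem. vertical genrization} iii), the generizations of $x$ are the coarsenings $v_x/\Delta$ indexed by convex subgroups $\Delta\subseteq\Gamma_{v_x}$, with $\Delta=\lbrace 1\rbrace$ recovering $x$ itself. Here analyticity enters decisively. Fixing a topologically nilpotent unit $t$ in a neighbourhood of $x$, continuity of $v_x$ forces the powers $v_x(t)^n$ to be cofinal towards $0$, so that $v_x(t)$ lies in no proper convex subgroup; in particular $\Gamma_{v_x}\neq\lbrace 1\rbrace$ and $v_x$ has rank at least $1$. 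Moreover the coarsening by all of $\Gamma_{v_x}$ is the trivial valuation, which sends $t$ to $1$ and is therefore not continuous, hence is not a point of $X$; so the generizations of $x$ are indexed exactly by the \emph{proper} convex subgroups of $\Gamma_{v_x}$. Consequently $x$ admits a proper generization if and only if there is a convex subgroup $\Delta$ with $\lbrace 1\rbrace\subsetneq\Delta\subsetneq\Gamma_{v_x}$, i.e. if and only if $v_x$ has rank at least $2$. Thus $x$ is maximal if and only if $\Gamma_{v_x}$ has rank exactly $1$.

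For part iii), let $x$ be a maximal point of $X$, so $v_x$ has rank $1$ by part ii). The compatibility built into a morphism of $\mt V$ (Definition~\ref{def. category V}) reads $v_{f(x)}=v_x\circ f^b_x$, where $f^b_x:\mt O_{Y,f(x)}\rightarrow\stalk$ is the induced local homomorphism. Hence every value taken by $v_{f(x)}$ is already a value of $v_x$, so $\Gamma_{v_{f(x)}}$ is an ordered subgroup of $\Gamma_{v_x}$. A subgroup of a rank-$1$ (i.e. archimedean) ordered group is again archimedean and therefore of rank at most $1$. On the other hand $f(x)$ is analytic, because $Y$ is an analytic adic space (Definition~\ref{def. analytic adic spaces}); choosing a topologically nilpotent unit $s$ near $f(x)$, its image $f^b_x(s)$ is again a topologically nilpotent unit near $x$, so $v_{f(x)}(s)=v_x(f^b_x(s))$ is nonzero and strictly below $1$, whence $\Gamma_{v_{f(x)}}\neq\lbrace 1\rbrace$ and $v_{f(x)}$ has rank at least $1$. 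Combining the two bounds, $v_{f(x)}$ has rank exactly $1$, and part ii) applied to $Y$ shows that $f(x)$ is maximal.

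The main obstacle is the verticality statement underlying part i): that analyticity forbids horizontal, support-changing generizations. This is the one step that genuinely uses Huber's analysis of supports and continuity rather than formal manipulation, and I would cite it from \cite{We1} as in Remark~\ref{rem. vertical genrization} i). Once verticality is granted, the remaining work is purely about value groups; the only delicate point there is to exploit the topologically nilpotent unit correctly, so that the rank is pinned to exactly $1$ -- ruling out the trivial, rank-$0$ coarsening -- rather than merely bounded above by $1$.
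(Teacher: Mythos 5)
Your argument is correct, but it takes a genuinely different route from the paper: the paper's entire proof of Proposition \ref{prop: properties of analytic adic spaces} is the bare citation ``\cite{Hu4} Lemma 1.1.10'', so no argument is actually given there. You instead isolate the single nontrivial geometric input --- that analyticity forces all generizations to be vertical --- and cite it exactly as the paper's own Remark \ref{rem. vertical genrization} does, then derive ii) and iii) by elementary reasoning about convex subgroups, using the cofinality of $v_x(t)^n$ for a topologically nilpotent unit $t$ to pin the rank to exactly $1$ and the relation $v_{f(x)}=v_x\circ f^b_x$ to transport rank along $f$. This buys a self-contained, more informative proof at the cost of length; the citation buys brevity and shifts the burden to Huber. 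Two small points on your write-up. First, Remark \ref{rem. vertical genrization} iii) as literally stated indexes the vertical generizations by \emph{all} convex subgroups of $\Gamma_{v_x}$, whereas, as you correctly observe, the coarsening by the full group is the trivial valuation and is not continuous, hence not a point of $X$; your restriction to \emph{proper} convex subgroups is the right reading and is in fact more careful than the paper's own remark. Second, in the direction ``rank at least $2$ implies not maximal'' you implicitly use that for an intermediate convex subgroup $\lbrace 1\rbrace \subsetneq \Delta \subsetneq \Gamma_{v_x}$ the coarsening $v_x/\Delta$ is still continuous, so that it really is a proper generization lying in $X$; this follows from the same cofinality observation you already make (the image of $v_x(t)$ remains cofinal towards $0$ in $\Gamma_{v_x}/\Delta$ because $v_x(t)$ lies in no proper convex subgroup), but it deserves an explicit sentence rather than being absorbed into the cited bijection.
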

\begin{proof}
\cite{Hu4} Lemma 1.1.10.
\end{proof}

\begin{comment}
\begin{rem}\label{rem. analytic adic spaces, no horizontal spezializations}
\emph{Let $X$ be an analytic adic space and $x \in X$ such that an open affinoid neighbourhood of $x$ in $X$ is given by $U=\Spa(A,A^+)$. The set $G_x$ of generizations of $x$ in $X$ is contained in $U$ and we can consider $x$ to be a valuation on $A$ with value group $\Gamma_x$. Since $X$ is analytic,$G_x$  coincides with the set of convex subgroups of $\Gamma_x$. Therefore $G_x$ is a totally ordered set for all $x \in X$. This fact will play an important role when we consider analytic adic spaces as valuative spaces. 
}
\end{rem}
\begin{proof}
c.f. literature
\end{proof}
\end{comment}

\section{Berkovich spaces}
We also recall the basic definitions in the theory of $k$-analytic Berkovich spaces. Note that the general Berkovich theory also permits such cases in which the fixed valuation on $k$ is trivial. Since we want to compare Berkovich spaces with adic spaces, we restrict ourselves to the non-trivially valued case. We omit most of the proofs and follow papers and summaries by Berkovich \cite{Be1}, \cite{Be2}, \cite{Be3}, Temkin \cite{Te1} and Conrad \cite{Co1}.

\subsection{The Berkovich spectrum of a Banach ring}
\begin{comment}
\begin{defn}\label{def. non-archimed. field}
\emph{A \emph{non-archimedean field $k$} is a complete topological field whose topology is induced by a valuation of rank 1.
}
\end{defn}
In this section we fix such a non-archimedean field $k$ with valuation $\val \cdot$.
\end{comment}

\begin{defn}\label{def. kommut banach norm/ ring k banach algebra}
\emph{Let $A$ be a ring.
\begin{enumerate}
\item A \emph{seminorm on $A$} is a map $\norm \cdot :A \rightarrow  \mathbb{R}^{\geq 0}$ such that the following properties are satisfied for all $f,g \in A$:
\begin{enumerate}
\item $\norm{0} =0$;
\item $\norm{fg}\leq \norm f \norm g$;
\item $\norm{f+g} \leq \norm f + \norm g$.
\end{enumerate}
\item A seminorm $\norm \cdot$ on $A$ is said to be a \emph{Banach norm} if $\norm f \neq 0$ whenever $f \neq 0$.
\item A seminorm $\norm \cdot$ on $A$ is called \emph{multiplicative} if $\norm {fg}=\norm f \norm g$ for all $f,g \in A$.
\item A \emph{Banach ring} is a ring $A$ endowed with a Banach norm $\norm \cdot$ such that the induced topology is complete.
\item A \emph{$k$-Banach algebra} is a Banach ring endowed with the structure of a $k$-algebra such that $\norm {af}=\val a \norm f $ for all $a \in k$ and $f \in A$.
\end{enumerate}
}
\end{defn}

\begin{rem}\label{rem normal triangle inequality implies ultrametric}
\emph{Note that for every multiplicative seminorm $\norm \cdot$ on a $k$-algebra $A$ which is compatible with the valuation on $k$, the usual triangle inequality implies the non-archimedean triangle inequality. Indeed, for $f,g \in A$ with $\norm f \leq \norm g$ and $n \in \mathbb{N}$ we have:
\begin{gather*}
\norm{f+g}^n=\norm{(f+g)^n} \leq \sum_{j=0}^n \val{\binom{n}{k}} \norm f^k \norm {g}^{n-k}
\leq (n+1) \norm g ^n.
\end{gather*}
The result follows by taking the n-th root of this inequality and let $n$ go to infinity.
}
\end{rem}

%\begin{rem}
%Let $\norm\cdot$ be a seminorm on an abelian group $M$
%\begin{enumerate}
%\item There is a canonical topology on $M$ where $(B(f,r))_{r\in \mathbb{R}^{>0}}$ ($B(f,r):= \lbrace g \in M \ | \ \norm{f-g} < r \rbrace$) forms a fundamental system of open neighbourhoods of $f \in M$. This topology is Hausdorff if and only if $\norm\cdot$ is a norm and if two seminorms on $M$ are equivalent in the natural way, then they define the same topology. 
%\item We can complete $M$ with respect to $\norm\cdot$, that is to complete the uniform structure associated to the topology which is induced by $\norm\cdot$.
%\end{enumerate}
%\end{rem}

%\begin{defn}
%A group homomorphism between two seminormed groups $\phi : M \rightarrow N$ is called
%\begin{enumerate}
%\item \emph{bounded}, if there is a $C>0$ such that $\norm{\phi (f)} \leq C \cdot \norm{f}$ for all $f \in M$.
%\item \emph{admissible}, if $\phi$ induces an isomorphism of semi normed groups $M/ \ker(f) \rightarrow Im(f)$.
%\end{enumerate}
%\end{defn}

\begin{defn}\label{def. berkovich spectrum}
\emph{Let $(A,\normd)$ be a Banach ring. The \emph{Berkovich spectrum} of $A$, denoted by $\mathcal{M}(A)$, is defined as the set of all bounded, multiplicative seminorms on $A$, i.e. the set of all multiplicative seminorms $\val \cdot_x :A \rightarrow \mathbb{R}^{\geq 0}$ such that there exists a bounding constant $C>0$ with $\val f_x\leq  C \norm f$ for all $f \in A$. We endow $\mathcal{M}(A)$ with the weakest topology such that the evaluation maps $\ev_f: \mt M(A) \rightarrow \mathbb{R}^{\geq 0}, \ \vald _x \mapsto \val f_x $ are continuous for all $f \in A$.}
\end{defn}

\begin{theo}\label{theorem: berkovich spectrum is nonempty, compact}
For a non-trivial Banach ring $A$, the topological space $\mathcal{M}(A)$ is non-empty, compact and Hausdorff.
\end{theo}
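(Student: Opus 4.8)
The plan is to prove the three assertions (nonemptiness, quasi-compactness, and the Hausdorff property) separately, with the compactness statement being the heart of the matter. For the \emph{Hausdorff} property, I would argue directly from the definition of the topology. Given two distinct bounded multiplicative seminorms $\val\cdot_x \neq \val\cdot_y$ on $A$, there exists by definition some $f \in A$ with $\val f_x \neq \val f_y$. Since the evaluation map $\ev_f$ is continuous by construction and $\mathbb{R}^{\geq 0}$ is Hausdorff, I can separate $\val f_x$ and $\val f_y$ by disjoint open intervals in $\mathbb{R}^{\geq 0}$ and pull them back along $\ev_f$ to obtain disjoint open neighbourhoods of $x$ and $y$ in $\mt M(A)$. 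This is essentially formal once one recalls that a space carrying the initial topology with respect to a family of maps into a Hausdorff space is Hausdorff as soon as the family separates points.

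For \emph{quasi-compactness}, the natural strategy is to realize $\mt M(A)$ as a closed subspace of a product of compact intervals and invoke Tychonoff's theorem. Concretely, for each $f \in A$ fix a bound, say the interval $[0, \norm f]$ (any bounded multiplicative seminorm satisfies $\val f_x \leq \norm f$, which one checks using multiplicativity together with the submultiplicativity of $\norm\cdot$; the bounding constant can be absorbed by passing to powers and taking roots, exactly as in Remark \ref{rem normal triangle inequality implies ultrametric}). This yields an injection
\begin{gather*}
\mt M(A) \hookrightarrow \prod_{f \in A} [0, \norm f], \quad \vald_x \mapsto (\val f_x)_{f \in A},
\end{gather*}
which is a topological embedding precisely because the topology on $\mt M(A)$ is the initial one for the evaluation maps, i.e.\ the subspace topology induced from the product. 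The product on the right is compact by Tychonoff. It then remains to show the image is closed: a tuple $(c_f)_{f\in A}$ lies in the image exactly when the assignment $f \mapsto c_f$ defines a multiplicative seminorm, and each of the defining conditions ($c_0 = 0$, $c_1 = 1$, $c_{fg} = c_f c_g$, $c_{f+g} \leq c_f + c_g$) is a closed condition cut out by continuous functions of the coordinates. Hence the image is an intersection of closed sets, thus closed, and therefore compact; being homeomorphic to its image, $\mt M(A)$ is quasi-compact.

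The genuinely delicate part is \emph{nonemptiness}, and I expect this to be the main obstacle. One must produce at least one bounded multiplicative seminorm on an arbitrary nonzero Banach ring $A$. The standard approach is to consider the spectral seminorm and to minimize: among all bounded multiplicative (or at least submultiplicative) seminorms bounded above by $\norm\cdot$, one uses a Zorn's lemma argument to find a minimal one and shows that a minimal bounded seminorm must in fact be multiplicative. The key input is that one should not work with an arbitrary seminorm but reduce to a maximal ideal or a suitable quotient where one can exhibit a multiplicative seminorm explicitly; in the non-archimedean setting this ultimately rests on the fact that a nonzero Banach ring has a nonzero bounded character into a valued field after a limiting/completion argument. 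Since this requires care with the ordering of seminorms and a compactness-type selection, I would cite Berkovich's original proof (\cite{Be1}) rather than reproduce the full minimization argument, flagging that the nonemptiness is exactly where the Banach (completeness) hypothesis is essential.
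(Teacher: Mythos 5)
Your proposal is correct and follows exactly the argument of \cite{Be2} Theorem 1.2.1, which is all the paper itself offers (it proves the statement by citation alone): Hausdorffness from the point-separating initial topology, quasi-compactness via the closed embedding $\mathcal{M}(A) \hookrightarrow \prod_{f}[0,\norm{f}]$ and Tychonoff, and nonemptiness via Zorn's lemma applied to bounded seminorms. Deferring the nonemptiness step (the minimization argument showing a minimal bounded seminorm is multiplicative) back to Berkovich is consistent with the paper's own treatment, and your identification of that step as the only place where completeness of $A$ is genuinely needed is accurate.
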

\begin{proof}
\cite{Be2} Theorem 1.2.1
\end{proof}

\begin{rem}\label{rem morphisms of spectra}
\emph{Let $(A, \normd)$ be a Banach ring.
\begin{enumerate} 
\item Note that if we endow $A$ with an equivalent norm, the space $\mt M(A)$ does not change since equivalent norms are respectively bounded by constants. 
\item We want to consider the elements of $A$ as functions on $\mathcal{M}(A)$ hence for $\val \cdot _x \in \mathcal{M}(A)$ and $f \in A$ we write $\val{f(x)}$ for $\val f _x$.
Note that any bounded homomorphism of Banach rings $\varphi:B\rightarrow A$ induces a continuous map between the respective Berkovich spectra $\mathcal{M}(\varphi): \mathcal{M}(A)\rightarrow \mathcal{M}(B)$ via $\val \cdot_x\mapsto \val \cdot_x \circ \varphi$.
\item Let $\vald _x$ be in $\mt M(A)$ then $\val f _x \leq \norm f$ for all $f \in A$, i.e. the bounding constant in the definition above can always be chosen to be $1$. Indeed, assume that $1$ is not a bounding constant of $\vald _x$ with respect to $\normd$. Then we can find an $f \in A$  and $d > 1 $ with $\val f _x =d \norm f$. But then $\vald _x$ is not bounded at all, since for an arbitrary $C>0$, there exists an $n\in \mathbb{N}$ with $d^n>C$ and hence $\val {f^n}_x = \val f^n_x=d^n \norm f^n>C \norm{f^n}$.
\item Note that if $A$ is a $k$-Banach algebra with norm $\normd$ such that $\norm 1=1$ then all elements of $\mt M(A)$ extend the given valuation $\vald$ of $k$. Indeed: For $\val{\cdot}_x \in \mt M(A)$ and $0 \neq a \in k$ we have $\val{ a \cdot 1 }_x \leq \norm{a \cdot 1} = \val a $. Multiplying this inequality by $\val{ a} ^{-1} \cdot \val{a^{-1} \cdot 1}_x$ we get $\val{a^{-1}} \leq \val{a^{-1}\cdot 1}_x $. But the first inequality also holds for $a^{-1}$ and hence $\val{a^{-1}}=\val{a^{-1}}_x$, or equivalently $\val{a}=\val{a}_x$.
\end{enumerate}
}
\end{rem}

The following lemma provides a different description of the topology of the Berkovich spectrum of a $k$-Banach algebra. Although in the Berkovich theory it is more important to work with non-strict inequalities defining rational domains (cf. \ref{example rational domain}), this result is important to compare the topologies of affinoid adic and affinoid Berkovich spaces (cf. \ref{prop. map: spa a to m(a)}).

\begin{lem}\label{lem. topology of berkovich spectrum}
Let $A$ be a $k$-Banach algebra. A subbasis of the topology of $\mt M(A)$ is given by sets of the form $U_{f,g} := \lbrace x \in \mt M(A) \ | \ \val{f(x)}<\val{g(x)} \rbrace$ where $f,g \in A$.
\end{lem}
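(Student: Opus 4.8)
The plan is to compare the topology generated by the sets $U_{f,g}$ with the given Berkovich topology, which by Definition \ref{def. berkovich spectrum} is the coarsest topology making every evaluation map $\ev_f \colon x \mapsto \val{f(x)}$ continuous. Since the open rays form a subbasis of $\mathbb{R}^{\geq 0}$, a standard subbasis of the Berkovich topology is given by the preimages $\{x : \val{f(x)} < b\}$ and $\{x : \val{f(x)} > a\}$ for $f \in A$ and $a,b \in \mathbb{R}^{\geq 0}$. I would prove the two inclusions between topologies separately.

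The easy inclusion is that each $U_{f,g}$ is open in the Berkovich topology. The map $x \mapsto (\val{f(x)}, \val{g(x)})$ is continuous into $\mathbb{R}^{\geq 0} \times \mathbb{R}^{\geq 0}$ as a product of the continuous maps $\ev_f$ and $\ev_g$, and $U_{f,g}$ is the preimage of the open set $\{(s,t) : s < t\}$, hence open. Thus the topology generated by the $U_{f,g}$ is contained in the Berkovich topology.

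For the reverse inclusion, I would express each Berkovich subbasic set as a union of sets $U_{f,g}$. The key input is that, after replacing $\normd$ by an equivalent norm with $\norm 1 =1$ (which changes neither $\mt M(A)$ nor its topology by Remark \ref{rem morphisms of spectra} i), every point restricts to $\vald$ on $k$ by Remark \ref{rem morphisms of spectra} iv, so that $\val{c(x)} = \val c$ is constant in $x$ for every $c \in k$. Taking a constant $g = c \in k^\times$ together with a power $f^n$ gives
\begin{gather*}
U_{f^n, c} = \{x : \val{f(x)} < \sqrt[n]{\val c}\}, \qquad U_{c, f^n} = \{x : \val{f(x)} > \sqrt[n]{\val c}\},
\end{gather*}
while $U_{0,f} = \{x : \val{f(x)} > 0\}$ handles the threshold $0$. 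Since the numbers $\sqrt[n]{\val c}$ exhaust the subset $\sqrt{\val{k^\times}}$, which is dense in $\mathbb{R}^{\geq 0}$, density yields
\begin{gather*}
\{x : \val{f(x)} < b\} = \bigcup_{\beta \in \sqrt{\val{k^\times}}, \ \beta < b} \{x : \val{f(x)} < \beta\}
\end{gather*}
and the analogous decomposition of $\{x : \val{f(x)} > a\}$ for $a > 0$. Hence every Berkovich subbasic set is a union of sets $U_{f,g}$, so the Berkovich topology is contained in the topology generated by the $U_{f,g}$. Combining the two inclusions, the topologies coincide and the $U_{f,g}$ form a subbasis.

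I expect the only delicate point to be the passage from the value-group thresholds $\val c$ to arbitrary real thresholds $b$: this is precisely where the density of $\sqrt{\val{k^\times}}$ and the device of raising $f$ to powers are needed, and where the non-triviality of the valuation on $k$ enters in an essential way.
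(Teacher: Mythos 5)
Your proof is correct and follows essentially the same route as the paper: first observe that each $U_{f,g}$ is open for the Berkovich topology, then recover the subbasic Berkovich opens by comparing powers $f^n$ against constants $c\in k^\times$ (which evaluate to $\val c$ at every point) and invoking the density of $\sqrt{\val{k^\times}}$ in $\mathbb{R}^{\geq 0}$. The only cosmetic difference is that the paper works pointwise with preimages of intervals $(a,b)$ and exhibits a finite intersection of two sets $U_{\cdot,\cdot}$ around each point, whereas you decompose the preimages of open rays as unions of such sets; the key idea is identical.
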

\begin{proof}
At first note that $U_{f,g}$ is open for all $f,g \in A$, since we have 
\begin{gather*}		
U_{f,g}=\bigcup_{\alpha \in \mathbb{R}} (ev_f^{-1}([0,\alpha))\cap (\mt M(A) \setminus ev_g^{-1}([0,\alpha ])).
\end{gather*}
Now let $a<b \in \mathbb{R}^{\geq 0}$, $f \in A$ be arbitrary and $V:=ev_f^{-1}((a,b))$. Let $z \in V$, i.e. $a<\val{f(x)}<b$. Since $\sqrt{\val{k^{\times}}}$ is dense in $\mathbb{R}^{\geq 0}$ we can find $n,m \in \mathbb{N}$ and $c_1,c_2 \in k$ such that $a<\sqrt[n]{\val{c_1}}<\val{f(z)}<\sqrt[m]{\val{c_2}}<b$. Then $W:=U_{c_1^m,f^{n+m}}\cap U_{f^{n+m},c_2^n}$ is an open neighbourhood of $z$ which is contained in $V$.
\end{proof}

\subsection{k-affinoid algebras and k-affinoid spaces}

In this subsection, we introduce the fundamental algebraic objects for the theory of Berkovich spaces.

\begin{defn}\label{def. def. generalized tate algebra}
\emph{Let $A$ be a $k$-Banach algebra with Banach norm $\norm \cdot_A$. For $r_1,\ldots, r_n>0$ the set
\begin{gather*}
A\lbrace r_1^{-1}T_1,...,r_n^{-1}T_n \rbrace := \\ \Bigl\lbrace f= \sum_{\nu\in\mathbb{N}^n_0} a_{\nu} T^{\nu} \ | \ a_{\nu} \in A \ \text{and} \ \norm{a_{\nu}}_A \cdot r^{\nu} \rightarrow 0 \ \text{as} \ |\nu| \rightarrow \infty \Bigr\rbrace
\end{gather*}
($\val \nu$ denotes the multi-index norm of $\nu$, i.e. $\val \nu = \nu_1+...+\nu_n$) is a $k$-Banach algebra with respect to the Banach norm 
\begin{gather*}
\norm{\sum_{\nu\in\mathbb{N}^n_0} a_{\nu} T^{\nu}}:= \max_{\nu \in \mathbb{N}^n_0} \norm{a_{\nu}}_A \cdot r^{\nu}.
\end{gather*} 
For brevity it is sometimes denoted by $A\lbrace r^{-1}T\rbrace$.
If we choose $(A, \normd_A)$ to be $(k, \vald)$, it is called \emph{generalized Tate algebra}.
}
\end{defn}

\begin{rem}\label{rem.first comarison of tate algebra on both sides}
\emph{\begin{enumerate}
\item  The Berkovich spectrum associated to a generalized Tate algebra is considered as the $n$-dimensional Berkovich polydisc with radius $(r_1,...,r_n)$ and therefore those polydiscs will be the fundamental building blocks of $k$-analytic Berkovich spaces which we want to define in this section.
\item
In \ref{defn. restricted power series} we have defined a different generalization of a Tate algebra. Note that if we choose $(A,\normd)=(k,\vald)$ in the definition above and $(r_1,...,r_n)=(1,...,1)$, we have $k\lbrace T_1,...,T_n \rbrace = k \langle T_1,...,T_n \rangle$. We will use both notions respectively to stress if we are situated in the Berkovich or in the adic setting.
\end{enumerate}
}
\end{rem}

Analogously to Banach spaces over $\mathbb{C}$ the following lemma states that a linear map between $k$-Banach spaces is bounded if and only if it is continuous. For the convenience of the reader we recall the proof which is nearly verbatim to the classic case.

\begin{lem}\label{lem. bounded means continuous}
Let $\varphi :(A,\normd) \rightarrow (B, \normd')$ be a linear map of $k$-Banach spaces (recall that a $k$-Banach space is a $k$-vector space, endowed with a $k$-vector space norm; in particular, any $k$-Banach algebra is a $k$-Banach space). Then $\varphi$ is continuous if and only if it is bounded. 
\end{lem}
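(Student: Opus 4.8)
The plan is to prove both implications, with the reverse direction (\emph{bounded implies continuous}) being entirely formal and the forward direction requiring the non-archimedean rescaling trick that replaces division by real scalars.

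First I would suppose $\varphi$ is bounded, so that there is a constant $C>0$ with $\norm{\varphi(f)}' \leq C\norm{f}$ for all $f\in A$. Since $\varphi$ is $k$-linear, for any $f,g\in A$ we have $\norm{\varphi(f)-\varphi(g)}' = \norm{\varphi(f-g)}' \leq C\norm{f-g}$, so $\varphi$ is Lipschitz and hence (uniformly) continuous. This direction uses nothing beyond linearity and the triangle inequality.

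For the converse I would use only continuity at $0$: there is a $\delta>0$ such that $\norm{f}<\delta$ implies $\norm{\varphi(f)}'\leq 1$. The classical archimedean argument would now rescale an arbitrary $f\neq 0$ by the real number $\delta/\norm{f}$; since here one may only multiply by scalars from $k$, I would instead fix a topologically nilpotent unit $c\in k$ (which exists because $k$ is non-trivially valued, cf.\ the standing assumptions) and set $r:=\val{c}\in(0,1)$. Given $f\neq 0$, choose $n\in\mathbb{Z}$ so that $\delta r \leq r^n\norm{f} < \delta$, which is possible because the half-open intervals $[\delta r^{m+1},\delta r^{m})$ with $m\in\mathbb{Z}$ partition $(0,\infty)$. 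Then $\norm{c^n f} = r^n\norm{f} < \delta$, so $\norm{\varphi(c^n f)}'\leq 1$, and by $k$-linearity together with the $k$-Banach norm identity $\norm{c^n\varphi(f)}' = r^n\norm{\varphi(f)}'$ this yields $\norm{\varphi(f)}' \leq r^{-n}$. Combining with $r^n\norm{f}\geq\delta r$ gives $\norm{\varphi(f)}'\leq (\delta r)^{-1}\norm{f}$, so $C:=(\delta\val{c})^{-1}$ is a bounding constant, the case $f=0$ being trivial.

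The main obstacle is precisely this rescaling step: in the absence of real scalars one must exploit the non-trivial valuation to produce, for each $f$, a scalar $c^n$ landing $\norm{c^n f}$ in the fixed annulus $[\delta r,\delta)$, so that the continuity estimate at $0$ can be leveraged uniformly over all $f$. Once the annulus argument is in place, the compatibility $\norm{af}' = \val{a}\,\norm{f}'$ of the $k$-Banach norm does the rest, and the resulting bound is uniform in $f$.
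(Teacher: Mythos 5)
Your proof is correct and rests on the same key device as the paper's: rescaling by powers of a scalar $c\in k$ with $0<\val{c}<1$ so that $\norm{c^n f}$ lands in a fixed annulus $[\delta\val{c},\delta)$, which substitutes for the archimedean division by real scalars. The only difference is organizational --- you prove ``continuous $\Rightarrow$ bounded'' directly and extract the explicit constant $C=(\delta\val{c})^{-1}$, whereas the paper proves the contrapositive by exhibiting a sequence witnessing unboundedness and rescaling each term minimally below $\delta$; the underlying estimate is identical.
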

\begin{proof}
It is clear that $\varphi$ is continuous if it is bounded. So assume that $\varphi$ is not bounded. Let $c \in k$ with $\val c < 1$  and $N\geq 1$ a natural number such that $\frac{1}{N+1}<\val c \leq \frac{1}{N}$. Since $\varphi$ is not bounded, there exists a sequence $(x_n)_{n \in \mathbb{N}}$ in $A$ such that $\norm{ \varphi (x_n)}'> \val c ^{-n} \norm{x_n}$ for all $n \in  \mathbb{N}$. Let $0< \epsilon < \frac{1}{N+1}$ and let $U':= \lbrace x \in V' \ | \ \norm{x}'< \epsilon \rbrace$. If we can show that there is no neighbourhood of $0$ in the preimage of $U'$ under $\varphi$ we know that $\varphi$ is not continuous and hence we are done. So let $\delta > 0$ be arbitrary and choose $n \in \mathbb{N}$ with $\val c ^{-n}> \frac{1}{\delta}$. Now choose $l \in \mathbb{N}$ minimal such that  $\norm{c^lx_n}< \delta$ (this implies $\frac{\delta}{N+1} \leq \norm{c^lx_n}$). We have
\begin{gather*}		
 \norm{\varphi(c^lx_n)}'=\val{c}^l\norm{\varphi(x_n)}'> \val c ^l\val c^{-n}  \norm{x_n}>\frac{1}{\delta}\delta \frac{1}{N+1} >\epsilon.
 \end{gather*} 
Therefore $c^lx_n$ is not an element of $\varphi^{-1}(U')$ and $\varphi$ is not continuous.
\end{proof}

The $k$-algebras defined in the following are the central algebraic objects in the theory of $k$-analytic Berkovich spaces.

\begin{defn}\label{def. k-affinoid algebra}
\emph{A  \emph{$k$-affinoid algebra} is a $k$-Banach algebra $(A, \normd)$ such that there is a surjective $k$-algebra homomorphism $\pi:k\lbrace r_1^{-1}T_1,...,r_n^{-1}T_n \rbrace \rightarrow  A$ for some generalized Tate algebra  $k\lbrace r_1^{-1}T_1,...,r_n^{-1}T_n \rbrace$.
$A$ is said to be \emph{strictly $k$-affinoid} if $(r_1,...,r_n)$ can be chosen to be $(1,...,1)$. 
}
\end{defn}

\begin{rem}\label{admissible homomorphismus}
\emph{
In the general Berkovich theory, where one permits a trivially valued $k$, one requires the $k$-algebra homomorphism $\pi$ in the definition above to be \emph{admissible}. This means that $\pi$ induces an isomorphism of Banach spaces $k\lbrace r_1^{-1}T_1,...,r_n^{-1}T_n\rbrace/\ker(\pi)  \rightarrow  A$, where $k\lbrace r_1^{-1}T_1,...,r_n^{-1}T_n \rbrace/\ker(\pi)$ is endowed with the quotient norm. We can omit this demand by the Open Mapping Theorem for such spaces, which ensures that any such morphism is admissible.
}
\end{rem}

We will later see that there is a one-to-one correspondence between strictly $k$-affinoid algebras and Huber pairs that are topologically of finite type over $k$. From this observation we will deduce that certain Berkovich spaces associated to strictly $k$-affinoid algebras can be associated to particular adic spaces that are locally of finite type over $k$.

\begin{rem}\label{rem. represetation of k affinoid algebras is not unique}
\emph{
For a $k$-affinoid algebra $A$ we do not explicitly name its norm since the Berkovich spectrum of $A$ is invariant under passing to an equivalent norm. By \emph{a norm on $A$} we mean a $k$-algebra norm that is in the equivalence class of the residue norm induced by a representation (which all are equivalent). For a norm on $A$ given by the residue norm $\normd$ of a representation $k\lbrace r_1^{-1}T_1,...,r_n^{-1}T_n \rbrace \rightarrow  A$, we have $\norm 1 =1$. Hence we know that all elements of $\mt M(A)$ extend the valuation on $k$ (cf. \ref{rem morphisms of spectra} iv)).
}
\end{rem}

\begin{rem}\label{rem. bounded brought together}
\emph{
Let $A$ be a $k$-affinoid algebra and $\norm \cdot$ a norm on $A$.
\begin{enumerate}
\item
A subset $B \subseteq A$ is bounded with respect to the usual definition of boundedness for normed spaces if and only if it is bounded with respect to definition \ref{def. Huber ring / Tate ring} 	iii). (For this remark differing from our fixed notation, we assume the product of two sets to be as in \ref{def. Huber ring / Tate ring} iii).) Indeed, if there is a real number $r > 0$ such that $\norm x < r$ for all $x \in B$, we have $B \cdot B_{\epsilon \frac{1}{r}} \subseteq B_\epsilon$ for all $\epsilon \in \mathbb{R}^{\geq 0}$ ($B_a$ denotes the open ball with center $0$ and radius $a$ with respect to $\norm \cdot$). Conversely, assume that $B$ is not bounded with respect to $\norm \cdot$. Consider an arbitrary $r > 0$. Let $\epsilon > 0$ be arbitrary as well. Choose $c \in k^{\times}$ such that $\val c < \epsilon$ and $x \in B$ with $\norm x> \frac{r}{\val c}$. Then we have $\norm{ c \cdot x }= \val c \norm x > r$. This means $B\cdot B_\epsilon \nsubseteq B_r$ for all $\epsilon > 0$ and hence $B$ is not bounded with respect to definition \ref{def. Huber ring / Tate ring} iii).
\item Let $\varphi : A\rightarrow B$ be a bounded map of $k$-affinoid algebras. Then $\varphi$ maps bounded sets to bounded sets and in particular we have $\varphi(A^o) \subseteq B^o$.
\end{enumerate}
}
\end{rem}
\begin{defn}\label{def. category k-affinoid spaces}
\emph{A \emph{$k$-affinoid space} $X$ is the Berkovich spectrum of some $k$-affinoid algebra $A$, i.e. $X=\mathcal{M}( A)$. A \emph{morphism of $k$-affinoid spaces} is a map $\mathcal{M}(B) \rightarrow \mathcal{M}(A)$ induced by a bounded homomorphism of $k$-Banach algebras $A\rightarrow B$ (cf. \ref{rem morphisms of spectra} i)). 
In other words, the \emph{category of $k$-affinoid spaces}, which we denote by $k$-$Aff$, is by definition the category dual to the category of $k$-affinoid algebras with bounded homomorphisms. We also define the category of \emph{strictly $k$-analytic spaces} to be the full sub category of  $k$-$Aff$ whose objects are the $k$-affinoid spaces corresponding to strictly $k$-affinoid algebras.
}
\end{defn}

$k$-affinoid spaces are the local objects in Berkovich's theory which are glued together to create general analytic Berkovich spaces.

\subsection{Affinoid domains}

In this subsection we focus on certain closed subsets of $k$-affinoid spaces which itself can be regarded as spectra of $k$-affinoid algebras.

In this section we fix a $k$-affinoid space $X=\mathcal{M}(A)$.

\begin{defn}\label{def. affinoid domain}
\emph{
\begin{enumerate}
\item
An \emph{affinoid domain in $X$} is a closed subset $V\subseteq X$ with a morphism of $k$-affinoid spaces $\varphi : \mathcal{M}(A_V)\rightarrow X$ with $\im(\varphi)\subseteq V$ such that any morphism of $k$-analytic spaces $\psi: Y\rightarrow X$ with image contained in $V$ factors uniquely through $\mathcal{M}(A_V)$.
\item An affinoid domain $V$ is said to be a \emph{strictly affinoid domain} if the $k$-affinoid algebra $A_V$ in the definition above can be chosen to be a strictly $k$-affinoid algebra.
\item
A morphism of $k$-affinoid spaces $\varphi:Y\rightarrow X$ is said to be an \emph{affinoid domain embedding} if it makes $\im (\varphi)$ to an affinoid domain in $X$.
\end{enumerate}
}
\end{defn}

\begin{theo}\label{theo. important properties of affinoid domain}
Let $X= \mt M(A)$ be a $k$-affinoid space and $V$ be an affinoid domain in $X$. The morphism of $k$-affinoid spaces $\varphi_V :\mt M (A_V)\rightarrow V \subseteq X$ is a homeomorphism.
\end{theo}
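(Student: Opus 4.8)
The plan is to show that $\varphi_V$ is a continuous bijection of $\mathcal{M}(A_V)$ onto $V$ and then upgrade this to a homeomorphism by a compactness argument. Continuity and the inclusion $\im(\varphi_V)\subseteq V$ are built into the definition of an affinoid domain, so the substance lies in bijectivity; once that is established I would conclude as follows. By Theorem \ref{theorem: berkovich spectrum is nonempty, compact} the space $\mathcal{M}(A_V)$ is compact, while $V$ is a closed subset of the compact Hausdorff space $X=\mathcal{M}(A)$ and is therefore itself compact and Hausdorff. Since a continuous bijection from a compact space onto a Hausdorff space is automatically a homeomorphism, $\varphi_V$ will be a homeomorphism as soon as it is bijective onto $V$. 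Throughout I will use that a point $x$ of a $k$-affinoid space $\mathcal{M}(C)$ is the same datum as a bounded character $C\to\mathcal{H}(x)$ into its complete residue field, and that such a character is the same as a morphism $\mathcal{M}(\mathcal{H}(x))\to\mathcal{M}(C)$ whose image is $\lbrace x\rbrace$.

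For surjectivity I would take $v\in V$ and consider the bounded character $A\to\mathcal{H}(v)$ attached to $v$. It induces a morphism $\mathcal{M}(\mathcal{H}(v))\to X$ whose image is the single point $\lbrace v\rbrace\subseteq V$. The existence part of the universal property in Definition \ref{def. affinoid domain} then factors this morphism through $\varphi_V$, producing a morphism $\mathcal{M}(\mathcal{H}(v))\to\mathcal{M}(A_V)$ whose image is a point $\tilde v$ with $\varphi_V(\tilde v)=v$. Hence $v\in\im(\varphi_V)$, and so $V\subseteq\im(\varphi_V)$.

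The main obstacle is injectivity, where I would exploit the uniqueness clause of the universal property together with a common-field construction. Suppose $x_1,x_2\in\mathcal{M}(A_V)$ satisfy $\varphi_V(x_1)=\varphi_V(x_2)=v$. Restricting the characters $\chi_j\colon A_V\to\mathcal{H}(x_j)$ along $A\to A_V$ shows that each $\mathcal{H}(x_j)$ carries an isometric copy of $\mathcal{H}(v)$ compatible with the maps from $A$. Choosing a point of the (nonzero) Banach algebra $\mathcal{H}(x_1)\,\widehat{\otimes}_{\mathcal{H}(v)}\,\mathcal{H}(x_2)$ yields a complete valued field $\mathcal{H}$ together with isometric embeddings $\mathcal{H}(x_1)\hookrightarrow\mathcal{H}$ and $\mathcal{H}(x_2)\hookrightarrow\mathcal{H}$ over $\mathcal{H}(v)$. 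The resulting characters $A_V\to\mathcal{H}$ define morphisms $g_1,g_2\colon\mathcal{M}(\mathcal{H})\to\mathcal{M}(A_V)$ with $\varphi_V\circ g_1=\varphi_V\circ g_2$, since both composites correspond to the single character $A\to\mathcal{H}(v)\hookrightarrow\mathcal{H}$ and hence to one morphism $\mathcal{M}(\mathcal{H})\to X$ with image $\lbrace v\rbrace\subseteq V$. By uniqueness in the universal property the two factorizations coincide, $g_1=g_2$, so the two characters $A_V\to\mathcal{H}$ agree; pulling back $\vald_{\mathcal{H}}$ then gives $x_1=x_2$. The delicate points I expect to justify carefully are that the universal property may be applied to test objects of the form $\mathcal{M}(\mathcal{H})$ for a complete valued extension $\mathcal{H}/k$ (such valued points being admissible), and that the completed tensor product over $\mathcal{H}(v)$ is nonzero, so that its Berkovich spectrum is nonempty.
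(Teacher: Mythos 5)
Your proposal is correct in substance, and it is essentially the argument behind the reference the paper gives for this statement (the paper supplies no proof of its own beyond the citation of \cite{Be2}, Proposition 2.2.4): reduce to bijectivity using compactness of $\mathcal{M}(A_V)$ (Theorem \ref{theorem: berkovich spectrum is nonempty, compact}) and the fact that $V$ is by Definition \ref{def. affinoid domain} a closed subset of the compact Hausdorff space $X$, then get surjectivity and injectivity by testing the universal property against spectra of complete valued fields. The two points you flag as delicate are exactly where the content lies, and both can be settled within the paper's framework. First, Definition \ref{def. affinoid domain} only quantifies over $k$-affinoid test objects, and $\mathcal{M}(\mathcal{H}(v))$ is not one of these unless $\mathcal{H}(v)$ is finite over $k$; you therefore need the stronger universal property of Proposition \ref{prop. universal property holds for all k- banach algebras}. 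This is not circular: the proof of that proposition (Gerritzen--Grauert plus Tate acyclicity) uses only $\im(\varphi_V)\subseteq V$ and the weak universal property, not the bijectivity you are establishing. (In the cited source the stronger property is part of the definition, cf. Remark \ref{rem. to addinoid domains} ii), so the issue does not arise there.) Second, the non-vanishing of $\mathcal{H}(x_1)\,\widehat{\otimes}_{\mathcal{H}(v)}\,\mathcal{H}(x_2)$ is a genuine theorem (Gruson) --- true, but not free. You can avoid it entirely: for a single point $y\in\mathcal{M}(A_V)$ over $v$, apply the uniqueness clause to the test object $\mathcal{M}(\mathcal{H}(y))$. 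The character $A_V\rightarrow\mathcal{H}(y)$ attached to $y$ and the composite $A_V\rightarrow\mathcal{H}(v)\hookrightarrow\mathcal{H}(y)$ (the latter produced by your surjectivity step) both lift the same morphism $\mathcal{M}(\mathcal{H}(y))\rightarrow X$ with image $\lbrace v\rbrace\subseteq V$, hence coincide; so $y$ is the point induced by the canonical character $A_V\rightarrow\mathcal{H}(v)$ and is therefore unique.
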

\begin{proof}
\cite{Be2} Proposition 2.2.4.
\end{proof}

\begin{rem}\label{rem. to addinoid domains}
\emph{\begin{enumerate}
\item
Let $V$ be an affinoid domain in $X$. The universal property of $\mt M(A_V)$ shows that $A_V$ is unique up to a unique bounded isomorphism of $k$-Banach algebras. Hence $A_V$ is well defined.
\item In the original paper of Berkovich (\cite{Be1}) and in the work of other authors (e.g. Fujiwara and Kato in \cite{FK}) affinoid domains are defined to satisfy a stronger universal property, also including morphisms from $k'$-affinoid spaces for a complete isometric extension $k\subseteq k'$. However, it can be shown that this is equivalent to the definition given in this paper (cf. \ref{prop. universal property holds for all k- banach algebras}). 
\end{enumerate}
}
\end{rem}

A key role for the whole theory of $k$-analytic Berkovich spaces plays the following notion of special affinoid domains:

\begin{exmp}\label{example rational domain}
\emph{\begin{enumerate}
\item
Let $g,f_1,...,f_n \in  A$ such that the ideal in $A$ generated by those elements is equal to $A$. Let $p=(p_1,...,p_n)$ be an $n$-tuple of positive numbers. The set 
\begin{gather*}
 V:=X(p^{-1} \frac{f}{g}):= \lbrace x \in X \ | \ \val{ f_i(x)}\leq p_i \val{g(x)} \ \text{for all} \ 1\leq i\leq n \rbrace
\end{gather*}
 is an affinoid domain in $X$ which is represented by the homomorphism 
\begin{gather*}		 
 A \rightarrow A_V:=A\lbrace p^{-1} \frac{f}{g} \rbrace := A\lbrace p_1^{-1} T_1,...,p_n^{-1} T_n \rbrace /(gT_i-f_i)_{i=1,...,n}.
 \end{gather*} 
  Affinoid domains of this type are called \emph{rational domains}. 
\item Let $B$ be a $k'$-affinoid algebra for some complete isometric extension $k\subseteq k'$ and $\varphi :A \rightarrow B$ be a bounded homomorphism of $k$-algebras. Let $V$ be a rational domain in $\mt M(A)$. Then $\varphi^{-1}(V)$ is a rational domain in $\mt M(B)$ which is defined by the same inequalities. For example $\varphi^{-1}(V)=\lbrace y \in \mt M(B) \ | \ y(\varphi(f_i))\leq p_i \cdot y(\varphi(g)) \ \text{for all} \ 1\leq i \leq n \rbrace$ for $V$ given as above.
\item Note that rational domains even satisfy the stronger universal property mentioned before: Let $V\subseteq \mt M(A)$ be a rational domain in $\mt M(B)$ be a $k'$-affinoid space for some complete isometric extension $k \subseteq k'$ with a morphism of $k$-Banach algebras $A \rightarrow B$ such that the corresponding morphism $\mt M(B) \rightarrow \mt M(A)$ factors through $V$. Then there exists a unique morphism of $k$-Banach algebras $A_V \rightarrow B$ that commutes with the other maps involved (cf. \cite{Be1} Remarks 2.2.2).
\end{enumerate}
}
\end{exmp}

The following theorem shows the tremendous importance of rational domains for the whole theory:   

\begin{theo}\label{theo. gerritzen Grauert} (Gerritzen-Grauert theorem)
Let $X=\mt M(A)$ be an affinoid space and $V \subseteq \mt M(A)$ be an affinoid domain in $X$. Then $V$ is a finite union of rational domains.
\end{theo}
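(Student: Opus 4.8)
The plan is to pass to the graded reduction of the affinoid algebras involved, where an affinoid domain embedding becomes an open immersion of graded schemes and rational domains become distinguished (principal) opens. Once this translation is in place, the statement reduces to the purely algebraic fact that an open subscheme of a Noetherian affine scheme is a finite union of distinguished opens, pulled back through the reduction map.

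First I would set up the reduction machinery. To a $k$-affinoid algebra $A$, equipped with a norm as in \ref{rem. represetation of k affinoid algebras is not unique}, one associates its graded reduction $\tilde A$: a finitely generated graded algebra over the graded residue ring of $k$, obtained from the power-bounded elements filtered by their norms modulo the topologically nilpotent ones. This comes with a reduction map $\pi\colon X=\mathcal{M}(A)\to \Spec(\tilde A)$ onto the (graded) spectrum. The essential dictionary is that every distinguished open of $\Spec(\tilde A)$ pulls back under $\pi$ to a rational domain of \ref{example rational domain}, and conversely: the defining inequalities $\val{f_i(x)}\leq p_i\val{g(x)}$ translate, after reduction, into the non-vanishing of the associated homogeneous elements. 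Note that $\pi$ is anti-continuous, so these preimages are closed subsets, consistent with rational domains being cut out by the non-strict inequalities of \ref{example rational domain}.

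The key step, which I expect to be the main obstacle, consists of two linked assertions about an affinoid domain embedding $\varphi_V\colon \mathcal{M}(A_V)\to X$: first, that its reduction induces an open immersion $\Spec(\widetilde{A_V})\hookrightarrow \Spec(\tilde A)$ of graded schemes; and second, that $V$ is exactly the preimage $\pi^{-1}\bigl(\Spec(\widetilde{A_V})\bigr)$ of the corresponding open subscheme. This is the genuine heart of the theorem. It requires controlling the reduction functor $A\mapsto\tilde A$ under the formation of affinoid domains and rests on a careful analysis of the graded rings $\widetilde{A_V}$ together with the universal property of $A_V$ from \ref{def. affinoid domain}; the openness in particular is where the defining finiteness of affinoid domains is used, and the homeomorphism of \ref{theo. important properties of affinoid domain} feeds into the saturation statement.

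Granting these two assertions the conclusion is formal. Since $\tilde A$ is finitely generated, $\Spec(\tilde A)$ is a Noetherian space, so its open subscheme $\Spec(\widetilde{A_V})$ is quasi-compact and hence a finite union of distinguished opens $D_1,\dots,D_m$ of $\Spec(\tilde A)$. Pulling back through $\pi$ and using the dictionary together with the saturation $V=\pi^{-1}(\Spec(\widetilde{A_V}))$ gives
\begin{gather*}
V=\pi^{-1}\Bigl(\bigcup_{j=1}^m D_j\Bigr)=\bigcup_{j=1}^m \pi^{-1}(D_j)=\bigcup_{j=1}^m U_j,
\end{gather*}
a finite union of rational domains $U_j$ of $X$, as desired. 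I should remark that there is also a more computational route, the original Gerritzen--Grauert argument, which bypasses reductions and instead stratifies affinoid domains into Weierstrass, Laurent, and rational layers, proving the result by induction along this hierarchy together with quantitative norm estimates; that approach trades the conceptual open-immersion input for a longer but more elementary bookkeeping of inequalities.
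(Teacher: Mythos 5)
The paper does not actually prove this theorem: its ``proof'' is the citation \cite{Te2} \S 3, i.e. Temkin's reduction-theoretic argument. So the real comparison is between your sketch and Temkin's proof, and here your outline goes wrong at exactly the step you flag as the heart of the matter: the mechanism you propose is not merely unproven, it is false. For an affinoid domain $V\subseteq X=\mt M(A)$ the induced map $\widetilde{A}\rightarrow \widetilde{A_V}$ of graded reductions does not give an open immersion of graded spectra, and $V$ is in general not saturated for the reduction map $\pi$, i.e.\ not a union of fibres of $\pi$, so no identity $V=\pi^{-1}\bigl(\Spec \widetilde{A_V}\bigr)$ can hold. Concretely, take $A=k\langle T\rangle$ and $V=\lbrace x \ | \ \val{T(x)}\leq 1/2\rbrace$, a Weierstrass and hence rational domain (assume $1/2\in\sqrt{\val{k^\times}}$). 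Here $\tilde A=\tilde k[\tilde T]$ with $\tilde T$ homogeneous of degree $1$. The points $\eta_{1/2}$ and $\eta_{3/4}$ given by $\sum a_nT^n\mapsto \max_n \val{a_n}r^n$ for $r=1/2$ and $r=3/4$ both satisfy $\val{T}<1$, so both are sent by $\pi$ to the same homogeneous prime $(\tilde T)$; yet $\eta_{1/2}\in V$ while $\eta_{3/4}\notin V$. Moreover $\tilde T$ maps to $0$ in $\widetilde{A_V}$, so the induced map of spectra is nothing like an open immersion. The same example shows your ``dictionary'' is only one-directional: the preimage of a distinguished open $D(\tilde h)$ is the locus where $\val{h(x)}$ equals the spectral seminorm of $h$, a Laurent-type domain that always meets the Shilov boundary, and most rational domains (such as $V$ above) are not of this form.

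The reason Temkin's proof works where your global version cannot is that his reduction is taken \emph{locally at each point}: to a germ $(X,x)$ he attaches an open subset of the Riemann--Zariski space of graded valuations of the graded residue field of $x$ over $\tilde k$, and he characterises affinoid subdomains, and among them the rational ones, by the shape of these local reductions; Gerritzen--Grauert then follows from a quasi-compactness argument in these RZ spaces. The global $\Spec(\tilde A)$ is too coarse, since it only records whether $\val{f(x)}$ attains the spectral radius of $f$, not its actual value. So the central step of your argument is missing and cannot be repaired along the lines you indicate; one must either pass to the local (germ-wise) reduction as Temkin does, or fall back on the original Gerritzen--Grauert induction through Weierstrass, Laurent and rational layers that you mention at the end.
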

\begin{proof}
\cite{Te2} §3.
\end{proof}

\subsection{Short excursion: stronger universal property for affinoid domains}

Most of the results in Berkovich's theory presented above are generalizations of Tate's theory of rigid analytic spaces. For the latter, the fundamental objects are \emph{Tate algebras} which are precisely the strictly $k$-affinoid algebras in Berkovich's theory. One important way to reduce assertions about general $k$-affinoid algebras to strictly $k$-affinoid algebras is given by the following propositions. At the end we will show that affinoid domains in $k$-affinoid spaces in fact satisfy a stronger universal property.
Note that $\hat{\otimes}$ denotes the completed tensor product (cf. \cite{Be1} 1.1).

\begin{prop}\label{prop. find k-affinoid algebra}
Let $A$ be a $k$-affinoid algebra. Then there exists a complete non-archimedean field extension $k \subseteq k'$ such that $A_{k'} = A \hat{\otimes}_k k'$ is a strictly $k'$-affinoid algebra. 
\end{prop}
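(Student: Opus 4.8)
The plan is to present $A$ as a quotient of a generalized Tate algebra and then enlarge $k$ just enough to move the defining radii into the value group, after which a rescaling of coordinates identifies the relevant generalized Tate algebra over the extension with an ordinary Tate algebra. By Definition \ref{def. k-affinoid algebra} there is a surjective $k$-algebra homomorphism $\pi : k\lbrace r_1^{-1}T_1,\ldots,r_n^{-1}T_n\rbrace \rightarrow A$ for some $r_1,\ldots,r_n > 0$, which we may take to be admissible (Remark \ref{admissible homomorphismus}), so that $A \cong k\lbrace r^{-1}T\rbrace / I$ with the residue norm, where $I=\ker\pi$. The entire problem is thereby reduced to producing a complete isometric field extension $k\subseteq k'$ with $r_i \in \val{k'^\times}$ for every $i$.

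To construct such a $k'$, I would consider the weighted Gauss seminorm on the generalized Tate algebra, namely
\begin{gather*}
\Bigl\lVert \sum_{\nu} a_\nu T^\nu \Bigr\rVert_{x_0} := \max_\nu \val{a_\nu}\, r^\nu .
\end{gather*}
One checks that $\normd_{x_0}$ is multiplicative, so that it defines a point $x_0 \in \mathcal{M}(k\lbrace r^{-1}T\rbrace)$; since the generalized Tate algebra is an integral domain and $\normd_{x_0}$ is a genuine norm, its support is $(0)$. Hence the completed residue field $k' := \mathcal{H}(x_0)$, obtained by completing $\mathrm{Frac}(k\lbrace r^{-1}T\rbrace)$ with respect to $x_0$, is a complete non-archimedean field containing $k$ isometrically (the restriction of $\normd_{x_0}$ to $k$ is $\vald$). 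By construction the image $c_i$ of $T_i$ in $k'$ is a unit with $\val{c_i} = r_i$, so $r_i \in \val{k'^\times}$ for all $i$, as desired. (Equivalently one may adjoin the radii one at a time using the one-variable version of this construction.)

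It remains to base change and verify strictness. Since $k\subseteq k'$ is isometric, $A_{k'}=A\hat{\otimes}_k k'$ is a $k'$-Banach algebra. The completed tensor product carries the generalized Tate algebra to its counterpart over $k'$, i.e. $k\lbrace r^{-1}T\rbrace \hat{\otimes}_k k' \cong k'\lbrace r^{-1}T\rbrace$, and, being right exact on admissible surjections, it sends $\pi$ to an admissible surjection $k'\lbrace r^{-1}T\rbrace \twoheadrightarrow A_{k'}$; thus $A_{k'}$ is a $k'$-affinoid algebra. Finally, because $\val{c_i}=r_i$, the coordinate rescaling $T_i \mapsto c_i S_i$ defines an isometric isomorphism of $k'$-Banach algebras $k'\langle S_1,\ldots,S_n\rangle \xrightarrow{\sim} k'\lbrace r^{-1}T\rbrace$ (the two Banach norms agree term by term). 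Composing, $A_{k'}$ becomes a quotient of the ordinary Tate algebra $k'\langle S_1,\ldots,S_n\rangle$, hence is strictly $k'$-affinoid in the sense of Definition \ref{def. k-affinoid algebra}.

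The main obstacle I anticipate is the interaction of the completed tensor product with the quotient presentation: one must know that $\hat{\otimes}_k k'$ preserves admissible epimorphisms and correctly computes $k\lbrace r^{-1}T\rbrace \hat{\otimes}_k k' \cong k'\lbrace r^{-1}T\rbrace$, which rests on the exactness properties of $\hat{\otimes}$ over a field together with the explicit description of convergent power series. Verifying multiplicativity of the weighted Gauss norm, so that $x_0$ is genuinely a point of the Berkovich spectrum with trivial support, is the other point requiring care, though it follows from the standard leading-term argument (the dominant monomials in each factor produce a unique dominant monomial in the product).
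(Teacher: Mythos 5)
Your proof is correct and follows essentially the same route as the source the paper cites for this statement (\cite{Be1} \S 2.1): enlarge $k$ so that the radii $r_i$ lie in the value group, then rescale coordinates $T_i \mapsto c_i S_i$ to turn the generalized Tate algebra into an ordinary one. The only cosmetic difference is that Berkovich builds $k'$ by iterating the explicit field $K_r$ of two-sided series $\sum_{i\in\mathbb{Z}} a_i T^i$ with $\val{a_i}r^i \to 0$, whereas you take the completed residue field at the weighted Gauss point; both achieve the same enlargement of the value group, and your base-change step is exactly what Proposition \ref{prop. exact sequences} supplies.
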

\begin{proof}
\cite{Be1} §2.1.
\end{proof}

\begin{prop}\label{prop. exact sequences}
Let $(k',\vald')$ be a complete isometric extension of $k$. Then a sequence of bounded homomorphisms of $k$-Banach spaces 
\begin{gather*}		
L \rightarrow M \rightarrow N
\end{gather*}
is exact and admissible (that means, consisting of admissible maps) if and only if 
\begin{gather*}		
L\hat{\otimes}_k k'\rightarrow M\hat{\otimes}_k k' \rightarrow N\hat{\otimes}_k k'
\end{gather*}
is exact and admissible.
\end{prop}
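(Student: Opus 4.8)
The plan is to reduce the three-term assertion to two separate facts about the base change functor $(-)\hat\otimes_k k'$: that it preserves admissible short exact sequences (flatness), and that it reflects exactness and admissibility (faithfulness). Recall that a bounded homomorphism $\varphi$ of $k$-Banach spaces is admissible precisely when the induced bijection $M/\ker\varphi \to \im(\varphi)$ is an isomorphism of Banach spaces for the quotient and subspace norms, and that $L \to M \to N$ being exact and admissible means it is exact as a sequence of $k$-vector spaces with both arrows admissible. First I would record the basic compatibility that for every $k$-Banach space $M$ the canonical map $M \to M\hat\otimes_k k'$, $m \mapsto m\otimes 1$, is an isometric embedding; this uses that $k'/k$ is an isometric extension and that the projective tensor norm restricts correctly. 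In particular $(-)\hat\otimes_k k'$ is faithful: a nonzero space stays nonzero after base change.

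For the forward direction I would split an admissible exact sequence $L \xrightarrow{\alpha} M \xrightarrow{\beta} N$ into the admissible short exact sequences $0 \to \ker\alpha \to L \to \im\alpha \to 0$ and $0 \to \im\alpha \to M \to M/\im\alpha \to 0$ (using that $\im\alpha = \ker\beta$ is closed), together with the admissible monomorphism $M/\im\alpha \hookrightarrow N$, and then apply base change to each piece. The functor $(-)\hat\otimes_k k'$ is right exact and carries admissible epimorphisms to admissible epimorphisms, since it commutes with the formation of quotients by closed subspaces and with completion. The essential point, and the main obstacle, is that it also preserves admissible monomorphisms, i.e. that $k'$ is flat as a $k$-Banach module: for a closed admissible embedding $L \hookrightarrow M$ the map $L\hat\otimes_k k' \to M\hat\otimes_k k'$ is again a closed admissible embedding. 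Here I would invoke the structure of a complete isometric field extension as a $k$-Banach space, which (after passing to the completed tensor product via spaces of null families) exhibits $(-)\hat\otimes_k k'$ as exact; this is the content of the results cited in \cite{Be1} §2.1 and the underlying orthogonality theory of \cite{BGR}. Assembling the short exact pieces then yields that the base-changed sequence is exact and admissible.

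For the converse I would use faithful flatness. Form the homology object $H := \ker\beta / \im\alpha$ with its natural subquotient norm; by the exactness properties established above, base change commutes with its formation, so the homology of $L\hat\otimes_k k' \to M\hat\otimes_k k' \to N\hat\otimes_k k'$ is $H\hat\otimes_k k'$. Exactness upstairs gives $H\hat\otimes_k k' = 0$, and faithfulness (the isometric embedding $H \hookrightarrow H\hat\otimes_k k'$) forces $H = 0$, i.e. exactness downstairs. It remains to descend admissibility of the individual arrows; since the relevant subspace and quotient norms are read off isometrically after base change, strictness of $\alpha$ and $\beta$ upstairs descends to strictness downstairs, completing the equivalence. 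The delicate input throughout is the flatness of the isometric extension, which is precisely where the non-archimedean orthogonality of $k'$ over $k$ enters.
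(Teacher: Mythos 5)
The paper does not actually prove this proposition: its ``proof'' is the bare citation of \cite{FK} Lemma C.1.3 (with \cite{Be1} Proposition 2.1.2 ii) for the special extension of \ref{prop. find k-affinoid algebra}). So your write-up is already more explicit than the text, and the overall shape --- split into admissible short exact pieces, push them through $(-)\hat{\otimes}_k k'$, and use the isometry $M\hookrightarrow M\hat{\otimes}_k k'$ together with flatness of the extension --- is exactly the route the cited sources take. For the forward direction your reduction is fine: the only nonformal input is that a closed admissible embedding stays a closed admissible embedding after base change, and deferring that to Gruson's theorem (via \cite{Be1} §2.1 and \cite{BGR}) is no worse than what the paper itself does.

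The converse direction, however, has a real gap as written. You form $H=\ker\beta/\im\alpha$ and assert that ``base change commutes with its formation,'' so that the homology of $L\hat{\otimes}_k k'\to M\hat{\otimes}_k k'\to N\hat{\otimes}_k k'$ is $H\hat{\otimes}_k k'$. But the exactness properties you established earlier apply to \emph{admissible} maps, and at this stage $\alpha$ and $\beta$ are only bounded: the identifications $\ker(\beta\hat{\otimes}1)=(\ker\beta)\hat{\otimes}_k k'$ and $\overline{\im(\alpha\hat{\otimes}1)}=\overline{\im\alpha}\hat{\otimes}_k k'$ are themselves part of what must be proven, and for a non-admissible map the kernel can genuinely grow after completed tensor product. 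The same circularity affects your final sentence: reading the quotient norm on $L/\ker\alpha$ off the base-changed sequence presupposes the kernel identification. The standard repair is the one Berkovich uses for $k'=K_r$ and Gruson supplies in general: exhibit $M$ as an isometric direct summand of $M\hat{\otimes}_k k'$ via a bounded $k$-linear projection compatible with the maps, so that the original sequence is a retract of the base-changed one and both exactness and admissibility descend formally; alternatively, first descend admissibility of $\alpha$ and $\beta$ separately from Gruson's full faithful-flatness statement and only then run your homology argument. Either way, the converse needs an input strictly stronger than the flatness you invoke for the forward direction, and your proof should name it.
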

\begin{proof}
\cite{FK} Lemma C.1.3. For the case where $k'$ is given as in \ref{prop. find k-affinoid algebra}, see \cite{Be1} Proposition 2.1.2 ii).
\end{proof}

With those assertions many problems about $k$-affinoid algebras can be reduced to questions about strictly $k$-affinoid algebras. One example is the following Berkovich version of Tate's acyclicity Theorem. 

\begin{theo}\label{theo. tate acyclicity} (Tate's Aclicity Theorem) 
Let $\lbrace V_i \rbrace _{i\in I}$ be a finite affinoid covering of a $k$-affinoid space $X=\mt M(A)$. For any finite Banach $A$-module $M$ the \v Cech complex is exact and admissible
\begin{gather*}
0 \rightarrow M \rightarrow \prod_i M \otimes_A A_{V_i} \rightarrow \prod_{i,j} M \otimes_A A_{V_i \cap V_j}\rightarrow ... 
\end{gather*}
\end{theo}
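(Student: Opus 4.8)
## Proof Plan for Tate's Acyclicity Theorem (Berkovich version)

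The plan is to reduce the general $k$-affinoid case to the strictly $k$-affinoid (classical) case, where Tate's acyclicity theorem is already available from rigid geometry. The reduction engine is precisely the pair of propositions established just above: Proposition \ref{prop. find k-affinoid algebra}, which provides a complete field extension $k \subseteq k'$ making $A_{k'} = A \hat{\otimes}_k k'$ strictly $k'$-affinoid, and Proposition \ref{prop. exact sequences}, which says that a sequence of bounded $k$-Banach space homomorphisms is exact and admissible if and only if it becomes so after applying $-\hat{\otimes}_k k'$.

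First I would fix a complete field extension $k \subseteq k'$ as in Proposition \ref{prop. find k-affinoid algebra}, so that $A_{k'}$ is strictly $k'$-affinoid. The key compatibility to verify is that base change along $-\hat{\otimes}_k k'$ carries the whole \v{C}ech complex of $X = \mt M(A)$ with coefficients in $M$ to the corresponding \v{C}ech complex of $X' = \mt M(A_{k'})$ with coefficients in $M_{k'} = M \hat{\otimes}_k k'$. Concretely, the covering $\lbrace V_i \rbrace_{i \in I}$ should pull back to a finite affinoid covering $\lbrace V_i' \rbrace_{i \in I}$ of $X'$, with $A_{V_i'} = A_{V_i} \hat{\otimes}_k k'$ and $A_{V_i' \cap V_j'} = A_{V_i \cap V_j} \hat{\otimes}_k k'$, and the module terms should satisfy $M \otimes_A A_{V_i} \hat{\otimes}_k k' \cong M_{k'} \otimes_{A_{k'}} A_{V_i'}$. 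Granting these identifications, the base-changed \v{C}ech complex is exactly the \v{C}ech complex over $A_{k'}$, which is exact and admissible by the classical (strictly affinoid) Tate acyclicity theorem; then Proposition \ref{prop. exact sequences} transports exactness and admissibility back down to the original complex over $k$.

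The hard part will be establishing that base change commutes with forming affinoid domains and with the relevant tensor products, i.e.\ the identifications $A_{V_i} \hat{\otimes}_k k' \cong A_{V_i'}$ and the module compatibility. For this I would reduce to the case where each $V_i$ is a rational domain: by the Gerritzen--Grauert theorem (Theorem \ref{theo. gerritzen Grauert}) every affinoid domain is a finite union of rational domains, so after refining the covering one may assume the $V_i$ are rational. For a rational domain $V = X(p^{-1}\frac{f}{g})$ the representing algebra is an explicit quotient $A\lbrace p^{-1}\frac{f}{g}\rbrace = A\lbrace p^{-1}T\rbrace/(gT_i - f_i)$ as in Example \ref{example rational domain}, and base change along the flat, exact functor $-\hat{\otimes}_k k'$ manifestly commutes with this presentation (the generalized Tate algebra base-changes to the generalized Tate algebra over $k'$ with the same radii, and the defining relations are preserved), which yields the desired identification and, by the stronger universal property of rational domains recorded in Example \ref{example rational domain}, identifies it with $A_{V'}$. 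One subtlety worth flagging is that refining a covering to rational domains requires knowing that acyclicity for the refined covering implies acyclicity for the original one, which follows from the standard double-complex comparison for \v{C}ech cohomology of a refinement, so this bookkeeping should be carried out carefully. The module terms are handled analogously, using that $M$ is a finite Banach $A$-module so that $M \otimes_A A_{V_i}$ is again finite and base change commutes with the finite presentation of $M$.
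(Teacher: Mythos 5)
Your proposal is correct and follows essentially the route the paper intends: the text immediately preceding the theorem presents it as an instance of reducing to the strictly affinoid case via Propositions \ref{prop. find k-affinoid algebra} and \ref{prop. exact sequences}, and the cited reference carries out exactly this base-change descent combined with Gerritzen--Grauert and the classical acyclicity theorem. You also correctly avoid the potential circularity with Proposition \ref{prop. universal property holds for all k- banach algebras} by invoking the stronger universal property only for rational domains, where it is available independently.
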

\begin{proof}
\cite{Be3} §1.2.
\end{proof}

\noindent Note that we do not have to use the completed tensor product in the theorem above, since we have $M \hat{\otimes}_A A_{V_i} \cong M \otimes_A A_{V_i}$ and $M \hat{\otimes}_A A_{V_i\cap V_j} \cong M \otimes_A A_{V_i\cap V_j}$ for all $i,j$ (cf. \cite{Be1} Proposition 2.1.10).

This result can be used to proof that affinoid domains of an affinoid space own a stronger universal property than that defined in \ref{def. affinoid domain}:

\begin{prop}\label{prop. universal property holds for all k- banach algebras}
Let $X=\mt M(A)$ be an affinoid space and $V \subseteq \mt M(A)$ be an affinoid domain in $X$. Then the universal property of $\mt M (A_V) \rightarrow \mt M(A)$ holds for all spectra of $k'$-affinoid algebras (where $k'$ is a complete isometric extension of $k$) and not merely for $k$-affinoid spaces.
\end{prop}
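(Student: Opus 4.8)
The plan is to reduce the statement to the case of rational domains, for which the strengthened universal property is already recorded in \ref{example rational domain} (iii), and then to reassemble the general case by glueing along Tate's acyclicity theorem. First I would invoke the Gerritzen--Grauert theorem \ref{theo. gerritzen Grauert} to write $V = \bigcup_{i=1}^n V_i$ as a finite union of rational domains $V_i \subseteq X$. Since each $V_i$ is contained in $V$, it is also a rational domain inside $\mt M(A_V)$, defined by the same inequalities (whose defining functions still generate the unit ideal after passing to $A_V$); by uniqueness of the affinoid algebra of an affinoid domain (\ref{rem. to addinoid domains} (i)) its affinoid algebra is the same $A_{V_i}$ whether $V_i$ is viewed inside $X$ or inside $\mt M(A_V)$. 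Using the homeomorphism $\mt M(A_V) \cong V$ of \ref{theo. important properties of affinoid domain}, the family $\{V_i\}$ is then a finite affinoid covering of $\mt M(A_V)$, and Tate's acyclicity \ref{theo. tate acyclicity} with module $M = A_V$ presents $A_V$ as the equalizer $A_V = \ker(\prod_i A_{V_i} \rightrightarrows \prod_{i,j} A_{V_i \cap V_j})$, the maps being the admissible restriction homomorphisms.

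Next, given a complete isometric extension $k \subseteq k'$, a $k'$-affinoid algebra $B$, and a bounded $k$-homomorphism $A \to B$ whose induced morphism $\psi : \mt M(B) \to \mt M(A)$ has image in $V$, I would set $W_i := \psi^{-1}(V_i)$. By \ref{example rational domain} (ii) each $W_i$ is a rational domain in $Y := \mt M(B)$, and since $\im\psi \subseteq V = \bigcup_i V_i$ the $W_i$ form a finite affinoid covering of $Y$. Because each $V_i$ is rational, its stronger universal property \ref{example rational domain} (iii) supplies unique bounded $A$-homomorphisms $\alpha_i : A_{V_i} \to B_{W_i}$, and likewise $\alpha_{ij} : A_{V_i \cap V_j} \to B_{W_i \cap W_j}$. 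The uniqueness clause in that property shows the $\alpha_i$ commute with the restriction maps on both sides, so they assemble into a morphism from the \v Cech complex of $(A_V, \{V_i\})$ to that of $(B, \{W_i\})$.

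Finally I would apply Tate's acyclicity once more, now over $k'$, to $B$ with the covering $\{W_i\}$, presenting $B = \ker(\prod_i B_{W_i} \rightrightarrows \prod_{i,j} B_{W_i \cap W_j})$. The morphism of complexes induces on equalizers the desired bounded homomorphism $A_V \to B$; that it lies over $A$ and is bounded follows from the corresponding properties of the $\alpha_i$ together with the admissibility of the two equalizer inclusions. For uniqueness, composing any factorization $A_V \to B$ with $B \to B_{W_i}$ gives a map $A_V \to B_{W_i}$ whose associated morphism $W_i \to \mt M(A_V)$ has image in $V_i$; by the stronger universal property of the rational domain $V_i$ inside $\mt M(A_V)$ this must coincide with $A_V \to A_{V_i} \xrightarrow{\alpha_i} B_{W_i}$, and injectivity of $B \hookrightarrow \prod_i B_{W_i}$ then forces the factorization to be unique.

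The main obstacle I anticipate is organising the glueing cleanly rather than proving anything genuinely new: one must check that Tate's acyclicity is legitimately applied on both sides (over $k$ for $A_V$ and over $k'$ for $B$, which is harmless since the theorem holds over any complete non-archimedean field), that the algebras $A_{V_i}$ are intrinsic to the domains $V_i$ independently of the ambient space, and that the compatibility and boundedness bookkeeping across the two \v Cech complexes goes through. Once the problem has been reduced to rational domains, no analytic input beyond \ref{example rational domain} and \ref{theo. tate acyclicity} is required.
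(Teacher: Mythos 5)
Your proposal is correct and follows essentially the same route as the paper's own proof: Gerritzen--Grauert to cover $V$ by rational domains, the stronger universal property of rational domains to produce the maps $A_{V_i}\to B_{W_i}$, and Tate's acyclicity theorem applied to both \v Cech complexes to descend to a bounded homomorphism $A_V\to B$. Your write-up is somewhat more careful than the paper's (it spells out the uniqueness of the factorization and the intrinsic nature of the $A_{V_i}$, which the paper leaves implicit), but there is no substantive difference in method.
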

\begin{proof} (cf. \cite{Be1} Proposition 2.2.3 i))
Let $k'$ be a complete isometric extension of $k$, $B$ a $k'$-affinoid algebra and $\varphi: \mt M(B) \rightarrow \mt M(A)$ with $\varphi (\mt M(B)) \subseteq V $ a map induced by a bounded homomorphism of $k$-algebras $A \rightarrow B$. By the Gerritzen-Grauert theorem (\ref{theo. gerritzen Grauert}), we can find rational domains $V_1,...,V_n$ in $\mt M(A)$ that cover $V$. For $i=1,...,n$, we set $U_i:= \varphi^{-1}(V_i)$ which is a rational domain in $\mt M(B)$ (cf. \ref{example rational domain}). But since we know that rational domains satisfy the stronger universal property (again cf. \ref{example rational domain}), we get bounded homomorphisms of $k$-algebras $A_{V_i}\rightarrow B_{U_i}$ and $A_{V_i \cap V_j}\rightarrow B_{U_i \cap V_j}$ for all $i,j=1,...,n$. By Tate's Theorem \ref{theo. tate acyclicity}, we therefore have a commutative diagram
\begin{gather*}		
 \begin{xy}
  \xymatrix{
     0 \ar[r] &  A_V \ar[r] \ar@{.>}[d] & \prod_i A_{V_i} \ar[r] \ar[d]     &   \prod_{i,j} A_{V_i \cap V_j} \ar[d]   \\
      0 \ar[r] & B \ar[r]   & \prod_i B_{U_i} \ar[r]    &   \prod_{i,j} B_{U_i \cap U_j}.
  }
\end{xy}
\end{gather*} 
Since the horizontal lines are exact, we obtain a bounded $k$-algebra homomorphism $A_V \rightarrow B$ as desired. 
\end{proof}

\begin{rem}\label{rem. pre images of affinoid domains}
\emph{
Let $\varphi : Y:= \mt M(B) \rightarrow \mt M(A)=:X$ be a morphism of $k$-affinoid spaces and $V\subseteq X$ be an affinoid domain in $X$. Then $\varphi^{-1}(V)$ is an affinoid domain in $Y$ which is represented by the morphism of $k$-affinoid algebras $A \rightarrow B \hat{\otimes}_A A_V$ (cf. \cite{Be1} Remarks 2.2.2).\\ 
For example if $U$ is an affinoid domain in $X$ that is contained in the affinoid domain $V$, then $\psi^{-1}(U)$ is an affinoid domain in $\mt M(A_V)$ (here $\psi$ denotes the morphism of $k$-affinoid spaces $\mt M(A_V) \rightarrow \mt M(A)$). Moreover, in this case the respective $k$-affinoid spaces $\mt M(A_U)$ and $\mt M((A_V)_{\psi^{-1}(U)})$ are isomorphic since they satisfy the same universal property. In particular, we get an isomorphism of $k$-affinoid algebras $A_U \cong (A_V)_{\psi^{-1}(U)}$.
}
\end{rem}

\subsection{k-analytic Berkovich spaces}

In this section we glue affinoid spaces together to obtain $k$-analytic Berkovich spaces. Since this will be done via affinoid domain embeddings which are closed topological embeddings the character of this process is different from the procedure we used for general adic spaces.

\begin{defn}\label{def. net and quasi net}
\emph{Let $X$ be a topological space and $\tau$ be a set of subsets of $X$.
\begin{enumerate}
\item $\tau$ is said to be a \emph{quasi-net on $X$} if for every $x \in X$ there are finitely many elements of $\tau$ such that their union is a neighbourhood of $x$ and $x$ is contained in their intersection.
\item $\tau$ is called a \emph{net on $X$} if it is a quasi-net on $X$ and for each $U,V \in \tau$ the set $\tau |_{U\cap V}:= \lbrace W \in \tau \ | \ W \subseteq U \cap V \rbrace$ is a quasi-net on $U\cap V$.
\end{enumerate}
}
\end{defn}

\begin{rem}\label{rem. nets as categories}
\emph{For a topological space $X$ with net $\tau$ we will consider $\tau$ as a category in the natural way (morphisms are inclusion maps). Moreover, we denote the canonical functor from $\tau$ to the category of topological spaces by $\mathcal{T}$.}
\end{rem}

\begin{defn}\label{def. k-analytic berkovich spaces}
\emph{Let $X$ be a locally Hausdorff space and $\tau$ be a net on $X$ consisting of compact subsets. A \emph{$k$-affinoid atlas of $X$ with net $\tau$} (denoted by $\mt A$) is given by:
%\begin{enumerate}
%\item A functor $\mathcal{A}$ from $\tau$ to the category of $k$-affinoid spaces, sending inclusions to affinoid domain embeddings (for $U \in \tau$ we will write $A_U$ for $\mathcal{A}(U)$);
%\item An isomorphism of functors $\mathcal{T}^a \circ \mathcal{A} \cong \mathcal{T}$.
%\end{enumerate}
%In other words, a $k$-affinoid atlas $\mt A$ of $X$ with net $\tau$ is given by:
\begin{enumerate}
\item A $k$-affinoid algebra $A_U $ and a homeomorphism $U \rightarrow \mt M(A_U)$ for each $U \in \tau$;
\item For each pair $U,U' \in \tau$ with $ U \subseteq U'$ a bounded $k$-algebra homomorphism $\rho_U^{U'}: A_{U'}\rightarrow A_{U}$ such that the corresponding morphism of $k$-affinoid spaces $\mt M(A_U) \rightarrow \mt M(A_{U'})$ is an affinoid domain embedding.\\
\\
 \noindent These data should satisfy the following cocycle condition:
\item[iii)] For any triple $U ,U' ,U'' \in  \tau$ such that $U \subseteq U' \subseteq U''$ the equation $\rho_U^{U''}=\rho^{U'}_U \circ \rho_{U'}^{U''}$ holds.
\end{enumerate}
A \emph{$k$-analytic Berkovich space} is a triple $(X,\mathcal{A},\tau)$ where $X$ is a locally Hausdorff space, $\tau$ is a net on $X$ that consists of compact subsets and $\mathcal{A}$ is a $k$-affinoid atlas of $X$ with net $\tau$.\\
One can similarly define strictly $k$-analytic Berkovich spaces by running through all definitions above replacing "$k$-affinoid" by "strictly $k$-affinoid". 
}
\end{defn}

The following definition provides a naive notion of morphisms between $k$-affinoid Berkovich spaces. As we will see in the remark below, it is not sufficient for the theory.

\begin{defn}\label{def. strong morphisms of k-analytic spaces}
\emph{
Let $(X,\mathcal{A},\tau)$ and $(X',\mathcal{A}',\tau')$ be $k$-analytic Berkovich spaces.
A \emph{strong morphism} $(X,\mathcal{A},\tau)\rightarrow (X',\mathcal{A}',\tau')$ consists of:
\begin{enumerate}
\item a continuous map $\varphi : X \rightarrow X'$ such that for each $U \in \tau$ there is an $U' \in \tau'$ with $\varphi(U)\subseteq U'$;
\item a bounded $k$-algebra homomorphism $\phi_{U/U'}: A'_{U'}\rightarrow A_U$ such that the induced map $\mathcal{M}(A_U)\rightarrow \mathcal{M}(A'_{U'})$ is identified with $\varphi |_U : U \rightarrow U'$ via $\mathcal{M}(A_U)\cong U$ and $\mathcal{M}(A'_{U'})\cong U'$ for each pair $(U,U')\in \tau \times \tau'$ with $\varphi(U)\subseteq U'$.
\end{enumerate}
}
\end{defn}

\begin{rem}\label{rem. affinoid domains as k-analytic spaces}
\emph{Let $X=\mt M(A)$ be a $k$-affinoid space (resp. strictly $k$-affinoid space). Choosing $\tau = \lbrace X \rbrace$ and defining a $k$-affinoid atlas $\mt A$ by $X\mapsto A$, we obtain a $k$-analytic space (resp. strictly $k$-analytic space) $(X,\mt A, \tau)$. On the other hand, we can also choose $\tau'$ to be the set containing all affinoid domains (resp. strictly $k$-affinoid domains) of $X$ and $\mt A'$ to be the canonical $k$-affinoid atlas (resp. strictly $k$-affinoid atlas) sending $U\in \tau'$ to $A_U$ where $A_U$ comes from the definition of affinoid domains. So we obtain a further $k$-analytic space (resp. strictly $k$-analytic space) $(X,\mt A',\tau')$ with ground space $X$.\\
Note that both spaces are completely determined by $A$, and hence they contain the same amount of information although $(X,\mt A',\tau')$ seems to be \emph{finer}  in a certain sense. Since we do not want to distinguish between analytic spaces that just differ by a certain refinement, we need a notion of morphisms that respects this requirement.
}
\end{rem}

Making this more precise, one defines a \emph{maximal $k$-analytic atlas} $(\widehat{\mt A}, \widehat{\tau})$ for a $k$-analytic Berkovich space $(X, \mt A, \tau)$ with the property that $(\widehat{\widehat{\mt A}}, \widehat{\widehat{\tau}})$ equals to $(\widehat{\mt A}, \widehat{\tau})$.  Morphisms in the category of $k$-analytic Berkovich spaces will then be defined in such a way that the canonical strong morphism $(X, \mt A, \tau) \rightarrow (X, \widehat{\mt A}, \widehat{\tau})$ induces an isomorphism.

\begin{rem}\label{rem. refinement of nets in Berkovich spaces: include all affinoids}
\emph{
Let $(X, \mt A, \tau)$ be a $k$-affinoid Berkovich space.
\begin{enumerate}
\item If $W$ is an affinoid domain in some $U\in \tau$, then it is an affinoid domain in any $V\in \tau$ that contains $W$. Indeed, by applying the universal property of affinoid domains, this assertion is clear if $X= \mt M(A)$ is $k$-affinoid space. For the general case see \cite{Be3} Lemma 1.2.5.
\item We define $\overline{\tau}$ to be the set of all $W's$ from i). Then $\overline{\tau}$ is a net on $X$ and there exists a unique extension of $\mt A$ to $\overline{\tau}$ which we denote by $\overline{\mt A}$.
\item Let $\varphi : (X,\mathcal{A},\tau) \rightarrow (Y,\mathcal{A'},\tau')$ be a strong morphism of $k$-analytic Berkovich spaces. Then $\varphi$ can uniquely be extended to a strong morphism $(X,\overline{\mathcal{A}},\overline{\tau}) \rightarrow (Y,\overline{\mathcal{A'}},\overline{\tau'})$ (cf. \cite{Be3} Proposition 1.2.8)
\item Let $\widehat{\tau}$ be the set containing all compact Hausdorff subsets $W\subseteq X$ such that $W$ admits a finite cover $(W_i)_{i \in I}$ with $W_i\in \overline{\tau}$ for all $i \in I$ and such that the following properties are satisfied:
\begin{enumerate}
\item[a)] $W_i \cap W_j \in \overline{\tau}$ and natural the map $\overline{\mt A}_{W_i}\widehat{\otimes}_k \overline{\mt A}_{W_j} \rightarrow \overline{\mt A}_{W_i\cap W_j}$ is an admissible surjection for all $i,j \in I$.
\item[c)] The $k$-Banach algebra $\widehat{\mt A}_{\lbrace W_i \rbrace_{i\in I}}:= \text{ker}(\prod_i \overline{\mt A}_{W_i} \rightarrow \prod_{i,j} \overline{ \mt A}_{W_i \cap W_j})$ is $k$-affinoid and the obtained map of sets $W\rightarrow \mt M(\widehat{\mt A}_{\lbrace W_i \rbrace})$ is a homeomorphism .
\end{enumerate}
\end{enumerate}
}
\end{rem}

One shows that $(X,\widehat{\mt A}, \widehat{\tau})$ in fact gives a $k$-analytic Berkovich space (cf. \cite{Be3} Proposition 1.2.13). One also uses the definition above to endow $X$ with a Grothendieck topology and a structure sheaf which we will not do in this work (for more details see \cite{Be3} §1.3).

Now we finally approach the right definition of a morphism. Beginning with the category of $k$-analytic Berkovich spaces and strong morphisms, we will define a category of fractions where we localize with respect to the following special strong morphisms:

\begin{defn}\label{def. quasi isomorphisms}
\emph{A strong morphism  $(\varphi,(\phi_{U/U'})):(X,\mathcal{A},\tau)\rightarrow (X',\mathcal{A}',\tau')$ of $k$-analytic spaces is said to be a \emph{quasi-isomorphism} if $\varphi$ is a homeomorphism and all $\phi_{U/U'}$ are affinoid domain embeddings.
}
\end{defn}

\begin{rem}\label{rem. calculus of fractions}
\emph{
Let $(X,\mathcal{A},\tau),(X,\mathcal{A}',\tau')$ be $k$-analytic spaces such that $\tau$ is a subset of $\tau'$ and $\mt A = \mt A' |_\tau$ (i.e. $ A_U =  A'_U$ for all $U \in \tau$). Then the identity $X \rightarrow X$ obviously induces a quasi-isomorphism $(X,\mathcal{A},\tau)\rightarrow (X,\mathcal{A}',\tau')$. In particular  $(X,\mathcal{A},\tau) \rightarrow (X, \widehat{\mt A}, \widehat{\tau})$ becomes a quasi-isomorphism, as desired. 
}
\end{rem}

Note that the system of quasi-isomorphisms admits \emph{calculus of fractions} in the category of $k$-analytic spaces with strong morphisms. Hence the following definition makes sense (cf. \cite{Be3} Proposition 1.2.10).

\begin{defn}\label{def. category of k-analytic spaces}
\emph{The \emph{category of $k$-analytic Berkovich spaces} is the quotient category of the category of $k$-analytic spaces with strong morphisms by the system of quasi-isomorphisms. I.e. a morphism of $k$-analytic spaces  $(X,\mathcal{A}_X,\tau_X)\rightarrow (Y,\mathcal{A}_Y,\tau_Y)$ is given by the equivalence class of a diagram (which we sometimes call a \emph{span})  
\begin{gather*}
(X,\mathcal{A}_X,\tau_X)\leftarrow (X',\mathcal{A}_{X'}, \tau_{X'})\rightarrow (Y,\mathcal{A}_Y,\tau_Y)
\end{gather*}
such that the left arrow is a quasi-isomorphism and the right arrow is a strong morphism.\\
The associated equivalence relation is defined as follows: Let $a,a',a''$ and $b$ be $k$-analytic Berkovich spaces and $v:a \rightarrow a'$, $w:a \rightarrow a''$, $f: a' \rightarrow b$ and $g: a'' \rightarrow b $ be strong morphisms such that $v$ and $w$ are quasi-isomorphisms.
$a \xleftarrow{v} a' \xrightarrow{f} b$ is equivalent to $a \xleftarrow{w} a'' \xrightarrow{g} b$ if there exists a $k$-analytic Berkovich space $\overline{a}$ and strong morphisms $s: \overline{a} \rightarrow a'$ and $t: \overline{a} \rightarrow a''$ such that $f \circ s = g \circ t$ and $v \circ s= w \circ t$ are quasi-isomorphism. This definition is illustrated by the diagram
\begin{gather*}
\begin{xy}
  \xymatrix{
		& a'  \ar[ld]_{v} \ar[rd]^{f}  \\
		a & \overline{a}\ar[u]_s \ar[l] \ar[r] \ar[d]_t & b\\
		& a''  \ar[lu]^{w} \ar[ru]_{g}.\\
 }
\end{xy}
\end{gather*}
We have the following definition of composition of morphisms:
Given two morphisms of $k$-analytic Berkovich spaces $f:a\rightarrow b$ and $g:b\rightarrow c$ represented by $a\xleftarrow{v} a' \xrightarrow{f} b$ and $b \xleftarrow{u} b' \xrightarrow{h} c$. Then there exist strong morphisms of $k$-analytic Berkovich spaces $z:d\rightarrow a'$ and $k:d \rightarrow b'$ such that $z$ is a quasi-isomorphism and $f \circ z =u\circ k$ (this holds since the system of quasi-isomorphisms admits calculus of fractions). We define $g\circ f$ to be the equivalence class of $a\xleftarrow{v\circ z} d \xrightarrow{h\circ k} c$.\\
For more details on \emph{calculus of fractions} and in particular arguments why the notions above are well-defined, see \cite{GZ} §2.
}
\end{defn}

\begin{rem}\label{rem. category of strictly k-analytic spaces}
\emph{
As hinted for objects of the category defined above, one similarly defines the \emph{category of strictly $k$-analytic Berkovich spaces} by using only \emph{strictly $k$-affinoid spaces} and \emph{strictly affinoid domains} in those definitions. It is not trivial that any morphism of $k$-analytic Berkovich spaces $f:X\rightarrow Y$ between strictly $k$-affinoid Berkovich spaces is also a morphism of strictly $k$-affinoid Berkovich spaces:\\ Given a realization of $f$, $X \leftarrow X' \rightarrow Y$ (where the left arrow is a quasi-isomorphism and the right arrow is a strong morphism) it is a priori not clear if the $k$-analytic space $X'$ can be chosen to be strict.\\
This problem was solved by Temkin within the following theorem: 
}
\end{rem}

\begin{theo}\label{theo. strict fully faithful to analytic}
The canonical inclusion functor from the category of strictly $k$-analytic Berkovich spaces to the category of $k$-analytic Berkovich spaces is fully faithful.
\end{theo}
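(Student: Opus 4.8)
The plan is to reduce the statement to a single cofinality result by exploiting the calculus-of-fractions description of morphisms (Definition \ref{def. category of k-analytic spaces}). A morphism in the $k$-analytic category between strictly $k$-analytic spaces $X$ and $Y$ is represented by a span $X \xleftarrow{v} X' \xrightarrow{g} Y$ in which $v$ is a quasi-isomorphism and $g$ is a strong morphism, but where $X'$ a priori carries only a $k$-affinoid, not necessarily strict, atlas; this is exactly the difficulty flagged in Remark \ref{rem. category of strictly k-analytic spaces}. Both faithfulness and fullness will follow once I know that such an $X'$ can always be dominated by a strict space: that there exist a strictly $k$-analytic space $X''$ and a quasi-isomorphism $w : X'' \to X'$ whose composition with $v$ is realizable by strictly $k$-affinoid data. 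Granting this, fullness is immediate, since the composite span $X \xleftarrow{v\circ w} X'' \xrightarrow{g\circ w} Y$ represents the same morphism but lives entirely in the strict category; faithfulness follows by applying the same domination to the space $\overline{a}$ witnessing an equivalence of two strict spans (as in Definition \ref{def. category of k-analytic spaces}), so that any identification of strict morphisms inside the larger category is already realized after a strictly $k$-analytic common refinement.

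The heart of the argument is therefore the following cofinality proposition, which I would isolate and prove first: \emph{on a strictly $k$-analytic ground space the strictly $k$-affinoid domains are cofinal among $k$-affinoid domains, and every $k$-affinoid net admits a strictly $k$-affinoid refinement.} To establish it I would work locally on the strictly $k$-affinoid pieces and invoke the Gerritzen--Grauert theorem (\ref{theo. gerritzen Grauert}) to write each affinoid domain as a finite union of rational domains $X(p^{-1}\frac{f}{g})$ in the sense of \ref{example rational domain}. A rational domain of this form is \emph{strictly} $k$-affinoid precisely when its radii $p_i$ may be taken in $\sqrt{\val{k^\times}}$; since $\sqrt{\val{k^\times}}$ is dense in $\mathbb{R}^{\geq 0}$ (Notations, item iv), I would approximate each $p_i$ from above and from below by elements of $\sqrt{\val{k^\times}}$, producing strict rational domains that sandwich the given one. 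Running this over the finitely many rational domains of a Gerritzen--Grauert decomposition, together with their pairwise intersections, should yield a strictly $k$-affinoid refinement compatible with the transition homomorphisms and the cocycle condition of Definition \ref{def. k-analytic berkovich spaces}, hence the desired quasi-isomorphism $w$.

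The step I expect to be the main obstacle is making this approximation \emph{coherent} across the whole finite covering and all of its intersections, so that the perturbed radii still assemble into a genuine net with the correct strictly affinoid domain embeddings as transition maps. Independent rounding of the individual radii destroys the combinatorics of the covering, since domains that overlapped may separate and intersections may cease to be rational of the prescribed shape; controlling this uniformly is exactly the delicate point resolved by Temkin. The clean tool for organizing the bookkeeping is Temkin's graded reduction, which attaches to each $k$-affinoid space its reduction over the $\mathbb{R}_{>0}$-graded residue field of $k$, the strict spaces corresponding to those reductions that are of finite type over the ordinary residue field $\tilde{k}$; full faithfulness then becomes a statement about these reductions that can be checked by reduction-theoretic means. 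Rather than reproving this machinery I would cite \cite{Te1} for it and use it to upgrade the heuristic sandwiching above into an honest construction of $w$, thereby closing both the faithfulness and the fullness arguments.
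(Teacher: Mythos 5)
The paper offers no proof of this theorem at all: it is quoted directly from Temkin (\cite{Te3}, Corollary 4.10). Your reduction of full faithfulness to the cofinality of strictly $k$-affinoid domains is sound and matches the shape of Temkin's actual argument, and since you too ultimately defer the one genuinely hard step --- making the approximation of radii coherent across a covering, which is exactly where the graded reduction theory enters --- to Temkin, your proposal is in substance the same as the paper's citation; note only that the reference for that machinery should be \cite{Te3} rather than the survey \cite{Te1}.
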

\begin{proof}
\cite{Te3} Corollary 4.10.
\end{proof}

\begin{rem}\label{rem. being strict no property}
\emph{
The theorem above also ensures that we can regard the category of strictly $k$-analytic Berkovich spaces as a subcategory of the category of $k$-analytic Berkovich spaces. So a $k$-analytic Berkovich space is said to be \emph{strict} if it is contained in the essential image of the inclusion functor.
%Let $A$ be a $k$-affinoid algebra. If we endow $\mt M(A)$ with the net (and the canonical atlas) consisting of all affinoid domains (and not merely all strictly affinoid domains), we obtain a $k$-analytic space which contains non-strict domains and hence cannot be regarded as a strictly $k$-analytic space. However, as a $k$-analytic space, it is isomorphic to $\mt M(A)$ endowed with the net (and the canonical atlas) that consist of the single point $\tau= \lbrace \mt M(A) \rbrace$, which in contrast is also a strictly $k$-analytic Berkovich space. This example shows that \emph{being strict} is no invariant in the category of $k$-analytic Berkovich spaces and hence strict is no property in this category.
}
\end{rem}

\section{The affine case}

The main goal of this paper is to prove an equivalence of categories between strictly $k$-analytic Berkovich spaces and taut adic spaces that are locally of finite type over $k$. Hence a natural approach is to consider the affine objects first and then go up to the global level. In the first paragraph of this section we show that the notions of strictly $k$-affinoid algebras and Huber pairs that are topologically of finite type over $k$ essentially coincide and hence the equivalence of categories in the affine case easily follows. However, it is not obvious how this equivalence can be extended to the global level since the respective glueing process differs significantly. Therefore it is necessary to investigate the connection between the affine objects on a geometric level, which will be done in the second subsection of this paragraph.

\subsection{Equivalence of categories in the affine case}

\begin{rem}\label{rem. k-affinoid algebra considered as a Huber pair }
\emph{Let $A$ be a strictly $k$-affinoid algebra and let $c \in k$ be an arbitrary topologically nilpotent unit. Then $A$ can be considered as a Huber ring with pair of definition $(A^o,c \cdot A^o)$ and this construction does not depend on the choice of $c \in k^o$. As mentioned before one can always choose $A^o$ as a ring of integral elements of $A$ to obtain a Huber pair $(A,A^o)$. The following propositions deal with this mapping $A \mapsto (A,A^o)$ and show that it induces an equivalence of categories.
}
\end{rem}

\begin{lem}\label{lem. unique topological of finite type}
Let $A$ be a strictly $k$-affinoid algebra. Then there exists a unique ring of integral elements $A^+$ of $A$ such that $k^o \subseteq A^+$ and such that $(A,A^+)$ is topologically of finite type over $k$, namely $A^+=A^o$.
\end{lem}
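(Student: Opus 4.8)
The plan is to establish existence and uniqueness simultaneously by reducing both to one integrality statement. I begin with two reductions. By Definition \ref{def. Huber pair} any ring of integral elements already satisfies $A^+ \subseteq A^o$; hence for uniqueness it suffices to prove $A^o \subseteq A^+$, while for existence it remains to check that $(A, A^o)$ is topologically of finite type over $k$. Moreover, $A$ is strictly $k$-affinoid and in particular a complete Tate ring, so Remark \ref{rem. huber pairs of finite type equal k affinoid} i) lets me assume any witnessing morphism $(k,k^o)\to(A,A^+)$ to be \emph{strictly} topologically of finite type. Unwinding Definition \ref{def. Huber pairs topologically of finite type /k}, this means there is a surjective, continuous and open $k$-algebra homomorphism $g\colon k\langle X_1,\dots,X_n\rangle \to A$ for which $A^+$ is the integral closure in $A$ of the image $g(k^o\langle X_1,\dots,X_n\rangle)$ of the ring of integral elements of $k\langle X_1,\dots,X_n\rangle$. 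Conversely, the defining surjection of a strictly $k$-affinoid algebra furnishes such a $g$, which is open by the Open Mapping Theorem (cf. Remark \ref{admissible homomorphismus}).

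After these reductions both assertions follow from a single claim: for every such $g$ the integral closure of $g(k^o\langle X\rangle)$ in $A$ is exactly $A^o$, independently of $g$. One inclusion is formal. Since $g$ is bounded and $k^o\langle X\rangle = k\langle X\rangle^o$ (cf. Example \ref{exmp. Huber pairs}), Remark \ref{rem. bounded brought together} ii) yields $g(k^o\langle X\rangle)\subseteq A^o$, and $A^o$ is integrally closed in $A$ (again Example \ref{exmp. Huber pairs}); hence the integral closure in question lies in $A^o$. Combined with the automatic inclusion $A^+ \subseteq A^o$, this already pins down the only possible candidate and reduces everything to the opposite inclusion.

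So the heart of the matter, and the main obstacle, is the reverse inclusion: every power-bounded element of $A$ is integral over $g(k^o\langle X\rangle)$. This is precisely where the rigidity of strictly $k$-affinoid algebras, as opposed to arbitrary Tate rings, is used, and it rests on the reduction theory of affinoid algebras. I would first reduce to the case that $A$ is reduced, using that the nilradical of $A$ is topologically nilpotent so that power-boundedness is detected modulo nilpotents, and that $A/\mathrm{nil}(A)$ is again strictly $k$-affinoid. For reduced $A$ one has $A^o=\lbrace f\in A \mid \val{f}_{\sup}\leq 1\rbrace$, the supremum norm being equivalent to a residue norm, and the reduction $\tilde A := A^o/A^{oo}$ is a reduced $\tilde k$-algebra of finite type; Noether normalization of $\tilde A$ over $\tilde k$ together with completeness then lifts to an integral dependence of each element of $A^o$ over $g(k^o\langle X\rangle)$ (cf. \cite{BGR}). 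Granting this, the integral closure of $g(k^o\langle X\rangle)$ in $A$ equals $A^o$; consequently $(A, A^o)$ is topologically of finite type over $k$, giving existence, and any competitor $A^+$ must coincide with $A^o$, giving uniqueness.
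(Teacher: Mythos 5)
The paper does not prove this lemma at all: it is quoted verbatim from \cite{Hu2}, Lemma 4.4, with the proof deferred to that reference. Your argument is therefore not so much an alternative route as a reconstruction of what sits behind the citation, and its skeleton is sound: reduce both existence and uniqueness to the single claim that for any continuous open surjection $g\colon k\langle X_1,\dots,X_n\rangle\to A$ the integral closure of $g(k^o\langle X\rangle)$ equals $A^o$; get the easy inclusion from boundedness of $g$ and integral closedness of $A^o$; use \ref{rem. huber pairs of finite type equal k affinoid } i) to normalize any witnessing morphism to a strict one; and source the hard inclusion $A^o\subseteq$ (integral closure) from the reduction theory of affinoid algebras in \cite{BGR} (Theorem 6.3.4/1 is the precise statement you need). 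Two remarks. First, you are tacitly reading Definition \ref{def. Huber pairs topologically of finite type /k} as requiring $B^+$ to be the integral closure of $g(A\langle X\rangle_T^+)$ rather than of $f(A^+)$ as the paper literally writes; that is Huber's definition and the only reading under which the lemma is true, but it is worth flagging that you are correcting the text there. Second, the one genuinely soft spot is your description of the final step: lifting an integral equation from $\tilde A=A^o/A^{oo}$ back to $A^o$ is not straightforward (a lift of $\tilde f^m+\tilde a_{m-1}\tilde f^{m-1}+\dots+\tilde a_0=0$ only places $f^m$ in a finite module modulo $A^{oo}$, and the obvious successive-approximation argument runs into the non-noetherianness of $k^o\langle X\rangle$ when the value group is not discrete). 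The actual proof in \cite{BGR} instead bounds, via the maximum modulus principle, the coefficients of the minimal polynomial of $f$ over a Noether normalization $T_d\hookrightarrow A$ chosen compatibly with $g$, and deduces integrality over $g(k^o\langle X\rangle)$ directly. Since you cite \cite{BGR} for exactly this input, the proof stands, but the sentence describing how the lifting works should be replaced by (or explicitly reduced to) that statement.
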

\begin{proof}
\cite{Hu2} Lemma 4.4.
\end{proof}

\begin{prop}\label{prop. equivalence of cat. k-affinoid algebras, complete Huber pairs locally of finite type /k}
We have a functor $A \mapsto (A,A^o)$ from the category of strictly $k$-affinoid algebras with bounded morphisms to the category of complete Huber pairs that are topologically of finite type over $k$ and it  is an equivalence of categories. 
\end{prop}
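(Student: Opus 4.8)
The plan is to verify the four ingredients of an equivalence: that $A \mapsto (A,A^o)$ is a well-defined functor, and that it is faithful, full, and essentially surjective. First I would check well-definedness on objects. If $A$ is strictly $k$-affinoid then $A$ is complete by definition, and a surjective continuous $k$-algebra homomorphism $k\langle X_1,\dots,X_n\rangle \to A$ exists; by \ref{rem. huber pairs of finite type equal k affinoid} (together with \ref{lem. unique topological of finite type}) the pair $(A,A^o)$ is then a complete Huber pair topologically of finite type over $k$, with $A^o$ forced to be the ring of integral elements. Here one uses that $A^o$ computed in the Banach sense agrees with the power-bounded elements in the sense of Definition \ref{def. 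Huber ring / Tate ring}, which is exactly \ref{rem. bounded brought together} i). On morphisms, a bounded $k$-algebra homomorphism $\varphi : A \to B$ is continuous by \ref{lem. bounded means continuous} and satisfies $\varphi(A^o)\subseteq B^o$ by \ref{rem. bounded brought together} ii), so it is a morphism of Huber pairs over $k$; functoriality is immediate since the functor leaves the underlying ring map unchanged.

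Faithfulness is then formal: since the functor does not alter the underlying ring homomorphism, two bounded morphisms inducing the same morphism of Huber pairs must coincide. For fullness I would start from a morphism of Huber pairs $\psi : (A,A^o) \to (B,B^o)$, that is, a continuous $k$-algebra homomorphism with $\psi(A^o)\subseteq B^o$. Since $A$ and $B$ are $k$-Banach spaces and $\psi$ is $k$-linear and continuous, \ref{lem. bounded means continuous} shows $\psi$ is bounded. Hence $\psi$ is a bounded $k$-algebra homomorphism of strictly $k$-affinoid algebras whose image under the functor is $\psi$ itself, giving fullness.

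For essential surjectivity I would take a complete Huber pair $(A,A^+)$ topologically of finite type over $k$. By \ref{rem. huber pairs of finite type equal k affinoid} we have $A^+=A^o$ and a surjective continuous $k$-algebra homomorphism $k\langle X_1,\dots,X_n\rangle \to A$; using the identification $k\langle X_1,\dots,X_n\rangle = k\{T_1,\dots,T_n\}$ from \ref{rem.first comarison of tate algebra on both sides}, the residue norm makes $A$ a strictly $k$-affinoid algebra whose associated Huber pair is $(A,A^o)=(A,A^+)$. The step I expect to need the most care is the compatibility between the two descriptions of the situation: one must confirm that the Banach topology on a strictly $k$-affinoid algebra coincides with the Huber-ring topology determined by a pair of definition $(A^o, c\cdot A^o)$, and that power-boundedness in the normed sense matches Definition \ref{def. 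Huber ring / Tate ring}. Both are supplied by \ref{rem. bounded brought together} and the discussion preceding it, but keeping these two pictures of $A^o$ and of the topology aligned throughout is the only genuinely non-formal point of the argument.
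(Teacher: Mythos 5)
Your proposal is correct and follows essentially the same route as the paper's proof: both directions on morphisms rest on \ref{lem. bounded means continuous} and \ref{rem. bounded brought together}, and essential surjectivity rests on \ref{lem. unique topological of finite type} together with \ref{rem. huber pairs of finite type equal k affinoid }. Your version merely spells out the four conditions (well-definedness, faithfulness, fullness, essential surjectivity) and the compatibility of the Banach and Huber notions of power-boundedness more explicitly than the paper does.
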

\begin{proof}
A bounded homomorphism of strictly $k$-affinoid algebras $\varphi : A \rightarrow B$ is obviously continuous  and maps power-bounded elements to power-bounded elements by \ref{rem. bounded brought together}. On the other hand if we have a continuous morphism of Huber pairs $\varphi :(A,A^o)\rightarrow (B,B^o)$ where $A$ and $B$ are strictly $k$-affinoid algebras, then it is bounded by \ref{lem. bounded means continuous}.
Hence we have a fully faithful functor from the category of strictly $k$-affinoid algebras to the category of complete Huber pairs, $A \mapsto (A,A^o)$. The uniqueness assertion of \ref{lem. unique topological of finite type} and the fact that the underlying Huber ring of a Huber pair that is locally of finite type over $k$ is a strictly $k$-affinoid algebra (cf. \ref{rem. huber pairs of finite type equal k affinoid }) ensures that the image of $A \mapsto (A,A^o)$ precisely consists of those complete Huber pairs that are topologically of finite type over $k$.
\end{proof}

\begin{prop}\label{prop. equivalence affine adic spaces locally of finite type over k strict k-affinoid spaces}
The functor $\mt M(A) \mapsto \Spa(A,A^o)$ from the category of strictly $k$-affinoid spaces to the category of affinoid adic spaces, that are locally of finite type over $k$, is an equivalence of categories.
\end{prop}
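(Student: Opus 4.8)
The plan is to reduce this statement to the purely algebraic equivalence already established in Proposition~\ref{prop. equivalence of cat. k-affinoid algebras, complete Huber pairs locally of finite type /k}. The strategy is to write the functor $\mt M(A) \mapsto \Spa(A,A^o)$ as a composition of two functors whose properties we already know. First, recall that the category of strictly $k$-affinoid spaces is by definition (cf.~\ref{def. category k-affinoid spaces}) the category dual to the category of strictly $k$-affinoid algebras with bounded homomorphisms, so a morphism $\mt M(B) \to \mt M(A)$ of strictly $k$-affinoid spaces is precisely a bounded $k$-algebra homomorphism $A \to B$. Likewise, by Theorem~\ref{theo. fully faithful functor}, the functor $(A,A^+) \mapsto \Spa(A,A^+)$ is an equivalence between the category of complete sheafy Huber pairs and the category of affinoid adic spaces, and it carries morphisms of Huber pairs bijectively to morphisms of affinoid adic spaces.

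The key steps in order are as follows. First I would observe that the functor in question factors as
\begin{gather*}
\{\text{strictly }k\text{-affinoid spaces}\} \xrightarrow{\ \mt M(A)\,\mapsto\, (A,A^o)^{\mathrm{op}}\ } \{\text{complete Huber pairs top.\ of finite type over }k\}^{\mathrm{op}} \xrightarrow{\ \Spa\ } \{\text{affinoid adic spaces loc.\ of fin.\ type over }k\}.
\end{gather*}
The first arrow is, up to passing to opposite categories, exactly the equivalence $A \mapsto (A,A^o)$ of Proposition~\ref{prop. equivalence of cat. k-affinoid algebras, complete Huber pairs locally of finite type /k}; passing to opposite categories preserves equivalences, so this first functor is an equivalence. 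For the second arrow I would invoke Theorem~\ref{theo. fully faithful functor}, which gives that $\Spa$ is an equivalence between complete sheafy Huber pairs and affinoid adic spaces. The remaining point is to check that $\Spa$ restricts to an equivalence between the indicated subcategories: I must verify that every complete Huber pair topologically of finite type over $k$ is sheafy, and that its associated affinoid adic space is precisely one that is locally of finite type over $k$.

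The sheafiness is immediate from Remark~\ref{rem: sheafy Huber pairs}, since any Huber pair topologically of finite type over $k$ is strongly noetherian and hence sheafy by Theorem~\ref{theo. when is a huber pair sheafy? sufficient properties}. For the identification of the essential image, one direction is that $\Spa(A,A^o)$ is locally of finite type over $k$ whenever $(A,A^o)$ is topologically of finite type over $k$, which follows directly from Definition~\ref{def. morphisms locally of finite type} applied to the morphism $(k,k^o) \to (A,A^o)$; the converse, that an affinoid adic space locally of finite type over $k$ arises from such a Huber pair, is exactly Remark~\ref{rem. locally of finite type over k morphisms} together with Proposition~\ref{prop: locally of finite type second prop}. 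The main obstacle I anticipate is purely bookkeeping rather than conceptual: one must confirm that the fully faithfulness supplied by Theorem~\ref{theo. fully faithful functor} for \emph{all} affinoid adic spaces restricts correctly to the subcategory of those locally of finite type over $k$, i.e.\ that a morphism of affinoid adic spaces between two such spaces automatically corresponds to a morphism of Huber pairs in the smaller category. This is handled by Proposition~\ref{prop: locally of finite type second prop}, which guarantees that the induced morphism of Huber pairs between $k$-affinoid algebras is automatically topologically of finite type, so no extra condition on morphisms is lost. Assembling these observations, the composition of two equivalences restricted compatibly to full subcategories is again an equivalence, which yields the claim.
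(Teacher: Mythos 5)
Your proposal is correct and follows essentially the same route as the paper: the paper likewise combines the algebraic equivalence $A \mapsto (A,A^o)$ of Proposition~\ref{prop. equivalence of cat. k-affinoid algebras, complete Huber pairs locally of finite type /k} with the restriction of the equivalence $\Spa$ from Theorem~\ref{theo. fully faithful functor}, using Proposition~\ref{prop: locally of finite type second prop} to identify the relevant subcategories. Your write-up merely makes explicit some bookkeeping (sheafiness via Remark~\ref{rem: sheafy Huber pairs}, the identification of the essential image) that the paper leaves implicit.
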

\begin{proof}
The functor $(A,A^+)\mapsto \Spa(A,A^+)$ from the category of complete Huber pairs that are topologically of finite type over $k$ to the category of affinoid adic spaces induces a categorial equivalence onto the category of affinoid adic spaces locally of finite type over $k$ (cf. \ref{theo. fully faithful functor} and \ref{prop: locally of finite type second prop}). Since the category of strictly $k$-analytic spaces is by definition equivalent to the category of strictly $k$-affinoid algebras, the claim follows from (\ref{prop. equivalence of cat. k-affinoid algebras, complete Huber pairs locally of finite type /k}).
\end{proof}

\subsection{A continuous map $\Spa(A,A^o) \rightarrow \mt M(A)$}

The previous section provided an equivalence on the affine level of adic and Berkovich spaces. Nevertheless, the equivalence on the global level is not obvious since there is a significant difference in the glueing processes of the respective settings. In both cases the rational subsets play a tremendous role. Whereas rational subsets form an open basis of the topology of an adic space, rational domains in $k$-analytic Berkovich spaces form fundamental systems of closed neighbourhoods for any point. Hence it is necessary to take a closer look at the geometric spaces $\Spa(A,A^o)$ and $\mt M(A)$ and their connections. More precisely, in this subsection we will construct a map $q:\Spa(A,A^o) \rightarrow \mt M(A)$ which carries properties from the adic setting to the Berkovich world and the other way around. In the following paragraph (§5), we will focus on how this map can be generalized to the global level and which information is transferred.\\

First we start with an easy lemma that will be applied various times in this section:

\begin{lem}\label{lem. power bounded topologically nilpotent}
Let $x: A \rightarrow \Gamma \cup \lbrace 0 \rbrace$ be a valuation on a ring $A$ which is endowed with the $x$-topology. Let $f \in A$.
\begin{enumerate}
\item If $f$ is topologically nilpotent, then $x(f)<1$;
\item if $x(f) \leq 1$, then $f$ is power-bounded;
\item if $x$ is of rank $1$ and $x(f)<1$ then $f$ is topologically nilpotent;
\item if $x$ is of rank $1$, $A$ has a nilpotent unit (e.g. $A$ is a field) and $f$ is power-bounded, then $x(f)\leq 1$.
\end{enumerate}
\end{lem}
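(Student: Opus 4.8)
The plan is to handle the four parts in two groups, since (i) and (ii) hold for valuations of arbitrary rank while (iii) and (iv) genuinely use rank $1$. Throughout I would lean on two facts: that a fundamental system of open neighbourhoods of $0$ for the $x$-topology is given by the sets $U_\gamma := \{a \in A : x(a) < \gamma\}$, $\gamma \in \Gamma$, and that $x$ is multiplicative, so $x(f^n) = x(f)^n$.

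For (i) I would observe that topological nilpotence of $f$ forces $f^n \in U_1$ for some $n$, hence $x(f)^n < 1$ and therefore $x(f) < 1$. For (ii), if $x(f) \leq 1$ then $x(f)^n \leq 1$ for all $n$; given a target neighbourhood $U = U_\gamma$ I would simply take $V = U_\gamma$ and note that $x(v f^n) = x(v)\,x(f)^n \leq x(v) < \gamma$ for all $v \in V$ and all $n$, so that $V \cdot \{f^n\} \subseteq U$ and $\{f^n\}$ is bounded.

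For (iii) and (iv) the decisive input is that rank $1$ means $\Gamma_x$ has no convex subgroup besides $\{1\}$ and itself, i.e. $\Gamma_x$ is archimedean (equivalently, by Hölder's theorem, it embeds as an ordered group into $(\mathbb{R}_{>0},\cdot)$). Concretely I would extract the two cofinality statements: an element $g > 1$ has powers $g^n$ that are unbounded above in $\Gamma_x$, while an element $g < 1$ has powers $g^n$ that eventually fall below any prescribed element. For (iii), from $x(f) < 1$ the values $x(f)^n$ then drop below any given $\gamma$, which is precisely the assertion that $f^n \to 0$, i.e. $f$ is topologically nilpotent. For (iv) I would argue by contradiction: suppose $f$ is power-bounded but $x(f) > 1$. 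Taking a topologically nilpotent unit $c$ (which exists in the field case because the non-trivial rank-$1$ valuation supplies an element, or its inverse, of value $< 1$) and applying boundedness of $\{f^n\}$ to $U = U_1$, I would obtain a basic neighbourhood $V = U_\delta$ with $V\cdot\{f^n\}\subseteq U_1$; choosing $m$ with $c^m \in V$ then gives $x(c)^m x(f)^n < 1$, i.e. $x(f)^n < x(c)^{-m}$, for every $n$. Since $x(f) > 1$ and $\Gamma_x$ is archimedean, the powers $x(f)^n$ must eventually exceed the fixed bound $x(c)^{-m}$, which is the desired contradiction; hence $x(f) \leq 1$.

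The one step I expect to carry real weight is the translation of the order-theoretic hypothesis ``rank $1$'' into the archimedean/cofinality statements driving (iii) and (iv); the remaining arguments are routine once the definition of the $x$-topology and multiplicativity of $x$ are in hand. I would also flag that ``nilpotent unit'' in (iv) must be read as ``\emph{topologically} nilpotent unit'': the role of that hypothesis is exactly to provide an element that is simultaneously invertible and coinitial in value, and this is what makes the contradiction in (iv) work.
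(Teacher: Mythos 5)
Your proposal is correct and follows essentially the same route as the paper: parts (i) and (ii) directly from the definition of the basic neighbourhoods $U_\gamma$ and multiplicativity, and parts (iii) and (iv) from the cofinality consequences of rank $1$, with (iv) argued by contradiction using a topologically nilpotent unit exactly as in the paper. Your remark that ``nilpotent unit'' must be read as ``topologically nilpotent unit'' is also consistent with the paper's own proof, which works with a topologically nilpotent unit $\varpi$.
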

\begin{proof}
Let $B:= \lbrace f^n \ | \ n\in \mathbb{N} \rbrace$ and $U_{\gamma}:= \lbrace a \in A \ | \ x(a)<\gamma\rbrace$ for $\gamma \in \Gamma$.
\begin{enumerate}
\item Assume that $f$ is topologically nilpotent and $x(f)\geq 1$. Then $x(f^n)\geq 1 $ for all $n \in \mathbb{N}$. Hence $B\cap U_1 = \emptyset$ which is a contradiction.
\item Let $x(f) \leq 1$, then $U_\gamma \cdot B \subseteq U_\gamma$ for all $\gamma \in \Gamma$. Hence $f$ is power-bounded.
\item Let $x(f)<1$ and $\gamma \in \Gamma$ arbitrary. Since there are no proper non-trivial convex subgroups of $\Gamma$, there is an $n \in \mathbb{N}$ such that $x(f)^n<\gamma$, which means
$f^n \in U_\gamma$. So in this case we have $A^{00}=\lbrace a \in A \ | \ x(a) < 1 \rbrace$.
\item Assume that we are in the situation of $iv)$ but $x(f)>1$. Let $\gamma \in \Gamma$ be arbitrary and let $\varpi$ be a topologically nilpotent unit in $A$. Then there exists an $n \in N$ such that $x(\varpi)^{n}< \gamma$, that means $\varpi^n \in U_\gamma$. Now choose $k \in \mathbb{N}$ with $x(f^k)>x(\varpi^n)$. Then we have $1<x(\varpi^n)x(f^{k})$ and in particular $U_\gamma \cdot B \nsubseteq U_1$ for all $\gamma\in \Gamma$. But this is a contradiction since we assumed that $B$ is bounded. Therefore we conclude $A^{0}=\lbrace a \in A \ | \ x(a) \leq 1 \rbrace$.
\end{enumerate}
\end{proof}

In the following, we want to consider the Berkovich spectrum of a strictly $k$-affinoid algebra $A$ to be a subset of the adic spectrum of its associated Huber pair $(A,A^o)$. Therefore we have to clarify some details since the definitions differ slightly.

\begin{lem}\label{lem. valuations are bounded}
Let $A$ be a strictly $k$-affinoid algebra and fix a norm $\normd$ on $A$. Let $v:A\rightarrow \mathbb{R}^{\geq 0}$ be a continuous valuation of $A$ such that there exists $c \in k^\times$ with $\val c <1$ and $v(c)=\val c$. Then $v$ is bounded, i.e. there exists $C>0$ such that $v(a) \leq C \cdot \norm a$ for all $a \in A$.
\end{lem}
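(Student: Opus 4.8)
The plan is to use the continuity of $v$ together with the scaling provided by the topologically nilpotent unit $c$ to compare $v$ with the fixed norm $\normd$. First I would extract a single open ball on which $v$ is controlled: since $v$ is continuous, the set $\lbrace f \in A \ | \ v(f) < 1 \rbrace$ is open and contains $0$, so there is some $\epsilon > 0$ with $B_\epsilon := \lbrace a \in A \ | \ \norm a < \epsilon \rbrace \subseteq \lbrace f \in A \ | \ v(f) < 1\rbrace$; in other words $\norm a < \epsilon$ forces $v(a) < 1$.

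Next I would reduce an arbitrary element to this ball by scaling. Fix $a \in A$ with $a \neq 0$. Because $\normd$ is a $k$-Banach algebra norm we have $\norm{c^n a} = \val c^n \norm a$ for all $n \in \mathbb{Z}$ (here $c^n$ makes sense for negative $n$ since $c$ is a unit), and since $\val c < 1$ the quantity $\val c^n \norm a$ tends to $0$ as $n \to +\infty$ and to $\infty$ as $n \to -\infty$. The set of $n$ with $\val c^n \norm a < \epsilon$ is therefore nonempty, bounded below, and upward closed, so it has a minimum $n_0 \in \mathbb{Z}$; minimality gives $\val c^{\, n_0 - 1} \norm a \geq \epsilon$. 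The defining inequality $\val c^{\, n_0}\norm a < \epsilon$ places $c^{n_0} a$ in $B_\epsilon$, whence $v(c^{n_0} a) < 1$.

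Then I would turn this into the desired estimate using multiplicativity of $v$ and the key hypothesis $v(c) = \val c$. Since $v$ is a valuation we have $v(c^{n_0} a) = v(c)^{n_0} v(a) = \val c^{\, n_0} v(a)$, so $v(a) < \val c^{-n_0}$. The minimality inequality $\val c^{\, n_0-1}\norm a \geq \epsilon$ rearranges to $\val c^{-n_0} \leq \val c^{-1}\,\norm a / \epsilon$, and combining the two estimates yields $v(a) < \tfrac{1}{\epsilon \val c}\norm a$. Thus $C := \tfrac{1}{\epsilon \val c}$ is a bounding constant (the case $a = 0$ being trivial since $v(0) = 0$), which proves that $v$ is bounded.

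I expect the only genuinely delicate point to be recognizing that the hypothesis $v(c) = \val c$ is doing the essential work: it is precisely the identity that lets us replace $v(c)^{-n_0}$ by $\val c^{-n_0}$ and thereby match the $v$-side of the estimate with the norm-side produced by the scaling. Everything else is the standard Archimedean ``dilate into a fixed neighbourhood of $0$'' argument, which works here because $v$ takes values in $\mathbb{R}^{\geq 0}$ and $c$ is a topologically nilpotent unit satisfying $\norm{c^n a} = \val c^n \norm a$.
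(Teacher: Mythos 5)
Your proof is correct and follows essentially the same route as the paper's: dilate $a$ by a power of $c$ into a neighbourhood of $0$ on which $v<1$, then use multiplicativity and $v(c)=\val c$ to transfer the estimate back. The only difference is cosmetic — the paper gets its neighbourhood from the implication $\norm a<1\Rightarrow a\in A^{oo}\Rightarrow v(a)<1$ rather than directly from continuity of $v$, and your version is in fact slightly more careful in allowing the exponent $n_0$ to be any integer (the paper tacitly needs this when $\norm a$ is small).
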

\begin{proof}
For $a \in A$ with $\norm a < 1$, we have $a \in A^{oo}$ and hence $v(a)<1$ by \ref{lem. power bounded topologically nilpotent}. So let $a \in A$ be arbitrary and choose $n\in \mathbb{N}$ such that 
\begin{gather*}		
\val c^n \norm a <1 \leq \val c^{n-1} \norm a.
\end{gather*}
Then $\norm {c^n a}<1$ and hence $v(c^na)<1$. But then we get 
\begin{gather*}		
v(a)<v(c)^{-n}=\val c^{-n}= \val c^{-1} \val c^{-n+1} \leq \val c^{-1} \norm a.
\end{gather*}
Hence $v$ is bounded.
\end{proof}

\begin{lem}\label{lem. unique order preserving hom to R}
Let $\Gamma$ be a totally ordered group of rank $1$. Let $j,j':\Gamma\rightarrow \mathbb{R}^{>0}$ be two injective morphisms of totally ordered groups. Then we have $j'=j^e$ for a unique $e>0$.
\end{lem}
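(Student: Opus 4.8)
The plan is to reduce everything to a statement about order-embeddings into $(\mathbb{R},+)$ and then pin down the two embeddings against each other via their Dedekind cuts relative to a fixed element. First I would pass to logarithms: writing $\phi := \log\circ\, j$ and $\phi' := \log\circ\, j'$, the maps $\phi,\phi'\colon\Gamma\to(\mathbb{R},+)$ are again injective morphisms of ordered groups, and the assertion $j'=j^e$ becomes $\phi'=e\,\phi$. Since $\Gamma$ has rank $1$ it is in particular nontrivial, so I may fix some $\gamma_0\in\Gamma$ with $\gamma_0>1$; then $\phi(\gamma_0)>0$ and $\phi'(\gamma_0)>0$ because $\phi,\phi'$ are strictly order-preserving, and I set $e:=\phi'(\gamma_0)/\phi(\gamma_0)>0$. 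Note that injectivity together with order-preservation makes each of $\phi,\phi'$ an order \emph{embedding}, i.e. $x\geq 1 \Leftrightarrow \phi(x)\geq 0$, which is what lets the comparisons below run as biconditionals rather than single implications.

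The core step is the following comparison. For an arbitrary $\gamma\in\Gamma$ and integers $m,n$ with $n>0$, the inequality $\gamma^n\geq\gamma_0^m$ holds in $\Gamma$ if and only if $\gamma^n\gamma_0^{-m}\geq 1$, and by the order-embedding property this is equivalent to $n\,\phi(\gamma)\geq m\,\phi(\gamma_0)$, that is, to $\phi(\gamma)/\phi(\gamma_0)\geq m/n$. Running the identical computation with $\phi'$ shows that $\gamma^n\geq\gamma_0^m$ is equivalent to $\phi'(\gamma)/\phi'(\gamma_0)\geq m/n$ as well. Hence the two real numbers $\phi(\gamma)/\phi(\gamma_0)$ and $\phi'(\gamma)/\phi'(\gamma_0)$ admit exactly the same set of rational lower bounds $m/n$, and since a real number is determined by its Dedekind cut in $\mathbb{Q}$, they must coincide. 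Rearranging gives $\phi'(\gamma)=\dfrac{\phi'(\gamma_0)}{\phi(\gamma_0)}\,\phi(\gamma)=e\,\phi(\gamma)$ for every $\gamma$, which exponentiates back to $j'(\gamma)=j(\gamma)^e$. Uniqueness of $e$ is then immediate: evaluating $j^e=j^{e'}$ at $\gamma_0$, where $j(\gamma_0)>1$, forces $e=e'$.

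The main obstacle is making the Dedekind-cut argument airtight, i.e. checking that the comparison $\gamma^n\geq\gamma_0^m$ genuinely transports to the \emph{same} rational threshold under both embeddings and that this common cut pins down a single real value; the only subtlety there is remembering that injectivity upgrades ``order-preserving'' to a two-sided equivalence, without which the threshold conditions would only give inequalities. It is worth remarking that the rank-$1$ hypothesis enters only to guarantee $\Gamma\neq\{1\}$ (needed to choose $\gamma_0$ and hence to make $e$ meaningful); the mere existence of an order-embedding $j\colon\Gamma\hookrightarrow\mathbb{R}^{>0}$ already forces $\Gamma$ to be archimedean, so no separate appeal to Hölder's theorem is required in this direct approach.
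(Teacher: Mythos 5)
Your proof is correct and follows essentially the same route as the paper's: both fix a reference element $\gamma_0$, compare an arbitrary $\gamma$ against rational powers of $\gamma_0$, and conclude by density of $\mathbb{Q}$ in $\mathbb{R}$. The only differences are presentational --- you pass to logarithms and phrase the comparison as an equality of Dedekind cuts, while the paper normalizes $j(\gamma_0)=j'(\gamma_0)$ and runs an explicit squeeze $\gamma_0^{n+1}\leq\gamma^m\leq\gamma_0^n$ with $m\to\infty$, which is where it (unlike you) invokes the rank-$1$ hypothesis to produce the sandwiching exponent $n$.
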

\begin{proof}
Let $\gar\in \Gamma_{<1}$. Then there is a unique $d>0$ such that $j(\gar)^d=j'(\gar)$. By replacing $j$ with $j^d$ we can therefore assume that $j(\gar)=j'(\gar)$. Now let $\gamma \in \Gamma_{<1}$ be arbitrary. We show that $j(\gamma)=j'(\gamma)$ by an approximation argument, which implies the claim for all $\gamma \in \Gamma$ since $j$ and $j'$ are group homomorphisms. Now let $m\in \mathbb{N}$ be arbitrary and choose $n\in \mathbb{N}_0$ (depending on $m$) such that $\gar^{n+1}\leq \gamma ^m \leq \gar^n$. Note that such an $n$ exists since $\Gamma$ is of rank $1$ and hence the convex subgroup \begin{gather*}		
\Delta := \lbrace a \in \Gamma \ | \ \exists \ l,r \in \mathbb{Z} \ \text{such that} \ \gar^l\leq a \leq \gar ^r \rbrace
\end{gather*}
has to be equal to $\Gamma$. We get $j(\gar)^{\frac{n+1}{m}}\leq j(\gamma), j'(\gamma) \leq j(\gar)^{\frac{n}{m}}$ which leads to $\frac{n+1}{m}\leq \log_{j(\gar)}j(\gamma),\log_{j'(\gar)}j'(\gamma)\leq \frac{n}{m}$. Since $j(\gar)=j'(\gar)$, this finishes the prove by taking $m\rightarrow \infty$. 
\end{proof}

\begin{prop}\label{prop. unique representant}
Let $A$ be a strictly $k$-affinoid algebra with fixed norm $\normd$ and let $v:A\rightarrow \Gamma \cup \lbrace 0 \rbrace$ be a representative of a rank $1$ point of $\Spa(A,A^o)$. Then there exists a unique element of $\mt M(A)$ that is equivalent to $v$ (as a valuation on $A$).
\end{prop}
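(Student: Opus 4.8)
The plan is to produce the required element of $\mt M(A)$ by pushing $v$ forward along a suitably normalized order embedding of its value group into $\mathbb R^{>0}$, and then to deduce uniqueness from the rigidity of such embeddings provided by \ref{lem. unique order preserving hom to R}.

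\textbf{Existence.} Since $v$ represents a rank $1$ point, its value group $\Gamma:=\Gamma_v$ is a totally ordered group of rank $1$, hence admits (by H\"older's theorem) an injective homomorphism of ordered groups $j_0:\Gamma\to\mathbb R^{>0}$. Fix a topologically nilpotent unit $c\in k$, so $\val c<1$; its image in $A$ lies in $A^{oo}$ because $k\to A$ is continuous, whence $v(c)<1$ by \ref{lem. power bounded topologically nilpotent} i). As $j_0$ is unique up to a positive power, I may choose the unique $e>0$ with $j_0(v(c))^e=\val c$ and set $j:=j_0^{\,e}$ and $w:=j\circ v:A\to\mathbb R^{\geq 0}$ (with $j(0):=0$). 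Then $w$ is multiplicative, and since $j$ is order-preserving one has $w(f+g)=j(v(f+g))\le j(\max\{v(f),v(g)\})=\max\{w(f),w(g)\}\le w(f)+w(g)$, so $w$ is a multiplicative seminorm in the sense of \ref{def. kommut banach norm/ ring k banach algebra}. The map $j$ identifies $\Gamma$ with $\Gamma_w$, so $w\sim v$; in particular $w$ has rank $1$, and because the topology a valuation induces depends only on its equivalence class (\ref{def. defs for valuations} vi)), $w$ is continuous as $v$ is. Finally $w(c)=j_0(v(c))^e=\val c<1$, so \ref{lem. valuations are bounded} applies and $w$ is bounded. Thus $w\in\mt M(A)$ is equivalent to $v$.

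\textbf{Uniqueness.} Let $w_1,w_2\in\mt M(A)$ both be equivalent to $v$; then $w_1\sim w_2$, and both are rank $1$ valuations with values in $\mathbb R^{>0}$. Writing $\iota_i:\Gamma_{w_i}\hookrightarrow\mathbb R^{>0}$ for the inclusions and $\psi:\Gamma_{w_1}\to\Gamma_{w_2}$ for the order isomorphism with $\psi\circ w_1=w_2$ furnished by \ref{def. defs for valuations} iv), the two injective order homomorphisms $\iota_1$ and $\iota_2\circ\psi$ out of the rank $1$ group $\Gamma_{w_1}$ differ by a positive power by \ref{lem. unique order preserving hom to R}, whence $w_2=w_1^{\,e}$ for some $e>0$. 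Because the residue norm of a representation satisfies $\norm 1=1$, every element of $\mt M(A)$ extends the valuation of $k$ (\ref{rem morphisms of spectra} iv)), so $w_1(c)=\val c=w_2(c)$ for the unit $c$ fixed above. Then $\val c=w_2(c)=w_1(c)^e=\val c^{\,e}$ with $\val c\neq 1$ forces $e=1$, i.e.\ $w_1=w_2$.

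\textbf{Main obstacle.} The delicate half is existence, and specifically the normalization: the real-valued push-forward of $v$ is only canonical up to a positive power, and only the choice forced by $w(c)=\val c$ makes the boundedness criterion \ref{lem. valuations are bounded} applicable. What lets me invoke that lemma is the fact that $w$ is genuinely \emph{continuous} and not merely a multiplicative seminorm, and this in turn rests on the equivalence-invariance of the induced topology. By contrast, uniqueness is essentially immediate once \ref{lem. unique order preserving hom to R} and the extension property \ref{rem morphisms of spectra} iv) are in hand.
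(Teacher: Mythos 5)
Your proposal is correct and follows essentially the same route as the paper: existence by pushing $v$ forward along a rank-$1$ order embedding into $\mathbb R^{>0}$, normalized so that $w(c)=\val c$ in order to invoke \ref{lem. valuations are bounded}, and uniqueness via \ref{lem. unique order preserving hom to R} together with the fact that elements of $\mt M(A)$ restrict to $\vald$ on $k$. Your justification that $w$ is continuous (via the equivalence-invariance of the induced topology, \ref{def. defs for valuations} vi)) is in fact spelled out more carefully than in the paper's own argument.
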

\begin{proof}
First note that all elements of $\mt M(A)$ are representatives of rank $1$ points of $\Spa(A,A^o)$. Indeed they are continuous since they are bounded. Moreover, as seen in \ref{rem morphisms of spectra} they extend the valuation on $k$ and hence satisfy the non-archimedean triangle inequality by \ref{rem normal triangle inequality implies ultrametric}. Finally since they are multiplicative and bounded they send power-bounded elements of $A^o$ to $[0,1]$.\\
Choose an injective homomorphism of totally ordered groups $j':\Gamma \rightarrow \mathbb{R}^{>0}$ (which exists since $\Gamma$ has rank $1$, cf. \cite{We1} Proposition 1.14.) and a $c \in k^\times$ such that $\val c <1$. Since $c$ is topologically nilpotent, we have $j'(v(c))<1$ and hence there is an $e>0$ such that $\val c = j'(v(c))^e$. We set $j:=(j')^e$. $j \circ v $ (and any power of $j$ composed with $v$) is obviously a multiplicative seminorm on $A$. Moreover, it is bounded by \ref{lem. valuations are bounded} (which implies that $\val a = j (v(a))$ for all $a\in k$ by \ref{rem. represetation of k affinoid algebras is not unique}) and hence $\tilde{j}\circ v$ is an element of $\mt M(A)$ which is equivalent to $v$ as a valuation (here $\tilde{j}$ denotes the extension of $j$ to the monoid $\Gamma \cup \lbrace 0 \rbrace$).  Assume that there is another element $w:A\rightarrow \mathbb{R}^{\geq 0}$ of $\mt M(A)$ that is equivalent to $v$ as a valuation (in particular $w$ restricted to $k$ is $\vald$). Let $V \subseteq \mathbb{R}^{>0}$ be the valuation group of $\tilde{j} \circ v$ and $W \subseteq \mathbb{R}^{>0}$ the valuation group of $w$. Then we have a commutative diagram
\begin{gather*}		
\begin{xy}
  \xymatrix{
      A \ar[r]^w   \ar[d]^v  &   W   \\
     \Gamma \ar[r]_j           &   V \ar[u]_f   
  }
\end{xy}
\end{gather*}
where $f$ is an isomorphism of totally ordered groups. This means that $f\circ j$ is an injective morphism of totally ordered groups as well and hence a power of $j$ by \ref{lem. unique order preserving hom to R} but this implies $f\circ j =j$ since both maps coincide on elements of $\Gamma$ that come from $k$. So we have $w=v \circ j$.
\end{proof}

\begin{prop}\label{prop. map: spa a to m(a)}
Let $A$ be a strictly $k$-affinoid algebra. We have a well defined continuous surjection $q:\Spa(A,A^o)\rightarrow \mt M(A)$ sending an element of $\Spa(A,A^o)$ to the unique representative from \ref{prop. unique representant} of its unique maximal generization. Moreover, this map makes $\mt M(A)$ the maximal $T_1$-quotient of $\Spa(A,A^o)$ (that means that any continuous map from $\Spa(A,A^o)$ to a $T_1$-space factors uniquely through $\mt M(A)$). 
\end{prop}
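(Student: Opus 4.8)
The claim has three parts: that $q$ is well defined, that it is a continuous surjection, and that it realizes $\mt M(A)$ as the maximal $T_1$-quotient of $\Spa(A,A^o)$. I would organize the proof around these three parts. The definition of $q$ relies on two facts already available: first, that every point $x\in\Spa(A,A^o)$ has a \emph{unique} maximal generization, and second, that this maximal generization, being of rank $1$ (by \ref{prop: properties of analytic adic spaces} ii), since $\Spa(A,A^o)$ is analytic), corresponds to a unique element of $\mt M(A)$ by \ref{prop. unique representant}. So I would begin by invoking \ref{prop: properties of analytic adic spaces} i) to note that $G_x$ is totally ordered and hence (being of rank bounded by the value group and the space being coherent/spectral) possesses a unique maximal element, then apply \ref{prop. unique representant} to make $q$ a well-defined map of sets. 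Surjectivity is then immediate, since each element of $\mt M(A)$ is itself a rank $1$ point of $\Spa(A,A^o)$ (as established in the proof of \ref{prop. unique representant}) and is its own maximal generization, so it is hit by $q$.

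\textbf{Continuity.}
For continuity I would use the subbasis description of the topology on $\mt M(A)$ from \ref{lem. topology of berkovich spectrum}, namely the sets $U_{f,g}=\lbrace y\in\mt M(A)\mid \val{f(y)}<\val{g(y)}\rbrace$. The key point is that the defining inequality is \emph{strict}, and strict inequalities are insensitive to passing between a valuation and its rank $1$ maximal generization: for $x\in\Spa(A,A^o)$ with maximal generization $\tilde x$, one has $\val{f(x)}<\val{g(x)}$ if and only if $\val{f(\tilde x)}<\val{g(\tilde x)}$, because the comparison only involves the largest convex subgroup, equivalently the image in the rank $1$ quotient. Granting this, $q^{-1}(U_{f,g})=\lbrace x\in\Spa(A,A^o)\mid \val{f(x)}<\val{g(x)}\rbrace$, which is open in $\Spa(A,A^o)$ (it is the complement of the closed set $\lbrace\val{g(x)}\leq\val{f(x)}\rbrace$, using the generating sets of Definition \ref{adic spectrum of a Huber ring}). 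Hence $q$ is continuous.

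\textbf{Universal property (maximal $T_1$-quotient).}
For the last part I would first check that $\mt M(A)$ is $T_1$: it is Hausdorff by \ref{theorem: berkovich spectrum is nonempty, compact}, hence $T_1$. Then, given any continuous map $\psi\colon\Spa(A,A^o)\to Z$ to a $T_1$-space $Z$, I would show $\psi$ is constant on the fibers of $q$, i.e. on each generization class $G_x$. The mechanism is that in a $T_1$-space points are closed, so $\psi(x)$ and $\psi(\tilde x)$ must coincide whenever $\tilde x$ is a generization of $x$: indeed $x\in\overline{\lbrace\tilde x\rbrace}$ forces $\psi(x)\in\overline{\lbrace\psi(\tilde x)\rbrace}=\lbrace\psi(\tilde x)\rbrace$. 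Since the fibers of $q$ are exactly the sets $q^{-1}(y)$, each consisting of a point together with its generizations collapsing to the same maximal rank $1$ point, $\psi$ factors set-theoretically through $q$ as $\psi=\bar\psi\circ q$; continuity of $\bar\psi$ follows because $q$ is a continuous surjection and one checks $q$ is a quotient map (it is continuous, surjective, and closed or at least identifies precisely the generization classes, so the quotient topology on $\mt M(A)$ agrees with its given topology). Uniqueness of $\bar\psi$ is clear from surjectivity of $q$.

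\textbf{Main obstacle.}
The step I expect to be most delicate is verifying that $q$ is genuinely a \emph{topological} quotient map, not merely a continuous surjection whose fibers are the right ones — that is, that the given topology on $\mt M(A)$ coincides with the quotient topology induced by $q$. The factorization of $\psi$ through a continuous $\bar\psi$ requires this, and it is exactly the subtle interaction between the fine, highly non-Hausdorff adic topology and the compact Hausdorff Berkovich topology. The cleanest route is probably to show directly that $q$ maps open saturated sets to open sets (equivalently that the image of a generization-saturated open is open), using again the strict-inequality subbasis so that saturated basic opens on the adic side correspond to the subbasic opens $U_{f,g}$ on the Berkovich side. Everything else — well-definedness, surjectivity, continuity — is comparatively mechanical once \ref{prop. unique representant} and \ref{prop: properties of analytic adic spaces} are in hand.
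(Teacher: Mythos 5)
Your overall architecture (well-definedness and surjectivity via \ref{prop: properties of analytic adic spaces} and \ref{prop. unique representant}; the universal property via closedness of points in a $T_1$-space) matches the paper's, but the continuity step contains a genuine error, and it also undermines your proposed fix for the quotient-map issue. You claim that for $x\in\Spa(A,A^o)$ with maximal generization $\tilde x$ one has $\val{f(x)}<\val{g(x)}$ if and only if $\val{f(\tilde x)}<\val{g(\tilde x)}$, so that $q^{-1}(U_{f,g})=\lbrace x \mid \val{f(x)}<\val{g(x)}\rbrace$. Only one implication holds. Writing $\tilde x = x/H$ for the relevant convex subgroup $H$ of the value group, $\val{f(\tilde x)}<\val{f(\tilde x)}$-type strict inequalities descend in one direction only: $\val{f(\tilde x)}<\val{g(\tilde x)}$ forces $\val{f(x)}<\val{g(x)}$, but the converse fails whenever $\val{f(x)}$ and $\val{g(x)}$ differ by an element of $H$, in which case their images in the rank $1$ quotient coincide. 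Concretely, for $A=k\langle T\rangle$ take the rank $2$ point $x$ with $x(\sum a_\nu T^\nu)=\max_\nu \val{a_\nu}\,\gamma^\nu$, where $\gamma$ is infinitesimally smaller than $1$: then $x(T)<x(1)$, while the maximal generization of $x$ is the Gauss point $\eta$ with $\eta(T)=\eta(1)$, so $x$ lies in $\lbrace \val{T}<\val{1}\rbrace$ but not in $q^{-1}(U_{T,1})$. Your identification of $q^{-1}(U_{f,g})$ is therefore false, and the auxiliary claim that $\lbrace x \mid \val{g(x)}\leq\val{f(x)}\rbrace$ is closed is also unjustified: these generating sets are open by definition of the topology on $\Spa(A,A^o)$, not closed.

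The paper repairs exactly this point with a local argument: given $x_0$ with $\eta_{x_0}(f)<\eta_{x_0}(g)$, it passes to the residue field $\kappa(x_0)$, observes that $f/g$ is topologically nilpotent for the rank $1$ topology and hence for the $x_0$-topology (these coincide by \ref{rem. vertical genrization}), and produces $n\in\mathbb{N}$ and $c\in k$ with $0<\val c<1$ such that $\Omega=\lbrace x \mid x(f^n)\leq x(cg^n)\neq 0\rbrace$ is an open neighbourhood of $x_0$ contained in $q^{-1}(U_{f,g})$; the point is that a non-strict inequality weighted by $\val c<1$ does transfer correctly between a point and its maximal generization, via power-boundedness (\ref{lem. power bounded topologically nilpotent}). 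Your universal-property paragraph is fine as far as the set-theoretic factorization goes, but the continuity of $\bar\psi$ needs $q$ to be a quotient map; the paper gets this cheaply by showing $q$ is closed (closed subsets of the spectral space $\Spa(A,A^o)$ are quasi-compact, their images are quasi-compact in the Hausdorff space $\mt M(A)$, hence closed). That is the route to take, rather than your saturated-opens argument, which again leans on the false strict-inequality equivalence.
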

\begin{proof}
Set $X:= \Spa(A,A^o)$. Since $A$ contains a topologically nilpotent unit, $X$ is analytic in the sense of \ref{def. analytic adic spaces}. Therefore for $x \in X$ there exists at most one maximal generization by \ref{prop: properties of analytic adic spaces} i).  Such a generization always exists since $X$ is sober (cf. \ref{rem. zorn and co}). By \ref{prop: properties of analytic adic spaces} ii) the maximal generizations in $X$ are precisely the rank $1$ valuations.  Hence by \ref{prop. unique representant} we have a well defined map $q: X \rightarrow \mt M(A)$. $q$ restricted to the rank $1$ points of $X$ provides a bijection. Now we approach the continuity of $q$.\\
By \ref{lem. topology of berkovich spectrum} sets of the form $V_{f,g}:=\lbrace z \in \mt M(A) \ | \ \val f_z < \val g_z \rbrace $, where $f,g \in A$ form a subbasis of the topology of $\mt M(A)$. Fix $f,g \in A$ and set $V:=V_{f,g}$. So it is enough to show that $q^{-1}(V)$ is open in $X$.
For $x \in X$ let $\eta_x \in X$ denote the unique maximal generization of $x$. Then $q^{-1}(V)= \lbrace x \in X \ | \ \eta_x(f)<\eta_x(g) \rbrace$. Choose $c \in k$ such that $0<\val c < 1$. Let $x_0  \in q^{-1}(V)$ (in particular we have $\eta_{x_0}(g) \neq 0$) and consider the residue field $\kappa:=\kappa(x_0):= \text{Frac}(A/\supp (x_0))$ (which is equal to $\kappa(\eta_{x_0})$ since $x_0$ and $\eta_{x_0}$ have the same support). Let $\tilde{x_0}$ and $\tilde{\eta_{x_0}}$ denote the unique extensions of $x_0$ and of $\eta_{x_0}$ to $\kappa$ respectively. Then the $\tilde{x_0}$ and the $\tilde{\eta_{x_0}}$ topology on $\kappa$ coincide (cf. \ref{def. defs for valuations} \emph{vi)}). 
Let $g(x_0)$ and $f(x_0)$ denote the respective images of $f$  and $g$ in $\kappa$. Then $g(x_0)\neq 0$ since $\tilde{\eta_{x_0}}(g(x))=\eta_{x_0}(g)\neq 0$ and hence $f(x)/g(x)$ is an element of $\kappa$. Since $\eta_{x_0}$ is a rank $1$ valuation, $\tilde{\eta_{x_0}}$ is a rank $1$ valuation of $\kappa$. Therefore from $\tilde{\eta_{x_0}}(f(x)/g(x))<1$ we get that $f(x)/g(x)$ is topologically nilpotent for the $\tilde{\eta_{x_0}}$-topology by \ref{lem. power bounded topologically nilpotent} iii) and hence it is also topologically nilpotent for the $\tilde{x_0}$-topology on $\kappa$.
Let $A_{\tilde{x_0}} \subseteq \kappa$ be the valuation ring of $\tilde{x_0}$ which is open (cf. \ref{rem. extension of valuations to field}). Then $c \cdot A_{\tilde{x}}$ is an open neighbourhood of $0$ in $\kappa$ and hence we can find $n \in \mathbb{N}$ such that $(f(x)/g(x))^n \in c \cdot A_{\tilde{x_0}}$. This means that $\tilde{x_0}((1/c)(f(x)/g(x))^n)\leq 1$ and hence $x_0(f)^n \leq x_0(c)x_0(g)^n$ in the valuation group of $x_0$. The set $\Omega := \lbrace x \in X \ | \ x(f^n)\leq x(cg^n)\neq 0 \rbrace$ is an open neighbourhood of $x_0$ in $X$ that is contained in $q^{-1}(V)$ and hence continuity of $q$ follows. Indeed, $\Omega$ is clearly open in $X$ and contains $x_0$ by construction. So let $x \in \Omega$. As above, since $x(g)\neq 0$, $g(x)$, the image of $g$ in $\kappa(x)$ is not zero and hence $f(x)^n/cg(x)^n$ is an element of $\kappa(x)$ which is power-bounded with respect to the $\tilde{x}$ topology on $\kappa(x)$ by \ref{lem. power bounded topologically nilpotent} ii). Again it is also power-bounded for the $\tilde{\eta_x}$-topology on $\kappa(x)$ and since $\tilde{\eta_x}$ is a rank $1$ valuation, we have $\tilde{\eta_x}(f(x)^n/cg(x)^n)\leq 1$ by \ref{lem. power bounded topologically nilpotent} iv). Therefore $\eta_x(f)^n\leq \eta_x(c)\eta_x(g)^n$ in the valuation group of $x$. But since $|c|<1$ and $\eta_x(c)$ has a representative in $\mt M(A)$, we have $\eta_x(c)< 1$. So $\eta_x(f)<\eta_x(g)$ follows and we conclude $\Omega \subseteq q^{-1}(V)$ as desired.

Let $f:X\rightarrow Y$ be a continuous map from $X$ to a $T_1$-space $Y$. Let $x,y \in X$ such that $q(x)=q(y)$. We want to show $f(x)=f(y)$ to get a unique set theoretical map $g: \mt M(A) \rightarrow Y$ such that $f=g \circ q$. We can assume that $y=\eta_x$ is the unique maximal generization of $x$ in $X$. But since $Y$ is a $T_1$-space, $f^{-1}(f(\eta_x))$ is a closed subset of $X$ containing $\eta_x$, hence $f^{-1}(f(\eta_x))$ contains $\overline{\lbrace \eta_x \rbrace}$ which in turn contains $x$. Therefore we have $f(x)=f(\eta_x)$. Moreover, $q$ is closed: Let $V \subseteq X$ be a closed subset, then it is quasi-compact by the quasi-compactness of $X$. Since $q$ is continuous, $q(V)$ is quasi-compact in $\mt M(A)$ and therefore closed, since $\mt M(A)$ is Hausdorff. $q$ is closed and  hence $\mt M(A)$ is endowed with the quotient topology of $X$ induced by $q$. Combined with what we said before, this means that $\mt M(A)$ is a maximal $T_1$-quotient of $X$.
\end{proof}

\begin{rem}\label{rem. section is not continuous in general}
\emph{Note that the natural section of $q$ sending an element of $\mt M(A)$ to its corresponding equivalence class in $\Spa(A,A^o)$ is not continuous in general (cf. \cite{Co2} Example 11.3.18).
}
\end{rem}

\section{Valuative spaces}

At first we recall some basic definitions of topological spaces and introduce \emph{valuative spaces} following Fujiwara and Kato in \cite{FK}. With the help of such spaces one can study the topological properties of certain adic spaces and their connections to Berkovich spaces.

\subsection{Valuative spaces}

\begin{defn}
\emph{A topological space $X$ is said to be \emph{valuative} if the following conditions are satisfied:
	\begin{enumerate}
	\item $X$ is locally coherent and sober;
	\item for every $x \in X$ the partially ordered set of generizations of $x$, denoted by $G_x$, is totally ordered.
\end{enumerate}
}
\end{defn}

\begin{rem}\label{rem. zorn and co}
\emph{ 
\begin{enumerate}
\item
Using Zorn's lemma, one can show that every point of a topological space $X$ is contained in an irreducible component. Hence any $x \in X$ admits a maximal generization. Indeed a maximal generization of a point $x \in X$ is given by the generic point of an irreducible component containing $x$.
\item Let $X$ be a valuative space. Since $G_x$ is totally ordered for all $x \in X$, the maximal generization of $x$ is unique which means that each $x \in X$ is contained in a unique irreducible component.  
\item Any open subset $U$ of a valuative space $X$ again is a valuative space. Moreover, the maximal points of such an open subset $U$ of $X$ are those of $X$ which are contained in $U$.
\end{enumerate}
}
\end{rem}

\begin{exmp}\label{exmp: Spa A is a valuative space}
\emph{This example will make clear how the terminology of valuative spaces is connected to the theory of adic spaces:
\begin{enumerate}
\item Let $(A,A^+)$ be a complete Huber pair such that $A$ contains a topologically nilpotent unit (e.g. $A$ is a strictly $k$-affinoid algebra and $A^+=A^{o}$). Then $\Spa(A,A^+)$ is an analytic adic space and in particular the underlying topological space is a quasi-compact valuative space (cf. \ref{prop: properties of analytic adic spaces}).
\item More generally, let $X$ be an analytic adic space. As above  \ref{prop: properties of analytic adic spaces} ensures that for each $x \in X$ the set of generizations is totally ordered and hence the underlying topological space of $X$ is a valuative space. 
\end{enumerate}
}
\end{exmp}

\subsection{Separated quotients}
Let $A$ be a strictly $k$-affinoid algebra. In \ref{prop. map: spa a to m(a)} we have seen how the topologies of the associated affinoid adic space $\Spa(A,A^o)$ and the associated Berkovich spectrum $\mt M(A)$ are connected. The main idea, to get a similar result for the connection between more general adic spaces and Berkovich spaces, is to generalize the map $q:\Spa(A,A^o) \rightarrow \mt M(A)$. We will first introduce this map in the setting of general valuative spaces and point out some of its properties which are most important for this work.
\begin{defn}\label{defn: separation map}
\emph{
Let $X$ be a valuative space. By $[X]_X$ we denote the subset of $X$ consisting of all its maximal points. If there is no ambiguity, we write $[X]$ for $[X]_X$. The \emph{separation map of $X$} denoted by $\text{sep}_X$
is defined to be the map from $X$ to $[X]$ which sends an element of $X$ to its unique maximal generization. Moreover, we endow $[X]$ with the quotient topology induced by $\sep_X$ and accordingly call $[X]$ the \emph{separated quotient of $X$}.
}
\end{defn}

\begin{rem}\label{rem separated quotient is T_1 quotient}
\emph{
Let $X$ be a valuative space. $[X]$ is a universal $T_1$-quotient of $X$ which means that $[X]$ is a $T_1$ space and that each continuous map from $X$ to a $T_1$-space factors uniquely through $[X]$.
}
\end{rem}
\begin{proof} First we show that $[X]$ is a $T_1$-space. Let $x\neq y \in [X]$. We do not distinguish between $x,y$ and their images under the separation map. Define $U:=X\setminus \overline{\lbrace x \rbrace }$ which is an open subset of $X$ containing $y$ but not $x$. Consider $\sep_X(U)\subseteq [X]$ which contains $y$. If it contained $x$ as well, we would find $u \in U$ such that $\sep_X(u)=x$. But this means $u \in \overline{ \lbrace x \rbrace}$ which is false. So $x \notin \sep_X(U)$. To show that $\sep_X(U)$ is an open subset of $[X]$, we have to show that $\sep_X^{-1}(\sep_X(U))$ is open in $X$. We claim $\sep_X^{-1}(\sep_X(U))=U$, where $"\supseteq"$ is clear. Let $z \notin U$, then $z \in \overline{\lbrace x \rbrace}$ and hence $\sep_X(z)=x \notin \sep_X(U)$. This proves the claim.\\
The rest of the proof is analogous to the respective part of the proof of \ref{prop. map: spa a to m(a)}:
Let $Y$ be a $T_1$-space and $f:X \rightarrow Y$ a continuous map. Let $x \in X$ and let $\tilde{x}$ be the maximal generization of $x$. We have to show that $f(x)$ equals $f(\tilde{x})$. Since $Y$ is a $T_1$-space, $f(\tilde{x})$ is closed in $Y$ and hence $f^{-1}(f(\tilde{x}))$ is a closed subset of $X$ that contains $\tilde{x}$. So $\overline{\lbrace \tilde{x} \rbrace} \subseteq f^{-1}(f(\tilde{x}))$ and therefore $f(x)=f(\tilde{x})$. \\
The continuity of the induced map $[X] \rightarrow Y$ is due to the quotient topology of $[X]$.
\end{proof}

\begin{rem}\label{rem: functoriality of sep map} (Functoriality of $\sep_X$) 
\emph{Let $f:X \rightarrow Y$ be a continuous map of valuative spaces. Then by the universal $T_1$-quotient property of $[X]$ there exists a unique continuous map $[f]:[X]\rightarrow [Y]$ such that the  diagram
\begin{gather*}
\begin{xy}
  \xymatrix{
      X \ar[r]^f \ar[d]_{\sep_X}    &   Y \ar[d]^{\sep_Y}  \\
     [X] \ar[r]_{[f]}           &   [Y]   
  }
\end{xy}
\end{gather*}
commutes.}
\end{rem}

It is natural that we want to consider those maps between valuative spaces that are not only continuous but also respect the additional structure of the respective separated quotients. This idea is made more precise in the following definition and remarks.

\begin{defn}\label{defn: valuative maps}
\emph{Let $f :X \rightarrow Y$ be a continuous map of valuative spaces. $f$ is called \emph{valuative} if $f([X]) \subseteq [Y]$.
}
\end{defn}

\begin{rem}\label{rem: valuative maps}\emph{
\begin{enumerate}
\item Let $X$ be a valuative space and $U\subseteq X$ an open subset. The immersion $U \hookrightarrow X$ is valuative since the maximal generizations of $U$ are those of $X$ contained in $U$.
\item If $f: X\rightarrow Y$ is a valuative map of valuative spaces, then $[f](x)=f(x)$ for all $x \in [X]$ (considered as a subset of $X$). 
\end{enumerate}
}
\end{rem}

\begin{rem}\label{rem: open subsets of valuative spaces}
\emph{Let $X$ be a valuative space and $U \subseteq X$ an open subset. Let $i: U \hookrightarrow X$ denote the canonical inclusion. In the commutative diagram
\begin{gather*}		
\begin{xy}
  \xymatrix{
      U \ar[r]^i \ar[d]_{\sep_U}    &   X \ar[d]^{\sep_X}  \\
     [U]_U \ar[r]_{[i]}           &   [X]_X   
  }
\end{xy}
\end{gather*}
$[i]$ is a continuous injection with image $[U]_X$ since the maximal generizations of $U$ are those of $X$ that are contained in $U$. In general this map may not induce a homeomorphism $[U]_{U} \cong [U]_X$. However, this statement holds in the following important special case:
}
\end{rem}

\begin{lem}\label{lem: case in which the map in fact is a homeomorphism}
Let $U$ be a coherent open subset of a valuative space $X$ such that $[X]$ is Hausdorff (we will see later that for example this is the case when $U$ is an affinoid open subset of a taut adic space $X$). Let $i: U \hookrightarrow X$ be the inclusion map. Then $[i]: \sep_U(U)\rightarrow \sep_X(X)$ is a closed topological embedding. In particular $\sep_U(U)$ is homeomorphic to $\sep_X(U)$.
\end{lem}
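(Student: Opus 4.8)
The plan is to reduce the statement to the standard topological fact that a continuous injection from a quasi-compact space into a Hausdorff space is automatically a closed topological embedding. First I would record the two structural inputs. Since $U$ is coherent it is in particular quasi-compact (Definition \ref{def. sober quasi sepatated...}), and as an open subset of the valuative space $X$ it is itself valuative, its maximal points being exactly those maximal points of $X$ that lie in $U$ (Remark \ref{rem. zorn and co}). Consequently the separation map $\sep_U \colon U \to \sep_U(U)$ is a continuous surjection out of a quasi-compact space, so $\sep_U(U)$ is quasi-compact.

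Next I would invoke Remark \ref{rem: open subsets of valuative spaces}, which already supplies that $[i]$ is a continuous injection with image $\sep_X(U)$: on underlying sets $[i]$ identifies a maximal point of $U$ with the same point viewed as a maximal point of $X$. Thus we obtain a continuous injection $[i] \colon \sep_U(U) \to [X]$ from a quasi-compact space into the Hausdorff space $[X]$, Hausdorffness being exactly the hypothesis imposed on $[X]$.

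The core of the argument is then purely topological. A continuous map from a quasi-compact space to a Hausdorff space sends closed (hence quasi-compact) subsets to quasi-compact, hence closed, subsets; therefore $[i]$ is a closed map. A closed continuous injection is a homeomorphism onto its image, so $[i]$ is a topological embedding, and applying closedness to the whole space $\sep_U(U)$ shows that its image $\sep_X(U)$ is closed in $[X]$. This simultaneously yields that $[i]$ is a closed topological embedding and the asserted homeomorphism $\sep_U(U) \cong \sep_X(U)$.

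I do not expect a serious obstacle here: the substance of the statement is packaged into the quasi-compactness of $U$ (furnished by coherence) and the Hausdorffness of $[X]$ (furnished by hypothesis), after which the closed-map principle does all the work. The only points requiring care are confirming that $\sep_U(U)$ is genuinely quasi-compact and that the injectivity and image description of $[i]$ are legitimately available from Remark \ref{rem: open subsets of valuative spaces}; both follow at once from the cited results.
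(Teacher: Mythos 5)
Your proposal is correct and follows essentially the same route as the paper's proof: both use that $[i]$ is a continuous injection (from Remark \ref{rem: open subsets of valuative spaces}), that $[U]_U$ is quasi-compact, and that a continuous injection from a quasi-compact space into a Hausdorff space is a closed embedding. Your only addition is to spell out why $[U]_U$ is quasi-compact (as the continuous image of the quasi-compact coherent set $U$ under $\sep_U$), which the paper asserts without comment.
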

\begin{proof}
As explained above, we have a commutative diagram
\begin{gather*}
\begin{xy}
  \xymatrix{
      U \ar[r]^i \ar[d]_{\sep_U}    &   X \ar[d]^{\sep_X}  \\
     [U]_U \ar[r]_{[i]}           &   [X]_X   
  }
\end{xy}
\end{gather*}
where $i:U \rightarrow X$ denotes the inclusion. $[i]$ is a continuous injection, so it is enough to show that $[i]$ is closed. Let $V$ be a closed subset of $[U]_U$. Since $[U]_U$ is quasi-compact, $V$ is quasi-compact as well and hence also its image $[i](V)$. But $[X]_X$ is Hausdorff and therefore $[i](V)$ is also closed.
\end{proof}

\begin{exmp}\label{exmp: separation map adic spectra; adic spaces }
\emph{Let $A$ be a strictly $k$-affinoid algebra and let $X=\Spa(A,A^o)$ be the associated adic spectrum. The maximal points of $X$ are in bijection to the rank $1$ points of $X$ and hence we have a bijection between $\mt M(A)$ and $[X]$. $[X]$ and $\mt M(A)$ are actually homeomorphic since both spaces are maximal $T_1$-quotients of $X$. This shows that the map $q:X \rightarrow \mt M(A)$ can be considered as a special case of the separation map of the valuative space $X$ and $\mt M(A)$ can be regarded as its separated quotient.
}
\end{exmp}

\begin{exmp}\label{exmp: separated quotient affinoid adic space, affinoid Berkovich space}
\emph{
Let $A$ and $B$ be strictly $k$-affinoid algebras and $(\varphi,\varphi^b)$ be a morphism of affinoid adic spaces from $\Spa(A,A^o)$ to $\Spa(B,B^o)$. Then $\varphi$ is induced by a homomorphism of Huber pairs $f: (B,B^o) \rightarrow (A,A^o)$ (cf. \ref{theo. fully faithful functor}) and we have a  commutative diagram
\begin{gather*}		
\begin{xy}
  \xymatrix{
      \Spa(A,A^o) \ar[r]^\varphi \ar[d]_{\sep_{\Spa(A,A^o)}}    &   \Spa(B,B^o) \ar[d]^{\sep_{\Spa(B,B^o)}}  \\
     \mt M(A) \ar[r]_{[\varphi]}           &   \mt M(B) .  
  }
\end{xy}
\end{gather*}
Since $\varphi$ is valuative by \ref{prop: properties of analytic adic spaces}, $[\varphi](x)$ equals $\varphi(x)=f\circ x$ for all $x \in \mt M(A)$. Hence $[\varphi]$ is precisely the morphism of $k$-affinoid spaces induced by the morphism of $k$-affinoid algebras $\tilde{f}:B\rightarrow A$, which corresponds to $f$ via the equivalence in \ref{prop. equivalence of cat. k-affinoid algebras, complete Huber pairs locally of finite type /k}.
}
\end{exmp}

\subsection{Reflexive valuative spaces}
Let $A$ be a strictly $k$-affinoid algebra and $X=\Spa(A,A^o)$ be the corresponding affinoid adic space. We have already seen that all information of $X$ is contained in $A$ and hence we do not lose any information if we pass to its separated quotient $\mt M(A)$. One main question is under which conditions the topological aspects of this result can be retained true in a more general setting where $X$ is a certain general adic space. Considering a general valuative space we can search for properties that ensure that sufficient much structure can be retained by its separated quotient. 
The following definition is central for this idea:

\begin{defn}\label{defn: reflexive valuative spaces}
\emph{
Let $X$ be a valuative space. $X$ is said to be \emph{reflexive} if for two coherent open subsets $U\subseteq V$ of $X$, such that $[U]=[V]$, we have $U=V$. 
}
\end{defn}

Reflexiveness gives us a hint how we can interpret set-theoretical assertions of a separated quotient in terms of the underlying valuative space.

\begin{prop}\label{prop: reflexive spaces alternative description}
Let $X$ be a valuative space. Then $X$ is reflexive if and only if for any pair of open subsets $U \subseteq V $ of $X$, such that the inclusion $U\hookrightarrow V$ is quasi-compact, $[U]=[V]$ implies $U=V$.
\end{prop}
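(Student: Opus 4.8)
The plan is to prove the two implications separately, recording first the elementary observation that for an open subset $W$ of a valuative space, its set of maximal points $[W]$ is exactly the set of maximal points of $X$ lying in $W$ (Remark \ref{rem. zorn and co} iii)). Thus for open subsets $U \subseteq V$ the hypothesis $[U]=[V]$ just says $[X]\cap U = [X]\cap V$, i.e. $U$ and $V$ contain the same maximal points of $X$; in particular this equality is stable under intersecting with a further open subset, a fact I will exploit below.

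For the implication that the quasi-compact-inclusion criterion implies reflexivity, I would take two coherent open subsets $U \subseteq V$ with $[U]=[V]$ and check that the inclusion $U \hookrightarrow V$ is automatically quasi-compact. Indeed $U$ is quasi-compact (being coherent) and open in the quasi-separated space $V$, so for any quasi-compact open $W \subseteq V$ the intersection $U \cap W$ is an intersection of two quasi-compact opens in a quasi-separated space, hence quasi-compact. Therefore the assumed criterion applies to the pair $(U,V)$ and gives $U = V$, which is precisely the defining condition of reflexiveness.

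The substantial direction is the converse. Assuming $X$ reflexive, let $U \subseteq V$ be open with $U \hookrightarrow V$ quasi-compact and $[U]=[V]$. Since $V$ is an open subspace of the locally coherent space $X$, it is again locally coherent, so I can cover $V$ by coherent open subsets $V_i$; each $V_i$ is then a coherent (hence quasi-compact) open subset of $X$ contained in $V$. Because $U \hookrightarrow V$ is quasi-compact and $V_i$ is a quasi-compact open of $V$, the preimage $U \cap V_i$ is quasi-compact; being also open in the coherent space $V_i$, it is itself coherent, since a quasi-compact open subset of a coherent space inherits a basis of quasi-compact opens together with quasi-separatedness. The key point will be to verify $[U\cap V_i]=[V_i]$: intersecting $[X]\cap U = [X]\cap V$ with $V_i \subseteq V$ yields $[X]\cap U \cap V_i = [X]\cap V_i$, which is exactly this equality. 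Applying reflexiveness to the pair of coherent open subsets $U\cap V_i \subseteq V_i$ gives $U\cap V_i = V_i$, i.e. $V_i \subseteq U$; taking the union over $i$ yields $V \subseteq U$ and hence $U = V$.

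I expect the main obstacle to lie entirely in the converse direction, and within it in the two facts that (i) $U\cap V_i$ is again coherent, which rests on the elementary but essential observation that a quasi-compact open subset of a coherent space is coherent, and (ii) that restriction to the pieces $V_i$ preserves the equality of maximal-point sets. Both become routine once local coherence is used to replace the global quasi-compact inclusion by genuinely coherent pieces, to which the definition of reflexiveness can be directly applied.
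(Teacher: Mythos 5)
Your proof is correct. The paper itself does not argue this proposition but simply defers to \cite{FK} Proposition 0.2.4.3, so there is no in-text proof to compare against; your argument supplies the missing details and is the natural one. Both directions check out: for the easy direction you correctly observe that a coherent open $U$ inside a coherent open $V$ automatically has quasi-compact inclusion $U\hookrightarrow V$ (quasi-compactness of $U$ plus quasi-separatedness of $V$), so the stated criterion subsumes the definition of reflexiveness; for the converse, the localization to a covering of $V$ by coherent opens $V_i$ works because $U\cap V_i$ is quasi-compact (by quasi-compactness of the inclusion) and hence coherent as a quasi-compact open of the coherent space $V_i$, while the identity $[W]=[X]\cap W$ for open $W$ (Remark \ref{rem. zorn and co} iii)) makes the equality of maximal-point sets stable under intersecting with $V_i$. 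Applying the definition of reflexiveness to $U\cap V_i\subseteq V_i$ and taking the union over $i$ then finishes the argument. The two auxiliary facts you flag --- that open subsets of locally coherent spaces are locally coherent and that quasi-compact opens of coherent spaces are coherent --- are indeed the only points requiring verification, and your justifications of them are sound.
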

\begin{proof}
\cite{FK} Proposition 0.2.4.3.
\end{proof}

Note that the above proposition implies that \emph{reflexiveness} is a local property:

\begin{lem}\label{lemma: reflex, local property}
Let $X$ be a valuative space such that there exists a covering $(W_i)_{i \in I}$ of reflexive quasi-compact open subsets of $X$. Then $X$ is reflexive.
\end{lem}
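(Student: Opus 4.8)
The plan is to reduce everything to the alternative characterization of reflexivity given in Proposition \ref{prop: reflexive spaces alternative description}. This is the essential move: the raw definition of reflexivity (Definition \ref{defn: reflexive valuative spaces}) involves \emph{coherent} open subsets, and coherence need not survive the intersections $U \cap W_i$ that will appear, whereas the retrocompact formulation of Proposition \ref{prop: reflexive spaces alternative description} applies to arbitrary open subsets and restricts cleanly. So I would take open subsets $U \subseteq V$ of $X$ such that the inclusion $U \hookrightarrow V$ is quasi-compact and $[U] = [V]$, and aim to show $U = V$; by Proposition \ref{prop: reflexive spaces alternative description} this suffices. First I would record, via Remark \ref{rem. zorn and co} iii), that the maximal points of any open subset $O \subseteq X$ are exactly the maximal points of $X$ lying in $O$, so that as subsets of $X$ one has $[U] = [X]_X \cap U$ and $[V] = [X]_X \cap V$, and the hypothesis $[U]=[V]$ becomes $[X]_X \cap U = [X]_X \cap V$.

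Next I would fix $i \in I$, set $U' := U \cap W_i$ and $V' := V \cap W_i$ (open subsets of the valuative space $W_i$ with $U' \subseteq V'$), and verify that $(U',V')$ satisfies the hypotheses of Proposition \ref{prop: reflexive spaces alternative description} relative to $W_i$. For retrocompactness, I would take a quasi-compact open $Q \subseteq V'$; since $V' = V \cap W_i$ is open in $V$, such a $Q$ is a quasi-compact open subset of $V$, so $U \cap Q$ is quasi-compact because $U \hookrightarrow V$ is quasi-compact, and as $Q \subseteq W_i$ we have $U' \cap Q = U \cap Q$, proving $U' \hookrightarrow V'$ quasi-compact. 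For the equality of maximal points, applying Remark \ref{rem. zorn and co} iii) to the open subsets $U', V' \subseteq W_i$ gives $[U']_{W_i} = [X]_X \cap U'$ and $[V']_{W_i} = [X]_X \cap V'$, so intersecting the identity $[X]_X \cap U = [X]_X \cap V$ with $W_i$ yields $[U']_{W_i} = [V']_{W_i}$.

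Since $W_i$ is reflexive, Proposition \ref{prop: reflexive spaces alternative description} (applied to $W_i$) then gives $U' = V'$, i.e. $U \cap W_i = V \cap W_i$. As $(W_i)_{i\in I}$ covers $X$ and hence covers both $U$ and $V$, I would conclude
\[
U = \bigcup_{i\in I} (U \cap W_i) = \bigcup_{i\in I} (V \cap W_i) = V,
\]
and a final appeal to Proposition \ref{prop: reflexive spaces alternative description}, now for $X$, shows that $X$ is reflexive.

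I expect no deep obstacle here; the only delicate steps are bookkeeping. The point requiring care is that a quasi-compact inclusion restricts to a quasi-compact inclusion after intersecting with $W_i$ (handled by observing that quasi-compact opens of $V'$ are quasi-compact opens of $V$), together with tracking the maximal points of the various open subsets consistently through Remark \ref{rem. zorn and co} iii). The conceptual heart of the argument, and the reason reflexivity is a local property at all, is Proposition \ref{prop: reflexive spaces alternative description}, which replaces the coherence condition by the intersection-stable retrocompactness condition.
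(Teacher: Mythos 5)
Your proof is correct and follows essentially the same route as the paper's: reduce to the retrocompact characterization of reflexivity (Proposition \ref{prop: reflexive spaces alternative description}), restrict a pair $U\subseteq V$ to each $W_i$, apply reflexiveness of $W_i$, and glue. The only difference is in justifying that $U\cap W_i\hookrightarrow V\cap W_i$ is quasi-compact: the paper invokes quasi-separatedness of $X$, whereas your direct argument (a quasi-compact open of $V\cap W_i$ is already a quasi-compact open of $V$) is if anything cleaner, since it uses only the stated hypotheses.
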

\begin{proof}
Let $U \subseteq V$ be open subsets of $X$ such that $U\hookrightarrow V$ is quasi-compact and $[U]=[V]$. Since the $W_i$'s are quasi-compact and $X$ is quasi-separated, the inclusions $U\cap W_i \hookrightarrow V \cap W_i$ are quasi-compact. Since $U$ and $V$ are open we also have $[W_i \cap U]_{W_i}=[W_i \cap V]_{W_i}$ and hence $(W_i \cap V) = (W_i \cap U)$ by the reflexiveness of $W_i$. But this means $U=V$.
\end{proof}

\begin{lem}\label{lem: closure of retrocompact opens}
Let $X$ be a valuative space and $U$ a retrocompact open subset of $X$. Then we have $\overline{U}=\bigcup_{x \in U} \overline{ \lbrace x \rbrace}=sep^{-1}_X([U])$.
\end{lem}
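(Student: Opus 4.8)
The plan is to prove the two equalities $\bigcup_{x\in U}\overline{\lbrace x\rbrace}=\sep_X^{-1}([U])$ and $\overline{U}=\bigcup_{x\in U}\overline{\lbrace x\rbrace}$ separately, writing $B:=\bigcup_{x\in U}\overline{\lbrace x\rbrace}$ and $C:=\sep_X^{-1}([U])$ for brevity; here $[U]$ is understood as $\sep_X(U)\subseteq[X]$, so that $C=\sep_X^{-1}(\sep_X(U))$.

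First I would establish $B=C$, which uses only that $X$ is valuative and $U$ is open, and not retrocompactness. The key observation is that if $x$ is a generization of $y$, then $\sep_X(x)=\sep_X(y)$: by transitivity of generization one has $G_x\subseteq G_y$, and $\sep_X(x)$, being a maximal point of $X$, has no proper generization, hence is a maximal element of the totally ordered set $G_y$; by uniqueness of the top element it must coincide with $\sep_X(y)$. This gives $B\subseteq C$. For $C\subseteq B$, take $y$ with $\xi:=\sep_X(y)=\sep_X(x)$ for some $x\in U$; then $\xi$ is a generization of $x\in U$, and since $U$ is open it is a neighbourhood of $x$ and therefore contains $\xi$, so $\xi\in U$; as $\xi$ is also a generization of $y$ we obtain $y\in\overline{\lbrace\xi\rbrace}\subseteq B$.

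The inclusion $B\subseteq\overline{U}$ is immediate, since $x\in U\subseteq\overline{U}$ and $\overline{U}$ is closed. The real content is $\overline{U}\subseteq B$, and this is where retrocompactness and local coherence enter. Since $X$ is locally coherent, choose a covering $(W_i)$ of $X$ by coherent open subsets. As $U$ is retrocompact and each $W_i$ is quasi-compact, $U\cap W_i$ is a quasi-compact open subset of the coherent sober space $W_i$. Because $W_i$ is open in $X$ one has $\overline{U}\cap W_i=\overline{U\cap W_i}^{W_i}$, and the spectral-space fact that the closure of a quasi-compact open subset of a coherent sober space is the union of the closures of its points yields $\overline{U\cap W_i}^{W_i}=\bigcup_{x\in U\cap W_i}\overline{\lbrace x\rbrace}^{W_i}=\bigl(\bigcup_{x\in U\cap W_i}\overline{\lbrace x\rbrace}\bigr)\cap W_i$. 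Any $y\in\overline{U}$ lies in some $W_i$, hence is a specialization of a point of $U\cap W_i\subseteq U$, so $y\in B$; this proves $\overline{U}\subseteq B$ and finishes the argument.

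The main obstacle is the spectral-space fact invoked above, i.e.\ the special case of the lemma for quasi-compact open subsets of a coherent sober space. This is standard and follows from the theory of constructible sets (a quasi-compact open is constructible, and the specialization-closure of a pro-constructible set is again pro-constructible and stable under specialization, hence closed); alternatively it can be extracted directly from the valuative structure of $W_i$ together with quasi-compactness. One must also be slightly careful with the passage between closures in the open subspace $W_i$ and in $X$, but this is harmless precisely because each $W_i$ is open, so that $\overline{U}\cap W_i=\overline{U\cap W_i}^{W_i}$ holds.
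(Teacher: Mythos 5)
Your proposal is correct. Note that the paper does not actually prove this lemma: it cites \cite{FK} Corollary 0.2.2.27 for the first equality and dismisses the second as definitional, so your argument supplies what the paper omits rather than diverging from it. Your reduction is the natural one and matches the structure of the cited result: the identification $\bigcup_{x\in U}\overline{\lbrace x\rbrace}=\sep_X^{-1}([U])$ uses only that $U$ is open together with the total ordering of the generization sets (your observation that generization-equivalent points have the same maximal generization is exactly the needed point, and your $C\subseteq B$ direction correctly uses openness of $U$ to see that $\sep_X(x)\in U$ for $x\in U$); the inclusion $\overline{U}\subseteq\bigcup_{x\in U}\overline{\lbrace x\rbrace}$ is correctly localized to a coherent open cover, where retrocompactness of $U$ guarantees that $U\cap W_i$ is quasi-compact open in the spectral space $W_i$, and the passage between closures in $W_i$ and in $X$ is handled properly since the $W_i$ are open. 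The one nontrivial ingredient, that the closure of a quasi-compact open in a coherent sober space is the set of specializations of its points, is indeed the core of the matter; the constructible-topology argument you sketch (a quasi-compact open is constructible, and the specialization-closure of a pro-constructible set is closed, via compactness of the patch topology) is a standard and valid way to establish it, so there is no gap.
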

\begin{proof}
\cite{FK} Corollary 0.2.2.27, the second equality holds by definition.
\end{proof}

\begin{prop}\label{prop: reflexive means regular for the right sets and U=U'}
Let $X$ be a valuative space. Then any retrocompact open subset of $U\subseteq X$ is \emph{regular}, that is $(\overline{U})^{\circ}=U$. Moreover, whenever we have $[U]=[U']$ for retrocompact open subsets $U$ and $U'$ of $X$, then $U$ and $U'$ coincide. 
\end{prop}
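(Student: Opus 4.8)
The whole argument rests on Lemma~\ref{lem: closure of retrocompact opens}, which for a retrocompact open $U$ identifies $\overline U$ with the saturation $\sep_X^{-1}([U])$ of $U$ under the separation map. The first thing I would observe is that, granting the regularity statement, the ``moreover'' part is purely formal: if $U$ and $U'$ are retrocompact open with $[U]=[U']$, then Lemma~\ref{lem: closure of retrocompact opens} gives $\overline U=\sep_X^{-1}([U])=\sep_X^{-1}([U'])=\overline{U'}$, hence $(\overline U)^{\circ}=(\overline{U'})^{\circ}$, and regularity turns this into $U=(\overline U)^{\circ}=(\overline{U'})^{\circ}=U'$. So the entire content is the regularity of retrocompact opens, and I would concentrate all effort there.

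For regularity, the inclusion $U\subseteq(\overline U)^{\circ}$ is immediate, since $U$ is an open subset of $\overline U$. For the reverse inclusion I would set $W:=(\overline U)^{\circ}$ and aim to show $W=U$. Since $U$ is dense in its closure $\overline U$ and $U\subseteq W\subseteq\overline U$, the set $U$ is a dense open subset of $W$; moreover the inclusion $U\hookrightarrow W$ is quasi-compact because $U$ is retrocompact in $X$ and every quasi-compact open of the open subset $W$ is quasi-compact in $X$. Passing to maximal generizations one checks $[U]=[W]$: a maximal point of $W\subseteq\overline U$ is its own maximal generization and lies in $U$ by Lemma~\ref{lem: closure of retrocompact opens}, so $[W]\subseteq[U]$, while $[U]\subseteq[W]$ is clear from $U\subseteq W$. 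Thus the problem reduces to showing that a dense retrocompact open subset $U$ of $W$ with $[U]=[W]$ must coincide with $W$, which is precisely the kind of conclusion recorded in Proposition~\ref{prop: reflexive spaces alternative description} for the inclusion $U\hookrightarrow W$.

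To establish this last point I would argue locally, reducing (in the spirit of Lemma~\ref{lemma: reflex, local property}) to a coherent open $V\subseteq W$ in which $U\cap V$ is a quasi-compact dense open. Suppose $x\in V\setminus U$. Its maximal generization $\tilde x=\sep_X(x)$ lies in $U$ by the description of $\overline U$, and, by condition (ii) of a valuative space, the set $G_x$ of generizations of $x$ is totally ordered with top $\tilde x$; since $U$ is open and hence stable under generization, $G_x\cap U$ is an up-set of $G_x$ containing $\tilde x$, so $x$ is the only point of the chain that could fail to lie in $U$. I would then exploit the retrocompactness of $U$ inside the coherent space $V$ to produce a quasi-compact open neighbourhood of $x$ disjoint from $U$, which contradicts the density of $U$ in $W$. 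The main obstacle is exactly this step: transporting the membership $\tilde x\in U$ down the totally ordered chain $G_x$ to the bottom point $x$. This is where the valuative hypothesis is indispensable, as it forbids the ``horizontal'' branching that would otherwise let an interior point of $\overline U$ escape $U$; making the descent quantitative, by fitting a fundamental system of coherent neighbourhoods of $x$ to the chain $G_x$, is the technical heart of the proof.
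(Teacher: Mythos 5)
Your handling of the ``moreover'' part is exactly the paper's: both you and the author deduce $\overline U=\sep_X^{-1}([U])=\sep_X^{-1}([U'])=\overline{U'}$ from Lemma~\ref{lem: closure of retrocompact opens} and then let regularity finish the job. The divergence is in the regularity statement itself, which the paper disposes of by citing \cite{FK} Proposition 0.2.4.5, while you attempt a direct proof --- and there the proposal has a genuine gap. The ``descent along the chain $G_x$'' that you yourself call the technical heart is never carried out, and it cannot be carried out at the stated level of generality, because the proposition as printed is missing the hypothesis that $X$ is \emph{reflexive}. The Sierpi\'{n}ski space $X=\lbrace \eta,s\rbrace$, with $\lbrace\eta\rbrace$ open and $s$ a specialization of $\eta$, is a coherent sober valuative space in which $U=\lbrace\eta\rbrace$ is retrocompact open, yet $\overline U=X$ and $(\overline U)^{\circ}=X\neq U$; likewise $[U]=[X]$ while $U\neq X$. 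So no argument valid for arbitrary valuative spaces can close your gap. (The label of the proposition, its placement in the subsection on reflexive valuative spaces, the numbering within \cite{FK}'s section on reflexive spaces, and the fact that the paper only ever applies the result to spaces known to be reflexive by \ref{prop. affine adic spaces connected to strict k-aff alg are reflexive}, all confirm that reflexivity is the intended hypothesis.)

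The frustrating part is that your second paragraph already contains a complete proof once reflexivity is restored: you establish $U\subseteq W:=(\overline U)^{\circ}$, that the inclusion $U\hookrightarrow W$ is quasi-compact, and that $[U]=[W]$, at which point Proposition~\ref{prop: reflexive spaces alternative description} yields $U=W$ immediately and your third paragraph is unnecessary. As written, that third paragraph is an attempt to prove the reflexivity condition itself for the pair $(U,W)$ --- precisely the condition your own reduction shows is equivalent to the claim --- so it was never going to succeed without importing the missing hypothesis.
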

\begin{proof}
The first part is \cite{FK} Proposition 0.2.4.5. For the second part assume $U,U'$ as above with $[U]=[U']$. Then we have $U=(\overline{U})^o =(\sep_X^{-1}([U]))^o=(\sep_X^{-1}([U']))^o=(\overline{U'})^o=U'$ where the second and the forth equality are due to \ref{lem: closure of retrocompact opens}. 
\end{proof}

\begin{prop}\label{prop: maps of reflexive spaces}
Let $f,g:X \rightarrow Y$ be two valuative quasi-compact maps of valuative spaces. Assume that $X$ is reflexive and $[f]=[g]$. Then we have $f=g$.
\end{prop}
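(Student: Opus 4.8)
The plan is to show that two valuative quasi-compact maps inducing the same map on separated quotients already have the same preimages of retrocompact open subsets, and then to recover the maps from these preimages. Since $Y$ is sober, hence $T_0$, and since the retrocompact open subsets form a basis of its topology, it suffices to prove $f^{-1}(V)=g^{-1}(V)$ for every retrocompact open $V\subseteq Y$. Indeed, once this is known, every $x\in X$ satisfies $f(x)\in V \Leftrightarrow x\in f^{-1}(V)=g^{-1}(V)\Leftrightarrow g(x)\in V$ for all $V$ in this basis, so $f(x)$ and $g(x)$ have the same open neighbourhoods; the $T_0$-property then forces $f(x)=g(x)$.

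So I would fix a retrocompact open $V\subseteq Y$. First note that $f$ and $g$, being quasi-compact maps between locally coherent spaces, are locally quasi-compact, so Proposition \ref{prop: quasi-compact map between locally coherent spaces} guarantees that $f^{-1}(V)$ and $g^{-1}(V)$ are retrocompact open subsets of $X$. The heart of the argument is then the set-theoretic identity
\[
\sep_X(f^{-1}(V)) = [f]^{-1}(\sep_Y(V)),
\]
and likewise for $g$. To see this, recall (Remark \ref{rem. zorn and co}, Remark \ref{rem: open subsets of valuative spaces}) that the maximal points of the open subset $f^{-1}(V)$ are exactly the maximal points of $X$ lying in $f^{-1}(V)$. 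For such a maximal point $\xi\in[X]$ one has $\xi\in f^{-1}(V)\Leftrightarrow f(\xi)\in V$; since $f$ is valuative, $f(\xi)$ is a maximal point of $Y$ and equals $[f](\xi)$ by Remark \ref{rem: valuative maps}, so the condition becomes $[f](\xi)\in\sep_Y(V)$. This yields the displayed identity.

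Combining this identity for $f$ and $g$ with the hypothesis $[f]=[g]$ gives $\sep_X(f^{-1}(V))=\sep_X(g^{-1}(V))$, i.e. $[f^{-1}(V)]=[g^{-1}(V)]$ as subsets of $[X]$. Now I would invoke reflexiveness: by the second assertion of Proposition \ref{prop: reflexive means regular for the right sets and U=U'}, two retrocompact open subsets of a reflexive valuative space with the same image under $\sep_X$ coincide. Hence $f^{-1}(V)=g^{-1}(V)$, which completes the reduction and therefore the proof.

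I expect the main obstacle to be the bookkeeping around retrocompactness rather than any deep idea: one must ensure that the maps are genuinely locally quasi-compact so that Proposition \ref{prop: quasi-compact map between locally coherent spaces} applies, and that the reduction in the first paragraph is legitimate (that the retrocompact opens form a basis of $Y$ and that soberness lets one read off the maps from their preimages). The conceptual core---that reflexiveness is precisely what upgrades the equality $[f]=[g]$ on separated quotients to an equality of preimages, and hence of the maps themselves---is short once these topological preliminaries are settled.
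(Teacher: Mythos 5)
Your argument is the same one that underlies the result the paper cites (Fujiwara--Kato, Corollary 0.2.4.12), and its core is correct: the identity $[f^{-1}(V)]=[f]^{-1}([V])$ for a valuative map $f$ and open $V$ (using that $[W]=W\cap[X]$ for open $W$ and that $[f]$ agrees with $f$ on $[X]$), the use of the second assertion of Proposition \ref{prop: reflexive means regular for the right sets and U=U'} to upgrade $[f^{-1}(V)]=[g^{-1}(V)]$ to $f^{-1}(V)=g^{-1}(V)$ for retrocompact opens, and the final reduction via soberness (hence the $T_0$ property) of $Y$ are all sound.

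The two items you set aside as ``bookkeeping'' are, however, exactly where something nontrivial is being invoked, and neither is automatic under the stated hypotheses: both come down to quasi-separatedness, which is \emph{not} part of the paper's definition of a valuative space (compare Proposition \ref{prop: locally strongly compact = taut}, which adds it as a separate assumption). First, to see that a quasi-compact map of locally coherent spaces is locally quasi-compact you must check that $f^{-1}(W)\cap U$ is quasi-compact for $W$ a quasi-compact open of $Y$ and $U$ a coherent open of $X$; this is an intersection of two quasi-compact opens of $X$ and so needs $X$ to be quasi-separated. Second, a locally coherent space always has a basis of quasi-compact opens, but such an open is retrocompact only if it meets every quasi-compact open in a quasi-compact set; without quasi-separatedness of $Y$ the retrocompact opens need not form a basis (a point lying on a non-quasi-compact overlap of two coherent charts may have no retrocompact neighbourhood at all). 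So you should either carry out these two verifications under an added quasi-separatedness hypothesis --- which is harmless for this paper, since the proposition is only ever applied to taut spaces --- or find a way around them. Granting those two points, your proof is complete and follows the same route as the cited source.
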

\begin{proof}
\cite{FK} Corollary 0.2.4.12.
\end{proof}

Since reflexiveness is a local property, the following proposition shows that an adic space, that is locally of finite type over $k$, is reflexive. This fact is one key point of all constructions following in §7.\\  
 Keep in mind that our idea is to consider an adic space as a valuative space and define the structure of a $k$-analytic Berkovich space on its separated quotient. The result of the lemma before brings us in the situation to deduce properties of maps between adic spaces from maps between the corresponding Berkovich spaces.

\begin{prop}\label{prop. affine adic spaces connected to strict k-aff alg are reflexive}
Let $A$ be a strictly $k$-affinoid algebra. Then $\Spa(A,A^o)$ is reflexive as a topological space. In particular, the topological space underlying any adic space, that is locally of finite type over $k$, is reflexive.
\end{prop}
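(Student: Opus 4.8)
The plan is to prove the statement in two steps: first establish reflexiveness for the affinoid piece $X := \Spa(A,A^o)$, and then bootstrap to the general case using that reflexiveness is a local property (Lemma \ref{lemma: reflex, local property}). The genuine analytic content will turn out to be already packaged in the earlier results on valuative spaces, so the task reduces largely to checking that the relevant opens in $X$ fall under the hypotheses of Proposition \ref{prop: reflexive means regular for the right sets and U=U'}.

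For the affinoid case, recall that $X$ is spectral, hence coherent (Theorem \ref{theo. adic spectrum is spectral space}) and, since $A$ is strictly $k$-affinoid and therefore contains a topologically nilpotent unit, analytic; thus $X$ is a valuative space whose separated quotient $[X]$ is homeomorphic to the compact Hausdorff space $\mt M(A)$ (Examples \ref{exmp: Spa A is a valuative space} and \ref{exmp: separation map adic spectra; adic spaces }, together with Theorem \ref{theorem: berkovich spectrum is nonempty, compact}). The key observation I would isolate is that in a spectral space every coherent, i.e.\ quasi-compact, open subset is automatically retrocompact: if $U$ is quasi-compact open and $W\subseteq X$ is any quasi-compact open, then $U\cap W$ is quasi-compact because $X$ is quasi-separated, so the inclusion $U\hookrightarrow X$ is quasi-compact. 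Moreover, since $[X]$ is Hausdorff, Lemma \ref{lem: case in which the map in fact is a homeomorphism} identifies the intrinsic separated quotient of a coherent open $U$ with its image $\sep_X(U)\subseteq[X]$, so that the condition $[U]=[V]$ in Definition \ref{defn: reflexive valuative spaces} is exactly the equality of images $\sep_X(U)=\sep_X(V)$.

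With these identifications the affinoid case is immediate: given coherent open subsets $U\subseteq V$ of $X$ with $[U]=[V]$, both are retrocompact and their separated-quotient images agree, so Proposition \ref{prop: reflexive means regular for the right sets and U=U'} forces $U=V$; hence $X$ is reflexive. For the general statement I would then take an adic space $Y$ locally of finite type over $k$, which is analytic (Example \ref{exmp. analytic adic space}) and hence valuative, and cover it by affinoid opens of the form $\Spa(A,A^o)$ with $A$ strictly $k$-affinoid (Remarks \ref{rem. locally of finite type over k morphisms} and \ref{rem. huber pairs of finite type equal k affinoid }). These pieces are quasi-compact and reflexive by the first part, so Lemma \ref{lemma: reflex, local property} yields reflexiveness of $Y$.

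The deep input is entirely contained in Proposition \ref{prop: reflexive means regular for the right sets and U=U'} (ultimately the regularity $(\overline{U})^{\circ}=U$ for retrocompact opens, from \cite{FK}); what remains is the organizational reduction above. Accordingly, the only point demanding care is not a computation but the bookkeeping: that coherent opens of the spectral space $X$ are retrocompact, and that the Hausdorffness of $\mt M(A)$ lets one read off $[U]_U$ as the image $\sep_X(U)$. I expect this last identification to be the subtle step, since without Hausdorffness of the separated quotient the homeomorphism $[U]_U\cong\sep_X(U)$ may fail and the hypothesis of Proposition \ref{prop: reflexive means regular for the right sets and U=U'} would no longer match the definition of reflexiveness.
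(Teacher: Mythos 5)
Your second step (covering an adic space locally of finite type over $k$ by affinoid opens and invoking Lemma \ref{lemma: reflex, local property}) is exactly how the paper passes from the affinoid case to the general one, so no issue there. The gap is in the affinoid case itself: your argument is circular. You derive reflexiveness of $X=\Spa(A,A^o)$ from Proposition \ref{prop: reflexive means regular for the right sets and U=U'}, but that proposition is really a statement about \emph{reflexive} valuative spaces --- its label already says ``reflexive means regular'', and the cited source (\cite{FK} Proposition 0.2.4.5) carries reflexiveness as a hypothesis that has been dropped in the paper's formulation. As literally printed the proposition is false: for $X=\Spec V$ with $V$ a rank-one valuation ring (a coherent, quasi-separated, sober space with totally ordered generizations, hence a valuative space), the generic point $U=\lbrace \eta\rbrace$ is a quasi-compact, hence retrocompact, open subset with $\overline{U}=X$, so $(\overline{U})^{\circ}=X\neq U$ while $[U]=[X]$ and $U\neq X$. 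The same failure occurs for analytic adic spaces such as $\Spa(K,K^+)$ with $K^+\subsetneq K^{o}$ a smaller open valuation subring: there $\lbrace x \ \mid \ \val{f(x)}\leq 1\rbrace$, for $f$ a unit of $K^{o}$ not lying in $K^+$, is a proper rational subset whose separated quotient is all of $[X]$.

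The decisive symptom is that your proof nowhere uses that $A$ is strictly $k$-affinoid, nor that the ring of integral elements is $A^{o}$; it would show verbatim that every spectral valuative space is reflexive, which the examples above refute (and Lemma \ref{lem. unique topological of finite type} is exactly why $A^+=A^{o}$ is forced in the finite-type setting). So the purely organizational reduction cannot stand on its own: the actual content of \cite{FK} Chapter II, Proposition C.2.6 is that a quasi-compact open subset of $\Spa(A,A^{o})$ is already determined by its rank-one points, and this requires input specific to strictly $k$-affinoid algebras rather than the general topology of valuative spaces. The subtlety you flag at the end (Hausdorffness of $\mt M(A)$ and the identification $[U]_U\cong\sep_X(U)$) is harmless here, since the condition $[U]=[V]$ is a set-theoretic equality of sets of maximal points and does not depend on the topologies involved. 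You should either reproduce the argument of loc.\ cit.\ for the affinoid case or cite it, as the paper does.
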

\begin{proof}
\cite{FK} Chapter II Proposition C.2.6.
\end{proof}

 The next result will play an important role for the construction of a functor from finite adic to strict Berkovich spaces.
 
\begin{lem}\label{lemma: connection inclusion of subsets on separated quotient}
Let $f:X \rightarrow Y$ be a locally quasi-compact valuative map of valuative spaces where $X$ is reflexive. Let $U \subseteq Y$ be an open coherent subset such that $[f]([X]) \subseteq [U]$. Then we already have $f(X) \subseteq U$.
\end{lem}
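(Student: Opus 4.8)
The plan is to set $V := f^{-1}(U)$, to show that $V$ contains every maximal point of $X$, that the inclusion $V \hookrightarrow X$ is quasi-compact, and then to conclude $V = X$ from the reflexiveness of $X$ via Proposition \ref{prop: reflexive spaces alternative description}. The payoff $V = X$ is exactly the assertion $f(X) \subseteq U$.

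First I would check that $V$ contains all maximal points. Let $\xi \in [X]$. Since $f$ is valuative, Remark \ref{rem: valuative maps} ii) gives $[f](\xi) = f(\xi)$, and by hypothesis $[f]([X]) \subseteq [U]$, so $f(\xi) \in [U] \subseteq U$; the last inclusion holds because the maximal points of $U$ are precisely the maximal points of $Y$ lying in $U$ (Remark \ref{rem. zorn and co} iii)). Thus $\xi \in f^{-1}(U) = V$, so $[X] \subseteq V$. As $V$ is open, Remark \ref{rem. zorn and co} iii) applied to $V$ shows that the maximal points of $V$ are exactly the maximal points of $X$ contained in $V$, i.e. all of $[X]$; hence $\sep_X(V) = [X] = \sep_X(X)$, that is $[V] = [X]$.

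Next I would establish that the inclusion $V \hookrightarrow X$ is quasi-compact, i.e. that $V$ is retrocompact in $X$. The coherent open subset $U$ is retrocompact in $Y$, since the inclusion of a coherent open subset into a locally coherent space is quasi-compact. Because $f$ is locally quasi-compact, Proposition \ref{prop: quasi-compact map between locally coherent spaces} then yields that $V = f^{-1}(U)$ is retrocompact in $X$. Finally, $V \subseteq X$ are open, the inclusion $V \hookrightarrow X$ is quasi-compact, and $[V] = [X]$; since $X$ is reflexive, Proposition \ref{prop: reflexive spaces alternative description} forces $V = X$, which is the claim. (Alternatively, once $V$ is known to be retrocompact one could invoke the regularity statement of Proposition \ref{prop: reflexive means regular for the right sets and U=U'} together with Lemma \ref{lem: closure of retrocompact opens}, since $\overline V = \sep_X^{-1}([V]) = \sep_X^{-1}([X]) = X$ forces $V = (\overline V)^{\circ} = X$.)

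I expect the middle step to be the real obstacle: everything else is a direct unwinding of the definitions of valuative map and maximal generization plus one application of reflexiveness. The delicate point is that the transfer result Proposition \ref{prop: quasi-compact map between locally coherent spaces} takes a \emph{retrocompact} open of $Y$ as input, so one must argue carefully that the coherent open subset $U$ is retrocompact in the locally coherent space $Y$, and recognize that the local quasi-compactness of $f$ is exactly the hypothesis that lets one pull retrocompactness back along $f$. Once retrocompactness of $V$ is secured, the reflexiveness of $X$ closes the gap between "$V$ and $X$ have the same maximal points" and "$V = X$".
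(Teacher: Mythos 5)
Your proposal is correct and follows essentially the same route as the paper's own proof: identify $V=f^{-1}(U)$ as a retrocompact open subset via the retrocompactness of the coherent open $U$ and Proposition \ref{prop: quasi-compact map between locally coherent spaces}, verify $[V]=[X]$ using that $f$ is valuative, and conclude $V=X$ from reflexiveness via Proposition \ref{prop: reflexive spaces alternative description}. The only cosmetic difference is in justifying retrocompactness of $U$ (the paper invokes quasi-separatedness of $Y$), and your parenthetical alternative ending via Proposition \ref{prop: reflexive means regular for the right sets and U=U'} and Lemma \ref{lem: closure of retrocompact opens} is a valid variant the paper does not use.
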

\begin{proof}
Since $U$ is coherent and $Y$ is quasi-separated, $U$ is retrocompact and hence $f^{-1}(U)$ is a retrocompact open subset of $X$ by \ref{prop: quasi-compact map between locally coherent spaces}. But this means that the inclusion $f^{-1}(U) \hookrightarrow X$ is quasi-compact. Moreover, we have $[f^{-1}(U)]=f^{-1}(U)\cap [X] = \lbrace x \in [X] \ | \ f(x) \in U \rbrace = \lbrace x \in [X] \ | \ [f](x)\in [U] \rbrace = [f]^{-1}([U])=[X]$.
Hence we get $f^{-1}(U)=X$ by \ref{prop: reflexive spaces alternative description}, i.e. $f(X) \subseteq U$.
\end{proof}

In the situation of the lemma above, if $V$ is a coherent subset of $X$ such that $[f]([V]) \subseteq [U]$, then we also get $f(V)\subseteq U$ since we can apply the lemma to $f|_V:V\rightarrow Y$.\\

In the setting of $k$-affinoid Berkovich spaces the global objects are obtained by glueing $k$-affinoid spaces in a certain way along affinoid domain embeddings. Hence we have to figure out which kind of map between affinoid adic spaces we obtain from such maps by the equivalence in §4.1. It will turn out, that we get open immersions of adic spaces and hence we will be able to transfer the glueing process back and forth. Our strategy to obtain the mentioned result is to consider rational subsets and then use the Gerritzen-Grauert theorem \ref{theo. gerritzen Grauert} to receive the general result. 

The following lemma can also be deduced by comparing the universal properties of the rings involved. However, with the notions we introduced we can carry it to a geometric assertion and use the universal properties of the associated geometric spaces.

\begin{lem}\label{lem. same universal property for rational subsets}
Let $A$ be a strictly $k$-affinoid algebra and $f_1,...,f_n, g \in A $ such that the ideal generated by such elements is $A$. Let $f:= \lbrace f_1,...,f_n \rbrace$. Then we have an isomorphism of $k$-Banach algebras 
\begin{gather*}		
A \langle \frac{f}{g}\rangle \cong A\lbrace T_1,...,T_n \rbrace / (gT_i-f_i)_{i=1,...,n}.
\end{gather*}
\end{lem}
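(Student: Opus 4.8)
The plan is to exhibit the two algebras as representing objects for two universal properties which, transported across the separation map $q\colon \Spa(A,A^o)\to \mt M(A)$, turn out to coincide; producing mutually inverse comparison maps and invoking the respective uniqueness clauses then yields the isomorphism. Write $B_1:=A\langle\frac{f}{g}\rangle$ for the completed localization of Construction \ref{con. localization in a Huber ring}, and $B_2:=A\lbrace T_1,\ldots,T_n\rbrace/(gT_i-f_i)_{i}$; by Example \ref{example rational domain} one has $B_2=A_V$ representing the rational domain $V:=\lbrace y\in\mt M(A)\mid \val{f_i(y)}\le\val{g(y)},\ 1\le i\le n\rbrace$. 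Both are strictly $k$-affinoid (for $B_1$, the localization map is topologically of finite type by Construction \ref{con. localization in a Huber ring}, so $B_1$ is strictly $k$-affinoid via Proposition \ref{prop. equivalence of cat. k-affinoid algebras, complete Huber pairs locally of finite type /k}), and they carry canonical structure maps $\rho\colon A\to B_1$, $\beta\colon A\to B_2$, which are bounded by Lemma \ref{lem. bounded means continuous}.

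First I would produce $\alpha\colon B_1\to B_2$ from the universal property of the localization, which requires checking that $g$ is a unit in $B_2$ and that each $f_i/g$ is power-bounded. The latter is immediate, since $f_i/g$ is the image of the variable $T_i$ under the bounded quotient map $A\lbrace T\rbrace\to B_2$ and $T_i\in A\lbrace T\rbrace^o$ (Remark \ref{rem. bounded brought together}). That $g$ is a unit uses the hypothesis that $f_1,\ldots,f_n,g$ generate the unit ideal: on $\mt M(B_2)=V$ no point can have $\val{g}=0$ (otherwise all $\val{f_i}=0$, contradicting $\val{1}=1$), so $\mt M(B_2/gB_2)=\emptyset$, whence $B_2/gB_2=0$ by Theorem \ref{theorem: berkovich spectrum is nonempty, compact}. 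This gives the unique $A$-algebra map $\alpha\colon B_1\to B_2$ with $\alpha\circ\rho=\beta$.

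The reverse map is where the geometry enters and is the main step. By Lemma \ref{lem. embeddings of rational subsets}, $\Spa(\rho)$ identifies $\Spa(B_1,B_1^o)$ with the rational subset $X(\frac{f}{g})\subseteq\Spa(A,A^o)$ as an open subspace. Passing to separated quotients and using functoriality (Remark \ref{rem: functoriality of sep map}, Example \ref{exmp: separated quotient affinoid adic space, affinoid Berkovich space}), the induced morphism $\mt M(B_1)=[\Spa(B_1,B_1^o)]\to[\Spa(A,A^o)]=\mt M(A)$ is $[\Spa(\rho)]$, whose image is the set of maximal points of $\Spa(A,A^o)$ lying in $X(\frac{f}{g})$ (Remark \ref{rem: open subsets of valuative spaces}, Remark \ref{rem. zorn and co}). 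By Proposition \ref{prop: properties of analytic adic spaces} these are exactly the rank-$1$ valuations $\eta$ with $\eta(f_i)\le\eta(g)\neq 0$, and under the bijection of Proposition \ref{prop. unique representant} they correspond precisely to the points of $V$ (equivalent valuations preserve the relations $\le$ and the support). Hence $\mt M(B_1)\to\mt M(A)$ has image contained in $V$, and the universal property of the affinoid domain $V$ (Definition \ref{def. affinoid domain}) yields a unique $A$-algebra map $\gamma\colon B_2=A_V\to B_1$ with $\gamma\circ\beta=\rho$.

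Finally I would conclude that $\alpha$ and $\gamma$ are mutually inverse. The composite $\gamma\circ\alpha\colon B_1\to B_1$ is a bounded $A$-algebra endomorphism with $(\gamma\circ\alpha)\circ\rho=\rho$, so it equals $\id_{B_1}$ by the uniqueness clause of the localization's universal property; symmetrically $\alpha\circ\gamma$ fixes $\beta$ and hence equals $\id_{B_2}$ by the uniqueness in Definition \ref{def. affinoid domain} (using that morphisms of $k$-affinoid spaces correspond bijectively to bounded $k$-algebra homomorphisms). This produces the desired isomorphism of $k$-Banach algebras. The main obstacle is the third paragraph: correctly matching the separated quotient of the \emph{open} rational subset $X(\frac{f}{g})$ with the \emph{closed} rational domain $V$, i.e. checking that the maximal points of $X(\frac{f}{g})$ are exactly the Berkovich points of $V$ — precisely the passage translating the open-immersion gluing of adic spaces into the closed-embedding gluing of Berkovich spaces.
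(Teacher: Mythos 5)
Your proof is correct, but it is organized differently from the paper's. The paper proves that $l\colon \mt M(A\langle \frac{f}{g}\rangle)\to\mt M(A)$ is an affinoid domain embedding with image $V$ by verifying the universal property against an \emph{arbitrary} test algebra $B$: a morphism $\mt M(B)\to\mt M(A)$ landing in $V$ is lifted to a morphism of adic spaces landing in the open rational subset $U=X(\frac{f}{g})$ via Lemma \ref{lemma: connection inclusion of subsets on separated quotient} (this is where reflexivity of $\Spa(B,B^o)$ enters), and is then factored through the rational localization by Lemma \ref{lem. embeddings of rational subsets}; the isomorphism with $A_V=A\lbrace T\rbrace/(gT_i-f_i)$ then falls out of uniqueness of the representing algebra. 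You instead cross-apply the two universal properties once each to the two concrete algebras: the map $A\langle\frac{f}{g}\rangle\to A_V$ comes from a purely algebraic check ($g$ invertible in $A_V$ because $\mt M(A_V/gA_V)=\emptyset$, and $f_i/g$ power-bounded as the image of $T_i$), while the map $A_V\to A\langle\frac{f}{g}\rangle$ uses only the ``easy'' geometric identification $[X(\frac{f}{g})]=V$, so you never need Lemma \ref{lemma: connection inclusion of subsets on separated quotient} or the reflexivity machinery. Your route is more elementary and yields explicit mutually inverse maps; the paper's route establishes the reusable geometric statement that the adic open immersion of a rational subset descends to the affinoid domain embedding of the corresponding rational domain, which is the form in which the lemma is exploited in \ref{rem. rational subsets and the separation map} and \ref{prop. affinoid domain embedding open immersion of adic spaces}. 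One small point you should make explicit in your second paragraph: $A_V/gA_V$ is again a Banach ring because ideals of a strictly $k$-affinoid (hence noetherian Banach) algebra are closed, so Theorem \ref{theorem: berkovich spectrum is nonempty, compact} really does apply to conclude $A_V/gA_V=0$.
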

\begin{proof}
Let 
\begin{gather*}		
U:= \lbrace x \in \Spa(A,A^o) \ | \ \val{f_i(x)} \leq \val{g(x)} \ \forall \ i=1,...,n\rbrace   
\end{gather*} and
\begin{gather*}		
V:= \lbrace x \in \mt M(A) \ | \ \val{f_i(x)} \leq \val{g(x)} \ \forall \ i=1,...,n\rbrace.
\end{gather*}
 Identifying the separated quotient of $\Spa(A,A^o)$ with $\mt M(A)$ (cf. \ref{exmp: separation map adic spectra; adic spaces }) we have $[U]=V$. 
It is enough to show that $l:\mt M(A \langle \frac{f}{g}\rangle) \rightarrow \mt M(A)$ is an affinoid domain embedding with image $V$, since this implies the claim by the universal property of the affinoid domain embedding. It is clear that $\im(l) \subseteq V$. So let $B$ be a strictly $k$-affinoid algebra and $\varphi: \mt M(B) \rightarrow \mt M(A)$ a morphism of strictly $k$-affinoid spaces with image contained in $V$. This induces a morphism $\psi:\Spa(B,B^o)\rightarrow \Spa(A,A^o)$ of adic space with $\im(\psi) \subseteq U$ by \ref{lemma: connection inclusion of subsets on separated quotient}. The universal property of $\Spa(A\langle \frac{f}{g}\rangle,A\langle \frac{f}{g}\rangle^o) \rightarrow \Spa(A,A^o)$ gives us a unique continuous ring homomorphism $A\langle \frac{f}{g}\rangle \rightarrow B$ that commutes with the respective further morphisms (cf. \ref{lem. embeddings of rational subsets}). This in turn induces a unique morphism $h: \mt M(B) \rightarrow \mt M(A\langle \frac{f}{g}\rangle)$ of $k$-affinoid spaces such that $\varphi=h \circ l$.
\end{proof}

We can use the result above to compare the open immersions of adic spaces and affinoid domain embeddings both induced by rational subsets in the respective setting.

\begin{rem}\label{rem. rational subsets and the separation map}
\emph{
Let $A$ be a strictly $k$-affinoid algebra, $X=\Spa(A,A^o)$ and $X(\frac{f}{s})$ (where $f= \lbrace f_1,...,f_n \rbrace$) a rational subset of $A$. Since $A\langle \frac{f}{s}\rangle$ and $ A\lbrace T_1,...,T_n \rbrace / (sT_i-f_i)_{i=1,...,n}$ are isomorphic by \ref{lem. same universal property for rational subsets}, we get a commutative diagram
\begin{gather*}		
\begin{xy}
  \xymatrix{
      \Spa(A\langle \frac{f}{s}\rangle,A\langle \frac{f}{s}\rangle^o) \ar[r] \ar[d]_{sep}    &   \Spa(A,A^o) \ar[d]^{sep}  \\
     \mt M(A\lbrace T_1,...,T_n \rbrace / (sT_i-f_i)) \ar[r]           &   \mt M(A)   .
  }
\end{xy}
\end{gather*}
The upper horizontal arrow is an open immersion of adic spaces (i.e. it is an open topological embedding and induces an isomorphism of adic spaces onto its image) and the lower horizontal arrow is an affinoid domain embedding of $k$-affinoid spaces.
}
\end{rem}

So if we start with an affinoid domain embedding inducing a rational domain on the affinoid Berkovich side, we get an open immersion of adic spaces between the corresponding adic spectra. The following result will generalize this observation:

\begin{prop}\label{prop. affinoid domain embedding open immersion of adic spaces}
Let $U$ be an affinoid domain in the strictly $k$-affinoid space $\mt M(A)$. Let $f:Y:=\Spa(A_U,A_U^o)\rightarrow \Spa(A,A^o):=X$ denote the corresponding morphism of adic spaces. Then $f$ is an open immersion of adic spaces, i.e. it is an open topological embedding and induces an isomorphism $(Y, \mt O_Y, (v_y)_{y \in Y}) \cong (f(Y), \mt O_{X|_{f(Y)}} , (v_x)_{x \in f(Y)})$ of adic spaces.
\end{prop}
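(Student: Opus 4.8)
The plan is to reduce the general case to that of rational domains, where the assertion is already recorded in Remark \ref{rem. rational subsets and the separation map}, and then to glue the resulting local isomorphisms using the reflexiveness of the adic spaces involved (Proposition \ref{prop. affine adic spaces connected to strict k-aff alg are reflexive}). First I would fix the behaviour on separated quotients: since $X$ and $Y$ are analytic adic spaces their underlying spaces are valuative (Example \ref{exmp: Spa A is a valuative space}), $f$ is valuative by Proposition \ref{prop: properties of analytic adic spaces}, and by Example \ref{exmp: separated quotient affinoid adic space, affinoid Berkovich space} the induced map $[f]\colon [Y]=\mt M(A_U)\to [X]=\mt M(A)$ is precisely the affinoid domain embedding, hence a homeomorphism of $[Y]$ onto $U$.

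Next, using the Gerritzen-Grauert theorem (\ref{theo. gerritzen Grauert}) I would write $U=\bigcup_{i=1}^n V_i$ as a finite union of rational domains of $\mt M(A)$. Each $V_i$ gives a rational subset $R_i\subseteq X$, and since $V_i\subseteq U$ it is also a rational domain of $\mt M(A_U)$ (preimages of rational domains are rational, \ref{example rational domain}) and hence a rational subset $S_i\subseteq Y$. By Lemma \ref{lem. embeddings of rational subsets} the localization maps $\Spa(A_{V_i},A_{V_i}^o)\to X$ and $\Spa((A_U)_{V_i},(A_U)_{V_i}^o)\to Y$ are open immersions onto $R_i$ and $S_i$, and the canonical isomorphism $A_{V_i}\cong (A_U)_{V_i}$ of Remark \ref{rem. pre images of affinoid domains} is compatible with the structural maps from $A$; consequently $f$ restricts to an isomorphism of adic spaces $f|_{S_i}\colon S_i\xrightarrow{\ \sim\ } R_i$. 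Because the $S_i$ are open, their maximal points satisfy $[S_i]=[Y]\cap S_i=V_i$ (Remark \ref{rem. zorn and co}), so that $[\bigcup_i S_i]=\bigcup_i V_i=U=[Y]$; as $\bigcup_i S_i$ is a retrocompact open subset and $Y$ is reflexive, Proposition \ref{prop: reflexive means regular for the right sets and U=U'} forces $\bigcup_i S_i=Y$, i.e. the $S_i$ cover $Y$.

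The decisive step is the injectivity of $f$, which I would deduce from the overlap identity $f^{-1}(R_i\cap R_j)\cap S_i=S_i\cap S_j$. Both sides are retrocompact open subsets of $Y$ (the left-hand side is homeomorphic to the quasi-compact $R_i\cap R_j$ via $f|_{S_i}$, the right-hand side is a finite intersection of rational subsets), so by reflexiveness it suffices to compare their separated quotients. A short computation with maximal points, using that $[f]$ is injective with image $U$ and that $[R_i]=V_i$, shows that both separated quotients equal $V_i\cap V_j$, which gives the identity. Consequently, if $y\in S_i$ and $y'\in S_j$ satisfy $f(y)=f(y')$, then their common image lies in $R_i\cap R_j$, forcing $y,y'\in S_i\cap S_j\subseteq S_j$, and injectivity of $f|_{S_j}$ yields $y=y'$.

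Finally I would assemble these facts. Injectivity together with $f(Y)=\bigcup_i R_i=:\tilde U$ (open in $X$) and the fact that each $f|_{S_i}$ is a homeomorphism onto the open set $R_i$ shows that $f$ is an open map and a homeomorphism onto the open subspace $\tilde U$. Since the restrictions $f|_{S_i}$ are isomorphisms of adic spaces that agree on the overlaps $S_i\cap S_j$ (by the overlap identity) and the $S_i$ cover $Y$, being an isomorphism of adic spaces is local and $f$ induces an isomorphism $Y\cong\tilde U=f(Y)$, as claimed. I expect the main obstacle to be the bookkeeping in the reflexiveness arguments, in particular verifying the overlap identity and the covering property by reducing set-theoretic equalities of retrocompact open subsets to equalities of their separated quotients, where one must keep careful track of which maximal points lie in $S_i$, in $R_i$, and in the relevant preimages.
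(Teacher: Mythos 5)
Your proposal is correct and follows the same overall strategy as the paper (Gerritzen--Grauert decomposition into rational domains, identification $A_{V_i}\cong (A_U)_{V_i}$, and reflexiveness to show the $S_i$ cover $Y$), but it diverges at one substantive point: the injectivity of $f$. The paper simply imports this from Fujiwara--Kato (Proposition C.2.8 of \cite{FK}, together with the remark that their affinoid domains carry the stronger universal property), and then only has to check openness. You instead prove injectivity internally, via the overlap identity $f^{-1}(R_i\cap R_j)\cap S_i=S_i\cap S_j$, which you verify by comparing separated quotients of two retrocompact open subsets of the reflexive space $Y$ and invoking \ref{prop: reflexive means regular for the right sets and U=U'}. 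I checked this computation: both sides are indeed retrocompact (one is a finite intersection of quasi-compact opens in a quasi-separated coherent space, the other is the preimage of a quasi-compact open under the homeomorphism $f|_{S_i}$), and since $[f]$ restricts to the injective map $\psi$ with $[R_j]=V_j$ and $[S_i]=V_i$, both separated quotients equal $V_i\cap V_j$; the deduction of injectivity from the overlap identity is then sound. What your route buys is self-containedness -- the only external inputs are Gerritzen--Grauert and the reflexiveness of $\Spa(A_U,A_U^o)$, both already used elsewhere -- at the cost of some extra bookkeeping. The treatment of the structure sheaves is essentially equivalent in both versions: the paper matches $\mt O_X(W')\cong\mt O_Y(f^{-1}(W'))$ on rational subsets $W'\subseteq\im(f)$, while you glue the local isomorphisms $f|_{S_i}\colon S_i\xrightarrow{\sim}R_i$, which agree on overlaps automatically because they are restrictions of the single morphism $(f,f^b)$.
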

\begin{proof}
Let $\psi : \mt M(A_U) \rightarrow \mt M(A)$ be the obtained affinoid domain embedding.
We first show that $f$ is an open topological immersion. Since $f$ is injective by \cite{FK} Proposition C.2.8. (note that affinoid domains in \cite{FK} satisfy by definition the stronger universal property; cf. \ref{prop. universal property holds for all k- banach algebras}), it suffices to show that $f$ is open:\\
By the Gerritzen-Grauert theorem \ref{theo. gerritzen Grauert} we can find rational domains $U_1,...,U_n$ in $\mt M(A)$ such that $U=\bigcup_{i=1}^n U_i$. \ref{rem. pre images of affinoid domains} shows that $V_i:=\psi^{-1}(U_i)$ is a rational domain of $\mt M(A_U)$  and that $A_{U_i} \cong (A_U)_{V_i}$ for each $i=1,...,n$. 
We denote the respective affinoid domain embeddings $\mt M(A_{U_i}) \rightarrow \mt M(A_U)$ by $\psi_i$ and the affinoid domain embeddings $\mt M(A_{U_i})\rightarrow \mt M(A)$ by $\varphi_i$. Note that those maps represent rational domains and commute with $\psi$ (i.e. $\varphi_i=\psi_i \circ \psi$ for all $i=1,...,n$).  Hence for $i=1,...,n$ the corresponding morphisms of affinoid adic spaces 
\begin{gather*}		
g_i:\Spa(A_{U_i},A_{U_i}^o)\rightarrow \Spa(A_U,A_U^o) \ \text{and} \ h_i: \Spa(A_{U_i},A_{U_i}^o)\rightarrow \Spa(A,A^o)
\end{gather*}
 are open immersions of adic spaces (cf. \ref{exmp: separated quotient affinoid adic space, affinoid Berkovich space} ii)) that satisfy $h_i =f \circ g_i$.\\
We have $[\bigcup_{i=1}^n \im(g_i)] = [\Spa(A_U,A_U^o)]$ since $U=\bigcup_{i=1}^n U_i$. So $\bigcup_{i=1}^n \im(g_i)$ is a retrocompact open subset of the reflexive valuative space $\Spa(A_U,A_U^o)$ with full separated quotient. By definition of reflexiveness this includes that $\bigcup_{i=1}^n \im(g_i) = \Spa(A_U,A_U^o)$. Hence the open subsets $W$ of $\Spa(A_U,A_U^o)$, such that $W$ is contained in some $g_i(\Spa(A_{U_i},A_{U_i}^o))$, form a basis of the topology of $\Spa(A_U,A_U^o)$. But the image of such subsets under $f$ is open since $h_i$ is open for each $i=1,...,n$. Therefore $f$ is an open topological immersion.\\
 Now let $W'$ be a rational of $X$ that is contained in the image of $f$. This means
 \begin{gather*}		
 [W'] \subseteq [f(\Spa(A_U,A_U^o))]=\psi([\Spa(A_U,A_U^o)])=\psi( \mt M(A_U))=U.
  \end{gather*} 
  In remark \ref{rem. pre images of affinoid domains} we have seen that in this case $A_{[W']} \cong (A_U)_{\psi^{-1}([W'])}$. Therefore $\mt O_X (W') \cong \mt O_Y(f^{-1}(W'))$ by \ref{exmp: separated quotient affinoid adic space, affinoid Berkovich space}.  Since sets of this type form a basis of the topology of $\im(f)$, their pre-images under $f$ are rational subsets of $Y$ that form a basis of the topology of $Y$. Hence the respective sheaves are isomorphic on a basis of the topology which implies the final claim.
\end{proof}

\subsection{Locally strongly compact valuative spaces}
Let $X$ be a valuative space. In general the separated quotient of $X$ is just a $T_1$-space but not locally Hausdorff. Since the topological space underlying a Berkovich analytic space has to have this property we need to find a sufficient condition on $X$ that ensures $[X]$ to be locally Hausdorff.  One such particular property will be introduced in this subsection.

\begin{defn}\label{def. locally strongly compact valuative spaces }
\emph{
A valuative space $X$ is said to be \emph{locally strongly compact} if for any $x \in X$ there exists a pair $(W_x,V_x)$ consisting of an open subset $W_x$ that is stable under specialization (i.e. for any $y \in W_x$ we have $\overline{\lbrace y \rbrace}\subseteq W_x$) and a coherent open subset $V_x$ such that $x \in W_x \subseteq V_x$.  
}
\end{defn}

\begin{prop}\label{prop: locally strongly compact = taut}
For a quasi-separated valuative space $X$ the following conditions are equivalent:
\begin{enumerate}
\item $X$ is locally strongly compact;
\item for any quasi-compact open subset $U \subseteq X$, the topological closure $\overline{U}$ is still quasi-compact in $X$.
\end{enumerate}
\end{prop}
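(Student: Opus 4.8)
The plan is to prove both implications directly from the definitions, with Lemma~\ref{lem: closure of retrocompact opens} as the central tool, since in a valuative space it converts the closure of a retrocompact open into the data $\overline{U}=\bigcup_{u\in U}\overline{\{u\}}=\sep_X^{-1}([U])$, thereby linking tautness to specialization and to the separated quotient. For the implication (i)$\Rightarrow$(ii), suppose $X$ is locally strongly compact and let $U\subseteq X$ be quasi-compact and open. First I would note that, since $X$ is quasi-separated, $U$ is retrocompact: for every quasi-compact open $O\subseteq X$ the intersection $U\cap O$ is quasi-compact, so the inclusion $U\hookrightarrow X$ is quasi-compact. Hence Lemma~\ref{lem: closure of retrocompact opens} gives $\overline{U}=\bigcup_{u\in U}\overline{\{u\}}$. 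For each $x\in U$ choose a pair $(W_x,V_x)$ as in Definition~\ref{def. locally strongly compact valuative spaces }; the family $(W_x)_{x\in U}$ is an open cover of $U$, so finitely many $W_{x_1},\dots,W_{x_n}$ already cover $U$. If $u\in U$ lies in $W_{x_i}$, then, because $W_{x_i}$ is stable under specialization, $\overline{\{u\}}\subseteq W_{x_i}\subseteq V_{x_i}$. Combining this with the formula for $\overline{U}$ yields $\overline{U}\subseteq V_{x_1}\cup\dots\cup V_{x_n}$, a finite union of coherent, hence quasi-compact, opens, which is itself quasi-compact. As $\overline{U}$ is closed in $X$ and contained in this quasi-compact set, it is quasi-compact, proving (ii).

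For the converse I would first record the dictionary between open subsets stable under specialization and open subsets of $[X]$. Since an open set contains every generization of each of its points and $\sep_X$ is constant along specializations inside an irreducible component (Remark~\ref{rem. zorn and co}, using that $\sep_X^{-1}(\eta)=\overline{\{\eta\}}$), one checks that $W\mapsto\sep_X(W)$ and $S\mapsto\sep_X^{-1}(S)$ define a bijection between the open specialization-stable subsets of $X$ and the open subsets of $[X]$, and that every such $W$ is a union of irreducible components. Consequently condition (i) is equivalent to the assertion that every point of $[X]$ admits an open neighbourhood whose $\sep_X$-preimage is contained in a coherent open subset of $X$.

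Now fix $x\in X$. By local coherence choose a coherent open $V_0\ni x$ and, using its basis of quasi-compact opens, a quasi-compact open $U$ with $x\in U\subseteq V_0$. By (ii) the closure $\overline{U}$ is quasi-compact; covering it by coherent opens and extracting a finite subcover produces a coherent open $V$ with $\overline{U}\subseteq V$ (quasi-compact as a finite union of such, quasi-separated as an open subspace of $X$, and with a basis of quasi-compact opens because $X$ is locally coherent). I would then take $W_x:=\sep_X^{-1}(S)$ with $S:=[X]\setminus\sep_X(X\setminus V)$, i.e. the union of all irreducible components entirely contained in $V$. By construction $W_x$ is stable under specialization and $W_x\subseteq V$ with $V$ coherent, and $x\in W_x$ because the component $\sep_X^{-1}(\sep_X(x))=\overline{\{\sep_X(x)\}}$ lies inside $\overline{U}=\sep_X^{-1}([U])\subseteq V$ by Lemma~\ref{lem: closure of retrocompact opens}.

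The remaining point, and the main obstacle, is to show that $W_x$ is open, i.e. that $\sep_X^{-1}(\sep_X(X\setminus V))$ is closed in $X$. Writing $\partial:=\overline{V}\setminus V$, which is quasi-compact since it is closed in the quasi-compact set $\overline{V}$ (tautness entering again, via (ii) applied to $V$), one reduces this to the closedness of the saturation $\sep_X^{-1}(\sep_X(\partial))$, equivalently to the statement that $\sep_X(\partial)$ is closed in $[X]$; in short, the crux is that $\sep_X$ is a closed map on quasi-compact saturated sets. This cannot be obtained by the naive quasi-compact-image argument of Proposition~\ref{prop. map: spa a to m(a)}, since $[X]$ is not yet known to be (locally) Hausdorff — indeed local Hausdorffness of $[X]$ is a payoff of this very proposition, so invoking it would be circular. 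I would therefore establish this closedness within the valuative-space framework of \cite{FK}, combining the explicit description of closures in Lemma~\ref{lem: closure of retrocompact opens} with the fact that specialization never leaves an irreducible component, which is exactly where the hypotheses that $X$ is sober with totally ordered sets of generizations are used.
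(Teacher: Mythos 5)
The paper offers no internal argument for this proposition---it simply cites \cite{FK} Proposition 0.2.5.5---so the only question is whether your proof stands on its own. Your implication (i)$\Rightarrow$(ii) does: quasi-separatedness makes the quasi-compact open $U$ retrocompact, Lemma~\ref{lem: closure of retrocompact opens} gives $\overline{U}=\bigcup_{u\in U}\overline{\{u\}}$, and the finite subcover by specialization-stable sets $W_{x_1},\dots,W_{x_n}$ traps $\overline{U}$ inside the quasi-compact open $V_{x_1}\cup\dots\cup V_{x_n}$, in which $\overline{U}$ is closed. That half is complete and correct.

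The converse, however, contains a genuine gap, and it sits exactly where you say it does. Your candidate $W_x=\sep_X^{-1}\bigl([X]\setminus\sep_X(X\setminus V)\bigr)$ is specialization-stable, contained in $V$, and contains $x$, and your reduction of its openness to the closedness of $\sep_X^{-1}(\sep_X(\partial))$ for the quasi-compact set $\partial=\overline{V}\setminus V$ is correct (note $X\setminus W_x=(X\setminus V)\cup\sep_X^{-1}(\sep_X(\partial))$). But this closedness is the entire content of the implication, and at precisely that point you write that you ``would establish this closedness within the valuative-space framework of \cite{FK}''---that is, you defer the crux to the very source the paper cites for the whole proposition, without an argument. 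As written this is a promissory note, not a proof. The claim is true and can in fact be closed with the tools you already quoted: suppose $y\in\overline{\sep_X^{-1}(\sep_X(\partial))}$ but $\overline{\{\eta_y\}}\cap\partial=\emptyset$. Since $X$ is locally coherent, the open set $X\setminus\overline{\{\eta_y\}}$ is a union of quasi-compact opens, so the quasi-compact set $\partial$ is covered by finitely many of them; their union $O$ is a quasi-compact (hence, by quasi-separatedness, retrocompact) open with $\partial\subseteq O$ and $O\cap\overline{\{\eta_y\}}=\emptyset$. Openness of $O$ gives $\sep_X(\partial)\subseteq O$, whence $\sep_X^{-1}(\sep_X(\partial))\subseteq\sep_X^{-1}([O])=\overline{O}$ by Lemma~\ref{lem: closure of retrocompact opens}; but then $y\in\overline{O}$ forces $\eta_y\in O$, contradicting $O\cap\overline{\{\eta_y\}}=\emptyset$. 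Hence $\overline{\{\eta_y\}}$ meets $\partial$ and $y$ already lies in the saturation, which is therefore closed. Until this (or an equivalent) argument is supplied, the proof of (ii)$\Rightarrow$(i) is incomplete.
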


\begin{proof}
\cite{FK} Proposition 0.2.5.5.
\end{proof}

\begin{rem}\label{rem: taut= locally strongly compact}
\emph{
In particular the proposition above states that a valuative space is taut if and only if it is quasi-separated and locally strongly compact.
} 
\end{rem}

\begin{theo}\label{theo: localy strongly compact -> locally Hausdorff}
Let $X$ be a locally strongly compact valuative space. Then the separated quotient $[X]$ is locally compact and in particular locally Hausdorff (for the definitions cf. \ref{def. sober quasi sepatated...}). If in addition $X$ is quasi-separated, then $[X]$ is Hausdorff.
\end{theo}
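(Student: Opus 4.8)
The plan is to transfer the compactness of separated quotients of \emph{coherent} valuative spaces down to $[X]$ by means of the two pieces of data $(W_x,V_x)$ furnished by local strong compactness. The essential input, which I would isolate as a separate lemma and which is the main obstacle, is that for a coherent valuative space $V$ the separated quotient $[V]_V$ is compact, i.e.\ quasi-compact and Hausdorff. Quasi-compactness is immediate, since $[V]_V=\sep_V(V)$ is the continuous image of the quasi-compact space $V$; the Hausdorff property is the delicate point, as $[V]_V$ is a priori only a quasi-compact $T_1$-space (cf. \ref{rem separated quotient is T_1 quotient}) and such spaces need not be Hausdorff. The geometric fact that drives everything is that the fibre of $\sep_V$ over a maximal point $\eta$ is exactly the closure $\overline{\{\eta\}}$, so that an open subset of $V$ is saturated for $\sep_V$ if and only if it is stable under specialization. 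Consequently $\sep_V$ carries specialization-stable opens to opens and, by \ref{lem: closure of retrocompact opens}, carries a retrocompact open $U$ to a \emph{closed} set $\sep_V(U)$, its $\sep_V$-saturation being $\overline U$. I would either cite \cite{FK} for this compactness statement or prove it separately.

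Granting this, I would first treat local compactness. Fix $x\in X$ and choose a pair $(W_x,V_x)$ with $W_x$ open and stable under specialization, $V_x$ coherent open, and $x\in W_x\subseteq V_x$. Since a generization of a point of the open set $W_x$ again lies in $W_x$, the set $W_x$ is saturated for $\sep_X$, whence $\sep_X(W_x)$ is open in $[X]$; the same computation inside $V_x$ shows that $W_x$ is saturated for $\sep_{V_x}$, so $\sep_{V_x}(W_x)$ is open in $[V_x]_{V_x}$. Comparing the quotient and the subspace topologies for the saturated open $W_x$, one checks that both $\sep_X(W_x)$ and $\sep_{V_x}(W_x)$ are homeomorphic to the separated quotient $[W_x]_{W_x}$ of $W_x$ itself. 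Thus $\sep_X(W_x)$ is homeomorphic to an open subset of the compact Hausdorff space $[V_x]_{V_x}$, hence is an open, Hausdorff neighbourhood of $\sep_X(x)$ in $[X]$. Inside a compact Hausdorff space one may choose a compact neighbourhood $K$ of $\sep_{V_x}(x)$ contained in $\sep_{V_x}(W_x)$; transporting $K$ through the homeomorphism gives a compact neighbourhood of $\sep_X(x)$ contained in the open Hausdorff set $\sep_X(W_x)$. This is precisely the defining condition for local compactness (cf. \ref{def. sober quasi sepatated...}), and local Hausdorffness follows a fortiori.

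For the last statement assume in addition that $X$ is quasi-separated. Given distinct points $\eta_1\neq\eta_2$ of $[X]$, regarded as maximal points of $X$, choose coherent open neighbourhoods $V_{\eta_1},V_{\eta_2}$ and specialization-stable opens $W_{\eta_i}\ni\eta_i$ as above. The union $V:=V_{\eta_1}\cup V_{\eta_2}$ is again coherent: it is quasi-compact as a finite union of quasi-compact opens, it inherits a basis of quasi-compact opens, and it is quasi-separated because $X$ is --- this is the only place where the global quasi-separatedness hypothesis enters. In the compact Hausdorff space $[V]_V$ the points $\eta_1,\eta_2$ are separated by disjoint opens $O_1,O_2$; pulling these back yields disjoint $\sep_V$-saturated opens $\widehat O_i=\sep_V^{-1}(O_i)$ of $V$. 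Setting $A_i:=\widehat O_i\cap W_{\eta_i}$, I would check that each $A_i$ is stable under specialization \emph{in $X$}: a point specializing a point of $A_i$ stays in $W_{\eta_i}$ (stable under specialization in $X$), hence lies in $V$, where it specializes a point of the $V$-saturated set $\widehat O_i$ and so lies in $\widehat O_i$ too. Therefore $\sep_X(A_1)$ and $\sep_X(A_2)$ are open in $[X]$, contain $\eta_1$ and $\eta_2$ respectively, and are disjoint because $A_1\cap A_2\subseteq\widehat O_1\cap\widehat O_2=\emptyset$ and disjoint specialization-stable opens have disjoint images. Hence $[X]$ is Hausdorff.

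In summary, the only hard kernel is the Hausdorffness of the separated quotient of a coherent valuative space; all the rest is a formal manipulation of the separation map, of the saturation--closure dictionary of \ref{lem: closure of retrocompact opens}, and of the comparison between quotient and subspace topologies for specialization-stable opens. The genuinely new content needed here is the passage from that coherent, quasi-compact situation to the locally strongly compact one, carried out above through the pairs $(W_x,V_x)$ and, in the Hausdorff part, through the coherent union $V_{\eta_1}\cup V_{\eta_2}$.
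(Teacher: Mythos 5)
Your argument is correct, but it is genuinely different from what the paper does: the paper's ``proof'' of this theorem is a bare citation of \cite{FK} Theorem 0.2.5.7 and Corollary 0.2.5.9, whereas you reduce everything to a single earlier input --- the compactness (in particular Hausdorffness) of the separated quotient of a \emph{coherent} valuative space, which is \cite{FK} Proposition 0.2.3.19 --- and then carry out the globalization by hand. The mechanism you use is sound: the fibre of $\sep_X$ over a maximal point $\eta$ is $\overline{\lbrace\eta\rbrace}$, so an open set is $\sep$-saturated iff it is stable under specialization, and for such a set the quotient and subspace topologies on its image agree; this correctly identifies $\sep_X(W_x)\cong [W_x]_{W_x}\cong \sep_{V_x}(W_x)$ as an open subset of the compact Hausdorff space $[V_x]_{V_x}$, from which local compactness in the sense of \ref{def. sober quasi sepatated...} follows. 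In the Hausdorff part, forming the coherent union $V=V_{\eta_1}\cup V_{\eta_2}$ is exactly where quasi-separatedness of $X$ is needed (to keep $V$ quasi-separated), and your verification that $A_i=\widehat O_i\cap W_{\eta_i}$ is specialization-stable in $X$ (stability of $W_{\eta_i}$ forces the specialization back into $V$, where saturation of $\widehat O_i$ takes over) is the one nontrivial gluing step, and it is done correctly. What your route buys is an essentially self-contained proof resting only on the coherent case, at the cost of having to supply (or cite) that coherent case, which you rightly flag as the hard kernel; the paper's route buys brevity by outsourcing the whole statement. As written, your proof is complete modulo that one cited lemma.
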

\begin{proof}
\cite{FK} Theorem 0.2.5.7. and Corollary 0.2.5.9.
\end{proof}

Moreover, in a locally strongly compact valuative space we have a criterion to find out when the images under the separation map of a certain family of subsets of $X$ form a net on the separated quotient $[X]$. This will be a useful tool for our constructions in §7. 

\begin{prop}\label{prop: when net on separated quotient}
Let $X$ be a locally strongly compact valuative space and $(U_\alpha)_{\alpha \in L}$ a covering of $X$ consisting of quasi-compact open subsets. Then the images under the separation map $([U_\alpha])_{\alpha \in L}$ form a quasi-net on $[X]$. For this quasi-net to be a net it is necessary and sufficient that for each $\alpha,\beta \in L$ the sets in $\lbrace U_\gamma \ | \ U_\gamma \subseteq U_\alpha \cap U_\beta \rbrace$ form a covering of $U_\alpha \cap U_\beta$.
\end{prop}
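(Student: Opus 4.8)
The plan is to handle the two assertions separately, using throughout the basic observation that the open subsets of $[X]$ are exactly the images $\sep_X(W)$ of the \emph{specialization-stable} open subsets $W\subseteq X$. Indeed, since generizations are totally ordered, a point and each of its specializations share the same maximal generization, so $\sep_X^{-1}(S)$ is specialization-stable for every $S\subseteq[X]$; conversely, for a specialization-stable open $W$ one checks $\sep_X^{-1}(\sep_X(W))=W$, whence $\sep_X(W)=W\cap[X]$ is open in $[X]$. In particular $[U_\alpha]=U_\alpha\cap[X]$ is quasi-compact, being a continuous image of the quasi-compact $U_\alpha$, and $[U_\alpha]\cap[U_\beta]=(U_\alpha\cap U_\beta)\cap[X]$.

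For the quasi-net property I would fix $\xi\in[X]$, regarded as a maximal point of $X$. By \ref{def. locally strongly compact valuative spaces } choose a specialization-stable open $W_\xi$ and a coherent open $V_\xi$ with $\xi\in W_\xi\subseteq V_\xi$; then $\overline{\{\xi\}}\subseteq W_\xi\subseteq V_\xi$ is a closed subset of the quasi-compact $V_\xi$, hence quasi-compact, and finitely many $U_{\alpha_1},\dots,U_{\alpha_n}$ already cover $V_\xi$. Any $U_{\alpha_i}$ meeting $\overline{\{\xi\}}$ must contain $\xi$, since $\xi$ is a generization of each of its specializations and so lies in every open set meeting $\overline{\{\xi\}}$; writing $I:=\{i:\xi\in U_{\alpha_i}\}$ we get $\overline{\{\xi\}}\subseteq U_I:=\bigcup_{i\in I}U_{\alpha_i}$. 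It then remains to produce a specialization-stable open neighbourhood of $\xi$ inside $U_I$. Here I would use that $V_\xi$ is coherent, so by \ref{theo: localy strongly compact -> locally Hausdorff} the space $[V_\xi]$ is compact Hausdorff and $\sep_{V_\xi}$ is a closed map; the set $C:=V_\xi\setminus U_I$ is closed in $V_\xi$ and disjoint from $\overline{\{\xi\}}$, so $\sep_{V_\xi}(C)$ is closed in $[V_\xi]$ and misses $\xi$. Setting $W':=W_\xi\cap\sep_{V_\xi}^{-1}([V_\xi]\setminus\sep_{V_\xi}(C))$ gives an open set with $\xi\in W'\subseteq U_I$, and a short check (specializations of a point of $W_\xi$ stay in $W_\xi$ and keep the same maximal generization) shows $W'$ is specialization-stable. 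Then $\sep_X(W')$ is an open neighbourhood of $\xi$ contained in $\bigcup_{i\in I}[U_{\alpha_i}]$, each member of which contains $\xi$, which is exactly the quasi-net condition of \ref{def. net and quasi net}.

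For the net condition, the key point is that $\{U_\gamma:U_\gamma\subseteq U_{\alpha\beta}\}$ (where $U_{\alpha\beta}:=U_\alpha\cap U_\beta$) is a covering of the open subspace $U_{\alpha\beta}$ by quasi-compact opens, and $U_{\alpha\beta}$ is again a locally strongly compact valuative space. For sufficiency I would apply the quasi-net part just proven to $U_{\alpha\beta}$, obtaining a quasi-net $\{[U_\gamma]_{U_{\alpha\beta}}\}$ on $[U_{\alpha\beta}]_{U_{\alpha\beta}}$, and transport it along the canonical bijection $[i]\colon[U_{\alpha\beta}]_{U_{\alpha\beta}}\to[U_\alpha]\cap[U_\beta]\subseteq[X]$ of \ref{rem: open subsets of valuative spaces}; since $\tau|_{[U_\alpha]\cap[U_\beta]}$ contains all the $[U_\gamma]_X$ with $U_\gamma\subseteq U_{\alpha\beta}$, having this subfamily be a quasi-net suffices. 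For necessity I would set $U':=\bigcup\{U_\gamma:U_\gamma\subseteq U_{\alpha\beta}\}\subseteq U_{\alpha\beta}$ and show $U'=U_{\alpha\beta}$: every maximal point $\xi$ of $U_{\alpha\beta}$ lies in $[U_\alpha]\cap[U_\beta]$, so the net condition gives a $U_\gamma$ with $\xi\in[U_\gamma]\subseteq[U_{\alpha\beta}]$, and as $[U_\gamma]=[U_\gamma]\cap[U_{\alpha\beta}]=[U_\gamma\cap U_{\alpha\beta}]$ the coincidence statement for retrocompact opens in \ref{prop: reflexive means regular for the right sets and U=U'} upgrades this to $U_\gamma\subseteq U_{\alpha\beta}$, whence $\xi\in U'$. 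This yields $[U']=[U_{\alpha\beta}]$, and a second application of \ref{prop: reflexive means regular for the right sets and U=U'} gives $U'=U_{\alpha\beta}$, i.e. the covering condition.

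The main obstacle will be the comparison between the separated quotient $[U_{\alpha\beta}]_{U_{\alpha\beta}}$ of the open subspace and its image $[U_\alpha]\cap[U_\beta]$ inside $[X]$: the map $[i]$ is a priori only a continuous bijection, and transporting a quasi-net faithfully requires it to be a homeomorphism onto its image. This is precisely the content of \ref{lem: case in which the map in fact is a homeomorphism}, which applies once $[X]$ (respectively the relevant coherent piece) is Hausdorff; guaranteeing this, together with the retrocompactness needed to invoke \ref{prop: reflexive means regular for the right sets and U=U'} in the necessity step, is the technical heart of the argument and is where quasi-separatedness of $X$ enters.
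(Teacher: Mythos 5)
The paper offers no proof of this proposition to compare against: it is quoted verbatim from Fujiwara--Kato and justified by the citation \cite{FK} Proposition 0.2.6.2, so your argument has to be judged on its own terms. The quasi-net half of your proof is correct and complete: the identification of opens of $[X]$ with specialization-stable opens of $X$, the passage to a coherent $V_\xi$ (which is quasi-separated and quasi-compact, hence locally strongly compact, so that $[V_\xi]$ is compact Hausdorff and $\sep_{V_\xi}$ is closed), and the construction of the specialization-stable neighbourhood $W'\subseteq U_I$ all check out.

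The net half has a genuine gap, concentrated in the necessity direction. You set $U':=\bigcup\lbrace U_\gamma \ | \ U_\gamma\subseteq U_{\alpha\beta}\rbrace$, establish $[U']=[U_{\alpha\beta}]$, and invoke \ref{prop: reflexive means regular for the right sets and U=U'} to conclude $U'=U_{\alpha\beta}$; but that statement applies only to \emph{retrocompact} opens, and $U'$ is an arbitrary union of quasi-compact opens, for which there is no reason to expect retrocompactness. The step cannot be repaired from the stated hypotheses, because the necessity direction is actually false for a general locally strongly compact valuative space: take $X=\Spec S^{-1}\mathbb{Z}$ with $S=\mathbb{Z}\setminus((2)\cup(3))$, so $X=\lbrace \eta,s_2,s_3\rbrace$ with generic point $\eta$, covered by $U_1=D(2)$ and $U_2=D(3)$. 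Then $[X]=[U_1]=[U_2]=\lbrace\eta\rbrace$ and $\lbrace[U_1],[U_2]\rbrace$ is a net, yet no $U_\gamma$ is contained in $U_1\cap U_2=\lbrace\eta\rbrace$. This $X$ is not reflexive, and indeed \ref{prop: reflexive means regular for the right sets and U=U'} itself fails for it (applied to $U_1$ and $\lbrace\eta\rbrace$): both the proposition you are proving and the lemma you lean on silently require reflexivity (and, for your sufficiency argument, quasi-separatedness), hypotheses present in [FK] and satisfied in all of the paper's applications, where $X$ is taut and locally of finite type over $k$. Granting those, your sufficiency argument does go through ($U_{\alpha\beta}$ is then coherent, hence a locally strongly compact valuative space to which the quasi-net half and \ref{lem: case in which the map in fact is a homeomorphism} apply), but as written you only flag this as ``the technical heart'' rather than carrying it out, so that half too is a plan rather than a proof.
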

\begin{proof}
\cite{FK} Proposition 0.2.6.2.
\end{proof}

\section{Valuations on topological spaces}
In the previous sections we assigned the separated quotient to a valuative space. Following $[FK]$ we now approach the question how this process can be inverted. At first we will associate a coherent reflexive valuative space to a certain compact space and then generalize this construction to locally Hausdorff spaces that are endowed with some additional structure. Moreover, we will explain how those notions are connected to the interplay of adic and Berkovich spaces.

\begin{defn}\label{def. valuations on compact sets}
\emph{
Let $S$ be a compact topological space.
\begin{enumerate}
\item A distributive sublattice $v$ of $\textbf{2}^S$ is called \emph{valuation of $S$} if there exists a coherent reflexive valuative space $X_S$ and a continuous $\pi :X_S \rightarrow S$ such that:
	\begin{enumerate}
	\item the map $[X_S]\rightarrow S$ induced from $\pi$ is a homeomorphism;
	\item the lattice $v$ coincides with the lattice $\lbrace [U] \subseteq S \ | \ U \in \QCOuv(X_S) \rbrace$ by $[X_S]\cong S$ (here $\QCOuv(X_S)$ denotes the set of all open and quasi-compact subsets of $X_S$; cf. \ref{def. sober quasi sepatated...} vii)).
	\end{enumerate}
\item If $v$ is a valuation of $S$, then the pair $(S,v)$ is called a \emph{valued compact space}.
\end{enumerate}
}
\end{defn}

\begin{rem}\label{rem. canonical representation of X_s... Stone}
\emph{Let $(S,v)$ be a valued compact space. The coherent reflexive valuative space $X_S$ from the definition above is uniquely determined up to unique homeomorphism and homeomorphic to $\Spec \ v$ where $\Spec \ v$ is the topological space corresponding to the lattice $v$ via \emph{Stone duality} (cf. \cite{FK} Theorem 0.2.2.8). However, for this paper we will only consider the case in which $S= \mt M(A)$ is a strictly $k$-affinoid space endowed with a valuation induced by the separation map $\Spa(A,A^o) \rightarrow \mt M(A)$ (as will be explained in the following example). In this situation we can assume that the coherent reflexive valuative space $X_S$ simply is equal to $\Spa(A,A^o)$.
}
\end{rem}

\begin{exmp}\label{exmp. valuation in M(A) }
\emph{
Let $A$ be a strictly $k$-affinoid algebra. Then $X:=\Spa(A,A^o)$ is a reflexive valuative space by \ref{prop. affine adic spaces connected to strict k-aff alg are reflexive} and as seen in \ref{exmp: separation map adic spectra; adic spaces } we have a homeomorphism $[\Spa(A,A^o)]\rightarrow \mt M(A)$. Hence we obtain a valued compact space $(\mt M(A),v)$, where we define $v:= \lbrace [U]  \subseteq \mt M(A) \ | \ U \in \QCOuv(\Spa(A,A^o))\rbrace$. Note that for this construction it is necessary that $X$ is reflexive. So for a possible generalization to arbitrary affinoid adic spaces this is a huge obstacle.
}
\end{exmp}

The previous example shows how we mainly want to use the notion of compact valued spaces. We have also obtained that it fits well with our local setting. To generalize this idea to a connection between  non-affinoid adic and Berkovich spaces, we need the following generalization:

\begin{defn}\label{def. valuations of locally Hausdorff spaces}
\emph{
Let $X$ be a locally Hausdorff space.
\begin{enumerate}
\item A \emph{pre-valuation on $X$} denoted by $v=(\tau(v), \lbrace v_S \rbrace_{S \in \tau(v)})$ consists of:
\begin{enumerate}
	\item a net $\tau(v)$ of compact subsets of $X$,
	\item for each $S \in \tau(v)$ a valuation $v_S$
\end{enumerate}
such that the following condition holds:\\
for $S,S' \in \tau(v)$ with $S \subseteq S'$, we have $v_S=\lbrace T \in v_{S'} \ | \ T \subseteq S \rbrace$.
\item A pre-valuation $v=(\tau(v), \lbrace v_S \rbrace_{S \in \tau(v)})$ on $X$ is said to be a \emph{valuation on $X$} if $\bigcup_{S \in \tau(v)} v_S = \tau(v)$ and for any collection of finitely many elements $S_1,...,S_n \in \tau(v)$ their union $S=\bigcup_{i=1}^n S_i$ belongs to $\tau(v)$ whenever $S$ is Hausdorff as a subspace of $X$.
\item If $v$ is a valuation on $X$, then the pair $(X,v)$ is said to be a \emph{valued locally Hausdorff space}.
\end{enumerate}
}
\end{defn}

\begin{exmp}\label{exmp. Berkovich spaces as valued locally Hausdorff spaces }
\emph{
Let $X=(X,\mt A, \tau)$ be a strictly $k$-analytic Berkovich space. We want to endow $X$ with the structure of a valued locally Hausdorff space. Hence we have to define a valuation $v_S$ on $S$ for each $S \in \tau$. That can be done analogously to \ref{exmp. valuation in M(A) }, namely by the separation map $\Spa(A_S,A_S^o) \rightarrow \mt M(A_S)\cong S$ (where $A_S$ denotes $\mt A(S)$). So $X_S=\Spa(A,A^o)$ is an associated valuative space of $(S,v_S)$. For $S,S' \in \tau$ with $S\subseteq S'$ we have an affinoid domain embedding $\mt M(A_S) \rightarrow \mt M(A_{S'})$ and hence the induced morphism of affinoid adic spaces $\Spa(A_S,A_S^o) \rightarrow \Spa(A_{S'},A^o_{S'})$ is an open embedding of topological spaces by \ref{prop. affinoid domain embedding open immersion of adic spaces}). Therefore 
\begin{gather*}		
\QCOuv(\Spa(A_S,A_S^o))=\QCOuv(\Spa(A_{S'},A^o_{S'}))|_{(\Spa(A_S,A_S^o))}
\end{gather*}
 and hence $v=(\tau, v_S)$ gives a pre-valuation on $X$.
}
\end{exmp}

\begin{theo}\label{theo. glueing valued spaces}
Let $X$ be a locally Hausdorff space and $v=(\tau(v), \lbrace v_S \rbrace_{S \in \tau(v)})$ be a pre-valuation on $X$. For $S \in \tau(v)$ let $\Spec\  v_S$ denote the reflexive coherent valuative space associated to $v_S$ via Stone duality (cf. \ref{rem. canonical representation of X_s... Stone}). We define 
\begin{gather*}		
\Spec \ v:= \lim\limits_{\underset{S \in  \tau(v)}\longrightarrow} \Spec\  v_S.
\end{gather*}
Then $\Spec \ v$ is a reflexive valuative space and its separated quotient $[\Spec \ v]$ is homeomorphic to $X$. 
\end{theo}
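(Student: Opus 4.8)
The plan is to regard $\Spec v$ as a gluing of the coherent reflexive valuative spaces $\Spec v_S$ along open immersions, to verify that the colimit is again a valuative space, to deduce reflexivity from its locality, and finally to identify the separated quotient with $X$ by exploiting that $[\cdot]$ is a universal $T_1$-quotient and therefore commutes with colimits. First I would analyse the transition maps of the diagram. For $S \subseteq S'$ in $\tau(v)$ the compatibility condition $v_S = \lbrace T \in v_{S'} \mid T \subseteq S \rbrace$ from Definition \ref{def. valuations of locally Hausdorff spaces} exhibits $v_S$ as an ideal of the distributive lattice $v_{S'}$; under Stone duality (\ref{rem. canonical representation of X_s... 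Stone}) ideals correspond to coherent open subspaces, so the induced map $\iota_{S,S'}\colon \Spec v_S \hookrightarrow \Spec v_{S'}$ is an open immersion, and these satisfy the cocycle condition $\iota_{S,S''} = \iota_{S',S''}\circ \iota_{S,S'}$. The net axiom for $\tau(v)$ — that the elements of $\tau(v)$ contained in $S\cap S'$ form a quasi-net on $S\cap S'$ — guarantees that the overlaps of the images of $\Spec v_S$ and $\Spec v_{S'}$ inside the colimit are covered by the subspaces $\Spec v_W$ with $W \subseteq S\cap S'$, which is exactly what is needed for the gluing to be consistent. Consequently each canonical map $\Spec v_S \to \Spec v$ is an open immersion and the family $\lbrace \Spec v_S \rbrace$ is an open cover of $\Spec v$ by coherent subspaces.

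Granting this, $\Spec v$ is a valuative space: it is locally coherent by the cover $\lbrace \Spec v_S \rbrace$; the total order of $G_x$ is inherited from the valuative neighbourhood $\Spec v_S$, since $G_x \subseteq \Spec v_S$ for any $S$ with $x \in \Spec v_S$; and soberness follows from an irreducibility argument. Indeed, for an irreducible closed $Z$ I would pick some $S_0$ meeting $Z$ and let $\eta$ be the generic point of the irreducible closed subset $Z\cap \Spec v_{S_0}$ of the sober space $\Spec v_{S_0}$; for any other $S$ meeting $Z$ the two nonempty open subsets $Z\cap \Spec v_{S_0}$ and $Z\cap \Spec v_S$ of the irreducible $Z$ intersect, so $\eta$ generizes a point of $\Spec v_S$, hence lies in $\Spec v_S$ and, by the total order of generizations together with soberness of $\Spec v_S$, is the generic point of $Z\cap \Spec v_S$; thus $\overline{\lbrace \eta \rbrace} = Z$. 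Reflexivity is then immediate from Lemma \ref{lemma: reflex, local property}, because $\lbrace \Spec v_S \rbrace$ is a cover of $\Spec v$ by quasi-compact reflexive open subsets.

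It remains to identify $[\Spec v]$ with $X$. By Remark \ref{rem separated quotient is T_1 quotient} the separated quotient is the universal $T_1$-quotient, i.e. the value of the left adjoint $L$ to the inclusion of $T_1$-spaces into all topological spaces; being a left adjoint, $L$ preserves colimits, so $[\Spec v] = L(\Spec v) = \operatorname{colim}_{S} L(\Spec v_S) = \operatorname{colim}_{S} [\Spec v_S]$, the colimit now being taken in $T_1$-spaces. Here $[\Spec v_S] \cong S$ by hypothesis, and $\iota_{S,S'}$ being a valuative open immersion with $[\Spec v_{S'}]=S'$ compact Hausdorff, Lemma \ref{lem: case in which the map in fact is a homeomorphism} shows that $[\iota_{S,S'}]$ is a closed embedding carrying $S$ onto the subset $S\subseteq S'$ of $X$. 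Thus the diagram of separated quotients is the system of subset inclusions $S \hookrightarrow S'$, and its colimit in $T_1$-spaces is obtained by applying $L$ to the colimit computed in all topological spaces; the latter has underlying set $\bigcup_{S} S = X$, and since $\tau(v)$ is a quasi-net of compact subsets a set is open for the final topology exactly when its trace on each $S$ is open, which coincides with the original topology of $X$. As $X$ is $T_1$ (being locally Hausdorff), this colimit already lies in $T_1$-spaces and $L$ leaves it unchanged, giving the desired homeomorphism $[\Spec v] \cong X$.

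The hard part will be the first step: making precise that the categorical colimit of the $\Spec v_S$ along the $\iota_{S,S'}$ genuinely realizes each $\Spec v_S$ as an open subspace and that the net condition controls all overlaps, so that local coherence, soberness and the total order of generizations pass to $\Spec v$. This is the technical heart, where the Stone-duality machinery of \cite{FK} is used; once it is in place, reflexivity and the computation of the separated quotient are formal consequences of the locality of reflexiveness and of the universal property of $[\cdot]$.
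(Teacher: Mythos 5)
The paper offers no argument for this theorem beyond the citation of \cite{FK} Theorem 0.2.6.16, so there is no in-text proof to compare against; your outline is, in substance, the strategy of the cited proof. The skeleton is correct: the transition maps are quasi-compact open immersions (note that the ideal $\lbrace T \in v_{S'} \mid T \subseteq S\rbrace$ is in fact \emph{principal}, generated by $S=[U_0]$ for some $U_0 \in \QCOuv(\Spec v_{S'})$ — this is where reflexivity of $\Spec v_{S'}$ enters, via $[U]\subseteq [U_0] \Rightarrow U \subseteq U_0$ from \ref{prop: reflexive means regular for the right sets and U=U'} — so it corresponds to a coherent open, not merely an open); your soberness argument and the inheritance of total orderedness of $G_x$ from an open coherent neighbourhood are fine; reflexivity does follow from \ref{lemma: reflex, local property}; and packaging the last step via left-adjointness of the universal $T_1$-quotient is a clean and correct way to reduce $[\Spec v]\cong X$ to a colimit computation in $\mathrm{Top}$.

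Two caveats. First, the step you defer is not a routine verification — it is most of the theorem. The poset $\tau(v)$ is not directed, so injectivity and openness of the canonical maps $\Spec v_S \to \Spec v$ require collapsing arbitrary zigzags in $\bigsqcup_S \Spec v_S$ down to common refinements in $\tau(v)|_{S\cap S'}$; this is exactly where the full net axiom (not just the quasi-net axiom) is used, and your sentence that the overlaps ``are covered by the $\Spec v_W$'' asserts the conclusion rather than deriving it. As it stands your argument proves: \emph{granted} that the colimit is a gluing along open immersions, everything else follows. Second, the identification of the final topology on $\bigcup_S S = X$ with the given topology of $X$ is Berkovich's quasi-net lemma (\cite{Be3} \S 1.1), which is not stated anywhere in this paper and should be cited explicitly; without it the homeomorphism $[\Spec v]\cong X$ only holds at the level of underlying sets.
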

\begin{proof}
\cite{FK} Theorem 0.2.6.16.
\end{proof}

\begin{rem}\label{rem. on glueing valued spaces}
\emph{
Applying this result to the situation of the above example \ref{exmp. Berkovich spaces as valued locally Hausdorff spaces } ($X=(X, \mt A , \tau)$ is a strictly $k$-analytic Berkovich space) we get the following statement (since in this situation $\Spec \ v_S$ is homeomorphic to $\Spa(A_S,A_S^o)$):
\begin{gather*}		
X^{ad}:=  \lim\limits_{\underset{S \in  \tau(v)}\longrightarrow} \Spa(A_S,A_S^o)
\end{gather*}
is a reflexive valuative space (homeomorphic to $\Spec \ v$), with $[X^{ad}] \cong X$. There is also an explicit description of $X^{ad}$, namely
\begin{gather*}		
(\bigsqcup_{S \in \tau} \Spa(A_S,A_S^o))/ \sim.
\end{gather*}
Here the equivalence relation $\sim$ is generated by the relation $\sim_R$ which is defined as follows:
For $x \in \Spa(A_S,A_S^o)$ and $y \in \Spa(A_{S'},A_{S'}^o)$ we have $x \sim_R y$ if there exists $S'' \in \tau$ such that $S,S'\subseteq S''$ and the induced morphisms $\Spa(A_S,A_S^o) \rightarrow \Spa(A_{S''},A_{S''}^o)$ and $\Spa(A_{S'},A_{S'}^o) \rightarrow \Spa(A_{S''},A_{S''}^o)$ identify $x$ and $y$ in $\Spa(A_{S''},A_{S''}^o)$.\\
Note that by this description and \ref{prop. affinoid domain embedding open immersion of adic spaces} one sees that the canonical maps $\pi_S:\Spa(A_S,A^o_S)\rightarrow X^{ad}$ are open topological embeddings. We will later use this construction to define the structure of an adic space on $X^{ad}$ in §7.
}
\end{rem}

Now we investigate some more properties of $\Spec \ v$ and transfer them back to $X^{ad}$.

\begin{prop}\label{prop. extension of valuations}
Any pre-valuation $v$ of a locally Hausdorff space $X$ has a unique extension to a valuation.
\end{prop}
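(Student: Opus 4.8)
The plan is to route the whole argument through the reflexive valuative space
\[
Y := \Spec v = \varinjlim_{S \in \tau(v)} \Spec v_S
\]
attached to the given pre-valuation, which exists and satisfies $[Y]\cong X$ by Theorem \ref{theo. glueing valued spaces}. By Remark \ref{rem. on glueing valued spaces} the canonical maps $\iota_S : X_S := \Spec v_S \hookrightarrow Y$ are open topological embeddings compatible with the separation maps, so the $X_S$ form an open cover of $Y$ by coherent subspaces with $[X_S]_{X_S}\cong S$. The structural fact I will use at every step is the reflexiveness of $Y$: a coherent open $U\subseteq Y$ is determined by its image $[U]$ under the separation map, since $[U]=[U']$ forces $U=U'$ by Proposition \ref{prop: reflexive means regular for the right sets and U=U'}. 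Thus there is a bijective dictionary between coherent opens of $Y$ and their separated quotients, and the pre-valuation data $\{v_S\}_{S\in\tau(v)}$ are just the lattices of quasi-compact opens read off from the pieces $X_S$ of this dictionary.

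\textbf{Existence.} I would define the extension $v'$ directly from $Y$. Let $\tau(v')$ be the set of all compact Hausdorff subsets of $X=[Y]$ of the form $[U]$ for a coherent open $U\subseteq Y$; such a $U$ is unique by reflexiveness, so I may set $v'_{[U]} := \{[U'] : U'\in\QCOuv(U)\}$, the valuation of $[U]$ determined by the coherent reflexive valuative space $U$ (note $[U]_U\cong[U]_Y$ by Lemma \ref{lem: case in which the map in fact is a homeomorphism}, applied inside a Hausdorff open neighbourhood furnished by Theorem \ref{theo: localy strongly compact -> locally Hausdorff}). Several points then have to be checked. First, $\tau(v')$ is a net of compact subsets of $X$ containing $\tau(v)$, which follows from Proposition \ref{prop: when net on separated quotient} applied to the covering of $Y$ by coherent opens. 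Second, the compatibility $v'_S=\{T\in v'_{S'} : T\subseteq S\}$ for $S\subseteq S'$ is immediate from the inclusion $U_S\subseteq U_{S'}$ of the representing coherent opens and the definition of the induced lattices. Third, the condition $\bigcup_S v'_S=\tau(v')$ holds, since every lattice member $[U']$ is again a separated quotient of a coherent open and hence lies in $\tau(v')$, while each $[U]$ lies in its own lattice (take $U'=U$). Finally the Hausdorff finite-union condition holds: a finite union $\bigcup_i U_i$ of coherent opens is again coherent (the quasi-separatedness being controlled by the net property of $\tau(v)$, which governs the overlaps $U_i\cap U_j$), and $[\bigcup_i U_i]=\bigcup_i[U_i]$ because $[\cdot]$ of a union of opens is the union of the separated quotients by Remark \ref{rem. zorn and co}. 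That $v'$ extends $v$ is clear, since for $S\in\tau(v)$ the embedding $X_S\hookrightarrow Y$ exhibits $X_S$ as the coherent open with $[X_S]=S$, whence $v'_S=v_S$.

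\textbf{Uniqueness.} Let $w$ be any valuation extending $v$. I would first observe that the reflexive valuative space it produces is again $Y$: the spaces $X_S$ with $S\in\tau(v)$ form an open cover of $\Spec w=\varinjlim_{S\in\tau(w)}\Spec w_S$ compatible with the gluing data of $v$, so $\Spec w\cong\Spec v=Y$ canonically. Now the two axioms of a valuation pin down $w$ completely through $Y$. Since each $S$ lies in its own lattice $w_S$, the condition $\bigcup_S w_S=\tau(w)$ says that $\tau(w)$ is exactly the collection of separated quotients $[U]$ of coherent opens of $Y$, and the Hausdorff-union axiom closes it under finite Hausdorff unions; by the covering argument of the existence step every Hausdorff $[U]$ with $U\in\QCOuv(Y)$ is such a finite union of members coming from the $X_S$, so $\tau(w)=\tau(v')$. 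For each such $S$ the lattice $w_S$ is forced to be that of the unique coherent open $U$ with $[U]=S$, again by reflexiveness of $Y$; hence $w_S=v'_S$ and $w=v'$.

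\textbf{Main obstacle.} I expect the genuine work to sit in the existence half, in verifying that the recipe ``separated quotients of coherent opens of $Y$'' actually produces a net of compact \emph{Hausdorff} subsets closed under the required operations with mutually compatible induced valuations. This rests on two facts that must be handled with care: that passing to separated quotients commutes with finite unions and with restriction to coherent open subspaces, so that $[U]_U\cong[U]_Y$ (for which Lemma \ref{lem: case in which the map in fact is a homeomorphism} needs a Hausdorff ambient neighbourhood, available by Theorem \ref{theo: localy strongly compact -> locally Hausdorff}); and that $U\mapsto[U]$ is a bijection on coherent opens, which is precisely reflexiveness (Proposition \ref{prop: reflexive means regular for the right sets and U=U'}). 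Once these are in place, the net axiom reduces to Proposition \ref{prop: when net on separated quotient} and both halves collapse to the coherent-open/Hausdorff-quotient dictionary for $Y$.
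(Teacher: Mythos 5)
The paper offers no argument of its own here --- its ``proof'' is the bare citation to [FK] Corollary 0.2.6.17 --- so you are in effect reconstructing Fujiwara--Kato's argument, and your architecture is the right one: pass to $Y=\Spec v$ via Theorem \ref{theo. glueing valued spaces} and use reflexiveness of $Y$ to set up a dictionary between coherent opens of $Y$ and their separated quotients. But the reconstruction has a genuine gap at exactly the hardest point, the Hausdorff finite-union axiom. You assert that a finite union $\bigcup_i U_i$ of coherent opens of $Y$ is again coherent, ``the quasi-separatedness being controlled by the net property of $\tau(v)$.'' That is not the operative mechanism: the $U_i$ are arbitrary coherent opens of $Y$ (not just the pieces $X_S$), and $Y$ is guaranteed to be quasi-separated only when $X$ is Hausdorff, so quasi-compactness of $U_i\cap U_j$ is genuinely in doubt. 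What makes it work is the hypothesis that $S=\bigcup_i[U_i]$ is Hausdorff: it forces each $[U_i]$ to be closed in $S$, hence $[U_i]\cap[U_j]=[U_i\cap U_j]$ to be compact; one then covers this compact set by finitely many coherent opens contained in $U_i\cap U_j$ (using local strong compactness and the fact that coherent opens form a basis of $Y$) and concludes that $U_i\cap U_j$ equals that finite union by reflexiveness via Proposition \ref{prop: reflexive spaces alternative description}. Without the Hausdorff proviso the coherence claim is simply false --- which is why the axiom carries that proviso --- and your stated justification never uses it.

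Two smaller points. The appeal to Lemma \ref{lem: case in which the map in fact is a homeomorphism} is not available as stated, since it requires the ambient separated quotient $[Y]=X$ to be Hausdorff and $X$ is only locally Hausdorff, while a coherent open $U$ with $[U]$ Hausdorff is not obviously contained in a single Hausdorff open of $X$. This is repairable: you have already restricted to $[U]$ compact Hausdorff, so $[U]_U\to[U]_Y$ is a continuous bijection from a quasi-compact space onto a Hausdorff one and hence a homeomorphism directly. In the uniqueness half, the claim that the $X_S$ with $S\in\tau(v)$ form an open cover of $\Spec w$, whence $\Spec w\cong\Spec v$ ``canonically,'' is not immediate: a point of $\Spec w$ whose maximal generization lies in $S$ lies only in $\overline{\Spec w_S}=\sep^{-1}(S)$ by Lemma \ref{lem: closure of retrocompact opens}, not necessarily in $\Spec w_S$ itself. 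The cleaner route is the one you gesture at in your last sentence: show that every $T\in\tau(w)$ is a finite union of elements of the lattices $v_S$ (using the net property of $\tau(w)$ restricted to the sets $T\cap S$ together with the forced equality $w_S=v_S$), and then invoke the finite-union statement again --- which returns you to the same missing step.
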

\begin{proof}
\cite{FK}  Corollary 0.2.6.17.
\end{proof}

\begin{prop}\label{prop. properties of glued valued space}
Let $(X,v)$ be a locally Hausdorff valuative space.
The valuative space $\Spec \ v$ is locally strongly compact. If $X$ is Hausdorff, then $\Spec \ v$ is quasi-separated.
\end{prop}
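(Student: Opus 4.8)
The plan is to exploit the presentation $\Spec v = \varinjlim_{S \in \tau(v)} \Spec v_S$ from Theorem \ref{theo. glueing valued spaces}, in which the canonical maps $\pi_S : \Spec v_S \hookrightarrow \Spec v$ are open topological embeddings with coherent image (cf. \ref{rem. on glueing valued spaces}). These charts cover $\Spec v$, satisfy $\sep_{\Spec v}(\pi_S(\Spec v_S)) = S$ under the identification $[\Spec v] \cong X$ (using \ref{rem. zorn and co}), and $\Spec v$ is reflexive. Both assertions will be reduced to the following \emph{Key Lemma}: if $S, S' \in \tau(v)$ are such that $S \cap S'$ is compact, then the overlap $\pi_S(\Spec v_S) \cap \pi_{S'}(\Spec v_{S'})$ is quasi-compact.

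To prove the Key Lemma, first observe that the maximal points of $\Spec v$ lying in the overlap are precisely $S \cap S'$, so its separated quotient is $S \cap S'$. Since $S, S' \in \tau(v)$, the collection $\tau(v)|_{S \cap S'}$ is a quasi-net on $S \cap S'$ by the net axiom \ref{def. net and quasi net}; compactness of $S \cap S'$ then produces finitely many $T_1, \dots, T_k \in \tau(v)$ with $T_j \subseteq S \cap S'$ and $\bigcup_j T_j = S \cap S'$. Each $T_j \subseteq S$, and since $T_j \in v_{T_j}$ (take the whole quasi-compact space $\Spec v_{T_j}$ in \ref{def. valuations on compact sets}), the compatibility condition of \ref{def. valuations of locally Hausdorff spaces} applied to $T_j \subseteq S$ gives $T_j \in v_S$, and symmetrically $T_j \in v_{S'}$. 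As $v_S$ is a sublattice, $S \cap S' = \bigcup_j T_j \in v_S$, i.e. $S \cap S'$ is the separated quotient of a quasi-compact open of $\Spec v_S$. The charts $\pi_{T_j}$ embed identically into $\pi_S$ and $\pi_{S'}$, so $P := \bigcup_j \pi_{T_j}(\Spec v_{T_j})$ is a quasi-compact open contained in the overlap with $[P] = S \cap S'$; the reflexiveness machinery of \ref{prop: reflexive means regular for the right sets and U=U'} together with \ref{lem: closure of retrocompact opens} then identifies $P$ with the entire overlap, which is therefore quasi-compact.

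For the quasi-separatedness statement I would assume $X$ Hausdorff, so that every $S \in \tau(v)$, being compact, is closed, and hence every intersection $S \cap S'$ is compact; thus the Key Lemma applies to all pairs of charts. Any quasi-compact open of $\Spec v$ is a finite union of quasi-compact opens each contained in a single coherent chart, so the intersection of two quasi-compact opens is a finite union of sets $U \cap U'$ with $U$ quasi-compact open in $\pi_S$ and $U'$ quasi-compact open in $\pi_{S'}$. Running the argument of the Key Lemma with the lattice elements $[U], [U'] \in \tau(v)$ in place of $S, S'$ shows each $U \cap U'$ is quasi-compact, whence $\Spec v$ is quasi-separated in the sense of \ref{def. sober quasi sepatated...}.

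For local strong compactness I would argue locally over the Hausdorff opens of $X$. Fix $p \in \Spec v$ and set $x := \sep_{\Spec v}(p)$; choose a Hausdorff open neighbourhood $H$ of $x$ and put $Y := \sep_{\Spec v}^{-1}(H)$, an open, hence reflexive (\ref{lemma: reflex, local property}), valuative subspace with $[Y] = H$ Hausdorff. The Key Lemma applied to charts lying over $H$ shows, exactly as above, that $Y$ is quasi-separated. Using the net together with the union axiom of \ref{def. valuations of locally Hausdorff spaces} (amalgamating finitely many members of $\tau(v)$ inside $H$, whose union is then Hausdorff) I would produce a single $S \in \tau(v)$ with $S \subseteq H$ and $x \in \operatorname{int}_X(S)$; its chart $\pi_S(\Spec v_S) \subseteq Y$ is a coherent open, quasi-compact in the quasi-separated $Y$, hence retrocompact in $Y$. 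By \ref{lem: closure of retrocompact opens} and the regularity part of \ref{prop: reflexive means regular for the right sets and U=U'} applied inside $Y$, one gets $\overline{\pi_S(\Spec v_S)}^{\,Y} = \sep^{-1}(S)$ and $\pi_S(\Spec v_S) = (\sep^{-1}(S))^{\circ}$; since $W := \sep_{\Spec v}^{-1}(\operatorname{int}_X(S))$ is open, stable under specialization, and contained in $\sep^{-1}(S)$, it lies in $\pi_S(\Spec v_S)$. The pair $(W_p, V_p) := (W, \pi_S(\Spec v_S))$ then verifies \ref{def. locally strongly compact valuative spaces }, because $x \in \operatorname{int}_X(S)$ forces $p \in W$. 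The main obstacle throughout is the Key Lemma: the geometrically transparent fact that the points lying over the interior of the overlap of two charts actually belong to both charts must be extracted purely from the compatibility of the valuations $v_S$ and the reflexiveness of $\Spec v$, and it is exactly here that compactness of $S \cap S'$, i.e. the (local) Hausdorffness of $X$, is indispensable.
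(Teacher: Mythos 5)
The paper gives no argument for this proposition---it simply cites [FK] Proposition 0.2.6.18---so you are reconstructing the omitted proof, and your overall strategy (control the chart overlaps via the net axiom and reflexiveness of $\Spec v$, then deduce quasi-separatedness and local strong compactness) is the right one. However, the concluding step of your Key Lemma is reached with the wrong tool: both \ref{prop: reflexive means regular for the right sets and U=U'} and \ref{lem: closure of retrocompact opens} require the open sets involved to be retrocompact in the ambient space, and retrocompactness of the overlap $O:=\pi_S(\Spec v_S)\cap\pi_{S'}(\Spec v_{S'})$ in $\Spec v$ is not available before you know $O$ is quasi-compact---which is exactly what you are trying to prove, so as written the argument is circular. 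The repair is to invoke \ref{prop: reflexive spaces alternative description} instead: $P\subseteq O$ are opens of the reflexive space $\Spec v$ with $[P]=[O]=S\cap S'$, and the inclusion $P\hookrightarrow O$ is a quasi-compact map because any quasi-compact open of $O$ lies in the coherent, hence quasi-separated, chart $\pi_S(\Spec v_S)$, where its intersection with the quasi-compact open $P$ is again quasi-compact; reflexiveness then yields $P=O$ directly. (A similar remark applies to citing \ref{lemma: reflex, local property} for $Y=\sep^{-1}(H)$, whose proof uses quasi-separatedness of the ambient space; but reflexivity of an open subspace of a reflexive space is immediate from the definition, so nothing is lost there.)

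The more substantive gap is the step ``produce a single $S\in\tau(v)$ with $S\subseteq H$ and $x\in\operatorname{int}_X(S)$,'' which you assert but do not prove. The quasi-net axiom hands you finitely many $T_1,\dots,T_n\in\tau(v)$ through $x$ whose union is a neighbourhood of $x$, but nothing forces them to lie inside $H$. To shrink each $T_i$ to an element of $v_{T_i}\subseteq\tau(v)$ contained in $T_i\cap H$ that is still a neighbourhood of $x$ in $T_i$, you need to know that the members of $v_{T_i}$ form a fundamental system of compact neighbourhoods of every point of $T_i$. This is true---it follows from the facts that the separated quotient of a coherent valuative space is compact and normal and that the tube open subsets form a basis of its topology---but neither this paper nor your proposal records those facts, so this step is a genuine hole as written. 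Once that neighbourhood basis is supplied, the rest of your local-strong-compactness argument, namely the pair $(\sep_{\Spec v}^{-1}(\operatorname{int}_X(S)),\ \pi_S(\Spec v_S))$ justified by regularity of the retrocompact chart inside the quasi-separated $Y$, does go through.
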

\begin{proof}
\cite{FK} Proposition 0.2.6.18
\end{proof}

\begin{rem}\label{rem. properties of valued spaces for Berkovich}
\emph{
As in the examples above (cf. \ref{exmp. Berkovich spaces as valued locally Hausdorff spaces }) let $X=(X,\mt A, \tau)$ be a strictly $k$-analytic Berkovich space. By \ref{prop. extension of valuations} we have a unique extension of the pre-valuation on $X$ defined in \ref{exmp. Berkovich spaces as valued locally Hausdorff spaces } to a valuation on $X$. Therefore $X^{ad}$ from \ref{rem. on glueing valued spaces} is locally strongly compact as seen in  \ref{prop. properties of glued valued space}. Moreover, if $X$ is Hausdorff, then $X^{ad}$ is taut. This provides the topological aspects of the main theorem of this paper. In the following final section we will focus on the comparison of the further structure.
}
\end{rem}

\section{Final result -  an equivalence of categories}

\begin{comment}
\begin{rem}\label{rem: open embediing of affinoid adic spaces}
\emph{
Let $X$ be an adic space and $U \subseteq U' \subseteq X$ open affinoid subsets. Indeed, the explicit construction of the bijection in \ref{theo. fully faithful functor} shows that the morphism of Huber pairs corresponding to the morphism of adic spaces $U \hookrightarrow U'$ is the morphism $\mt O_{U'}(U') \rightarrow \mt O_U(U)$ which in fact is nothing but the restriction map from $X$, $\mt O_X(U')\xrightarrow{res} \mt O_X(U)$. ....
}
\end{rem}
\end{comment}

We have collected sufficiently many properties of valuative spaces and associated separated quotients to be able to define a functor from the category of taut adic spaces that are locally of finite type over $k$ to the category of strictly Hausdorff $k$-analytic Berkovich spaces. The first construction describes the functor on the object level:

\begin{const}\label{construction: functor objects}
\emph{Let $(X, \mt O_X , (v_x)_{x \in X})$ be a taut adic space that is locally of finite type over $k$. We want to define the structure of a strictly Hausdorff $k$-analytic Berkovich space on $[X]$ which denotes the separated quotient of $X$. For this purpose we successively process definition \ref{def. k-analytic berkovich spaces}.\\
 In view of \ref{prop: locally strongly compact = taut} we know that $X$ is quasi-separated and locally strongly compact and hence $[X]$ is \emph{Hausdorff} by \ref{theo: localy strongly compact -> locally Hausdorff}.\\
The next step is to define a suitable net on $[X]$ on which we can define a $k$-affinoid atlas afterwards. Let $\delta := \lbrace U \subseteq X \ | \ U \ \text{is an affinoid open subset} \rbrace$. This means that $U\cong \Spa(\mt O_X(U),\mt O_X(U)^o)$ where $(\mt O_X(U),\mt O_X(U)^o)$ is a Huber pair which is topologically of finite type over $k$ by \ref{rem. locally of finite type over k morphisms}. \ref{prop. equivalence of cat. k-affinoid algebras, complete Huber pairs locally of finite type /k} states that in this case $\mt O_X(U)$ is a strictly $k$-affinoid algebra. For brevity, we put $A_U:= \mt O_X(U)$. Since $\delta$ forms a basis of the topology of $X$, its image under the separation map $\tau:=\text{sep}_X(\delta) = \lbrace [U] \ | \ U\in \delta \rbrace$ then is a net on $[X]$ by \ref{prop: when net on separated quotient}.\\
We set $\mt A([U]):= A_U$ to endow $[X]$ with a $k$-affinoid atlas $\mt A$ with net $\tau$. This is well defined since $X$ is reflexive. Indeed for $U,V \in \delta$ with $[U]=[V]$ we already have $U=V$ by \ref{prop: reflexive means regular for the right sets and U=U'}. For each $U \in \delta$ we know from \ref{lem: case in which the map in fact is a homeomorphism} that $\sep_X(U)$ is homeomorphic to $\sep_U(U)$ which in turn is homeomorphic to $\mt M(A_U)$ by \ref{exmp: separation map adic spectra; adic spaces }. \\
Now let $[U],[U']\in \tau$ such that $[U]\subseteq [U']$. Again reflexiveness of $X$ provides $U \subseteq U'$ by the first assertion of \ref{prop: reflexive means regular for the right sets and U=U'}. Therefore the restriction map $\text{res}_{U'}^{U}$ of $\mt O_X$ provides a morphism $\rho_{U'}^U:=\text{res}_{U'}^{U}:A_{U'}= \mt O_X (U')\rightarrow \mt O_X(U)= A_U$ of strictly $k$-affinoid algebras.\\ Next we have to show that the associated morphism of $k$-affinoid spaces $\psi:\mt M(A_U)\rightarrow \mt M(A_{U'})$ is an affinoid domain embedding. We have a commutative diagram
\begin{gather*}		
\begin{xy}
  \xymatrix{
      \Spa(A_U,A_U^o) \ar[r]^\varphi \ar[d]_{\sep_{\Spa(A_{U},A_{U}^o)}}    &   \Spa(A_{U'},A_{U'}^o) \ar[d]^{\sep_{\Spa(A_{U'}, A_{U'}^o)}}  \\
     \mt M(A_U) \ar[r]_{\psi}           &   \mt M(A_{U'})   
  }
\end{xy}
\end{gather*}
in which the vertical arrows are the separation maps and the horizontal arrows are the respective maps induced by $\rho_{U'}^U$.
The explicit description of the bijection in \ref{theo. fully faithful functor} shows that $\varphi$ corresponds to the open immersion of adic spaces $U\hookrightarrow U'$ and hence the image of $\psi$ is precisely $[U]$. Now we consider an arbitrary morphism of strictly $k$-affinoid spaces $\pi :Y= \mt M(B) \rightarrow \mt M(A_{U'})$, such that $\im(\pi) \subseteq [U]$. Since $\pi$ comes from a morphism of strictly $k$-affinoid algebras, it induces a morphism of adic spectra and we obtain a commutative diagram
\begin{gather*}		
\begin{xy}
  \xymatrix{
      \Spa(B,B^o) \ar[r]^f \ar[d]_{\sep_{\Spa(B,B^o)}}    &   \Spa(A_{U'},A_{U'}^o) \ar[d]^{\sep_{\Spa(A_{U'}, A_{U'}^o)}}  \\
     \mt M(B) \ar[r]_{\pi}           &   \mt M(A_{U'})   .
  }
\end{xy}
\end{gather*}
By \ref{lemma: connection inclusion of subsets on separated quotient} we get $\im(f)\subseteq U$ and hence there exists a unique morphism of affinoid adic spaces $g:\Spa(B,B^o)\rightarrow \Spa(A_U,A_U^o)$ such that $f=\varphi \circ g$. But this induces a unique morphism $\mt M(B) \rightarrow \mt M(A_U)$ of strictly $k$-affinoid spaces such that the diagram
\begin{gather*}		
\begin{xy}
  \xymatrix{
      \mt M(A_U) \ar[rr]^\psi   &     &  \mt M(A_{U'})   \\
                             &  \mt M(A)  \ar[ru]_\pi \ar[ul]^h &
  }
\end{xy}
\end{gather*}
commutes.
So $[U]$ is an affinoid domain of $\mt M(A_{U'})$. For $U,U',U'' \in \delta$ such that $[U] \subseteq [U'] \subseteq [U'']$ we have $\rho_U^{U''}=\rho_U^{U'}\circ \rho_{U'}^{U''}$ by the cocycle property of $\mt O_X$. Hence the triple $([X],\mt A, \tau)$ is a Hausdorff strictly $k$-analytic Berkovich space.
}
\end{const}

The next paragraph approaches morphisms between adic spaces that are locally of finite type over $k$. It shows how the construction from \ref{construction: functor objects} can be used to define morphisms between the associated Hausdorff strictly $k$-analytic Berkovich spaces.

\begin{const}\label{const: functor morphisms}
\emph{
Let $(f,f^b):(X,\mt O_X, (v_x)_{x \in X}) \rightarrow (Y,\mt O_Y, (v_y)_{y \in Y})$ be a morphism of taut adic spaces that are locally of finite type over $k$. We endow $[X]=([X],\mt A_X,\tau_X)$ and $[Y]=([Y], \mt A_Y, \tau_Y)$ with the structure of a $k$-affinoid Berkovich space as we did in the construction above (\ref{construction: functor objects}). We write $A_U$ for $\mt A_X([U])$ and $A'_V$ for $\mt A_Y([V])$ where $[U]$ in $\tau_X$ and $[V]$ in $\tau_Y$. Since $f$ is a continuous map between valuative spaces, it induces a continuous map $[f]:[X]\rightarrow [Y]$ which commutes with the respective separation maps (cf. \ref{rem: functoriality of sep map}).\\
We want $[f]$ to induce a strong morphism from some Hausdorff strictly $k$-analytic Berkovich space $Z=(Z,\mt A_Z, \tau_Z)$ to $([Y],\mt A_Y,\tau_Y)$ where $(\mt A_Z, \tau_Z)$ is an appropriate refinement of $(\mt A_{X}, \tau_{X})$ (more precisely, we want to have a quasi-isomorphism $Z \rightarrow [X]$). So we have to ensure that for each $[U]\in \tau_{Z}$ there is an $[U'] \in \tau_Y$ such that $f([U])\subseteq [U']$. For this purpose we coarsen the $k$-affinoid atlas of $X$ in the following way:
Let $\delta_X$ and $\delta_Y$ be defined as in the previous construction (namely the sets containing all open affinoid subsets of $X$, respectively $Y$) and set
\begin{gather*}		
\delta_X|_{f}:=\lbrace V \in \delta_X \ | \ \exists \ V' \in \delta_Y \ \text{with} \ V \subseteq f^{-1}(V') \rbrace. 
\end{gather*}
Since $(f^{-1}(V'))_{(V'\in \delta_Y)}$ is an open covering of $X$ the sets of $\delta_X|_{f}$ form a basis of the topology of $X$. Therefore $\tau_X|_f:=\sep_X(\delta_X|_{f}) \subseteq \tau_X$ is a net on $[X]$ by \ref{prop: when net on separated quotient}. We denote the restriction of the $k$-affinoid atlas $\mt A_X$ to $\tau_X|_f$ by $\mt A_X|_f$. Then the identity $\id_{[X]}:([X], \mt A_X|_f, \tau_X|_f) \rightarrow ([X],\mt A_X, \tau_X)$ is a quasi-isomorphism of $k$-affinoid spaces.\\
We claim that $[f]:[X] \rightarrow [Y]$ induces a strong morphism of $k$-analytic Berkovich spaces $([X], \mt A_X|_f, \tau_X|_f) \rightarrow ([Y],\mt A_Y, \tau_Y)$:\\ 
Note that property (i) of definition \ref{def. strong morphisms of k-analytic spaces} is satisfied by the definition of $ \tau_X|_f$. So let $[U]\in \tau_X|_f$ and $[U'] \in \tau_Y$ such that $[f]([U]) \subseteq [U']$. As argued in the construction above, we have $f(U) \subseteq U'$ by \ref{lemma: connection inclusion of subsets on separated quotient}. Hence the bounded $k$-algebra homomorphism $f_{U'}^b:\mt O_Y(U')\rightarrow \mt O_X (f^{-1}(U'))$ composed with the restriction map $\text{res}_{f^{-1}(U'}^U$ induces a bounded $k$-algebra homomorphism $\phi:\mt O_Y(U')=A'(U')\rightarrow A(U)= \mt O_X(U)$. Again as in construction \ref{construction: functor objects} we see that $\phi$ corresponds via the functor $(A,A^o)\mapsto \Spa(A,A^o)$ to the morphism of adic spaces $f|_U :U \rightarrow U'$. Hence the obtained morphism of $k$-affinoid spaces $\mt M(A_U)\rightarrow \mt M(A'_{U'})$ corresponds to $[f|_U]$ via the diagram from \ref{exmp: separation map adic spectra; adic spaces }. But $[f|_U]$ is equal to $[f]|_{[U]}$ which shows that we constructed a strong morphism.\\ 
Finally, we define the morphism of Hausdorff strictly $k$-analytic Berkovich spaces associated to $(f,f^b)$ to be the equivalence class of the diagram 
\begin{gather*}		
([X],\mt A_X, \tau_X)\xlongleftarrow{\id_{[X]}} ([X], \mt A_X|_f,\tau_X|_f) \xlongrightarrow{[f]} ([Y],\mt A_Y, \tau_Y)
\end{gather*}
(with respect to the equivalence relation defined in \ref{def. category k-affinoid spaces}).
Note that this indeed is a morphism $([X],\mt A_X, \tau_X)\rightarrow ([Y],\mt A_Y, \tau_Y)$ of Hausdorff strictly $k$-analytic Berkovich spaces (remember that the category of (strictly) $k$-analytic Berkovich spaces is a category of fractions, cf. \ref{def. category of k-analytic spaces}).
}
\end{const}

\begin{theo}\label{theo: functor well defined}
There is a functor from the category of taut adic spaces that are locally of finite type over $k$ to the category of strictly $k$-analytic Hausdorff spaces, sending $(X, \mt O_X , (v_x)_{x\in X})$ to $([X],\mt A_X, \tau_X)$ as in \ref{construction: functor objects} and a morphism of adic spaces $(f,f^b):(X, \mt O_X , (v_x)_{x\in X})\rightarrow (Y, \mt O_Y , (v_y)_{y\in Y})$ to $([X],\mt A_X, \tau_X)\xlongleftarrow{\id_{[X]}} ([X], \mt A_X|_f,\tau_X|_f) \xlongrightarrow{[f]} ([Y],\mt A_Y, \tau_Y)$ as in \ref{const: functor morphisms}. We will denote this functor by $\mt F$.
\end{theo}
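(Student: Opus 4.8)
The assignment $\mathcal F$ has already been shown to be well defined on objects in \ref{construction: functor objects} and on morphisms in \ref{const: functor morphisms}, so the plan is to verify the two functor axioms: preservation of identities and preservation of composition. The identity axiom is immediate. For $\id_X$ one has
\[
\delta_X|_{\id_X} = \{\, V \in \delta_X \mid \exists\, V' \in \delta_X,\ V \subseteq \id_X^{-1}(V') \,\} = \delta_X,
\]
since one may always take $V' = V$; hence $\tau_X|_{\id_X} = \tau_X$ and $\mt A_X|_{\id_X} = \mt A_X$, while $[\id_X] = \id_{[X]}$ by the uniqueness in \ref{rem: functoriality of sep map}. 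Thus the span defining $\mathcal F(\id_X)$ collapses to $([X],\mt A_X,\tau_X) \xleftarrow{\id} ([X],\mt A_X,\tau_X) \xrightarrow{\id} ([X],\mt A_X,\tau_X)$, which is exactly the identity of $([X],\mt A_X,\tau_X)$ in the category of fractions.

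The substantial point is compatibility with composition. Given $(f,f^b)\colon X \to Y$ and $(g,g^b)\colon Y \to Z$, I would first record $[g\circ f] = [g]\circ[f]$ from the functoriality of the separation map (\ref{rem: functoriality of sep map}). The morphism $\mathcal F(g)\circ\mathcal F(f)$ must be computed through the calculus of fractions of \ref{def. category of k-analytic spaces}: starting from the spans with apices $a' = ([X],\mt A_X|_f,\tau_X|_f)$ and $b' = ([Y],\mt A_Y|_g,\tau_Y|_g)$, one has to produce a common refinement of $a'$ along which $[f]$ lands inside the net $\tau_Y|_g$. The natural candidate is the net generated by
\[
\delta_X|_{f,g} := \{\, V \in \delta_X \mid \exists\, V'' \in \delta_Y|_g \text{ with } V \subseteq f^{-1}(V'') \,\}.
\]
Since $(f^{-1}(V''))_{V'' \in \delta_Y|_g}$ is an open cover of $X$, this family is a basis of the topology of $X$, so $\tau_X|_{f,g} := \sep_X(\delta_X|_{f,g})$ is a net on $[X]$ by \ref{prop: when net on separated quotient}, and the resulting space $d := ([X],\mt A_X|_{f,g},\tau_X|_{f,g})$ refines $a'$ because $\delta_X|_{f,g} \subseteq \delta_X|_f$. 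By construction, for $V \in \delta_X|_{f,g}$ one has $f(V)\subseteq V''$ for some $V'' \in \delta_Y|_g$, whence $[f]([V]) = \sep_Y(f(V)) \subseteq [V'']$; this furnishes the strong morphism $k\colon d \to b'$ required by \ref{def. strong morphisms of k-analytic spaces}, whose underlying map is again $[f]$. Hence $d$ serves as the apex in the calculus of fractions, and $\mathcal F(g)\circ\mathcal F(f)$ is represented by $[X]\xleftarrow{\id} d \xrightarrow{[g]\circ k}[Z]$.

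It then remains to identify this span with the one defining $\mathcal F(g\circ f)$, whose apex is $a'' = ([X],\mt A_X|_{g\circ f},\tau_X|_{g\circ f})$. The key inclusion is $\delta_X|_{f,g} \subseteq \delta_X|_{g\circ f}$: if $V\subseteq f^{-1}(V'')$ with $V''\subseteq g^{-1}(W)$ and $W\in\delta_Z$, then $V\subseteq (g\circ f)^{-1}(W)$. Consequently $\tau_X|_{f,g}\subseteq\tau_X|_{g\circ f}$, so the identity on $[X]$ induces a quasi-isomorphism $t\colon d \to a''$ (\ref{rem. calculus of fractions}). Taking $d$ itself as the common space, $s=\id_d$ and this $t$ in the equivalence relation of \ref{def. category of k-analytic spaces}, both left legs become quasi-isomorphisms, and both right legs have underlying map $[g\circ f]$. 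The last thing to check is that these two right legs agree as strong morphisms on $\tau_X|_{f,g}$, i.e. at the level of the affinoid atlas; this follows from the naturality of $f^b$ and $g^b$ with respect to the restriction maps of $\mt O_X$ and $\mt O_Y$, together with the definition of composition in $\mt V$ (\ref{def. category V}), which identifies the $k$-algebra homomorphism attached to a pair $(V,W)$ in both descriptions with the restriction of the structure map of $g\circ f$. I expect the main obstacle to be precisely this bookkeeping inside the calculus of fractions — confirming that the chosen refinement is genuinely a net and that the two descriptions of the composite strong morphism coincide atlas-by-atlas — rather than any single deep statement, all of whose ingredients are already supplied by \ref{lemma: connection inclusion of subsets on separated quotient}, \ref{prop: when net on separated quotient} and \ref{rem: functoriality of sep map}.
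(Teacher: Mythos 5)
Your proof is correct and follows essentially the same route as the paper: your intermediate net $\delta_X|_{f,g}$ is exactly the paper's $\delta_X''$, and the final identification via $\tau_X|_{f,g}\subseteq\tau_X|_{g\circ f}$ is the same step the paper uses. The only additions are your explicit verification of the identity axiom (which the paper leaves implicit) and the slightly more detailed atlas-level bookkeeping, both of which are fine.
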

\begin{proof}
It is left to show that $\mt F$ respects composition of morphisms. So consider the following diagram of morphisms of taut adic spaces that are locally of finite type over $k$:
\begin{gather*}		
(X, \mt O_X , (v_x)_{x\in X})\xrightarrow{(f,f^b)} (Y, \mt O_Y , (v_y)_{y\in Y}) \xrightarrow{(g,g^b)} (Z, \mt O_Z, (v_z)_{z \in Z}).
\end{gather*}
By applying $\mt F$, we get:
\begin{gather*}		
([X],\mt A_X, \tau_X)\xrightarrow{\mt F((f,f^b))}([Y],\mt A_Y, \tau_Y)\xrightarrow{\mt F((g,g^b))}([Z],\mt A_Z, \tau_Z).
\end{gather*}
As above let $\delta_X$, $\delta_Y$ and $\delta_Z$ be the sets containing the open affinoid subsets of $X$, resp. $Y$, resp. $Z$. 
Consider 
\begin{gather*}		
\delta_X'':= \lbrace U \in \delta_X \ | \ \exists \ V \in \delta_Y \ \text{and}  \ W \in \delta_Z \ \text{such that} \ f(U)\subseteq V \ \text{and} \ g(V)\subseteq W \rbrace \\ 
=\lbrace U \in \delta_X |_f \ ; \ \exists \ V \in \delta_Y |_g \ \text{with} \ f(U)\subseteq V \rbrace
\end{gather*}
Then the elements of $\delta_X''$ still form a basis of the topology of $X$ and therefore $\tau_X'':=\text{sep}_X(\delta_X'')$ forms a net on $[X]$ by \ref{prop: when net on separated quotient}. Let $\mt A_X''$ denote the restriction of the $k$-affinoid atlas $\mt A_X$ to $\tau_X''$. We obtain a Hausdorff strictly $k$-analytic Berkovich space $([X],\mt A_X'', \tau_X'')$. Using the definition of $\delta_X''$, we can in an natural way complement $([X], \mt A_X|_f ,\tau_X|_f) \rightarrow ([Y],\mt A_Y, \tau_Y) \leftarrow ([Y],\mt A_Y|_g, \tau_Y|_g) $ to a commutative diagram of strong morphisms
\begin{gather*}		
\begin{xy} 
  \xymatrix{
      ([X], \mt A_X|_f ,\tau_X|_f) \ar[r]    &   ([Y],\mt A_Y, \tau_Y)  \\
      ([X],\mt A_X'', \tau_X '') \ar[r] \ar[u]             &   ([Y],\mt A_Y|_g, \tau_Y|_g) \ar[u]
  }
\end{xy}
\end{gather*}
 in which the vertical arrows are quasi-isomorphisms.\\
Hence $([X],\mt A_X, \tau_X)\xlongleftarrow{\id_{[X]}} ([X], \mt A_X'',\tau_X'') \xlongrightarrow{[g\circ f]} ([Z],\mt A_Z, \tau_Z)$ is a representative of $\mt F((g,g^b)) \circ \mt F((f,f^b))$ (remember the definition of composition of morphisms \ref{def. category of k-analytic spaces}). But since $\tau_X''$ is a subset of $\tau_X|_{g\circ f}$, this diagram is equivalent to $([X],\mt A_X, \tau_X)\xlongleftarrow{\id_{[X]}} ([X], \mt A_X|_{g\circ f},\tau_X|_{g \circ f}) \xlongrightarrow{[g\circ f]} ([Z],\mt A_Z, \tau_Z)$ as desired.
\end{proof}

For the constructions above it was essential to know that a basis of the topology on the adic side leads to a net on the Berkovich level via the separation map (cf. \ref{prop: when net on separated quotient}). The opposite direction is not true. Consider a $k$-affinoid space $X= \mt M(A)$ with trivial net $\tau=\lbrace X \rbrace$. In this case $\lbrace U \subseteq Spa(A,A^o) \ | \ U \ \text{open and affinoid}, [U] \in \tau \rbrace$ is in general not a basis of the topology of $\Spa(A,A^o)$. However, this question plays an important role for constructions that go in the other direction, namely when we show that $\mt F$ is essentially surjective and full. Luckily, we have the following result instead:

\begin{lem}\label{lem. when net gives basis of the topology}
Let $(X, \mt O_X, (v_x)_{x \in X})$ be a taut adic space that is locally of finite type over $k$ and $([X], \mt A _X, \tau_X)$ be its image under $\mt F$. Moreover, let $\tau'\subseteq \tau_X$ also be a net on $[X]$. Then $W=\lbrace U \subseteq X \ | \ U \text{open and affine} \ , [U] \in \overline{\tau'} \rbrace $ forms a basis of the topology of $X$ (recall that $\overline{\tau'}$ is by definition the set containing all affinoid domains of all elements of $\tau'$; cf. \ref{rem. refinement of nets in Berkovich spaces: include all affinoids}). 
\end{lem}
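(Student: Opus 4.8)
The plan is to verify the basis property pointwise. Since the affinoid open subsets of $X$ already form a basis of its topology (they cover $X$ and in each affinoid chart the rational subsets are a basis by \ref{theo. adic spectrum is spectral space}), it suffices to show that for every affinoid open $U \subseteq X$ and every $x \in U$ there is some $\tilde V \in W$ with $x \in \tilde V \subseteq U$. Throughout I would use that $X$ is a reflexive valuative space (\ref{exmp: Spa A is a valuative space}, \ref{prop. affine adic spaces connected to strict k-aff alg are reflexive}), so that $U' \mapsto [U']:=\sep_X(U')$ is a bijection between the set $\delta'$ of affinoid opens $U'$ with $[U'] \in \tau'$ and $\tau'$ itself (\ref{prop: reflexive means regular for the right sets and U=U'}).

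First I would localize near the image point. Set $\sigma := \sep_X(x) \in [X]$. Since $\tau'$ is in particular a quasi-net on $[X]$, choose $[U_1'], \dots, [U_m'] \in \tau'$ with $\sigma \in \bigcap_i [U_i']$ and $\bigcup_i [U_i']$ a neighbourhood of $\sigma$, and let $U_1', \dots, U_m' \in \delta'$ be the corresponding affinoid opens. The crucial step is to promote the relation $\sigma \in \bigcup_i [U_i']$ on the separated quotient to the relation $x \in \bigcup_i U_i'$ on $X$ itself. Write $O := \bigcup_i U_i'$; this is a quasi-compact, hence retrocompact, open of the quasi-separated space $X$, so by \ref{prop: reflexive means regular for the right sets and U=U'} it is regular, i.e. $(\overline O)^{\circ} = O$. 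Using \ref{lem: closure of retrocompact opens} for each $U_i'$ together with finiteness of the union, $\overline O = \bigcup_i \overline{U_i'} = \sep_X^{-1}(\bigcup_i [U_i'])$. As $\bigcup_i [U_i']$ is a neighbourhood of $\sigma$, the open set $N := \sep_X^{-1}\bigl((\bigcup_i [U_i'])^{\circ}\bigr)$ (interior taken in $[X]$) contains $x$ and is contained in $\overline O$; regularity then forces $N \subseteq (\overline O)^{\circ} = O$, whence $x \in U_j'$ for some $j$.

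With $x \in U \cap U_j'$ at hand, I would conclude on the adic side: the rational subsets of $U$ form a basis of its topology (\ref{theo. adic spectrum is spectral space}), so pick a rational subset $\tilde V$ of $U$ with $x \in \tilde V \subseteq U \cap U_j'$. Then $\tilde V$ is an affinoid open of $X$ with $\tilde V \subseteq U_j'$, hence a rational subset of $U_j'$ by \ref{lem. embeddings of rational subsets}, so that $[\tilde V]$ is a rational, in particular affinoid, domain of $[U_j'] = \mt M(A_{U_j'})$ by \ref{rem. rational subsets and the separation map}. Since $[U_j'] \in \tau'$, this yields $[\tilde V] \in \overline{\tau'}$ and therefore $\tilde V \in W$, with $x \in \tilde V \subseteq U$ as desired.

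The main obstacle is precisely the first step: a priori the quasi-net condition only records $\sigma = \sep_X(x)$ inside the $[U_i']$, and since open subsets of a valuative space are stable under generization but not under specialization, this does not directly place the (possibly non-maximal) point $x$ into any open $U_i'$ — only into the closures $\overline{U_i'}$. The regularity of retrocompact opens in a reflexive valuative space (\ref{prop: reflexive means regular for the right sets and U=U'}) is exactly what bridges this gap; without it one obtains merely $x \in \bigcup_i \overline{U_i'}$, which is insufficient because the affinoid domains of $[U_j']$ correspond under \ref{prop. affinoid domain embedding open immersion of adic spaces} to adic opens contained in $U_j'$, so $x$ must genuinely lie in $U_j'$ and not just in its closure.
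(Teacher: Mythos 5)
Your overall strategy is the same as the paper's --- cover $X$ by the affinoid opens $U'$ with $[U']\in\tau'$ and then pass to rational subsets of those --- but you carry out pointwise, and justify in detail, the covering step that the paper disposes of in a single sentence (``the sets in $W'$ form an open cover of $X$ because $X$ is reflexive''). Your combination of the quasi-net property at $\sigma=\sep_X(x)$ with the regularity of the retrocompact open $O=\bigcup_i U_i'$ and the identity $\overline{U_i'}=\sep_X^{-1}([U_i'])$ is correct, and it is a genuinely useful elaboration: applying \ref{prop: reflexive spaces alternative description} directly to the union of \emph{all} affinoid opens $U'$ with $[U']\in\tau'$ would require that inclusion to be quasi-compact, which is not automatic, whereas your reduction to a finite union supplied by the quasi-net condition makes the reflexiveness argument airtight.

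There is, however, one incorrect step in your concluding paragraph. Having found $x\in U\cap U_j'$, you choose $\tilde V$ rational in $U$ with $x\in\tilde V\subseteq U\cap U_j'$ and then assert that $\tilde V$ is ``a rational subset of $U_j'$ by \ref{lem. embeddings of rational subsets}''. That lemma only identifies rational subsets of a localization $X(\frac{T}{s})$ with rational subsets of the ambient affinoid that are contained in $X(\frac{T}{s})$; it does not say that an arbitrary affinoid open of $X$ contained in $U_j'$ is rational in $U_j'$, and since there is no containment between $U$ and $U_j'$, a rational subset of $U$ lying inside $U_j'$ need not be rational in $U_j'$. The repair is immediate and does not change your architecture: either choose $\tilde V$ to be a rational subset of $U_j'$ with $x\in\tilde V\subseteq U\cap U_j'$ (rational subsets of $U_j'$ form a basis of its topology and $U\cap U_j'$ is open in $U_j'$), so that $[\tilde V]$ is a rational domain of $\mt M(A_{U_j'})$ by \ref{rem. rational subsets and the separation map}; or keep your $\tilde V$ and instead invoke the fact, established in \ref{construction: functor objects}, that for any pair of affinoid opens $\tilde V\subseteq U_j'$ of $X$ the set $[\tilde V]$ is an affinoid domain in $[U_j']$. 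Either way $[\tilde V]\in\overline{\tau'}$, $\tilde V\in W$, and the argument closes.
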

\begin{proof}
Since the elements of $\tau'$ form a net of $[X]$, they cover $[X]$. Hence the sets in $W':=\lbrace U \subseteq X \ | \ U  \ \text{open and affine} \ , [U] \in \tau' \rbrace $ form an open cover of $X$ because $X$ is reflexive (cf. \ref{prop: reflexive spaces alternative description}). Since $\overline{\tau'}$ contains all rational domains of all elements of $\tau'$, $W$ contains all rational subsets of all elements of $W'$. This shows the claim.
\end{proof}

\begin{const}\label{construction: quasi-inverse functor}($\mt F$ is essentially surjective)
\emph{Let $X=(X,\mt A ,\tau)$ be a Hausdorff strictly $k$-analytic Berkovich space. In this paragraph, we construct a taut adic space $(X^{ad},\mt O_X, (v_x)_{x \in X^{ad}})$ that is locally of finite type over $k$ in such a way that $\mt F(X^{ad})$ is isomorphic to $X$. In other words, we show that $\mt F$ is essentially surjective. The proof is just a glueing argument mainly basing on the fact that affinoid domain embeddings induce open immersions of affinoid adic spaces (cf. \ref{prop. affinoid domain embedding open immersion of adic spaces}). \\
We consider the Hausdorff strictly $k$-analytic Berkovich space $(X,\overline{\mt A}, \overline{\tau})$  which is isomorphic to $X$ and whose net consists of all affinoid domains of all elements of $\tau$ (cf. \ref{rem. refinement of nets in Berkovich spaces: include all affinoids}).\\
As in example \ref{rem. properties of valued spaces for Berkovich}, we consider the taut reflexive valuative space  
\begin{gather*}		
X^{ad}:= \lim\limits_{\underset{S \in \overline{\tau}}\longrightarrow} X_S
\end{gather*}
 where $X_S:= Spa(A_S,A_S^o)$. Moreover, for $S \in \overline{\tau}$ we also have open topological immersions $\pi_S: X_S \rightarrow X^{ad}$. Under the homeomorphism $[X^{ad}] \cong X$, the separated quotient $[\pi_S(X_S)]$ corresponds to $S \subseteq X$ and hence
$(\pi_S(X_S))_{S \in \overline{\tau}}$ is a basis of the topology of $X^{ad}$ by \ref{lem. when net gives basis of the topology} (note that $X^{ad}$ is taut by \ref{prop. properties of glued valued space}).\\
Now we want to define a sheaf of complete topological rings on $X^{ad}$:\\
$\pi_S(X_S) \cong X_S = \Spa(A_S,A_S^o)$ provide sheaves of complete topological rings on an open covering of $X^{ad}$ which we want to glue together.  By \ref{prop. affinoid domain embedding open immersion of adic spaces} those sheaves coincide on sets of the form $\pi_S(X_S)$ where $S \in \overline{ \tau}$. Since those sets form a basis of the topology of $X^{ad}$, they coincide on intersections and hence can be glued together to obtain a sheaf $\mt O _{X^{ad}}$ on $X^{ad}$ such that $\mt O_{X^{ad}}|_{\pi_S(X_S)} \cong \mt O_{X_S}$ for all $S \in \overline{\tau}$.\\
Let $x\in X^{ad}$ with $x \in \im(\pi_S)$ for some $S \in \tau$ we have $\mt O _{X^{ad},x} \cong \mt O_{{X_S},\pi_S^{-1}(x)}$ and hence $(X^{ad},\mt O _{X^{ad}})$ is a locally ringed space. Moreover, for such an $x \in X^{ad}$ we define $v_x$ to be the valuation on $\mt O _{X^{ad},x}$ corresponding to the valuation on $\mt O_{{X_s},\pi_S^{-1}(x)}$ obtained from the adic space structure on $X_S$. Note that this is well defined: In fact, if $x \in \im(\pi_S')$ for another $S' \in \tau$, we can find $S'' \in \overline{\tau}$ with $x \in \im(\pi_{S''}) \subseteq (\im(\pi_S) \cap \im(\pi_{S'}))$. But this means $S'' \subseteq S \cap S'$ and hence we have open immersions of adic spaces $\Spa(A_{S''},A_{S''}^o)\rightarrow \Spa(A_{S'},A_{S'}^o)$ and $\Spa(A_{S''},A_{S''}^o) \rightarrow \Spa(A_S,A_S^o)$ respectively (cf. \ref{prop. affinoid domain embedding open immersion of adic spaces}).\\
Since $X_S$ is an adic space, which is locally of finite type over $k$ for any $S \in \tau$, we obtain a taut adic space $X^{ad}=(X^{ad},\mt O_{X^{ad}}, (v_x)_{x \in X^{ad}})$ which also is locally of finite type over $k$.\\
We finally show that $(X,\mt A ,\tau )$ is isomorphic to 
\begin{gather*}		
([X^{ad}],\mt A', \tau ') := \mt F((X^{ad},\mt O_{X^{ad}}, (v_x)_{x \in X^{ad}}))
\end{gather*}
 as a Hausdorff strictly $k$-analytic Berkovich space:\\
$[X^{ad}]$ is homeomorphic to $X$ by \ref{rem. on glueing valued spaces}, so we do not distinguish between those two topological spaces. $S \in \tau$ corresponds to an open affinoid subset $\pi_S(\Spa(A_S,A_S^o))$ of $X^{ad}$ which can in turn be identified with $S$ in its separated quotient.  Hence $S \in \tau '$ and $ \mt A(S) = \mt A'(S)$ by construction (cf. \ref{const: functor morphisms}). But this provides a quasi-isomorphism $X=(X,\mt A ,\tau)\rightarrow ([X^{ad}],\mt A', \tau')$ and therefore the claim follows.
}
\end{const}

Given a morphism $\Phi:\mt F(X) \rightarrow \mt F(Y)$ of Hausdorff strictly $k$-analytic Berkovich spaces (for taut adic spaces $X$ and $Y$ that are locally of finite type over $k$), we want to use the universal property of the $(\cdot)^{ad}$ construction to obtain a morphism of adic spaces $\mt F(X)^{ad} \rightarrow \mt F(Y)^{ad}$. To show that this morphism provides a pre-image of $\Phi$ under $\mt F$, we have to check that $\mt F(X)^{ad} $ and $\mt F(Y)^{ad}$ are connected to $X$ and $Y$ respectively. For this purpose note the following remark.

\begin{rem}\label{rem. ad is quasi inverse functor}
\emph{\begin{enumerate}
\item
The $(\cdot)^{ad}$ construction of \ref{construction: quasi-inverse functor} provides a quasi-inverse functor to $\mt F$ on the object level. Indeed: Let $X$ be a taut adic space that is locally of finite type over $k$ and $\mt F(X)=([X], \mt A_X ,\tau_X)$. Let $\delta_X$ denote the collection of all open affinoid subsets of $X$. Then by construction we have
\begin{gather*}		
\mt F(X)^{ad} = \lim\limits_{\underset{U \in \delta_X}\longrightarrow} \Spa(\mt O_X(U), \mt {O_X}(U)^o).
\end{gather*}
We see that $\mt F(X)^{ad}$ and $X$ are isomorphic since for every $U \in \delta_X$, we have an open immersion of adic spaces $U \hookrightarrow \mt F(X)^{ad} $ that glue to an isomorphism $X \rightarrow \mt F(X)^{ad}$. \\
\item In \ref{const: functor morphisms} we had to coarsen the net of a $k$-analytic space. For the following constructions we need a similar procedure on the level of adic spaces.
Let $\delta_X'$ be a collection of open affinoid subsets of $X$ that form a covering of $X$. Then 
\begin{gather*}		
X \cong  \lim\limits_{\underset{U \in \delta_X}\longrightarrow} \Spa(\mt O_X(U), \mt {O_X}(U)^o) =  \lim\limits_{\underset{U \in \delta_X'}\longrightarrow} \Spa(\mt O_X(U), \mt {O_X}(U)^o).
\end{gather*}
\end{enumerate}
}
\end{rem}

\begin{const}\label{const. inverse function on mapping sets}($\mt F$ is full)
\emph{Let $X$ and $Y$ be taut adic spaces that are locally of finite type over $k$. In this section we want to show that any morphisms of Hausdorff strictly $k$-analytic Berkovich spaces 
\begin{gather*}		
  \Phi : \mt F(X)=([X],\mathcal{A}_X,\tau_X) \rightarrow  ([Y],\mathcal{A}_Y,\tau_Y)= \mt F(Y)
\end{gather*} 
is the image of a morphism of adic spaces $X \rightarrow Y$ under the functor $\mt F$. In other words, we show that $\mt F$ is full.\\
We may assume that the span
\begin{gather*}		
([X],\mathcal{A}_X,\tau_X) \xlongleftarrow{\id_{[X]}}([X],\mathcal{A}_X'',\tau_X'')  \xlongrightarrow{\varphi}  ([Y],\mathcal{A}_Y,\tau_Y)
\end{gather*}
is a representative of $\Phi$,
where $\tau''_X$ is assumed to be the maximal subset of $\tau_X$ such that $\varphi$ is a strong morphism. In particular, this means that $\tau_X''$ contains all $[U]$ where $U\subseteq X$ open and affinoid, such that there exists a $V\subseteq Y$ open and affinoid with $\varphi([U]) \subseteq [V]$.
Moreover, $\varphi$ induces a strong morphism
\begin{gather*}		
 \overline{\varphi}:([X],\overline{\mathcal{A}_X''},\overline{\tau_X''})  \rightarrow  ([Y],\overline{\mathcal{A}_Y},\overline{\tau_Y})
  \end{gather*} (cf. \ref{rem. refinement of nets in Berkovich spaces: include all affinoids} iii)). For brevity we write $A_U$ for $\overline{\mt A_X''}(U)$ with $U \in \overline{\tau_X''}$ and $A'_V$ for $\overline{\mt A_Y}(V)$ with $V \in \overline{\tau_Y}$.\\
Note that by \ref{lem. when net gives basis of the topology} the sets 
\begin{gather*}		
\delta_X'':=\lbrace U \subseteq X \ | \ \text{open and affinoid, such that} \ [U] \in \overline{\tau_X''} \rbrace  \\
\delta_Y'':=\lbrace V \subseteq Y \ | \ \text{open and affinoid, such that} \ [V] \in \overline{\tau_Y}. \rbrace  
\end{gather*}
form basis of the topologies of $X$ and $Y$ respectively.
By \ref{rem. ad is quasi inverse functor} we have 
\begin{gather*}
X \cong \lim\limits_{\underset{U \in \delta_X''}\longrightarrow} \Spa(\mt O_X(U), \mt {O_X}(U)^o)\\  
Y \cong \lim\limits_{\underset{V \in \delta_Y''}\longrightarrow} \Spa(\mt O_Y(V), \mt {O_Y}(V)^o).
\end{gather*}
For all $U \in \delta_X''$ there exists a $V \in \delta_Y''$ and a morphisms of strictly $k$-affinoid spaces 
\begin{gather*}		
\mt M(A_{[U]})\rightarrow \mt M(A'_{[V]})
\end{gather*}
which is induced by $\overline{\varphi}$. This provides a morphisms of affinoid adic spaces
\begin{gather*}		
\Spa(A_U,A_U^o) \rightarrow \Spa(A'_V,{A'_V}^o).
\end{gather*}
Hence we obtain a continuous map $f: X \rightarrow Y$ by the universal property of $X$ as a colimit. 
Now we want to define an appropriate morphism of sheaves $f^b: \mt O_Y \rightarrow f^{*} \mt O_X$ such that $\mt F((f,f^b))= \Phi$.\\
As we have seen above for  $U \in \delta_X''$ and $V \in \delta_Y''$ such that $f(U)\subseteq V$ we have a bounded homomorphism of complete rings $\mt O_Y(V) \rightarrow \mt O_X(U)$. $\delta_X''$ and $\delta_Y''$ form basis of the topologies of $X$ and $Y$ respectively. Therefore those ring homomorphisms define $f_W^b$ for an arbitrary open subset $W$ of $Y$ by using suitable restriction maps and the usual colimes construction. Therefore we obtain a morphism of ringed spaces $(f,f^b):X \rightarrow Y$.\\
We still have to check that this morphism respects the valuations on the stalks. But this is clear since $(f,f^b)$ is locally given by morphisms of affinoid adic spaces.
%$[X]^{ad} \rightarrow [Y]^{ad}$. Since $X \cong [X]^{ad}$ and $Y \cong [Y]^{ad}$ this gives a morphism of adic spaces $(g,g^b):X\rightarrow Y$, which can be described in the following way, where $U \in \delta_X$ and $V \in \delta Y$ with $f(U) \subseteq V$ as before:
%\begin{gather*}		
%g:X \supseteq \Spa(A_U,A^o_U) \hookrightarrow [X]^{ad} \xrightarrow{f} f(\Spa(A_U,A_U^o) \hookrightarrow \Spa(A'_V,{A'_V}^o) \subseteq Y.
%\end{gather*}
To prove that $\mt F$ is full, we show that 
\begin{gather*}		
([X],\mathcal{A}_X,\tau_X) \xlongleftarrow{\id_{[X]}}([X],\mathcal{A}_X|_f,\tau_X|_f)  \xlongrightarrow{[f]}  ([Y],\mathcal{A}_Y,\tau_Y) \\ \cong  \\([X],\mathcal{A}_X,\tau_X) \xlongleftarrow{\id_{[X]}}([X],\mathcal{A}_X'',\tau_X'')  \xlongrightarrow{\varphi}  ([Y],\mathcal{A}_Y,\tau_Y),
\end{gather*}
where the first span is a representative of $\mt F((f,f^b))$.
By the construction of $f$ it is clear that $[f]$ and $\varphi$ coincide set theoretically, since $[f]|_{[U]}$ and $\varphi|_{[U]}$ both correspond to $\mt M(A_{[U]}) \rightarrow \mt M(A'_{[V]})$ for $U \in \delta_X''$ and suitable $V$. For $U \subseteq X$, $V\subseteq Y$ open and affine with $f(U)\subseteq V$ we have $[f]([U])\subseteq [V]$ and hence $\tau_X|_f \subseteq \tau_X''$ by our particular choice of $\tau_X''$. So we get a quasi-isomorphism $([X],\mathcal{A}_X|_f,\tau_X|_f) \rightarrow ([X],\mathcal{A}_X',\tau_X')$. Therefore the spans considered above are equivalent.
}
\end{const}

\begin{const}\label{const: F  faithful}($\mt F$ is faithful)
\emph{ Let $(f,f^b),(g,g^b):X\rightarrow Y$ be morphisms of taut adic spaces locally of finite type over $k$, such that the spans 
\begin{gather*}
\mt F(X)=([X],\mathcal{A}_X,\tau_X) \xlongleftarrow{\id_{[X],f}}([X],\mathcal{A}_X|_f,\tau_X|_f)  \xlongrightarrow{[f]}  ([Y],\mathcal{A}_Y,\tau_Y) = \mt F(Y) \\
\cong \\
([X],\mathcal{A}_X,\tau_X) \xlongleftarrow{\id_{[X],g}}([X],\mathcal{A}_X|_g,\tau_X|_g)  \xlongrightarrow{[g]}  ([Y],\mathcal{A}_Y,\tau_Y)
\end{gather*}
are equivalent.
By definition of the equivalence relation there exists a Hausdorff strictly $k$-analytic Berkovich space $\overline{X}$, and strong morphisms $h,l$ such that $\id_{[X],f}\circ h$, $\id_{[X],g}\circ l$ are quasi-isomorphisms and the diagram
\begin{gather*}
\begin{xy}
  \xymatrix{
		& ([X],\mathcal{A}_X|_f,\tau_X|_f)  \ar[ld]_{\id_{[X],f}} \ar[rd]^{[f]}  \\
		([X],\mathcal{A}_X,\tau_X) & \overline{X}\ar[u]_h \ar[d]_l & ([Y],\mathcal{A}_Y,\tau_Y)\\
		& ([X],\mathcal{A}_X|_g,\tau_X|_g)  \ar[lu]^{\id_{[X],g}} \ar[ru]_{[g]}\\
 }
\end{xy}
\end{gather*}
commutes. This means that $[f]=[g]$ as continuous maps and hence $f=g$ as continuous maps by reflexivity of $X$ (cf. \ref{prop: maps of reflexive spaces}).
It is left to show that for all open subsets $V \subseteq Y$ the map $f_V^b: \mt O_Y(V) \rightarrow \mt O_X(f^{-1}(V))$ is uniquely determined by $[f]=[g]$ (and hence is equal to $g_V^b$). Let $V \subseteq Y$ be open, $W' \subseteq V$ be an open affinoid subset and $U'$ open and affinoid in $X$, such that $f(U')\subseteq W'$. The morphism $f_{W',U'}^b :\mt O_Y(W') \rightarrow \mt O_X(U')$ we obtain in this way corresponds by construction of $\mt F ((f,f^b))$ to $[f]|_{[U']}$ and therefore just depends on $[f]$. We put $f_{V,U'}^b:=f_{V,W'}^b \circ res^{W'}_{V}$. Then we have 
\begin{gather*}		
f_V^b=\lim_{\longleftarrow} f_{V,U}^b \	,
\end{gather*}
where the limit is taken over all $U\subseteq X$ open and affinoid such that there is an open and affinoid $W \subseteq V$ with $f(U) \subseteq W$. This shows the claim.
}
\end{const}

Combining all constructions in this paragraph leads to the final theorem of this paper:

\begin{theo}\label{theo. final theorem}
There is an equivalence of categories:
\begin{gather*}		
\lbrace \text{taut adic spaces that are locally of finite type over }k \rbrace \\	
\cong	\\
\lbrace \text{Hausdorff strictly} \ k\text{-analytic Berkovich spaces} \rbrace
\end{gather*}
sending $(X,\mt O_X, (v_x)_{x\in X})$ to $([X],\mt A, \tau)$ as in construction \ref{construction: functor objects}. 
\end{theo}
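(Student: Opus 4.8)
The plan is to verify that $\mt F$ is an equivalence of categories by appealing to the standard criterion that a functor is an equivalence precisely when it is fully faithful and essentially surjective. Since $\mt F$ has already been shown to be a well-defined functor in \ref{theo: functor well defined}, the proof reduces to collecting the three properties established in the preceding constructions. First I would recall that $\mt F$ sends a taut adic space $(X,\mt O_X,(v_x)_{x\in X})$ to $([X],\mt A,\tau)$ and a morphism to the span described in \ref{const: functor morphisms}, so that only the three categorical properties remain to be cited.

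Essential surjectivity is exactly the content of \ref{construction: quasi-inverse functor}: given a Hausdorff strictly $k$-analytic Berkovich space $(X,\mt A,\tau)$, one forms the taut adic space $X^{ad}$ as the colimit of the affinoid adic spaces $\Spa(A_S,A_S^o)$ over $S\in\overline{\tau}$, which is locally of finite type over $k$, and one checks $\mt F(X^{ad})\cong(X,\mt A,\tau)$. Fullness is provided by \ref{const. inverse function on mapping sets}, where an arbitrary morphism $\Phi\colon\mt F(X)\to\mt F(Y)$ of Berkovich spaces is realized as $\mt F((f,f^b))$ for a morphism of adic spaces constructed through the universal property of the colimit presentation of $X$ and $Y$ furnished by \ref{rem. ad is quasi inverse functor}. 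Faithfulness is \ref{const: F  faithful}: if $\mt F((f,f^b))=\mt F((g,g^b))$, then reflexivity of $X$ together with \ref{prop: maps of reflexive spaces} forces $f=g$ as continuous maps, and the induced sheaf maps $f_V^b$ are determined locally by $[f]$, whence $f^b=g^b$. Assembling these three facts yields the equivalence at once.

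The genuine difficulty — already absorbed into those constructions — is the mismatch between the two glueing procedures: in the adic world one glues along open immersions, whereas in the Berkovich world one glues along affinoid domain embeddings, which are closed from the topological point of view. The decisive bridge is \ref{prop. affinoid domain embedding open immersion of adic spaces}, showing that an affinoid domain embedding induces an open immersion of the associated affinoid adic spaces; this is what lets the separation map $\sep_X\colon X\to[X]$ and its inverse $(\cdot)^{ad}$ translate covers and refinements compatibly in both directions. That translation in turn rests on the reflexivity of the valuative spaces involved (\ref{prop. affine adic spaces connected to strict k-aff alg are reflexive}), on the fact that the images under $\sep_X$ of a basis of affinoid opens form a net (\ref{prop: when net on separated quotient}), and conversely that such a net pulls back to a basis of the adic topology (\ref{lem. when net gives basis of the topology}). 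Once these compatibilities are in place, the three properties combine to give the claimed equivalence, with the correspondence on objects being exactly the passage to the universal Hausdorff quotient $[X]$.
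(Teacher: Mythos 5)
Your proposal matches the paper exactly: the paper gives no separate proof of Theorem \ref{theo. final theorem} beyond the sentence ``Combining all constructions in this paragraph leads to the final theorem,'' so the intended argument is precisely your assembly of essential surjectivity (\ref{construction: quasi-inverse functor}), fullness (\ref{const. inverse function on mapping sets}), and faithfulness (\ref{const: F  faithful}) on top of the well-definedness of $\mt F$ from \ref{theo: functor well defined}. Your identification of \ref{prop. affinoid domain embedding open immersion of adic spaces} and reflexivity as the decisive bridges is also exactly the paper's point of view.
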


\end{document}